\DeclareSymbolFont{cyrletters}{OT2}{wncyr}{m}{n}
\DeclareMathSymbol{\Sha}{\mathalpha}{cyrletters}{"58}
\theoremstyle{plain}
\newtheorem{thm}{Theorem}[section]
\newtheorem{lem}[thm]{Lemma}
\newtheorem{prop}[thm]{Proposition}
\newtheorem{cor}[thm]{Corollary}
\theoremstyle{definition}
\newtheorem{defn}[thm]{Definition}
\newtheorem{ex}[thm]{Example}
\theoremstyle{remark}
\newtheorem{rem}[thm]{Remark}
\renewcommand{\AA}{\mathbb{A}}
\newcommand{\FF}{{\mathbb F}}
\newcommand{\EE}{{\mathbb E}}
\newcommand{\ZZ}{{\mathbb Z}}
\newcommand{\QQ}{{\mathbb Q}}
\newcommand{\RR}{{\mathbb R}}
\newcommand{\CC}{{\mathbb C}}
\newcommand{\GG}{{\mathbb G}}
\newcommand{\PP}{{\mathbb P}}
\newcommand{\YY}{{\mathbb Y}}
\newcommand{\afk}{\mathfrak{a}}
\newcommand{\bfk}{\mathfrak{b}}
\newcommand{\dfk}{\mathfrak{d}}
\newcommand{\mfk}{\mathfrak{m}}
\newcommand{\ufk}{\mathfrak{u}}
\newcommand{\Afk}{\mathfrak{A}}
\newcommand{\Cfk}{\mathfrak{C}}
\newcommand{\Hfk}{\mathfrak{H}}
\newcommand{\Ifk}{\mathfrak{I}}
\newcommand{\Mfk}{\mathfrak{M}}
\newcommand{\Zfk}{\mathfrak{Z}}
\newcommand{\Ecal}{\mathcal{E}}
\newcommand{\Lcal}{\mathcal{L}}
\newcommand{\Ocal}{\mathcal{O}}
\newcommand{\Pcal}{\mathcal{P}}
\newcommand{\Rcal}{\mathcal{R}}
\newcommand{\Ical}{\mathcal{I}}
\newcommand{\Acal}{\mathcal{A}}
\newcommand{\Bcal}{\mathcal{B}}
\newcommand{\Pscr}{\mathscr{P}}
\newcommand{\tr}{\operatorname{Tr}}
\newcommand{\Mat}{\operatorname{Mat}}
\newcommand{\ord}{\operatorname{ord}}
\newcommand{\GL}{\operatorname{GL}}
\newcommand{\PGL}{\operatorname{PGL}}
\newcommand{\gal}{\operatorname{Gal}}
\newcommand{\Pic}{\operatorname{Pic}}
\newcommand{\Frob}{\operatorname{Frob}}
\newcommand{\im}{\operatorname{Im}}
\newcommand{\End}{\operatorname{End}}
\newcommand{\re}{\operatorname{Re}}
\numberwithin{equation}{section}
\newcommand\subfrac[2]{\genfrac{}{}{0pt}{}{#1}{#2}}
\begin{document}

\title{On Kronecker terms over global function fields}
\author[Fu-Tsun Wei]{Fu-Tsun Wei}
\address{Department of Mathematics, National Tsing Hua Univeristy, Taiwan}
\email{ftwei@math.nthu.edu.tw}

\subjclass[2010]{11M36, 11G09, 11R58}
\keywords{Function field, Drinfeld period domain, Bruhat-Tits building, Kronecker limit formula, Drinfeld-Siegel unit, Mirabolic Eisenstein series, CM Drinfeld module, Taguchi height, Colmez-type fomula,  Special $L$-value}

\begin{abstract}
We establish a general Kronecker limit formula of arbitrary rank over global function fields with Drinfeld period domains playing the role of upper-half plane.  The Drinfeld-Siegel units come up as equal characteristic modular forms replacing the classical $\Delta$. This leads to analytic means of deriving a Colmez-type formula for \lq\lq stable Taguchi height\rq\rq\ of CM Drinfeld modules having arbitrary rank. A Lerch-Type formula for \lq\lq totally real\rq\rq\ function fields is also obtained, with the Heegner cycle on the Bruhat-Tits buildings intervene.  Also our limit formula is naturally applied to
the special values of both the Rankin-Selberg $L$-functions and the Godement-Jacquet $L$-functions associated to automorphic cuspidal representations over global function fields.
\end{abstract}

\maketitle

\section{Introduction}\label{Intro}

The celebrated first (resp.\ second) limit formula of Kronecker expresses the \lq\lq second term\rq\rq\ of the non-holomorphic Eisenstein series by the \lq\lq order\rq\rq\ of the modular discriminants (resp. modular units) at the archimedean place. This formula reveals very interesting stories in arithmetic geometry concealed inside the Euler-Kronecker constants of quadratic number fields (cf.\ Colmez \cite{Col}, Hecke \cite{Hec}) and \lq\lq non-central\rq\rq\ special $L$-values coming from classical modular forms (cf.\ Beilinson \cite{Bei3}).
The aim of the present paper is to take up the study of this phenomenon in the function field setting.
We first establish an \lq\lq adelic\rq\rq\ Kronecker limit formula of arbitrary rank in the mixed characteristic context, and then explore the arithmetic of various \lq\lq Kronecker terms\rq\rq\ over global function fields.


\subsection{Kronecker limit formula for arbitrary rank}\label{sec 0.1}

Let $k$ be a global function field with a finite constant field $\FF_q$. 
Fix a place $\infty$ of $k$, referred to the infinite place of $k$.
Let $k_\infty$ be the completion of $k$ at $\infty$ and $\CC_\infty$ the completion of a chosen algebraic closure of $k_\infty$.
Let $\Hfk^r$ be the Drinfeld period domain of rank $r$, 
which admits a \lq\lq M\"obius\rq\rq\ left action of $\GL_r(k_\infty)$.
Let $\AA$ (resp.\ $\AA^\infty$) be the (resp.\ finite) adele ring of $k$.
Put $\Hfk^r_\AA := \Hfk^r \times \GL_r(\AA^\infty)$.
Then $\GL_r(k)$ acts on $\Hfk^r_\AA$ diagonally from the left, 
and $\GL_r(\AA^\infty)$ acts on the second component of $\Hfk^r_\AA$ by right multiplication.
Let $S((\AA^\infty)^r)$ be the space of Schwartz functions (i.e.\ locally constant and compactly supported) on $(\AA^\infty)^r$.
For each $\varphi^\infty \in S((\AA^\infty)^r)$, we introduce the following \lq\lq non-holomorphic\rq\rq\ Eisenstein series on $\Hfk^r_\AA$:
for $z_\AA = \big((z_1:\cdots: z_{r-1}:1), g^\infty\big) \in \Hfk^r_\AA$ we set
$$
\EE(z_\AA,s;\varphi^\infty) := \sum_{0 \neq x = (x_1,...,x_r) \in k^r} \varphi^\infty(x g^\infty) \cdot 
\frac{\im(z_\AA)^s}{|x_1z_1+\cdots + x_{r-1} z_{r-1} + x_r|_\infty^{rs}}.
$$
Here $\im(z_\AA)$ is the \lq\lq total imaginary part\rq\rq\ of $z_\AA$ (cf.\ the equation (\ref{eqn 1.5}) and (\ref{eqn 2.1})),
and $|\cdot|_\infty$ is the normalized absolute value on $\CC_\infty$ (cf.\ Section~\ref{sec 1.1}).
One can (formally) check that
$$\EE(\gamma \cdot z_\AA,s; \varphi^\infty) = \EE(z_\AA,s; \varphi^\infty), \quad \forall
\gamma \in \GL_r(k).$$
For instance, let $A$ be the ring of integers of $k$ (with respect to $\infty$). 
Taking a rank $r$ projective $A$-module $Y \subset k^r$,
let $\mathbf{1}_{\widehat{Y}}$ be the characteristic function of $\widehat{Y}$, the closure of $Y$ in $(\AA^\infty)^r$.
Then we may write
$$
\EE\big((z,1),s; \mathbf{1}_{\widehat{Y}}\big)
= \sum_{0 \neq (c_1,...,c_{r-1},d) \in Y} \frac{\im(z)^s}{|c_1z_1+\cdots + c_{r-1}z_{r-1}+d|_\infty^{rs}} 
=: \EE^Y(z,s),
\quad \forall z \in \Hfk^r.
$$

The main theorem of this paper is presented in the following:

\begin{thm}\label{thm 0.1}
${}$
\begin{itemize}
\item[(1)] The Eisenstein series $\EE(z_\AA,s;\varphi^\infty)$ is an \lq\lq extension\rq\rq\ of the \lq\lq mirabolic\rq\rq\ Eisenstein series on $\GL_r(\AA)$ associated to $\varphi^\infty$ to $\Hfk^r_\AA$ through the building map.
\item[(2)]
Let $S((\AA^\infty)^r)_\ZZ$ be the subspace of $\ZZ$-valued Schwartz functions in $S((\AA^\infty)^r)$.
For $z_\AA \in \Hfk^r_\AA$ and $\varphi^\infty \in S((\AA^\infty)^r)_\ZZ$, 
we have $\EE(z_\AA,0;\varphi^\infty) = - \varphi^\infty(0)$ and
$$
\frac{\partial}{\partial s} \EE(z_\AA,s;\varphi^\infty)\bigg|_{s=0}=
-\varphi^\infty(0) \cdot \ln \im(z_\AA)- \frac{r}{q^{r\deg \infty} -1} \cdot \ln |\ufk(z_\AA;\varphi^\infty)|_\infty,
$$
where $\ufk(\cdot;\varphi^\infty)$ is the Drinfeld-Siegel units on $\Hfk^r_\AA$ associated to $\varphi^\infty$$($which is an \lq\lq equal characteristic\rq\rq\ $\CC_\infty$-valued modular form on $\Hfk^r_\AA$, cf.\ {\rm Definition~\ref{defn 3.2}}$)$.
\end{itemize}
\end{thm}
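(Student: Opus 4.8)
The plan is to reduce the arbitrary-rank adelic statement to the classical analytic continuation and special-value computation of a Dirichlet-type series built out of the Drinfeld exponential, then identify the constant and linear Taylor coefficients at $s=0$ with the Drinfeld--Siegel unit. First I would treat part (1) as essentially a matter of unwinding definitions: the building map $\Hfk^r \to$ (realization of the Bruhat--Tits building of $\GL_r(k_\infty)$) sends $z_\AA$ to a point whose associated "total imaginary part" $\im(z_\AA)$ is exactly the modulus factor appearing in the classical mirabolic Eisenstein series $E(g,s;\varphi)$ on $\GL_r(\AA)$; so one checks that $\EE(z_\AA,s;\varphi^\infty)$, as a function of $z_\AA$, is the pullback of $E(\cdot,s;\varphi)$ along $\Hfk^r_\AA \to \GL_r(k)\backslash\GL_r(\AA)/$(compact) composed with the building map, at least on the region of absolute convergence $\re(s)$ large, and then the identity of meromorphic continuations is automatic. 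This also imports, for free, the known meromorphic continuation and functional equation of $\EE(z_\AA,s;\varphi^\infty)$ in $s$, which is what legitimizes talking about $\partial_s|_{s=0}$ at all.

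Next, for part (2), I would fix $z_\AA$ and $\varphi^\infty \in S((\AA^\infty)^r)_\ZZ$ and organize the sum over $0 \neq x \in k^r$ according to the value of the linear form $\ell(x) := x_1 z_1 + \cdots + x_{r-1} z_{r-1} + x_r \in \CC_\infty$. Write $\Lambda := \{\,x \in k^r : \varphi^\infty(xg^\infty) \ne 0\,\}$ — up to the locally constant nature of $\varphi^\infty$ this is a finite union of cosets of a rank-$r$ projective $A$-module, and on the example case $\varphi^\infty = \mathbf{1}_{\widehat Y}$ it is literally $Y$ — so that $\ell(\Lambda)$ is (a finite union of translates of) a rank-$r$ $A$-lattice $\Gamma$ in $\CC_\infty$, i.e.\ the period lattice of a rank-$r$ Drinfeld module. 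Then
$$
\EE(z_\AA,s;\varphi^\infty) \;=\; \im(z_\AA)^s \sum_{0 \neq \lambda} \frac{c_\lambda}{|\lambda|_\infty^{rs}},
$$
where $c_\lambda \in \ZZ$ counts $x$ with $\ell(x) = \lambda$ (weighted by $\varphi^\infty$). The standard move is to split off the "leading" part of the lattice and run a partial-fractions / Poisson-type argument: for a rank-$r$ lattice $\Gamma \subset \CC_\infty$ with Drinfeld exponential $e_\Gamma$, one has the product expansion of $e_\Gamma$ and the logarithmic-derivative identity $\sum_{\lambda \in \Gamma}(X-\lambda)^{-1} = e_\Gamma'(X)/e_\Gamma(X)$, and iterating the decomposition $\Gamma = \bigsqcup_{i} (\Gamma_{r-1} + \lambda_i)$ into a rank-$(r-1)$ sublattice and its cosets reduces the rank-$r$ zeta sum to rank-$(r-1)$ data plus an explicit "$\log|e_{\Gamma_{r-1}}(\cdot)|_\infty$" contribution. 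Carrying this induction down to rank $1$, where everything is the Carlitz/Goss zeta function and the special values at $s=0$ are known, should yield both $\EE(z_\AA,0;\varphi^\infty) = -\varphi^\infty(0)$ and a closed formula for $\partial_s\EE(z_\AA,s;\varphi^\infty)|_{s=0}$ as a sum of $\ln\im(z_\AA)$ and a sum of $\ln|\cdot|_\infty$ of special values of Drinfeld exponentials — which is, by Definition~\ref{defn 3.2}, exactly $\ln|\ufk(z_\AA;\varphi^\infty)|_\infty$ up to the normalizing rational factor $r/(q^{r\deg\infty}-1)$.

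Alternatively — and this is probably the cleaner route to write up — I would not re-derive the continuation from scratch but instead use the known Fourier/"Bruhat" expansion of the mirabolic Eisenstein series along the mirabolic parabolic $P \subset \GL_r$: the constant term contributes the $\im(z_\AA)^s$ and $\im(z_\AA)^{1-s}$-type pieces (whose Taylor expansion at $s=0$ gives $-\varphi^\infty(0)$ and the $-\varphi^\infty(0)\ln\im(z_\AA)$ term after using the value of the relevant completed $L$-factor/Tamagawa constant at $s=0$), while the non-constant terms assemble into a sum over $\GL_{r-1}(k)\backslash(\text{stuff})$ whose $s=0$ derivative telescopes into $\ln|\ufk|_\infty$ by the very definition of the Drinfeld--Siegel unit as the regularized infinite product over those terms. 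In either approach the main obstacle is the same: controlling the $s$-derivative at $s=0$ of the non-leading part of the series uniformly enough to interchange $\partial_s$ with the (infinite, only conditionally convergent near $s=0$) sum, and matching the resulting regularized product precisely — including the constant-field factor $q^{r\deg\infty}-1$ and the power $r$ — with the normalization chosen in Definition~\ref{defn 3.2} for $\ufk(\cdot;\varphi^\infty)$. The rank-$1$ base case (a Kronecker limit formula for the Carlitz module) and the bookkeeping of the coset decomposition's combinatorics are the places where sign and normalization errors are easiest to make, so I would pin those down first and then let the induction run.
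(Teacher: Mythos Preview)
Your treatment of part~(1) is right in spirit, though the paper makes the link precise via a concrete combinatorial identity (Corollary~\ref{cor 1.9}): for $z$ landing in a simplex with vertices $[L_{z,i}]$, one has $\im(z)^s/\nu_z(x)^{rs} = \sum_i c_{z,i}(s)\,[\im(z)]_i^s/\nu_{L_{z,i}}(x)^{rs}$ with explicit coefficients $c_{z,i}(s)$, and this matches term-by-term with the finite mirabolic Eisenstein series (Lemma~\ref{lem 2.4}, Proposition~\ref{prop 2.7}). So ``unwinding definitions'' is fair, but there is a real identity to check.

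For part~(2) your two routes both diverge from the paper, and Approach~A as stated has a gap. The paper neither inducts on rank nor computes a Fourier expansion along the mirabolic parabolic. Its key device is Lemma~\ref{lem 2.10}: for any $a\in A\setminus\FF_q$, the elementary manipulation
\[
(|a|_\infty^{rs}-|a|_\infty^r)\,\EE^Y(z,s)=\im(z)^s\!\!\sum_{0\neq w\in \frac{1}{a}\Lambda_z^Y/\Lambda_z^Y}\Bigl[|w|_\infty^{-rs}+\!\!\sum_{\substack{0\neq\lambda\in\Lambda_z^Y\\|\lambda|_\infty\le|w|_\infty}}\!\!\bigl(|\lambda-w|_\infty^{-rs}-|\lambda|_\infty^{-rs}\bigr)\Bigr]
\]
expresses $\EE^Y(z,s)$ through a \emph{finite} sum (the outer sum is over the finite group $a^{-1}\Lambda_z^Y/\Lambda_z^Y$; the inner sum is finite by discreteness). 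This gives the meromorphic continuation and all $s$-derivatives at $s=0$ in one stroke (Corollary~\ref{cor 2.11}); matching the $n=0,1$ cases against the product formula $\Delta_a^Y(z)=a\prod_{0\neq w}\exp_{\Lambda_z^Y}(w)^{-1}$ (Lemma~\ref{lem 1.3}) yields~(\ref{eqn 3.2}) directly, and the Jacobi-type piece~(\ref{eqn 3.3}) follows from~(\ref{eqn 2.3}). The convergence issue you flag simply never arises.

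By contrast, your induction-on-rank decomposes $\Gamma$ into cosets of a rank-$(r{-}1)$ sublattice $\Gamma_{r-1}$, but $\Gamma/\Gamma_{r-1}$ is a rank-$1$ $A$-module and hence \emph{infinite}; the ``reduction'' is therefore an infinite sum of shifted lower-rank zetas, and to resum those you would need Poisson/Fourier input --- which is your Approach~B in disguise. That route may be workable, but you have not indicated how the non-constant Fourier terms assemble into $\ln|\ufk|_\infty$ with the exact constant $r/(q_\infty^r-1)$, and the paper's finite Hecke-type identity is a much shorter path.
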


In order to state Theorem~\ref{thm 0.1} (1) more concretely, we first recall basic properties of mirabolic Eisenstein series on $\GL_r(\AA)$.
Given a unitary Hecke character $\chi$ on $k^\times \backslash \AA^\times$ and $\varphi \in S(\AA^r)$,
the mirabolic Eisenstein series on $\GL_r(\AA)$ associated to $\chi$ and $\varphi$ can be expressed as follows (cf.\ {\it Remark}~\ref{rem 2.3}):
$$
\Ecal(g,s;\chi,\varphi) =|\det g|_\AA^s \int_{k^\times \backslash \AA^\times}
\left(\sum_{0 \neq x \in k^r}\varphi(a^{-1} x g)\right) \chi(a) |a|_\AA^{-rs} d^\times a, \ \  \forall g \in \GL_r(\AA), \ \ \re(s)>1.
$$
Here the Haar measure $d^\times a$ is normalized so that the maximal compact subgroup $O_\AA^\times$ of $\AA^\times$ has volume one. 
For $\varphi^\infty \in S((\AA^\infty)^r)$, define the \lq\lq finite\rq\rq\ mirabolic Eisenstein series associated to $\varphi^\infty$ by:
$$
\Ecal^\infty(g,s;\varphi^\infty):= \frac{q-1}{\#\Pic(A)} \cdot
\sum_{\chi \in \widehat{\Ical}^\infty}
\Ecal(g,s;\chi,\varphi^\infty \otimes \mathbf{1}_{O_\infty^r}), \quad \forall g \in \GL_r(\AA),
$$
where $\widehat{\Ical}^\infty$ is the Pontryagin dual group of the finite idele class group  $\Ical^\infty:= k^\times \backslash \AA^{\infty, \times}$, and $\mathbf{1}_{O_\infty^r}$ is the characteristic function of $O_\infty^r \subset k_\infty^r$ where $O_\infty$ is the maximal compact subring of $k_\infty$.
Note that the normalized factor $(q-1)\cdot \#\Pic(A)^{-1}$ comes from the volume of $\Ical^\infty$ with respect to the chosen Haar measure (cf.\ the equation~(\ref{eqn Ivol})).
On the other hand, let $\Bcal^r(\RR)$ be the realization of the Bruhat-Tits building $\Bcal^r$ of $\PGL_r(k_\infty)$, and denote by $\lambda: \Hfk^r \rightarrow \Bcal^r(\RR)$ the building map (cf.\ Definition~\ref{defn 1.6}).
Put $\Bcal^r_\AA(\RR):= \Bcal^r(\RR) \times \GL_r(\AA^\infty)$,
and extend $\lambda$ naturally to a 
$\GL_r(k) \times \GL_r(\AA^\infty)$-biequivariant map $\lambda_\AA$ from $\Hfk_\AA^r$ to $\Bcal^r_\AA(\RR)$.
We observe that $\EE(\cdot,s;\varphi^\infty)$ actually factors through $\lambda_\AA$
(cf.\ Proposition~\ref{prop 2.7}), and satisfies
(cf.\ the equation~(\ref{eqn 2.2})):
\begin{eqnarray}\label{eqn 0.1}
\EE(z_\AA, s;\varphi^\infty) &=& (1-q^{-(r\deg \infty) s}) \cdot \Ecal^\infty(g_{z_\AA},s;\varphi^\infty)
\end{eqnarray}
for every $z_\AA \in \Hfk^r_\AA$ with $\lambda(z_\AA) = [g_{z_\AA}] \in \Bcal^r_\AA(\ZZ) \cong \GL_r(\AA)/k_\infty^\times \GL_r(O_\infty)$.
In other words,
the equality \eqref{eqn 0.1} links our non-holomorphic Eisenstein series with automorphic Eisenstein series on $\GL_r(\AA)$ in a conceptual way.

\begin{rem}
${}$
\begin{itemize}
\item[(1)]
From the analytic behavior of mirabolic Eisenstein series (recalled in Theorem \ref{thm 2.2}), the equality \eqref{eqn 0.1} says that
$\EE(z_\AA,s;\varphi^\infty)$ converges absolutely for $\re(s)>1$, 
and has a meromorphic continuation to the whole complex $s$-plane satisfying a \lq\lq weak\rq\rq\ functional equation with the symmetry between values at $s$ and $1-s$ (cf.\ Proposition \ref{prop 2.7} and {\it Remark} \ref{rem 2.8}).
\item[(2)]
Let $\Delta^{Y}(z)$ be the Drinfeld-Gekeler discriminant function associated to $Y$ on $\Hfk^r$,
which is a Drinfeld modular form of weight $q^{r\deg \infty}-1$ on $\Hfk^r$ (cf.\ Section~\ref{sec 1.4}).
Then for every $z \in \Hfk^r$ we have $\ufk((z,1), \mathbf{1}_{\widehat{Y}}) = \Delta^{Y}(z)$.
Accordingly, the above theorem leads to a precise function field analogue of the Kronecker (first) limit formula:
$$
\EE^{Y}(z,0) = -1 \quad \text{ and } \quad
\frac{\partial}{\partial s}\EE^{Y}(z,s)\bigg|_{s=0} = - \ln\left(\im(z) \cdot |\Delta^{Y}(z)|_\infty^{\frac{r}{q^{r\deg\infty}-1}}\right),
\quad \forall z \in \Hfk^r.
$$
\item[(3)] In the number field case, there seems no ideal candidates of the symmetric space for $\GL_r$ when $r>2$. 
Contrarily, the period domain of Drinfeld, together with the building map to the Bruhat-Tits building, provide a perfect choice of the symmetric space for $\GL_r$ over global function fields.
Our treatment of higher rank Eisenstein series is thereby well-developed.
This framework over global function fields is so natural that we can get hold of the Kronecker terms, just as the classical $\GL_2$ case.
In other words, Kronecker terms can be understood completely even for $r>2$ in the function field setting.
\end{itemize}
\end{rem}

\subsubsection{Outline the proof of {\rm Theorem~\ref{thm 0.1} (2)}}
Our approach is completely different from the classical one. 
Note that
$\EE(z_\AA,s;\varphi^\infty)$ is $\CC$-valued, and $\ufk(z_\AA;\varphi^\infty)$ lies in the positive characteristic world.
The building map $\lambda_\AA : \Hfk^r_\AA \rightarrow \Bcal^r_\AA(\RR)$ is the main bridge in the mix characteristic scene.
In particular, we pin point that the building map strips out all the transcendentals in the Drinfeld period domain leaving only an elegant discrete structure. 

Another key ingredient of our proof is an explicit description of the meromorphic continuation of $\EE^{Y}(z,s)$ (cf.\ Lemma~\ref{lem 2.10}). 
This enables us to derive a Stieltjes-type formula of all the Taylor coefficients of $\EE^{Y}(z,s)$ at $s = 0$ (cf.\ Corollary~\ref{cor 2.11}). 

\begin{rem} 
In \cite{Ge2}, Gekeler first connects an \lq\lq improper\rq\rq\  Eisenstein series on $\GL_2(k_\infty)$ over rational function fields with Drinfeld discriminant functions (cf.\ \cite{Ge2}).
His result is then generalized by P\'al \cite[Section 4]{Pal} to a special family of modular units (in the rank $2$ case), 
and also by Kondo \cite{Kon}, Kondo-Yasuda \cite[Section 3.5]{K-Y} who target at Jacobi-type Eisenstein series with arbitrary rank.
One purpose of this paper is to give a complete account of this phenomenon in adelic settings from the point of view of automorphic representation theory.

Comparing with \cite{Wei2} on the rank $2$ case, there are many new terminologies and approaches when dealing with higher ranks. For instance:
\begin{itemize}
\item We introduce a concept of the \lq\lq total imaginary part\rq\rq\ of $z \in \Hfk^r$ for arbitrary rank (cf.\ Section \ref{sec 1.5.1}). This new notion is in fact very essential in the whole paper.
\item The connection of our non-holomorphic Eisenstein series and automorphic (mirabolic) Eisenstein series (in Theorem~\ref{thm 0.1} (1)) is much more conceptual.
\item The adelic formulation in Theorem~\ref{thm 0.1}, together with the new input of Schwartz functions $\varphi^\infty$, enables us to utilize the well-developed tools of the automorphic representation theory in our study on special $L$-values.
\end{itemize}
Different from Kondo-Yasuda \cite{K-Y}, our Eisenstein series totally reflect the whole combinatorial structures of the Bruhat-Tits building, and provide information not just for vertices.
\end{rem}


\subsubsection{Lerch-type formula}

Applying Theorem~\ref{thm 0.1}, we obtain a Lerch-type formula of mirabolic Eisenstein series on $\GL_r(\AA)$ over function fields. 
Indeed, 
let $\chi$ be a unitary Hecke character on $k^\times \backslash \AA^\times$ with $\chi(k_\infty^\times) = 1$.
Given $\varphi^\infty \in S((\AA^\infty)^r)_\ZZ$, 
suppose (for simplicity) that either $\chi$ is non-trivial or $\varphi^\infty$ vanishes at $0$. 
For $g \in \GL_r(\AA)$, 
we set
$$
\eta_\chi(g;\varphi^\infty) :=  \int_{k^\times \backslash \AA^{\infty,\times}}
\overline{\chi(a^\infty)} \cdot \log_{q^{\deg \infty}} |\ufk(z_\AA a^\infty;\varphi^\infty)|_\infty d^\times a^\infty,
$$
where $z_\AA \in \Hfk^r_\AA$ is any point satisfying $\lambda_\AA(z_\AA) = [g] \in \Bcal^r_\AA(\ZZ)$, 
and the Haar measure $d^\times a^\infty$ is normalized so that the maximal compact subgroup $O_{\AA^\infty}^\times$ of $\AA^{\infty,\times}$ has volume one. 
Then (cf.\ Corollary~\ref{cor 3.7}):

\begin{cor}\label{cor 0.2}
Suppose either $\chi$ is non-trivial or $\varphi^\infty$ vanishes at $0$.
For every $g \in \GL_r(\AA)$ we have 
$$
\Ecal(g,0;\chi, \varphi^\infty\otimes \mathbf{1}_{O_\infty})
= \frac{1}{1-q^{r\deg \infty}} \cdot \eta_{\chi}(g;\varphi^\infty).
$$
\end{cor}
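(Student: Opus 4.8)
The plan is to spectrally decompose the quantity $\eta_\chi(g;\varphi^\infty)$ against the group of unitary Hecke characters that are trivial on $k_\infty^\times$, so as to recognize its $\chi$-isotypic piece as exactly $\Ecal(g,0;\chi,\varphi^\infty\otimes\mathbf{1}_{O_\infty^r})$ up to the stated constant. First I would observe that by definition $\ufk(z_\AA a^\infty;\varphi^\infty)$ depends, as a function on $\GL_r(\AA^\infty)$ acting on the second component of $\Hfk^r_\AA$, only on the image $[g]\in\Bcal^r_\AA(\ZZ)\cong\GL_r(\AA)/k_\infty^\times\GL_r(O_\infty)$; so the integrand $\overline{\chi(a^\infty)}\cdot\log_{q^{\deg\infty}}|\ufk(z_\AA a^\infty;\varphi^\infty)|_\infty$ is a well-defined $L^1$-function on the compact group $k^\times\backslash\AA^{\infty,\times}$ (its absolute value is bounded because $\ufk(\cdot;\varphi^\infty)$ is a nonvanishing modular form away from the relevant cusps, and the integral is over a compact set). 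Next I would invoke Theorem~\ref{thm 0.1}(2): applying the $s$-derivative formula at $s=0$ to the point $z_\AA a^\infty$ and dividing by $-r/(q^{r\deg\infty}-1)$, one gets
\[
\log_{q^{\deg\infty}}|\ufk(z_\AA a^\infty;\varphi^\infty)|_\infty
= \frac{q^{r\deg\infty}-1}{r}\cdot\frac{1}{\deg\infty\cdot\ln q}\Biggl(-\frac{\partial}{\partial s}\EE(z_\AA a^\infty,s;\varphi^\infty)\Big|_{s=0}-\varphi^\infty(0)\ln\im(z_\AA a^\infty)\Biggr),
\]
after converting natural log to $\log_{q^{\deg\infty}}$; I would keep track of the constant carefully, but the key point is that the right-hand side is a linear combination of $\partial_s\EE|_{s=0}$ and a term $\varphi^\infty(0)\cdot\ln\im$.

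The second step is to integrate this identity against $\overline{\chi(a^\infty)}\,d^\times a^\infty$ over $k^\times\backslash\AA^{\infty,\times}$. For the $\ln\im$ term: since $\chi$ is assumed trivial on $k_\infty^\times$, under the hypothesis that either $\chi\neq 1$ or $\varphi^\infty(0)=0$, this term integrates to zero — either because $\int\overline{\chi}\,d^\times a^\infty=0$ when $\chi$ is non-trivial (one must check $\im(z_\AA a^\infty)$ transforms by a power of $|a^\infty|_\AA$, which is $1$ on a compact group, so it is actually constant in $a^\infty$; hence it pulls out of the integral and dies against a non-trivial character), or because the coefficient $\varphi^\infty(0)$ is zero. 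For the $\partial_s\EE|_{s=0}$ term: I would use the relation \eqref{eqn 0.1}, $\EE(z_\AA a^\infty,s;\varphi^\infty)=(1-q^{-(r\deg\infty)s})\Ecal^\infty(g_{z_\AA a^\infty},s;\varphi^\infty)$, and the definition of $\Ecal^\infty$ as an average over all $\chi'\in\widehat{\Ical^\infty}$ of $\Ecal(\cdot,s;\chi',\varphi^\infty\otimes\mathbf{1}_{O_\infty^r})$. Differentiating the product at $s=0$ and using $\Ecal^\infty(g,0;\varphi^\infty)=-\varphi^\infty(0)$ (the $s=0$ value of $\EE$ from Theorem~\ref{thm 0.1}(2) combined with the factor $(1-q^{-(r\deg\infty)s})$ vanishing to first order), the derivative becomes $-(r\deg\infty\ln q)\cdot\Ecal^\infty(g_{z_\AA a^\infty},0;\varphi^\infty)=(r\deg\infty\ln q)\varphi^\infty(0)$, which is independent of $\chi$; to capture the genuine $\chi$-dependence one needs the next Taylor coefficient. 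The cleaner route: integrate $\overline{\chi(a^\infty)}$ against $\Ecal^\infty(g_{z_\AA a^\infty},s;\varphi^\infty)$ in $a^\infty$ first, for $\re(s)>1$, where everything converges absolutely, and use the orthogonality of characters on $\Ical^\infty$ together with the transformation property $\Ecal(g_{z_\AA a^\infty},s;\chi',\varphi^\infty\otimes\mathbf{1}_{O_\infty^r}) = \chi'(a^\infty)\Ecal(g_{z_\AA},s;\chi',\varphi^\infty\otimes\mathbf{1}_{O_\infty^r})\cdot|a^\infty|_\AA^{s}$ (the last factor trivial on the compact group), to collapse the $\chi'$-sum to the single term $\chi'=\chi$; this yields
\[
\int_{k^\times\backslash\AA^{\infty,\times}}\overline{\chi(a^\infty)}\,\EE(z_\AA a^\infty,s;\varphi^\infty)\,d^\times a^\infty
= \frac{q-1}{\#\Pic(A)}\cdot(1-q^{-(r\deg\infty)s})\cdot\mathrm{vol}(\Ical^\infty)^{-1}\cdot\Ecal(g_{z_\AA},s;\chi,\varphi^\infty\otimes\mathbf{1}_{O_\infty^r}),
\]
where the volume factor $(q-1)/\#\Pic(A)=\mathrm{vol}(\Ical^\infty)^{-1}$ cancels the normalizing constant, leaving just $(1-q^{-(r\deg\infty)s})\Ecal(g_{z_\AA},s;\chi,\varphi^\infty\otimes\mathbf{1}_{O_\infty^r})$. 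This identity, a priori for $\re(s)>1$, extends by meromorphic continuation to $s=0$.

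The final step is to assemble the constants. Taking $\partial/\partial s$ at $s=0$ of the last displayed identity: the factor $(1-q^{-(r\deg\infty)s})$ vanishes at $s=0$ with derivative $(r\deg\infty)\ln q$, and under our hypothesis $\Ecal(g_{z_\AA},s;\chi,\varphi^\infty\otimes\mathbf{1}_{O_\infty^r})$ is finite at $s=0$ (its value at $0$ is what we want; the potential pole of mirabolic Eisenstein series at $s=0$ only occurs when $\chi=1$ and $\varphi^\infty(0)\neq0$, which we have excluded). Hence
\[
\frac{\partial}{\partial s}\Big|_{s=0}\int\overline{\chi(a^\infty)}\,\EE(z_\AA a^\infty,s;\varphi^\infty)\,d^\times a^\infty
= (r\deg\infty)\ln q\cdot\Ecal(g_{z_\AA},0;\chi,\varphi^\infty\otimes\mathbf{1}_{O_\infty^r}).
\]
On the other hand, integrating the Theorem~\ref{thm 0.1}(2) formula termwise (justified by dominated convergence on the compact integration domain) gives that this same quantity equals $-\dfrac{r}{q^{r\deg\infty}-1}\cdot(\deg\infty\ln q)\cdot\eta_\chi(g;\varphi^\infty)$, once the $\ln\im$ term has been shown to integrate to $0$ as above. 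Equating the two expressions and solving for $\Ecal(g,0;\chi,\varphi^\infty\otimes\mathbf{1}_{O_\infty^r})$ yields
\[
\Ecal(g,0;\chi,\varphi^\infty\otimes\mathbf{1}_{O_\infty^r}) = -\frac{1}{q^{r\deg\infty}-1}\cdot\eta_\chi(g;\varphi^\infty) = \frac{1}{1-q^{r\deg\infty}}\cdot\eta_\chi(g;\varphi^\infty),
\]
as claimed, with the factors of $r$ and $\deg\infty\cdot\ln q$ cancelling cleanly between the two sides. I expect the main obstacle to be bookkeeping: correctly tracking the normalization of $d^\times a^\infty$ versus $d^\times a$, the conversion between $\ln$ and $\log_{q^{\deg\infty}}$, and the precise vanishing order of the mirabolic Eisenstein series and of $(1-q^{-(r\deg\infty)s})$ at $s=0$ — in particular, making sure the $\chi=1$, $\varphi^\infty(0)\neq0$ subtlety (where genuine poles appear) is what forces the stated hypothesis, and confirming that under that hypothesis all the "nuisance" terms ($\ln\im$ and the constant $\chi$-independent pieces) really do drop out.
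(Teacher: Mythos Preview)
Your overall approach is the paper's own: integrate Theorem~\ref{thm 0.1}(2) against $\overline{\chi}$ over $k^\times\backslash\AA^{\infty,\times}$, combine with the identity
\[
\int_{k^\times\backslash\AA^{\infty,\times}}\overline{\chi(a^\infty)}\,\EE(z_\AA a^\infty,s;\varphi^\infty)\,d^\times a^\infty \;=\; (1-q_\infty^{-rs})\,\Ecal(g,s;\chi,\varphi^\infty\otimes\mathbf{1}_{O_\infty^r})
\]
(this is \textit{Remark}~\ref{rem 2.8}(2) together with \eqref{eqn 3.6}), and read off the Laurent expansion at $s=0$.

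There is, however, one concrete error in your treatment of the $\ln\im$ term. You claim that $\im(z_\AA a^\infty)$ is constant in $a^\infty$ because ``$|a^\infty|_\AA$ is $1$ on a compact group''. This is false. One has $\im(z_\AA a^\infty)=\im(z_\AA)\cdot|a^\infty|_{\AA}^{\,r}$, and $|\cdot|_\AA$ does \emph{not} descend to $k^\times\backslash\AA^{\infty,\times}$: the product formula says $|\alpha|_\AA=1$ for $\alpha\in k^\times$ embedded \emph{diagonally} in $\AA^\times$, but an element $a^\infty\in\AA^{\infty,\times}$ sits in $\AA^\times$ with trivial component at $\infty$, so for $\alpha\in k^\times$ one gets $|\alpha|_{\AA^\infty}=|\alpha|_\infty^{-1}\neq 1$ in general. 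Consequently $a^\infty\mapsto\ln\im(z_\AA a^\infty)$ is not even $k^\times$-invariant, and the integral $\int\overline{\chi}\,\ln\im$ over the quotient is ill-posed when $\varphi^\infty(0)\neq 0$. (For the same reason, the extra factor $|a^\infty|_\AA^s$ you attach to $\Ecal(\cdot,s;\chi',\varphi)$ is not there: the central character of $\Ecal$ is exactly $\chi'$, with no $|\cdot|^s$ twist.)

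The fix is what the paper does in Section~\ref{sec 3.3}: keep the combination
\[
\frac{(q_\infty^r-1)\varphi^\infty(0)}{r}\,\log_{q_\infty}\im(z_\AA a^\infty)\;+\;\log_{q_\infty}\bigl|\ufk(z_\AA a^\infty;\varphi^\infty)\bigr|_\infty
\]
intact and call \emph{that} $\eta(ga^\infty;\varphi^\infty)$. This combination \emph{is} $k^\times$-invariant, since it equals $-\tfrac{q_\infty^r-1}{r\ln q_\infty}\cdot\partial_s\EE(z_\AA a^\infty,s;\varphi^\infty)\big|_{s=0}$ and $\EE$ is $\GL_r(k)$-invariant. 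With this definition $\eta_\chi$ is well-defined for every $\chi\in\widehat{\Ical}^\infty$, and your final assembly of constants goes through with no separate vanishing argument needed. The hypothesis ``$\chi\neq 1$ or $\varphi^\infty(0)=0$'' is then used in exactly one place: to ensure that the zeroth Taylor coefficient $\EE(z_\AA a^\infty,0;\varphi^\infty)=-\varphi^\infty(0)$ integrates to $0$ against $\overline{\chi}$, so that $(1-q_\infty^{-rs})\,\Ecal(g,s;\chi,\ldots)$ indeed vanishes at $s=0$ (equivalently, so that $\Ecal$ is holomorphic there). Note that the introduction's abbreviated definition of $\eta_\chi$, omitting the $\im$ term, literally agrees with the Section~\ref{sec 3.3} definition only when $\varphi^\infty(0)=0$.
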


\begin{rem}\label{rem 0.3}
${}$
\begin{itemize}
\item[(1)]
In Corollary~\ref{cor 3.7}, we also include the case when $\chi$ is the trivial character and $\varphi^\infty(0)$ is non-vanishing.
Then the corresponding mirabolic Eisenstein series may have a simple pole at $s=0$. 
Our formula in Corollary~\ref{cor 3.7} actually describes the first two terms of its Laurant expansion arround $s=0$.
\item[(2)]
Theorem \ref{thm 0.1} enables us to express the Kronecker term of zeta functions over \lq\lq totally real\rq\rq\ function fields as integrations of an \lq\lq eta function\rq\rq\ along the corresponding \lq\lq Heegner cycles\rq\rq\ in $\Bcal^r_\AA(\ZZ)$, and leads us to a Lerch-type formula for the Dirichlet $L$-functions associated to \lq\lq ring class characters\rq\rq\ over totally real function fields (cf.\ Theorem~\ref{thm E-K})\end{itemize}
\end{rem}

\subsection{Colmez-type formula}\label{sec 0.2}

Colmez \cite{Col} proposes a conjectural formula expressing explicitly the stable Faltings height of CM abelian varieties over number fields in terms of a precise linear combination of logarithmic derivatives of Artin $L$-functions. This formula provides a very interesting arithmetic interpretation of the geometric invariant in question, took a stand near the center of arithmetic geometry ever since its discovery (cf.\ \cite{Col}, \cite{Obu}, \cite{Yan}, \cite{Y-Z}, \cite{AGHP}, and \cite{BCCGM}). 
Here we apply Theorem~\ref{thm 0.1} to derive an analogue of the Colmez formula for the stable \lq\lq Taguchi height\rq\rq\ of Drinfeld modules.

Let $\rho$ be a Drinfeld $A$-module of rank $r$ over a finite extension $F$ of $k$ in $\CC_\infty$. 
The endomorphism ring $\End_A(\rho/\bar{k})$ can be identified with an $A$-order $\Ocal$ of an \lq\lq imaginary\rq\rq\ field $K/k$ with the degree $[K:k]$ dividing $r$.
We call $\rho$ {\it CM} if $[K:k] = r$.

In \cite{Tag}, Taguchi introduced a \lq\lq metrized line bundle\rq\rq\ $\Lcal_\rho$ associated to $\rho$, and define the height $h_{\text{Tag}}(\rho/F)$ of $\rho$ be the \lq\lq degree\rq\rq\ of $\Lcal_\rho$ (cf.\ Section \ref{sec 4.1} for an alternative definition). The stable Taguchi height of $\rho$ is defined by:
$$h_{\text{Tag}}^{\text{st}}(\rho) := \lim_{F:  \atop [F:k]<\infty} h_{\text{Tag}}(\rho/F),$$
where the limit always exists from the fact that every Drinfeld $A$-module has potential stable reduction (cf.\ \cite[Proposition 7.2]{Ha1}).

Suppose now that $\rho$ is a CM Drinfeld $A$-module.
Let $\Lambda_\rho \subset \CC_\infty$ be the $A$-lattice associated to $\rho$.
Viewing $\Lambda_\rho$ as an $\Ocal$-module, 
we take an ideal $\Ifk$ of $\Ocal$ so that $\Lambda_\rho$ and $\Ifk$ have the same genus.
Let $\zeta_\Ifk(s)$ be the zeta function associated to $\Ifk$:
$$
\zeta_\Ifk(s) := \sum_{\text{\rm invertible fractional ideal $I$ of $\Ocal$} \atop I \subset \Ifk}
\frac{1}{N(I)^s},
$$
where $N(I) := \#(\Ifk/I)$.
Note that $\zeta_\Ifk(s)$ only depends on the genus of $\Ifk$ (as an $\Ocal$-module).
Our Colmez-type formula is stated as follows:

\begin{thm}\label{thm 0.4}
Let $h^{\text{\rm st}}_{\text{\rm Tag}}(\rho)$ be the stable Taguchi height of the CM Drinfeld $A$-module $\rho$.
We have:
$$
h^{\text{\rm st}}_{\text{\rm Tag}}(\rho) =
-\ln D_A(\Ocal) - \frac{1}{r}\cdot \frac{\zeta_\Ifk'(0)}{\zeta_\Ifk(0)}.
$$
Here $D_A(\Ocal)$ is the \lq\lq lattice discriminant\rq\rq\ of $\Ocal$ (as an $A$-lattice in $\CC_\infty$, cf.\ Remark {\rm \ref{rem 1.10}}).
\end{thm}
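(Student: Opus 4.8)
The plan is to transplant the classical route from the first Kronecker limit formula to the Chowla--Selberg and Colmez formulas, with Theorem~\ref{thm 0.1}~(2) playing the role of the limit formula. Identify $\End_A(\rho/\bar k)$ with the $A$-order $\Ocal$ of the imaginary field $K$, $[K:k]=r$, and let $\Lambda_\rho\subset\CC_\infty$ be the CM lattice. A $k$-basis of $K\hookrightarrow\CC_\infty$ adapted to the $\Ocal$-module structure of $\Lambda_\rho$ produces a rank $r$ projective $A$-module $Y_\rho\subset k^r$ and a ``CM point'' $z_\rho\in\Hfk^r$ for which $\Lambda_\rho$ is the lattice generated by $Y_\rho$ through $(z_{\rho,1}:\cdots:z_{\rho,r-1}:1)$; thus $\EE^{Y_\rho}(z_\rho,s)=\sum_{0\neq\lambda\in\Lambda_\rho}\im(z_\rho)^s|\lambda|_\infty^{-rs}$ and $\ufk((z_\rho,1),\mathbf{1}_{\widehat{Y_\rho}})=\Delta^{Y_\rho}(z_\rho)$. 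Running over the finitely many $\Ocal$-lattices $\Lambda_1,\dots,\Lambda_m$ in the genus of $\Lambda_\rho$ gives CM Drinfeld $A$-modules $\rho_1,\dots,\rho_m$ (each with $\End_A=\Ocal$), CM points $z_1,\dots,z_m$, and modules $Y_1,\dots,Y_m$. By the theory of complex multiplication for Drinfeld modules these $\rho_j$ are Galois conjugate, so --- the stable Taguchi height being a height --- $h^{\text{st}}_{\text{Tag}}(\rho_j)=h^{\text{st}}_{\text{Tag}}(\rho)$ for every $j$, and it suffices to compute the average $\frac1m\sum_{j=1}^m h^{\text{st}}_{\text{Tag}}(\rho_j)$.

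\emph{Step 1 (analytic uniformization of the height).} Fix $j$ and base change $\rho_j$ to a finite extension $F/k$ over which it has good reduction at every finite place (potential good reduction) and over which the module of invariant differentials $\omega_{\rho_j/O_F}$ is free. Unwinding Taguchi's construction of $\Lcal_{\rho_j}$ and of $h_{\text{Tag}}(\rho_j/F)$ (\cite{Tag}; cf.\ Section~\ref{sec 4.1}), the underlying line bundle is then trivial, so (the height having stabilized) $h^{\text{st}}_{\text{Tag}}(\rho_j)=h_{\text{Tag}}(\rho_j/F)$ reduces to a normalized sum, over the places of $F$ above $\infty$, of $-\log$ of the Taguchi norm of a generator of $\omega_{\rho_j/O_F}$. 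Computing this norm via the analytic uniformization $\CC_\infty/\Lambda_{\rho_j}\cong\rho_j(\CC_\infty)$ and the theory of the Drinfeld--Gekeler discriminant $\Delta^{Y_j}$ (a modular form of weight $q^{r\deg\infty}-1$) yields
$$
h^{\text{st}}_{\text{Tag}}(\rho_j)=-\ln D_A(\Ocal)-\frac1r\ln\im(z_j)-\frac{1}{q^{r\deg\infty}-1}\ln|\Delta^{Y_j}(z_j)|_\infty,
$$
where $-\ln D_A(\Ocal)$ records the covolume normalization implicit in passing from $\Lambda_{\rho_j}$ to $(z_j,1)$. By Theorem~\ref{thm 0.1}~(2) the right-hand side equals $-\ln D_A(\Ocal)+\frac1r\,\partial_s\EE^{Y_j}(z_j,s)\big|_{s=0}$.

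\emph{Step 2 (Eisenstein values and the partial zeta function).} Collect the terms of $\EE^{Y_j}(z_j,s)=\sum_{0\neq\lambda\in\Lambda_{\rho_j}}\im(z_j)^s|\lambda|_\infty^{-rs}$ according to the invertible fractional $\Ocal$-ideal they generate: the factor $\im(z_j)^s$ supplies precisely the covolume normalization turning $|\lambda|_\infty^{-rs}$ into an ideal-norm power, and, as $j$ and $\lambda$ range over the genus, the invertible ideals $I\subseteq\Ifk$ are swept out, each with multiplicity $\#\Ocal^\times$. The hypothesis that $\Lambda_\rho$ and $\Ifk$ share a genus is exactly what forces all the local factors to agree (including the one at $\infty$ underlying the shape \eqref{eqn 0.1}), so that $S(s):=\sum_{j=1}^m\EE^{Y_j}(z_j,s)$ equals $\zeta_\Ifk(s)$ up to an elementary factor with vanishing logarithmic derivative at $s=0$, while $S(0)=\sum_j\big(-\mathbf{1}_{\widehat{Y_j}}(0)\big)=-m$ by Theorem~\ref{thm 0.1}~(2); hence $S'(0)/S(0)=\zeta_\Ifk'(0)/\zeta_\Ifk(0)$. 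Averaging the last identity of Step~1 over $j$ now gives
$$
h^{\text{st}}_{\text{Tag}}(\rho)=-\ln D_A(\Ocal)+\frac{1}{mr}\,S'(0)=-\ln D_A(\Ocal)-\frac1r\cdot\frac{S'(0)}{S(0)}=-\ln D_A(\Ocal)-\frac1r\cdot\frac{\zeta_\Ifk'(0)}{\zeta_\Ifk(0)},
$$
which is the claimed formula.

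\emph{Where the difficulty lies.} The main obstacle is the exact bookkeeping of normalizations, which is entangled across the two steps: one must pin down the archimedean term of Step~1 precisely --- that passing from $\Lambda_{\rho_j}$ to $(z_j,1)$ costs exactly $-\ln D_A(\Ocal)-\frac1r\ln\im(z_j)$, and that the Taguchi norm of such a generator is read off from $\Delta^{Y_j}(z_j)$ with the stated weight normalization --- and simultaneously verify in Step~2 that the ``total imaginary part'' $\im(z_j)$ is exactly the $\Ocal$-ideal norm of $\Lambda_{\rho_j}$, so that the elementary factor relating $S$ to $\zeta_\Ifk$ is indeed trivial. This draws on the theory of the total imaginary part, on the compatibility of the building map with the Taguchi metric at $\infty$, and on the transitivity of the Galois action on CM Drinfeld modules of a fixed endomorphism order and lattice-genus (used to pass from the average back to each individual $\rho_j$).
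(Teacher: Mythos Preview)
Your overall route is the paper's route: use CM theory to run over the $\Pic(\Ocal)$-orbit of the lattice, apply the Kronecker limit formula (Theorem~\ref{thm 0.1}(2), or more precisely its variant~\eqref{eqn 3.4}) to each Eisenstein series, and recognise the sum as $\zeta_\Ifk$. But Step~1 as written contains a genuine error, and Step~2 leans on it.

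The displayed formula you claim for the \emph{individual} height,
\[
h^{\text{st}}_{\text{Tag}}(\rho_j)=-\ln D_A(\Ocal)-\tfrac1r\ln\im(z_j)-\tfrac{1}{q_\infty^r-1}\ln|\Delta^{Y_j}(z_j)|_\infty,
\]
cannot be correct: its right-hand side visibly depends on $j$, while two lines earlier you (correctly) argued that $h^{\text{st}}_{\text{Tag}}(\rho_j)$ is independent of $j$ by Galois invariance. The reason the formula fails is that the Taguchi height over any field $F$ containing $H_\Ocal$ is, by definition, a sum over \emph{all} places $\tilde\infty\mid\infty$ of $F$, not just the one singled out by your embedding $H_\Ocal\hookrightarrow\CC_\infty$. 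Those places are permuted simply transitively by $\gal(H_\Ocal/K)\cong\Pic(\Ocal)$, and the lattice $\Lambda_{\rho,\tilde\infty}$ seen through the place corresponding to $[\Acal]$ is homothetic to $\Acal^{-1}\Ifk$ (this is exactly Theorem~\ref{thm A.1}(2)). Hence the height of a \emph{single} $\rho_j$ is already the average
\[
h^{\text{st}}_{\text{Tag}}(\rho_j)=\frac{-1}{\#\Pic(\Ocal)}\sum_{[\Acal]\in\Pic(\Ocal)}\ln\Big(D_A(\Acal^{-1}\Ifk)\,|\Delta(\Acal^{-1}\Ifk)|_\infty^{1/(q_\infty^r-1)}\Big),
\]
which is what the paper's equation~\eqref{eqn 4.1} records. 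In other words, your ``average over $j$'' is not an extra trick; it is forced on you by the very definition of $h_{\text{Tag}}$ and must happen \emph{inside} Step~1, not in Step~2. The passage from the archimedean local heights to the discriminant term also needs the product formula for $\Delta(\rho)\in H_\Ocal^\times$ together with $c(\rho)=0$ (good reduction everywhere); this is the content of \textit{Remark}~\ref{rem 4.2}(3) and is not captured by ``the underlying line bundle is trivial''.

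Once Step~1 is fixed as above, your Step~2 collapses to the paper's computation: rewrite $\zeta_\Ifk(s)$ as $\frac{1}{\#\Ocal^\times}\sum_{[\Acal]}\frac{N_\Ocal(\Acal\Ifk)^s}{D_A(\Acal\Ifk)^{rs}}\,\EE(\Acal\Ifk,s)$, apply~\eqref{eqn 3.4} termwise, and compare with~\eqref{eqn 4.1}. Note that the factors $N_\Ocal(\Acal\Ifk)^s/D_A(\Acal\Ifk)^{rs}$ genuinely depend on $[\Acal]$, so your claim that $S(s)$ and $\zeta_\Ifk(s)$ differ by a single ``elementary factor with vanishing logarithmic derivative'' is too optimistic; the bookkeeping has to be done class by class, as in~\eqref{eqn 4.2}.
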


\begin{rem}\label{rem 0.5}
${}$
\begin{itemize}
\item[(1)] 
This formula also provides a geometric interpretation for $\zeta_\Ifk'(0)/ \zeta_\Ifk(0)$.
\item[(2)]
In the proof of Theorem \ref{thm 0.4}, we need to extend Hayes' CM theory of Drinfeld modules to the case when the corresponding lattice has arbitrary genus. The details are attached in Appendix~\ref{sec A} for the sake of completeness.
\item[(3)]
when the \lq\lq CM\rq\rq\ function field $K$ is separable over $k$ and tamely ramified at $\infty$, we get (cf.\ {\it Remark}~\ref{rem 4.4} (2))
$$
D_A(\Ocal) = \Vert \dfk(\Ocal/A)\Vert^{\frac{1}{2r}},
$$
where $\dfk = \dfk(\Ocal/A) \subset A$ is the discriminant ideal of $\Ocal$ over $A$ and $\Vert \dfk \Vert := \#(A/\dfk)$.
\item[(4)] 
Hartl-Singh \cite{H-S} investigate the Colmez conjecture for $A$-motives, and prove a product formula for the Carlitz module. Our formula in Theorem~\ref{thm 0.4} in the case of the Carlitz module coincides with their result (cf.\ \cite[Example 1.5]{H-S}), but the analytic approach is completely different from theirs.
\item[(5)]
Let $O_K$ be the integral closure of $A$ in $K$.
From the Ihara estimate of the Euler-Kronecker constant of the zeta function $\zeta_{O_K}(s)$ in \cite[(0.6) and (1.2)]{Iha}, an asymptotic formula of the Taguchi height of Drinfeld modules with CM by $O_K$ is worked out in Section \ref{sec 4.2.1}.
\end{itemize}
\end{rem}

\subsection{Special values of automorphic $L$-functions}\label{sec 0.3}

In the theory of automorphic representation, mirabolic Eisenstein series naturally occur as the kernel functions in the integral representations of automorphic $L$-functions (cf.\ \cite{GPSR} and \cite{S-S}).
From Theorem~\ref{thm 0.1} (1), we may naturally apply our Kronecker limit formula to
special values of Rankin-Selberg $L$-functions and Godement-Jacquet $L$-functions over global function fields.

\subsubsection{Rankin-Selberg $L$-functions}\label{sec 0.3.1}
Let $\Pi$ and $\Pi'$ be two automorphic cuspidal representations of $\GL_r(\AA)$ with unitary central characters $\omega$ and $\omega'$, respectively. 
Let $\chi := (\omega \cdot \omega')^{-1}$. 
Suppose $\chi\big|_{k_\infty^\times} = 1$.
We introduce the follwing multi-linear functional $\Pcal^{\text{RS}} : \Pi \times \Pi' \times S((\AA^\infty)^r)_{\ZZ} \rightarrow \CC$ (where $S((\AA^\infty)^r)_{\ZZ}$ consists of $\ZZ$-valued Schwartz functions in $S((\AA^\infty)^r)$):
$$
\Pcal^{\text{RS}}(f,f',\varphi^\infty) := \frac{1}{1-q^{r\deg \infty}} \cdot
\int_{\AA^\times \GL_r(k)\backslash \GL_r(\AA)} f(g) f'(g) \eta_\chi(g;\varphi^\infty) dg.
$$
Here $\eta_\chi(\cdot;\varphi^\infty)$ is defined in the above of Corollary~\ref{cor 0.2}, and $dg$ is chosen to be the Tamagawa measure 
(i.e.\ $\text{vol}(\AA^\times \GL_r(k)\backslash \GL_r(\AA), dg) = 2$, cf.\ \cite[Theorem 3.3.1]{We2}).
On the other hand, 
we have another multi-linear functional $\Pscr^{\text{RS}}$ on $\Pi \times \Pi' \times S((\AA^\infty)^r)_\ZZ$ coming from the product of \lq\lq local integrals\rq\rq\ (cf.\ the equality~(\ref{eqn 5.1})).
Let $L(s,\Pi\times \Pi')$ be the Rankin-Selberg $L$-function associated to $\Pi$ and $\Pi'$
(following the definition in \cite[Lecture 4, p.\ 137]{S-S}).
The Lerch-type formula in Corollary~\ref{cor 0.2} results in:

\begin{thm}\label{thm 0.7}
Let $\Pi$ and $\Pi'$ be two automorphic cuspidal representations of $\GL_r(\AA)$ with unitary central characters $\omega$ and $\omega'$, respectively.
Suppose $\Pi'$ is not isomorphic to the contragredient representation of $\Pi$ and $(\omega \cdot \omega')\big|_{k_\infty^\times} = 1$.
Then
$$\Pcal^{\text{\rm RS}} = L(0,\Pi\times \Pi') \cdot \Pscr^{\text{\rm RS}}.$$
\end{thm}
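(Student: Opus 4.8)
The plan is to derive Theorem~\ref{thm 0.7} from the Rankin-Selberg integral representation by inserting the Lerch-type formula of Corollary~\ref{cor 0.2} into the standard unfolding argument. Recall that for $\re(s)$ large one has the global zeta integral
$$
Z(s,f,f',\varphi) = \int_{\AA^\times \GL_r(k)\backslash \GL_r(\AA)} f(g)\, f'(g)\, \Ecal(g,s;\chi,\varphi)\, dg,
$$
with $\chi = (\omega\cdot\omega')^{-1}$ and $\varphi = \varphi^\infty\otimes\mathbf{1}_{O_\infty^r}$, which unfolds against the mirabolic Eisenstein series to a global Rankin-Selberg integral $\Psi(s,W,W',\varphi)$ of Whittaker functions; by the local theory of Jacquet--Piatetski-Shapiro--Shalika this factors as $L(s,\Pi\times\Pi')$ times a finite product of normalized local integrals, which is exactly (up to the meromorphic-continuation step) what defines the functional $\Pscr^{\text{RS}}$ at $s=0$ in the equality~(\ref{eqn 5.1}). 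The hypothesis that $\Pi'$ is not the contragredient of $\Pi$ guarantees that $\Ecal(g,s;\chi,\varphi)$ has no pole at $s=0$ that is picked up by the cuspidal pairing --- equivalently, that $L(s,\Pi\times\Pi')$ is holomorphic and nonzero-behaviour is controlled at $s=0$ --- so that both sides of the desired identity make sense as honest values rather than leading Laurent coefficients.

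First I would set up the unfolding: starting from $Z(s,f,f',\varphi)$ in the region of absolute convergence, replace $\Ecal^\infty$ by the sum over $\chi\in\widehat{\Ical}^\infty$ as in the definition of $\Ecal^\infty(g,s;\varphi^\infty)$, unfold the mirabolic sum over $0\ne x\in k^r$ against the mirabolic subgroup $P_r\subset\GL_r$, and use cuspidality of $f$ (so the constant-term contributions from the non-open orbit vanish) to arrive at the Euler product $L(s,\Pi\times\Pi')\cdot\prod_v \Psi_v^\ast$. This is the content of (\ref{eqn 5.1}) and I would cite it rather than reprove it. Second, I would invoke the analytic continuation: both $Z(s,\dots)$ and $L(s,\Pi\times\Pi')\cdot\Pscr^{\text{RS}}$-type expressions continue to $s=0$, the former because $\Ecal(g,s;\chi,\varphi)$ does (Theorem~\ref{thm 2.2}) and the integral over the compact-modulo-center quotient $\AA^\times\GL_r(k)\backslash\GL_r(\AA)$ against the rapidly decreasing cusp forms $f,f'$ converges uniformly, the latter by the standard properties of the completed Rankin-Selberg $L$-function. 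Third, I would evaluate at $s=0$: by Corollary~\ref{cor 0.2}, since $(\omega\cdot\omega')|_{k_\infty^\times}=1$ means $\chi(k_\infty^\times)=1$, and since we may assume $\chi$ non-trivial or $\varphi^\infty(0)=0$ (the cuspidal pairing kills the $\varphi^\infty(0)$-term because $\int f f' = 0$ by cuspidality, so the $\chi$ trivial obstruction is harmless), we get
$$
\Ecal(g,0;\chi,\varphi^\infty\otimes\mathbf{1}_{O_\infty}) = \frac{1}{1-q^{r\deg\infty}}\cdot\eta_\chi(g;\varphi^\infty).
$$
Substituting this into $Z(0,f,f',\varphi)$ reproduces exactly the definition of $\Pcal^{\text{RS}}(f,f',\varphi^\infty)$, while substituting it on the Euler-product side gives $L(0,\Pi\times\Pi')\cdot\Pscr^{\text{RS}}(f,f',\varphi^\infty)$, and comparing yields the theorem.

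The main obstacle I expect is the interchange of the various limits and sums at the non-generic point $s=0$: the definition of $\Pcal^{\text{RS}}$ involves $\eta_\chi(g;\varphi^\infty)$, which is itself an integral over $k^\times\backslash\AA^{\infty,\times}$ of $\log|\ufk|_\infty$, and one must justify that $\int_{\AA^\times\GL_r(k)\backslash\GL_r(\AA)} f f' \eta_\chi$ equals $\lim_{s\to 0}$ (or the value at $s=0$) of $\int f f' \Ecal(g,s;\chi,\varphi)$ --- i.e.\ that taking the Kronecker limit commutes with the automorphic integration. This is a dominated-convergence argument using the explicit meromorphic continuation of $\EE^Y(z,s)$ (Lemma~\ref{lem 2.10}) together with the rapid decay of cusp forms, but it requires care about the growth of the derivative $\partial_s\Ecal$ uniformly in $g$ over a fundamental domain; the Bruhat-Tits building description of $\lambda_\AA$ is what makes the relevant estimates tractable, reducing the bound to a combinatorial count of lattices as in the proof of Theorem~\ref{thm 0.1}~(2). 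A secondary technical point is checking that the local integrals appearing in $\Pscr^{\text{RS}}$ are exactly those of (\ref{eqn 5.1}) after the limit, i.e.\ that the normalization of $\varphi = \varphi^\infty\otimes\mathbf{1}_{O_\infty^r}$ and the Tamagawa measure are compatible, which is bookkeeping but must be done to pin down the precise constant $(1-q^{r\deg\infty})^{-1}$.
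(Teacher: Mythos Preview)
Your proposal is correct and follows essentially the same route as the paper: compute the global Rankin--Selberg integral $Z(s,f,f',\varphi^\infty\otimes\mathbf{1}_{L^o})$, identify it with $L(s,\Pi\times\Pi')\cdot\prod_v L_v^o$ via the standard unfolding (equation~(\ref{eqn RS-local-global})), then plug in the Lerch-type formula (Corollary~\ref{cor 3.7}) at $s=0$ to recognize the left-hand side as $\Pcal^{\text{RS}}$.

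Two small remarks. First, the vanishing $\int_{\AA^\times\GL_r(k)\backslash\GL_r(\AA)} f(g)f'(g)\,dg = 0$ that kills the possible pole is \emph{not} a consequence of cuspidality alone---two cusp forms in contragredient representations can pair nontrivially---but rather of the hypothesis that $\Pi'\not\cong\widetilde{\Pi}$, which forces orthogonality of the two irreducible subspaces. Second, your worries about dominated convergence and interchange of limits are largely unnecessary here: over a function field the Eisenstein series $\Ecal(g,s;\chi,\varphi)$ is, for fixed $g$, a rational function in $q^{-s}$, and the automorphic quotient is compact modulo center with $f,f'$ of uniformly bounded support in the building; the evaluation at $s=0$ is therefore purely algebraic once meromorphic continuation is in hand, and the paper simply substitutes the Laurent expansion of Corollary~\ref{cor 3.7} under the integral without further justification.
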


\subsubsection{Godement-Jacquet $L$-functions}
Let $\Pi$ be an automorphic cuspidal representation of $\GL_r(\AA)$ with unitary central character denoted by $\omega$. 
For $f_1,f_2 \in \Pi$ and $\varPhi \in S(\Mat_r(\AA))$, the {\it Godement-Jacquet $L$-function associated to $f_1,f_2$ and $\varPhi$} is defined by (cf.\ \cite[p.\ 12]{GPSR}):
$$
L^{\text{GJ}}(s;f_1,f_2,\Phi) := \int_{\GL_r(\AA)} \varPhi(g) \cdot \langle \Pi(g) f_1,f_2 \rangle_{\text{Pet}} \cdot |\det g|_\AA^s dg, \quad \re(s)>r.
$$
Here $\langle \cdot,\cdot\rangle_{\text{Pet}}$ is the Petersson inner product on $\Pi$.
We choose the Haar measure $dg$ on $\GL_r(\AA)$ to be induced from the Tamagawa measure on $\AA^\times \backslash \GL_r(\AA)$ and the measure $d^\times a$ on $\AA^\times$ with $\text{vol}(O_\AA^\times, d^\times a) = 1$.
It is known that this $L$-function has analytic continuation to the whole complex $s$-plane and a functional equation with the symmetry between values at $s$ and $r-s$.
On the other hand, identifying $\Mat_r(k)$ with $k^{r^2}$ suitably (as in the identity~(\ref{eqn 5.3})), 
we have a group homomorphism $\iota: \GL_r^2 = \GL_r \times \GL_r \rightarrow \GL_{r^2}$ via the left and right multiplications:
$$
(g_1,g_2) \cdot X :={}^t g_2 X g_1, \quad
\forall g_1,g_2 \in \GL_r \text{ and } X \in \Mat_r.
$$

Suppose $\omega\big|_{k_\infty^\times} = 1$.
Define the multi-linear functional
$\Pcal^{\text{GJ}} : \Pi \times \Pi \times S(\Mat_r(\AA^\infty))_{\ZZ} \rightarrow \CC$ by:
$$
\Pcal^{\text{GJ}}(f_1, f_2, \varPhi^\infty) := \frac{1}{1-q^{r^2\deg \infty}} \cdot
\iint_{(\AA^\times \GL_r(k)\backslash \GL_r(\AA))^2} f_1(g_1) \cdot \eta_{\omega^{-1}}\big(\iota(g_1,g_2);\varPhi^\infty\big) \cdot \overline{f_2(g_2)} dg_1 dg_2.
$$
Here $\eta_{\omega^{-1}}(\cdot; \varPhi^\infty)$ is the function on $\GL_{r^2}(\AA)$ coming from Drinfeld-Siegel units on $\Hfk_\AA^{r^2}$ (cf.\ Corollary~\ref{cor 0.2}).
Thus for $f_1,f_2 \in \Pi$ and $\varPhi^\infty \in S(\Mat_r(\AA^\infty))_{\ZZ}$, 
from the \lq\lq doubling method\rq\rq\ of Piatetski-Shapiro and Rallis (cf.\ Proposition~\ref{prop 5.2}) and Corollary~\ref{cor 0.2}, we immediately get
\begin{eqnarray}
L^{\text{GJ}}(0; f_1,f_2, \varPhi^\infty \otimes \mathbf{1}_{\Mat_r(O_\infty)})
&=& \Pcal^{\text{GJ}}(f_1, f_2, \varPhi^\infty). 
\end{eqnarray}
Let $L(s,\Pi)$ be the automorphic $L$-function associated to $\Pi$. It is a fact that (cf.\ \cite[Theorem 3.3 (2)]{G-J})
$$
\frac{L^{\text{GJ}}(s;f_1,f_2,\varPhi)}{L(s-(r-1)/2,\Pi)} \in \CC[q^s,q^{-s}], \quad \forall f_1,f_2 \in \Pi \text{ and } \varPhi \in S(\Mat_r(\AA)).
$$
Using \lq\lq local zeta integrals\rq\rq\ at each place of $k$, we obtain anther multi-linear functional 
$\Pscr^{\text{GJ}}$ on $\Pi \times \Pi \times S(\Mat_r(\AA^\infty))_{\ZZ}$
(cf.\ the equality~(\ref{eqn 5.4})).
Then we arrive at:

\begin{thm}\label{thm 0.8}
Let $\Pi$ be an automorphic cuspidal representation of $\GL_r(\AA)$ with a unitary central character $\omega$.
Suppose $\omega\big|_{k_\infty^\times} = 1$.
The following equality holds:
$$\Pcal^{\text{\rm GJ}} = L(\frac{1-r}{2},\Pi) \cdot \Pscr^{\text{\rm GJ}}.$$
\end{thm}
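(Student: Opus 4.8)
The plan is to deduce Theorem~\ref{thm 0.8} from the doubling method of Piatetski-Shapiro and Rallis together with Corollary~\ref{cor 0.2}, in essentially the same way that Theorem~\ref{thm 0.7} follows from the Rankin--Selberg unfolding. First I would recall the setup of the doubling integral for $\GL_r$: for $f_1,f_2\in\Pi$ and $\varPhi\in S(\Mat_r(\AA))$, one forms the Godement--Jacquet integral $L^{\text{GJ}}(s;f_1,f_2,\varPhi)$ and, via the embedding $\iota:\GL_r\times\GL_r\to\GL_{r^2}$ realized on $\Mat_r$ by left and right multiplication, rewrites it as the pairing of $f_1\otimes\overline{f_2}$ against a mirabolic Eisenstein series on $\GL_{r^2}(\AA)$ attached to the Schwartz function $\varPhi$ and the Hecke character $\omega^{-1}$. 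This is precisely the content that should be packaged in Proposition~\ref{prop 5.2}; the key point is that the central character condition $(\omega\cdot\omega^{-1})\big|_{k_\infty^\times}=1$ is automatic here, so the hypothesis $\omega\big|_{k_\infty^\times}=1$ suffices to make the relevant mirabolic Eisenstein series on $\GL_{r^2}$ fit the framework of Theorem~\ref{thm 0.1} and Corollary~\ref{cor 0.2} (applied with $r$ replaced by $r^2$).

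Next I would specialize the doubling identity to $s=0$ (equivalently to the point $s=(1-r)/2$ for $L(s,\Pi)$ after the normalization shift recorded in the excerpt). On the analytic side, the factorization of $L^{\text{GJ}}(s;f_1,f_2,\varPhi)$ into local zeta integrals, together with the fact from \cite[Theorem 3.3 (2)]{G-J} that $L^{\text{GJ}}(s;f_1,f_2,\varPhi)/L(s-(r-1)/2,\Pi)\in\CC[q^s,q^{-s}]$, lets me write $L^{\text{GJ}}(0;f_1,f_2,\varPhi)=L(\tfrac{1-r}{2},\Pi)\cdot\Pscr^{\text{GJ}}(f_1,f_2,\varPhi^\infty)$ once $\varPhi=\varPhi^\infty\otimes\mathbf{1}_{\Mat_r(O_\infty)}$, where $\Pscr^{\text{GJ}}$ is built from the local zeta integrals at the finite places (and the normalized local integral at $\infty$ contributing the appropriate constant); this is the definition alluded to in the equality~(\ref{eqn 5.4}). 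On the geometric side, Corollary~\ref{cor 0.2} evaluates the relevant mirabolic Eisenstein series on $\GL_{r^2}(\AA)$ at $s=0$ as $\frac{1}{1-q^{r^2\deg\infty}}\eta_{\omega^{-1}}\big(\iota(g_1,g_2);\varPhi^\infty\big)$, and integrating against $f_1(g_1)\overline{f_2(g_2)}$ over $(\AA^\times\GL_r(k)\backslash\GL_r(\AA))^2$ with the Tamagawa measure produces exactly $\Pcal^{\text{GJ}}(f_1,f_2,\varPhi^\infty)$. Combining the two evaluations of the same quantity $L^{\text{GJ}}(0;f_1,f_2,\varPhi^\infty\otimes\mathbf{1}_{\Mat_r(O_\infty)})$ yields $\Pcal^{\text{GJ}}=L(\tfrac{1-r}{2},\Pi)\cdot\Pscr^{\text{GJ}}$.

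The step I expect to be the main obstacle is the bookkeeping in matching measures and normalizations across the identification $\Mat_r(k)\cong k^{r^2}$ and the embedding $\iota$: one must check that the Tamagawa measure on $\AA^\times\GL_r(k)\backslash\GL_r(\AA)$ used to define $\Pcal^{\text{GJ}}$ is compatible with the Haar measure on $\GL_{r^2}(\AA)$ implicit in Corollary~\ref{cor 0.2}, that the Schwartz function $\varPhi^\infty$ on $\Mat_r(\AA^\infty)$ really is $\ZZ$-valued as an element of $S((\AA^\infty)^{r^2})_\ZZ$ under the chosen coordinates, and that the local factor at $\infty$ coming from $\mathbf{1}_{\Mat_r(O_\infty)}$ is exactly the one absorbed into $\Pscr^{\text{GJ}}$ rather than contributing an extra power of $q^{\deg\infty}$. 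A secondary subtlety is justifying the interchange of the $\AA^\times$-integral (hidden in the definition of $\eta_{\omega^{-1}}$) with the integration over $(\GL_r(k)\backslash\GL_r(\AA))^2$; since $\Pi$ is cuspidal the inner integrals converge rapidly, so absolute convergence and Fubini apply, but this should be remarked. Beyond these normalization checks, the argument is a formal consequence of the doubling method and Corollary~\ref{cor 0.2}.
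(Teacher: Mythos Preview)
Your approach is essentially the same as the paper's: apply the doubling identity (Proposition~\ref{prop 5.2}) to write $L^{\text{GJ}}(rs;f_1,f_2,\varPhi^\infty\otimes\mathbf{1}_{\Mat_r(O_\infty)})$ as a double integral against a mirabolic Eisenstein series on $\GL_{r^2}(\AA)$, evaluate at $s=0$ via the Lerch-type formula, and match with the local factorization~(\ref{eqn GJ-local-global}).

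There is one small gap you should address. Corollary~\ref{cor 0.2} as stated requires either $\chi\neq 1$ or $\varphi^\infty(0)=0$, and neither is guaranteed here: $\omega$ may be trivial and $\varPhi^\infty(0)$ may be nonzero. The paper handles this by invoking the full expansion in Corollary~\ref{cor 3.7}, which in that case contains an extra $s^{-1}$ term and a constant term, each multiplied by $\varPhi^\infty(0)\int_{k^\times\backslash\AA^{\infty,\times}}\omega(a^\infty)\,d^\times a^\infty$. These contributions vanish after integrating against $f_1(g_1)\overline{\tilde f_2(g_2)}$ because cuspidality gives $\int_{\AA^\times\GL_r(k)\backslash\GL_r(\AA)} f(g)\,dg=0$ for every $f\in\Pi$, so the double integral factors and dies. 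Once you insert this one-line observation, your argument goes through exactly as in the paper; the normalization worries you raise are legitimate bookkeeping but are already absorbed into the definitions of $\Pscr^{\text{GJ}}$ and $\eta_{\omega^{-1}}$.
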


To gain an in-depth understanding of the special $L$-values $L(0,\Pi\times \Pi')$ and $L((1-r)/2,\Pi)$, Theorem \ref{thm 0.7} and \ref{thm 0.8} reduces the technicalities to local calculations.
More precisely, taking suitable test functions at each place, it is possible to determine the corresponding values of $\Pscr^{\text{\rm RS}}$ and $\Pscr^{\text{\rm GJ}}$ in concrete terms, which gives rise to explicit formulas for the special values $L(0,\Pi\times \Pi')$ and $L((1-r)/2,\Pi)$. This is actually a key ingredient in the study of Beilinson's regulators for Drinfeld modular varieties, which will be explored in a subsequent paper.

\begin{rem}
Kondo-Yasuda \cite{K-Y} consider \lq\lq partial $L$-functions $L^{I,J}(s,\Pi)$\rq\rq, and connect their special \lq\lq derivatives\rq\rq\ with an \lq\lq Euler system\rq\rq\ coming from rank $r$ Drinfeld-Siegel units.
Contrarily, Theorem \ref{thm 0.8} illustrates a complete different phenomenon. Our formula states for the complete $L$-function $L(s,\Pi)$, and expresses the special $L$-value in question by an \lq\lq inner product\rq\rq\ with rank $r^2$ Drinfeld-Siegel units. 
We may expect, after further study, there is a natural link between Drinfeld-Siegel units having rank $r$ and $r^2$ hidden behind the special $L$-value in question.
\end{rem}

\subsection{The content of the paper}\label{sec 0.4}
We fix basic notations used throughout this paper in Section~\ref{sec 1.1}. The analytic theory of Drinfeld modules and Drinfeld period domain are reviewed in Section~\ref{sec 1.2} and \ref{sec 1.3}, respectively. Drinfeld-Gekeler discriminant functions are introduced in Section~\ref{sec 1.4}. In Section~\ref{sec 1.5}, we discuss the needed properties of the building map from $\Hfk^r$ to the Bruhat-Tits building associated to $\PGL_r(k_\infty)$, and introduce \lq\lq imaginary parts\rq\rq\ of $z \in \Hfk^r$. 

Section~\ref{sec 2} is to understand the analytic behavior of our non-holomorphic Eisenstein series. We first recall the well-known analytic properties of mirabolic Eisenstein series on $\GL_r(\AA)$ in Section~\ref{sec 2.1}, and establish a natural identity (via the building map) between these automorphic Eisenstein series with our non-holomorphic Eisenstein series in Section~\ref{sec 2.2} and \ref{sec 2.3}. In Section~\ref{sec 2.4}, we present a Stieltjes-type formula of $\EE^Y(z,s)$ from an explicit description of its meromorphic continuation.  

In Section~\ref{sec 3}. We first introduce the Drinfeld-Siegel units on $\Hfk^r_\AA$ in Section~\ref{sec 3.1}, and prove our Kronecker limit formula in Section~\ref{sec 3.2}. In Section~\ref{sec 3.3}, we derive a Lerch-type formula for our non-holomorphic Eisenstein series on $\Hfk^r_\AA$ and mirabolic Eisenstein series on $\GL_r(\AA)$. 

In Section~\ref{sec 4}, we apply the Kronecker limit formula to prove a Colmez-type formula for the Taguchi height of CM Drinfeld modules. The definition of the Taguchi height of a Drinfeld module is recalled in Section~\ref{sec 4.1}, and our Colmez-type formula is derived in Section~\ref{sec 4.2}, together with a short discussion on the asymptotic behavior of the CM Taguchi height. 

In Section~\ref{sec Z-E}, we expresses the Euler-Kronecker constants of zeta functions over \rq\rq totally real\rq\rq\ (with respect to $\infty$) function fields as integrations of finite mirabolic Eisenstein series along the corresponding \lq\lq Heegner cycles\rq\rq\ in $\Bcal^r_\AA(\ZZ)$. Consequently, we obtain a Lerch-type formula of the Dirichlet $L$-functions associated to ring class characters.

In Section~\ref{sec 5}, we study applications of our Kronecker limit formula to special values of automorphic $L$-functions. Theorem~\ref{thm 0.7} and \ref{thm 0.8} are demonstrated in Section~\ref{sec 5.1} and Section~\ref{sec 5.2}, respectively. 

Finally, we extend Hayes' CM theory of Drinfeld modules to the case of arbitrary genus in Appendix~\ref{sec A}.

\subsection*{Acknowledgements}
The author is very grateful to Jing Yu and Chieh-Yu Chang for their steady interest, encouragements, and very useful suggestions. He would also like to thank Mihran Papikian for helpful discussions. The author is deeply appreciate the anonymous referee for very careful reading and many useful comments to improve the manuscript. This work is supported by the Ministry of Science and Technology (grant no.\ 105-2115-M-007-018-MY2 and 107-2628-M-007-004-MY4) and the National Center for Theoretical Sciences.

\section{Preliminaries}\label{sec 1}

\subsection{Basic settings}\label{sec 1.1}

Let $\FF_q$ be the finite field with $q$ elements.
Let $k$ be a global function field with constant field $\mathbb{F}_q$, 
i.e.\ $k$ is a finitely generated field extension over $\mathbb{F}_q$ with transcendence degree one and $\mathbb{F}_q$ is algebraically closed in $k$. 
For each place $v$ of $k$, the completion of $k$ at $v$ is denoted by $k_v$, and $O_v$ denotes the valuation ring in $k_v$. 
Choosing a uniformizer $\pi_v$ in $O_v$ once and for all, we set $\FF_v := O_v/\pi_vO_v$ and $q_v := \#(\FF_v)$. Let $\deg v := [\FF_v: \FF_q]$, called the degree of $v$.
The absolute value on $k_v$ is normalized to:
$$|\alpha_v|_v:= q_v^{-\ord_v(\alpha_v)} = q^{-\deg v\ord_v(\alpha_v)}, \quad \forall \alpha_v \in k_v.$$
Let $\AA:= \prod_v'k_v$, the adele ring of $k$ and $O_{\AA}:= \prod_v O_v$, the maximal compact subring of $\AA$.
We embed $k$ (resp.\ $k^{\times}$) into $\AA$ (resp.\ $\AA^{\times}$) diagonally.
For each element $\alpha = (\alpha_v)_v$ in the idele group $\AA^{\times}$, the norm $|\alpha|_{\AA}$ is defined to be
$$|\alpha|_{\AA}:= \prod_v |\alpha_v|_v.$$

Throughout this paper, we fix a non-trivial additive character $\psi = \otimes_v \psi_v :\AA \rightarrow \CC^{\times}$ which is trivial on $k$
(here $\psi_v:= \psi|_{k_v}$).
For each place $v$ of $k$, let $\delta_v$ be the \lq\lq conductor of $\psi$ at $v$,\rq\rq\ i.e.\ the maximal integer $r$ so that $\pi_v^{-r}O_v$ is contained in the kernel of $\psi_v$ (cf.\ \cite[Def.\ 4 in Chap.\ II \S 5]{We1}).
It is known that (cf.\ \cite[Cor.\ 1 of Theorem 2 in Chap.\ VI]{We1}) $\sum_v \delta_v \deg v = 2g_k-2$,
where $g_k$ is the genus of $k$. We call $\delta = (\pi_v^{\delta_v})_v \in \AA^{\times}$ a \textit{differential idele of $k$ associated to $\psi$}.\\

Fix a place $\infty$ of $k$, regarded as the place at infinity. 
We set $\AA^\infty:= \prod_{v \neq \infty}'k_v$, called the finite adele ring of $k$, and $O_{\AA^\infty}:= \prod_{v \neq \infty} O_v$.
Let $A$ be the ring of functions in $k$ regular away from $\infty$. Then the finite places of $k$ (i.e.\ the place not equal to $\infty$) are canonically identified with non-zero prime ideals of $A$.  For a fractional ideal $\afk$ of $A$, we denote by $\afk \lhd A$ if $\afk$ is an integral ideal. In this paper, every ideal is assumed to be non-zero.
For each fractional ideal $I$ of $A$, writing $I = \afk^{-1} \bfk$ where $\afk, \bfk \lhd A$ we set
$$\Vert I \Vert :=\frac{\#(A/ \bfk)}{\#(A/ \afk)}.$$
Note that $\Vert \alpha A \Vert = |\alpha|_\infty$ for $\alpha \in k^\times$.
Finally, we put $\deg a := -\deg \infty \ord_\infty (a)$ for $a \in A$.

\subsection{Drinfeld modules}\label{sec 1.2}

Let $(F,\iota)$ be an \textit{$A$-field}, i.e.\ $F$ is a field together with a ring homomorphism $\iota:A \rightarrow F$. The $\FF_q$-linear endomorphism ring $\End_{\FF_q}(\GG_{a/F})$ is isomorphic to the \textit{twisted polynomial ring} $F\{\tau\}$, where $\tau : \GG_{a/F} \rightarrow \GG_{a/F}$ is the Frobenius map $(x \mapsto x^q)$ and $\tau a = a^q \tau$ for every $a \in F$. 

\begin{defn}\label{defn 1.1}
Suppose an $A$-field $(F, \iota)$ and a positive integer $r$ is given.\\
$(1)$ A \text{\it Drinfeld $A$-module over $F$ of rank $r$} is a ring homomorphism $\rho: A \rightarrow F\{\tau\}$ satisfying that
$$\rho_a = \iota(a) + \sum_{i = 1}^{r \deg a} l_i(\rho_a) \tau^i \in F\{\tau \}, \quad \text{with } l_{r\deg a}(\rho_a) \neq 0 \ \ \forall a \in A.$$
$(2)$ Given two Drinfeld $A$-modules $\rho$ and $\rho'$ over $F$, a {\it homomorphism $f: \rho \rightarrow \rho'$ over $F$} is an element in $F\{\tau\}$ satisfying $ f \cdot \rho_a = \rho'_a \cdot f$ for every $a \in A$. We call $f$ an {\it isogeny} if $f \neq 0$. We denote the endomorphism ring of $\rho$ over $F$ by $\End_A(\rho/F)$.
\end{defn}

\subsection{Drinfeld period domain}\label{sec 1.3}

Let $\CC_\infty$ be the completion of a chosen algebraic closure of $k_\infty$. We may view $\CC_\infty$ as an $A$-field via the natural embedding $A\hookrightarrow \CC_\infty$. Given a Drinfeld $A$-module $\rho$ of rank $r$ over $\CC_\infty$.
There exists a unique $\FF_q$-linear entire function $\exp_\rho$ on $\CC_\infty$ satisfying
$$\exp_\rho(w) = w + \sum_{i=1}^\infty c_i w^{q^i} \quad  \text{ and }\quad  \exp_\rho(aw) = \rho_a\big( \exp_\rho(w)\big), \quad \forall a \in A.$$
It is known that (cf.\ \cite[Theorem 4.6.9]{Gos})
$\Lambda_\rho := \{\lambda \in \CC_\infty: \exp_\rho(\lambda) = 0\}$
 is a discrete projective $A$-submodule of rank $r$ in $\CC_\infty$ (i.e.\ an $A$-lattice of rank $r$ in $\CC_\infty$). We call $\Lambda_\rho$ the \textit{$A$-lattice associated to $\rho$}.
On the other hand, given an $A$-lattice $\Lambda$ of rank $r$ in $\CC_\infty$, set
$$\exp_\Lambda(w):= w \prod_{0 \neq \lambda \in \Lambda}\left(1-\frac{w}{\lambda}\right).$$
This uniquely determines a rank $r$ Drinfeld $A$-module $\rho^{\Lambda}$ over $\CC_\infty$ satisfying that 
\begin{equation}\label{eqn 1.1}
\exp_{\Lambda}(aw) = \rho^{\Lambda}_a\left(\exp_{\Lambda}(w)\right),\quad \forall a \in A.
\end{equation}
In other words, the correspondence $\rho \leftrightarrow \Lambda_\rho$ gives us a bijection (cf.\ \cite[Proposition 3.1]{Dri})
$$\{\text{Drinfeld $A$-modules of rank $r$ over $\CC_\infty$}\} \cong \{\text{$A$-lattices of rank $r$ in $\CC_\infty$}\}.$$

We now recall the analytic description of the moduli space for rank $r$ Drinfeld $A$-modules over $\CC_\infty$. Given $a = (a_1: \cdots :a_r) \in \PP^{r-1}(k_\infty)$, let $\widetilde{H}_a \subset \CC_\infty^r$ (resp.\ $H_a \subset \PP^{r-1}(\CC_\infty)$) be the $k_\infty$-rational hyperplane corresponding to $a$, i.e.\
$$\widetilde{H}_a :=\Big\{(z_1, ... ,z_r) \in \CC_\infty^r : \sum_{i=1}^r a_iz_i = 0\Big\}  \text{ and } 
H_a :=\Big\{(z_1:\cdots:z_r) \in \PP^{r-1}(\CC_\infty):  \sum_{i=1}^r a_iz_i = 0\Big\}.$$
Let $$\widetilde{\Hfk}^{r} := \CC_\infty^r - \bigcup_{a \in \PP^{r-1}(k_\infty)} \widetilde{H}_a \quad \text{ and } \quad \Hfk^r:= \PP^{r-1}(\CC_\infty) - \bigcup_{a \in \PP^{r-1}(k_\infty)} H_a \ (= \widetilde{\Hfk}^{r}/\CC_\infty^\times).$$ 
We call $\Hfk^r$ the \textit{Drinfeld period domain of rank $r$}.
Note that $\widetilde{\Hfk}^{r}$ and $\Hfk^r$ are equipped with a (compatible) left action of $\GL_r(k_\infty)$:
given $\tilde{z} = (z_1,...,z_r) \in \widetilde{\Hfk}^{r}$ and the corresponding point $z = (z_1:\cdots: z_r) \in \Hfk^r$, for $g = (a_{ij})_{1\leq i,j\leq r} \in \GL_r(k_\infty)$ we
put $g \cdot \widetilde{z}:= (z'_1,..., z'_r) \in \widetilde{\Hfk}^{r}$ and $g \cdot z := (z'_1:\cdots: z'_r) \in \Hfk^r$ where
$$\begin{pmatrix}z_1'\\ \vdots \\ z_r'\end{pmatrix} = \begin{pmatrix} a_{11} & \cdots &a_{1r} \\ \vdots & & \vdots \\ a_{r1} & \cdots & a_{rr} \end{pmatrix}\begin{pmatrix}z_1\\ \vdots \\ z_r\end{pmatrix}.$$

Note that every $z \in \Hfk^r$ has a unique representative $z=(z_1:\cdots: z_{r-1}: 1)$.
For each
$g = \begin{pmatrix} * & * \\ c_1 \cdots c_{r-1} & d \end{pmatrix} \in \GL_r(k_\infty)$, we set
$$j(g,z):= c_1z_1 + \cdots c_{r-1} z_{r-1} + d.$$
Let $Y \subset k^r$ be a projective $A$-module of rank $r$.
For $z = (z_1: \cdots : z_{r-1}:1) \in \Hfk^r$, put
$$\Lambda^Y_z:= \{a_1z_1+ \cdots + a_{r-1} z_{r-1} +a_r\subset \CC_\infty : (a_1,...,a_r) \in Y\}.$$
Observe that
\begin{eqnarray}\label{eqn 1.2}
\Lambda^{Y\gamma^{-1}}_{\gamma z}
= j(\gamma,z)^{-1} \cdot \Lambda^Y_z, \quad \forall \gamma  \in \GL_r(k).
\end{eqnarray}
Let $\rho^{Y,z}$ denote the rank $r$ Drinfeld $A$-module over $\CC_\infty$ corresponding to the $A$-lattice $\Lambda^Y_z$.

\begin{thm}\label{thm 1.2}
{\rm (cf.\ \cite{Dri})} The map $(Y,z)\mapsto \rho^{Y,z}$ induces the following bijection
$$
\Mfk^{(r)}_A := \left(\coprod\limits_{[Y] \in \Pscr_A^r} \GL(Y)\backslash \Hfk^r \right) \longleftrightarrow \left\{\text{\rm rank-$r$ Drinfeld $A$-modules over $\CC_\infty$}\right\} / \cong.$$
Here $\Pscr_A^r$ is the set of isomorphism classes of projective $A$-modules of rank $r$.
\end{thm}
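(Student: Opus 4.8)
The plan is to factor the claimed bijection through the correspondence recorded in Section~\ref{sec 1.3} between rank-$r$ Drinfeld $A$-modules over $\CC_\infty$ and rank-$r$ $A$-lattices in $\CC_\infty$, namely $\rho\leftrightarrow\Lambda_\rho$, and to track what isomorphism of Drinfeld modules and the $\GL(Y)$-action mean on the lattice side. First I would upgrade that correspondence to isomorphism classes: using \eqref{eqn 1.1} and the product expansion of $\exp_\Lambda$, one checks that for $c\in\CC_\infty^\times$ multiplication by $c$ (as an element of $\CC_\infty\{\tau\}$) is an isomorphism $\rho^\Lambda\xrightarrow{\sim}\rho^{c\Lambda}$, and conversely any isomorphism $\rho^\Lambda\xrightarrow{\sim}\rho^{\Lambda'}$ in $\CC_\infty\{\tau\}$ is such a scalar, since the units of $\CC_\infty\{\tau\}$ are exactly the nonzero constants; comparing kernels of the exponentials then forces $c\Lambda=\Lambda'$ (cf.\ \cite{Dri}, \cite[Ch.~4]{Gos}). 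Hence isomorphism classes of rank-$r$ Drinfeld $A$-modules over $\CC_\infty$ are in bijection with $\CC_\infty^\times$-homothety classes of rank-$r$ $A$-lattices in $\CC_\infty$.

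For surjectivity, let $\Lambda\subset\CC_\infty$ be a rank-$r$ $A$-lattice. Discreteness forces the natural map $k_\infty\otimes_A\Lambda\to\CC_\infty$ to be injective, so any $k$-basis $\lambda_1,\dots,\lambda_r$ of the $k$-span $k\Lambda$ is even $k_\infty$-linearly independent in $\CC_\infty$; rescaling by $\lambda_r^{-1}$ (harmless up to homothety) we may assume $\lambda_r=1$, and then $z:=(\lambda_1:\cdots:\lambda_{r-1}:1)$ lies off every $k_\infty$-rational hyperplane, i.e.\ $z\in\Hfk^r$. Writing an element of $\Lambda$ in the coordinates of $\lambda_1,\dots,\lambda_r$ exhibits $\Lambda$ as $\ell_z(Y)$ for a rank-$r$ projective $A$-submodule $Y\subseteq k^r$, where $\ell_z\colon k^r\to\CC_\infty$ is the $k$-linear map $(a_1,\dots,a_r)\mapsto a_1z_1+\cdots+a_{r-1}z_{r-1}+a_r$, which is injective precisely because $z\in\Hfk^r$. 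Thus $\Lambda=\Lambda^Y_z$, and every isomorphism class of rank-$r$ Drinfeld $A$-modules is attained.

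It remains to prove injectivity, which also subsumes well-definedness on $\coprod_{[Y]}\GL(Y)\backslash\Hfk^r$. For $\gamma\in\GL(Y)$, equation \eqref{eqn 1.2} gives $\Lambda^Y_{\gamma z}=j(\gamma,z)^{-1}\Lambda^Y_z$, homothetic to $\Lambda^Y_z$, so $(Y,z)\mapsto\rho^{Y,z}$ is constant on $\GL(Y)$-orbits. Conversely, suppose $\rho^{Y,z}\cong\rho^{Y',z'}$; then $\Lambda^{Y'}_{z'}=c\,\Lambda^Y_z$ for some $c\in\CC_\infty^\times$, and comparing the $A$-module structures of the two sides (via $\ell_{z'}$ on the left and $c$ followed by $\ell_z^{-1}$ on the right) gives $Y\cong Y'$, so for the chosen representatives we may take $Y=Y'$. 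Then $\ell_{z'}(Y)=c\,\ell_z(Y)$; since $Y$ spans $k^r$ over $k$ and $\ell_z,\ell_{z'}$ are injective, $\gamma:=\ell_z^{-1}\circ(c^{-1}\ell_{z'})$ is a well-defined element of $\GL_r(k)$ carrying $Y$ onto itself, hence $\gamma\in\GL(Y)$; unwinding $c^{-1}\ell_{z'}=\ell_z\circ\gamma$ and normalizing the last coordinate to $1$ yields $z'=\gamma z$ in $\Hfk^r$. This gives the asserted bijection.

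The one genuinely substantive point, which I would treat with care, is the claim in the surjectivity step that a $k$-basis of a discrete rank-$r$ $A$-lattice is automatically $k_\infty$-linearly independent — equivalently, that the point it produces lies in $\Hfk^r$ rather than on a rational hyperplane. This is the geometric raison d'\^etre of the Drinfeld period domain, and it follows from the structure theory of discrete $A$-submodules of $\CC_\infty$ (approximate a hypothetical $k_\infty$-linear relation among the $\lambda_i$ by $k$-coefficients, clear denominators, and obtain arbitrarily small nonzero lattice vectors, contradicting discreteness). Everything else is either quoted from Section~\ref{sec 1.3} and \cite{Dri} or is linear-algebra bookkeeping converting lattice homotheties into $\GL_r(k)$-transformations via $\ell_z$ and \eqref{eqn 1.2}.
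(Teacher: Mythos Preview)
The paper does not give its own proof of this theorem: it is stated as a known result with a reference to Drinfeld \cite{Dri}, and the surrounding text only records the lattice--module dictionary $\rho\leftrightarrow\Lambda_\rho$ and the transformation law \eqref{eqn 1.2}. So there is no in-paper argument to compare against.

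Your proof is correct and is the standard one. You factor through the bijection between Drinfeld $A$-modules and $A$-lattices, upgrade it to isomorphism classes versus homothety classes (using that the units of $\CC_\infty\{\tau\}$ are the nonzero constants), and then parametrize homothety classes of lattices by pairs $(Y,z)$ modulo $\GL(Y)$. The two places where something non-formal happens are exactly the ones you flagged: (i) that a $k$-basis of $k\Lambda$ is automatically $k_\infty$-linearly independent when $\Lambda$ is discrete, which you justify by the approximation argument; and (ii) the translation of a homothety $\Lambda^Y_{z'}=c\,\Lambda^Y_z$ into an element $\gamma\in\GL(Y)$ with $\gamma z=z'$, which you carry out via the comparison of the linear forms $\ell_z$ and $\ell_{z'}$, compatibly with \eqref{eqn 1.2}. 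The only cosmetic point worth tightening is to be explicit about the convention that $Y$ consists of row vectors with $\GL_r(k)$ acting on the right (as in \eqref{eqn 1.2}), so that your $\gamma=\ell_z^{-1}\circ(c^{-1}\ell_{z'})$ visibly lands in $\GL(Y)$ and acts on $\Hfk^r$ by the M\"obius action used in the paper.
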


\subsection{Drinfeld-Gekeler discriminant function}\label{sec 1.4}

Given  $z = (z_1:\cdots: z_{r-1}: 1) \in \Hfk^r$ and a projective $A$-module $Y$ of rank $r$ in $k^r$, let
$$\Delta^Y_a(z) := l_{r\deg a}(\rho^{Y,z}_a), \quad \forall a \in A.$$
Then the equation~(\ref{eqn 1.2}) implies (cf.\ \cite[Chapter V, 3.4 Example]{Ge1})
$$\Delta^Y_a(\gamma z) = j(\gamma,z)^{q^{r\deg a}-1} \cdot \Delta^Y_a(z), \quad \forall \gamma  \in \GL(Y).$$
Moreover, the functional equation (\ref{eqn 1.1}) implies
$$\rho^{Y,z}_a(x) = \Delta^Y_a(z) \cdot x \cdot \prod_{0 \neq w \in \frac{1}{a}\Lambda^Y_z/\Lambda^Y_z} \big(x - \exp_{\Lambda^Y_z}(w)\big), \quad \forall a \in A -\{0\}.$$
Therefore we have the following product formula of $\Delta^Y_a(z)$:

\begin{lem}\label{lem 1.3}
For every $a \in A$,
$$\Delta^Y_a(z) = a \cdot \prod_{0\neq w \in \frac{1}{a} \Lambda^Y_z/\Lambda^Y_z} \exp_{\Lambda^Y_z}(w)^{-1}.$$
\end{lem}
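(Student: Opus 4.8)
The plan is to derive the product formula directly from the two displayed facts that precede the statement, namely the transformation/normalization data packaged in the functional equation (\ref{eqn 1.1}) and the explicit factorization
$$
\rho^{Y,z}_a(x) = \Delta^Y_a(z) \cdot x \cdot \prod_{0 \neq w \in \frac{1}{a}\Lambda^Y_z/\Lambda^Y_z} \big(x - \exp_{\Lambda^Y_z}(w)\big).
$$
The idea is that the product over the $a$-torsion of the lattice can be re-expressed as a product of values of $\exp_{\Lambda^Y_z}$, once one matches the linear coefficient on both sides. So the whole argument is a comparison of the coefficient of $x^1$ (equivalently $\tau^0$) on the two sides of this identity, together with the classical fact that $\exp_{\Lambda^Y_z}$ has derivative $1$ at the origin.

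\textbf{Step 1.} Write $\Lambda := \Lambda^Y_z$ for brevity and $e := \exp_\Lambda$. Fix $a \in A \setminus \{0\}$. Recall from (\ref{eqn 1.1}) that $e(aw) = \rho^{Y,z}_a(e(w))$ as $\FF_q$-linear entire functions of $w$, and that $e(w) = w + \sum_{i\ge 1} c_i w^{q^i}$, so that the linear coefficient of $e$ is $1$. Differentiating the displayed factorization of $\rho^{Y,z}_a$ with respect to $x$ and evaluating at $x = 0$ (or simply extracting the coefficient of $x$, since the product is a polynomial in $x$ of degree $q^{r\deg a}$) gives
$$
l_0(\rho^{Y,z}_a) = \Delta^Y_a(z)\cdot \prod_{0\neq w \in \frac1a\Lambda/\Lambda} \big(-\exp_\Lambda(w)\big).
$$
Here I am using that the number of nonzero classes $w$ is $q^{r\deg a}-1$, which is the degree of $a$ as an element of $F\{\tau\}$ acting on the rank-$r$ Drinfeld module, so the sign $(-1)^{q^{r\deg a}-1}$ appears; since the characteristic is $q$ and $q^{r\deg a}-1$ is coprime-to-$p$ considerations are irrelevant, in characteristic $p$ one has $-1 = (-1)^{q^{r\deg a}}$ only when $p=2$, so I must be careful: in fact $(-1)^{q^{r\deg a}-1} = 1$ precisely when $p$ is odd (exponent even) and equals $-1 = 1$ in characteristic $2$, so in every case this sign is $+1$. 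Thus $l_0(\rho^{Y,z}_a) = \Delta^Y_a(z)\cdot \prod_{0\neq w} \exp_\Lambda(w)$.

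\textbf{Step 2.} Identify $l_0(\rho^{Y,z}_a)$. By definition of a Drinfeld $A$-module over the $A$-field $\CC_\infty$ (Definition~\ref{defn 1.1}(1)), $\rho^{Y,z}_a = \iota(a) + \sum_{i\ge 1} l_i(\rho^{Y,z}_a)\tau^i$, so $l_0(\rho^{Y,z}_a) = \iota(a) = a$, the image of $a$ under the structure map $A \hookrightarrow \CC_\infty$. Substituting this into the relation from Step~1 and solving for $\Delta^Y_a(z)$ yields
$$
\Delta^Y_a(z) = a \cdot \prod_{0\neq w \in \frac1a\Lambda^Y_z/\Lambda^Y_z} \exp_{\Lambda^Y_z}(w)^{-1},
$$
which is exactly the claimed formula. (The product is finite and every factor is nonzero: $\exp_\Lambda(w) = 0$ would force $w \in \Lambda$, contradicting $w$ being a nonzero class in $\frac1a\Lambda/\Lambda$; this also shows $\Delta^Y_a(z) \neq 0$, consistent with the rank condition $l_{r\deg a}(\rho^{Y,z}_a) \neq 0$.)

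\textbf{The main point to be careful about} is the bookkeeping in Step~1: matching the linear term requires knowing that $\rho^{Y,z}_a$, viewed as a polynomial in $x$, has degree exactly $q^{r\deg a}$ and that its $q^{r\deg a}$ roots are precisely $\{\exp_\Lambda(w) : w \in \frac1a\Lambda/\Lambda\}$ — this is the content of the factorization already displayed before the lemma, which follows from (\ref{eqn 1.1}) since $\rho^{Y,z}_a(e(w)) = e(aw) = 0$ exactly when $aw \in \Lambda$, i.e. $w \in \frac1a\Lambda$, and these torsion points are distinct because $e$ is injective on a fundamental domain. Everything else is a one-line coefficient extraction. There is no serious obstacle here; the lemma is essentially the statement that the leading/linear coefficients of the two standard presentations of $\rho^{Y,z}_a$ agree.
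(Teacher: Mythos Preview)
Your proof is correct and follows exactly the approach implicit in the paper: the paper simply displays the factorization $\rho^{Y,z}_a(x) = \Delta^Y_a(z)\cdot x\cdot\prod_{0\neq w}(x-\exp_{\Lambda^Y_z}(w))$ and then writes ``Therefore'', leaving the linear-coefficient comparison to the reader; you have spelled out precisely that comparison. One cosmetic remark: your sign discussion in Step~1 reaches the right conclusion but is phrased a bit awkwardly (``the characteristic is $q$'' should be ``the characteristic is $p$'', and the intermediate sentence about $(-1)^{q^{r\deg a}}$ is confusing); it would be cleaner to say directly that $q^{r\deg a}-1$ is even when $q$ is odd, and that $-1=1$ when $q$ is even, so $(-1)^{q^{r\deg a}-1}=1$ in $\CC_\infty$ in all cases.
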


Since 
$$\rho^{Y,z}_a \cdot \rho^{Y,z}_b = \rho^{Y,z}_{ab} = \rho^{Y,z}_b \cdot \rho^{Y,z}_a, \quad \forall a,b \in A,$$
one gets
\begin{equation}\label{eqn 1.3}
\Delta^{Y}_a(z) \cdot \Delta^Y_b(z) ^{q^{r\deg a}} = \ \Delta^Y_{ab}(z) \ = \ \Delta^Y_b(z) \cdot \Delta^Y_a(z)^{q^{r\deg b}}.
\end{equation}
Take two elements $a_1, a_2 \in A$ such that $\text{gcd}(\ord_\infty(a_1),\ord_\infty(a_2)) = 1$, and choose $\ell_1$, $\ell_2 \in \ZZ$ such that $\ell_1\big(q_\infty^{r\ord_\infty(a_1)}-1\big)+\ell_2\big(q_\infty^{r\ord_\infty( a_2)}-1\big) = q_\infty^r-1$. Set
$$\Delta^Y(z):= \Delta^Y_{a_1}(z)^{\ell_1} \cdot \Delta^Y_{a_2}(z)^{\ell_2},$$
which is a nowhere-zero analytic function on $\Hfk^r$ satisfying that
$$\Delta^Y(\gamma z) = j(\gamma,z)^{q_\infty^r-1} \cdot \Delta^Y(z), \quad \forall \gamma  \in \GL(Y).$$

\begin{prop}\label{prop 1.4}
\text{\rm (cf.\ \cite[Chapter IV, Proposition 5.15]{Ge1})}
The function $\Delta^Y$ is, up to multiplying with $(q_\infty^r-1)$-th roots of unity, independent  of the chosen $a_1, a_2 \in A$ and $\ell_1, \ell_2 \in \ZZ$.
In particular, one has $$(\Delta^Y)^{q^{r\deg a}-1} = (\Delta^Y_a)^{q_\infty^r-1}, \quad \forall a \in A-\{0\}.$$
\end{prop}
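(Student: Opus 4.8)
The plan is to establish the two assertions separately. For the independence claim, suppose $a_1,a_2 \in A$ with $\gcd(\ord_\infty(a_1),\ord_\infty(a_2)) = 1$ and integers $\ell_1,\ell_2$ chosen so that $\ell_1(q_\infty^{r\ord_\infty(a_1)}-1) + \ell_2(q_\infty^{r\ord_\infty(a_2)}-1) = q_\infty^r - 1$, and likewise $a_1',a_2',\ell_1',\ell_2'$ giving another such expression. Write $\Delta^Y$ and $(\Delta^Y)'$ for the two resulting functions. Both are nowhere-zero analytic functions on $\Hfk^r$ transforming under $\GL(Y)$ with the same automorphy factor $j(\gamma,z)^{q_\infty^r-1}$. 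Hence their ratio $\Delta^Y/(\Delta^Y)'$ is a nowhere-zero $\GL(Y)$-invariant analytic function on $\Hfk^r$, i.e.\ a nowhere-zero weight-$0$ modular form for $\GL(Y)$; such a function must be constant (this is the standard rigidity of holomorphic modular forms of weight $0$ on the period domain, a fact available from the analytic theory reviewed in Section~\ref{sec 1.3}, Section~\ref{sec 1.4}, cf.\ Gekeler \cite{Ge1}). So $\Delta^Y = c \cdot (\Delta^Y)'$ for some $c \in \CC_\infty^\times$, and it remains to pin down $c$ as a $(q_\infty^r-1)$-th root of unity.

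To identify $c$, the cleanest route is to raise everything to a common power that kills the ambiguity. Using the cocycle relation \eqref{eqn 1.3} repeatedly, one derives for each $a \in A-\{0\}$ the clean identity
$$
\big(\Delta^Y_a(z)\big)^{q_\infty^r - 1} = \big(\Delta^Y(z)\big)^{q^{r\deg a}-1},
$$
which is exactly the \lq\lq in particular\rq\rq\ statement; I would prove this first, since it does not depend on the choice of $a_1,a_2,\ell_1,\ell_2$ at all. Indeed, from \eqref{eqn 1.3} one shows by induction on $\deg a$ that $\Delta^Y_a(z)$ is, up to a root of unity factor controlled by $\ord_\infty(a)$, determined by the values $\Delta^Y_{a_i}(z)$; more efficiently, note that \eqref{eqn 1.3} implies $\big(\Delta^Y_a\big)^{q_\infty^{r\ord_\infty(b)}-1} = \big(\Delta^Y_b\big)^{q_\infty^{r\ord_\infty(a)}-1}$ up to $(q_\infty^r-1)$-st roots of unity (symmetrizing the two expressions for $\Delta^Y_{ab}$), so the function $(\Delta^Y_a)^{1/(q_\infty^{r\ord_\infty(a)}-1)}$, suitably interpreted via the Bézout combination $\ell_1,\ell_2$, is well-defined up to such roots of unity and equals $\Delta^Y$; raising to the $(q_\infty^r-1)$-st power removes the root-of-unity ambiguity and yields the displayed identity. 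Then, comparing $\Delta^Y$ and $(\Delta^Y)'$: both satisfy this same identity with the same $\Delta^Y_a$, so $(\Delta^Y)^{q^{r\deg a}-1} = ((\Delta^Y)')^{q^{r\deg a}-1}$, forcing $c^{q^{r\deg a}-1} = 1$; choosing $a$ of degree $1$ (or more generally using $\gcd$ over all $a$) gives $c^{q_\infty^r - 1}=1$, i.e.\ $c$ is a $(q_\infty^r-1)$-th root of unity, as claimed.

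I expect the main obstacle to be the bookkeeping in the inductive/Bézout argument that expresses $\Delta^Y_a$ in terms of $\Delta^Y_{a_1},\Delta^Y_{a_2}$ up to $(q_\infty^r-1)$-st roots of unity — the relation \eqref{eqn 1.3} is a twisted (Frobenius-semilinear) cocycle, so one must track the exponents $q_\infty^{r\ord_\infty(\cdot)}$ carefully and verify that the $\ZZ$-module generated by $\{q_\infty^{r\ord_\infty(a)}-1 : a \in A-\{0\}\}$ is exactly $(q_\infty^r-1)\ZZ$ (which uses that the $\ord_\infty$-values of elements of $A$ generate $\ZZ$, a standard consequence of $\infty$ being a place of degree $\deg\infty$ and the Riemann–Roch-type fact that $A$ contains elements of every sufficiently large degree). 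The rigidity input (nonvanishing weight-$0$ modular forms are constant) is the other load-bearing ingredient, but it is classical for the Drinfeld period domain and I would simply cite \cite{Ge1}. Everything else is formal manipulation with \eqref{eqn 1.3} and the automorphy factor.
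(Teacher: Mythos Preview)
The paper does not prove this proposition; it simply cites \cite[Chapter IV, Proposition 5.15]{Ge1}. So there is no in-paper argument to compare against directly, and the relevant question is whether your sketch stands on its own.

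There is a genuine gap. Your first paragraph rests on the claim that a nowhere-zero $\GL(Y)$-invariant rigid analytic function on $\Hfk^r$ is constant. This is neither stated nor proved in Sections~\ref{sec 1.3}--\ref{sec 1.4}, and it is not an innocuous ``classical'' fact: for congruence subgroups it is outright false (modular units exist), and for the full group $\GL(Y)$ it would require knowing the unit group of the coarse moduli space, which is not available here. You cannot simply cite \cite{Ge1} for this in the generality you need.

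Fortunately the rigidity input is unnecessary, and your second paragraph already contains the right idea once it is cleaned up. From \eqref{eqn 1.3} one gets \emph{exactly} (not merely up to roots of unity)
\[
(\Delta^Y_a)^{q^{r\deg b}-1} \;=\; (\Delta^Y_b)^{q^{r\deg a}-1},\qquad a,b\in A-\{0\}.
\]
Applying this with $b=a_1$ and $b=a_2$ and using the B\'ezout relation $\ell_1(q^{r\deg a_1}-1)+\ell_2(q^{r\deg a_2}-1)=q_\infty^r-1$ gives directly
\[
(\Delta^Y)^{q^{r\deg a}-1}
=(\Delta^Y_{a_1})^{\ell_1(q^{r\deg a}-1)}(\Delta^Y_{a_2})^{\ell_2(q^{r\deg a}-1)}
=(\Delta^Y_a)^{\ell_1(q^{r\deg a_1}-1)+\ell_2(q^{r\deg a_2}-1)}
=(\Delta^Y_a)^{q_\infty^r-1},
\]
which is the displayed identity. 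If $(\Delta^Y)'$ comes from another choice, then $\big((\Delta^Y)'/\Delta^Y\big)^{q^{r\deg a}-1}=1$ for every $a$; taking the gcd of the exponents (which is $q_\infty^r-1$, using that $A$ contains elements with coprime $\ord_\infty$-values) yields $\big((\Delta^Y)'/\Delta^Y\big)^{q_\infty^r-1}=1$. The ratio is thus a rigid analytic function on the connected space $\Hfk^r$ taking values in the finite set $\mu_{q_\infty^r-1}$, hence constant. That last step is the only ``rigidity'' required, and it is trivial. Drop the appeal to weight-$0$ modular forms and remove the hedging ``up to roots of unity'' from the intermediate algebra; the argument then goes through.
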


The following \lq\lq norm compatibilities\rq\rq\ are straightforward.

\begin{lem}\label{lem 1.5}
Given two projective $A$-modules $Y$ and $Y'$ of rank $r$ in $k^r$ with $Y' \subset Y$, for $z \in \Hfk^r$ one has
$$\exp_{\Lambda^Y_z}(w) = \exp_{\Lambda^{Y'}_z}(w) \prod_{0 \neq u \in \Lambda^Y_z/\Lambda^{Y'}_z} \left(1-\frac{\exp_{\Lambda^{Y'}_z}(w)}{\exp_{\Lambda^{Y'}_z}(u)}\right),$$
and
$$\Delta^Y(z) = \Delta^{Y'}(z)^{\#(Y/Y')}\prod_{0 \neq u \in \Lambda^Y_z/\Lambda^{Y'}_z}
\exp_{\Lambda^{Y'}_z}(u)^{q_\infty^r-1}.$$
\end{lem}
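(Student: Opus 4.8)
The plan is to reduce both assertions to the corresponding statement for the lattice exponentials, which then propagates to the discriminant functions through their product description. Throughout, fix $z=(z_1:\cdots:z_{r-1}:1)\in\Hfk^r$; since $1,z_1,\dots,z_{r-1}$ are linearly independent over $k_\infty$, hence over $k$, the map $(a_1,\dots,a_r)\mapsto a_1z_1+\cdots+a_{r-1}z_{r-1}+a_r$ identifies $Y$ with $\Lambda^Y_z$ and $Y'$ with $\Lambda^{Y'}_z$ as $A$-modules. In particular $\Lambda^{Y'}_z\subset\Lambda^Y_z$, and $\Lambda^Y_z/\Lambda^{Y'}_z\cong Y/Y'$ is a finite $\FF_q$-vector space; write $n:=\#(Y/Y')$.

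For the first identity, I would set $V:=\exp_{\Lambda^{Y'}_z}(\Lambda^Y_z)$. Because $\ker\exp_{\Lambda^{Y'}_z}=\Lambda^{Y'}_z$, this is a finite $\FF_q$-subspace of $\CC_\infty$ with $\#V=n$, and the separable $\FF_q$-linear polynomial $P_V(x):=x\prod_{0\neq v\in V}(1-x/v)$ has zero set exactly $V$ and linear coefficient $1$. Then $P_V\circ\exp_{\Lambda^{Y'}_z}$ is an $\FF_q$-linear entire function of the shape $w+O(w^q)$ whose zero locus (all zeros simple) equals $\{w:\exp_{\Lambda^{Y'}_z}(w)\in V\}=\Lambda^Y_z+\Lambda^{Y'}_z=\Lambda^Y_z$; by the uniqueness characterization of $\exp_{\Lambda^Y_z}$ recalled in Section~\ref{sec 1.3} we conclude $\exp_{\Lambda^Y_z}=P_V\circ\exp_{\Lambda^{Y'}_z}$. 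Substituting $v=\exp_{\Lambda^{Y'}_z}(u)$ as $u$ runs over coset representatives of $\Lambda^Y_z/\Lambda^{Y'}_z$ (the value depends only on the coset and is nonzero for $u\notin\Lambda^{Y'}_z$) gives precisely the asserted product expansion of $\exp_{\Lambda^Y_z}$.

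For the second identity, I would first compare $\Delta^Y_a$ with $\Delta^{Y'}_a$ for a fixed $a\in A\setminus\{0\}$. Using $\exp_{\Lambda^Y_z}(aw)=\rho^{Y,z}_a(\exp_{\Lambda^Y_z}(w))$, the composition identity just obtained, $\exp_{\Lambda^{Y'}_z}(aw)=\rho^{Y',z}_a(\exp_{\Lambda^{Y'}_z}(w))$, and the surjectivity of $\exp_{\Lambda^{Y'}_z}$, one gets the polynomial identity $P_V(\rho^{Y',z}_a(x))=\rho^{Y,z}_a(P_V(x))$ in $x$. Both sides are additive polynomials of $x$-degree $n\cdot q^{r\deg a}$; comparing leading coefficients, recalling $l_{r\deg a}(\rho^{Y,z}_a)=\Delta^Y_a(z)$ and $l_{r\deg a}(\rho^{Y',z}_a)=\Delta^{Y'}_a(z)$, and noting that the leading coefficient of $P_V$ equals $\prod_{0\neq v\in V}(-1/v)=\big(\prod_{0\neq u\in\Lambda^Y_z/\Lambda^{Y'}_z}\exp_{\Lambda^{Y'}_z}(u)\big)^{-1}$ (the sign $(-1)^{n-1}$ being $1$, as $n$ is odd or $\ch k=2$), I obtain
$$\Delta^Y_a(z)=\Delta^{Y'}_a(z)^{n}\cdot\prod_{0\neq u\in\Lambda^Y_z/\Lambda^{Y'}_z}\exp_{\Lambda^{Y'}_z}(u)^{q^{r\deg a}-1}.$$
Finally, I would pick $a_1,a_2\in A$ with $\gcd(\ord_\infty a_1,\ord_\infty a_2)=1$ and $\ell_1,\ell_2\in\ZZ$ with $\ell_1(q^{r\deg a_1}-1)+\ell_2(q^{r\deg a_2}-1)=q_\infty^r-1$, as in the definition of $\Delta^Y$ in Section~\ref{sec 1.4}, using the \emph{same} data for $Y$ and $Y'$. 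Raising the displayed identity for $a=a_i$ to the $\ell_i$-th power and multiplying over $i=1,2$, the exponent of $\prod_{0\neq u}\exp_{\Lambda^{Y'}_z}(u)$ collapses to $\sum_i\ell_i(q^{r\deg a_i}-1)=q_\infty^r-1$, whereas $\prod_i\Delta^Y_{a_i}(z)^{\ell_i}=\Delta^Y(z)$ and $\prod_i\Delta^{Y'}_{a_i}(z)^{n\ell_i}=\Delta^{Y'}(z)^{n}$, which yields $\Delta^Y(z)=\Delta^{Y'}(z)^{\#(Y/Y')}\prod_{0\neq u}\exp_{\Lambda^{Y'}_z}(u)^{q_\infty^r-1}$.

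I expect the only genuinely delicate points to be: (i) invoking the uniqueness of the lattice exponential to upgrade the formal matching of zero sets into the functional equation $\exp_{\Lambda^Y_z}=P_V\circ\exp_{\Lambda^{Y'}_z}$; and (ii) keeping the auxiliary choice $(a_1,a_2,\ell_1,\ell_2)$ common to $Y$ and $Y'$, so that the $(q_\infty^r-1)$-th root-of-unity indeterminacy in the definition of $\Delta^{\bullet}$ does not intervene. The remaining steps are routine manipulations of twisted polynomials and their leading coefficients.
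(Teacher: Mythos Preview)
Your proof is correct and complete. The paper states the lemma without proof, calling it ``straightforward,'' so your argument supplies exactly the details the paper omits: the first identity via the uniqueness of the lattice exponential applied to $P_V\circ\exp_{\Lambda^{Y'}_z}$, and the second via comparison of leading coefficients in the twisted-polynomial identity $P_V\circ\rho^{Y',z}_a=\rho^{Y,z}_a\circ P_V$, followed by the combination over $(a_1,a_2,\ell_1,\ell_2)$.
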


\subsection{Building map}\label{sec 1.5}

Let $\Bcal^r$ be the Bruhat-Tits building associated to $\PGL_r(k_\infty)$. 
The set $V(\Bcal^r)$ of vertices of $\Bcal^r$ consists of all the homothety classes of $O_\infty$-lattices in $k_\infty^r$.
Take $L^o := O_\infty^r \subset k_\infty^r$, the standard $O_\infty$-lattice in $k_\infty^r$.
Via the left action of $\GL_r(k_\infty)$ on $\Bcal^r$, the set $V(\Bcal^r)$ can be identified with $\GL_r(k_\infty)/k_\infty^{\times} \GL_r(O_\infty)$: 
$$V(\Bcal^r) = \{[ L^o g^{-1}] : g \in \GL_r(k_\infty)/k_\infty^{\times} \GL_r(O_\infty)\}.$$
For $0\leq i < r$, a (resp.\ non-)oriented $i$-simplex is an $i$-tuples $([L_0], ...,[L_i])$ (resp.\ up to cyclic permutations), where $L_0,...,L_i$ are $O_\infty$-lattices satisfying
$$L_0 \supsetneq \cdots \supsetneq L_i \supsetneq \pi_\infty L_0.$$
We let $\vec{C}^i(\Bcal^r)$ (resp.\ $C^i(\Bcal^r)$) be the set consisting of all the (resp.\ non-)oriented $i$-simplices.

It is known that the realization $\Bcal^r(\RR)$ of $\Bcal^r$ is identified with the equivalence classes of norms on $k_\infty^r$ as follows: suppose $P \in \Bcal^r(\RR)$ belongs to the realization of an $i$-simplex, say $([L_0], ...,[L_i])$ with $L_0 \supsetneq \cdots \supsetneq L_i \supsetneq \pi_\infty L_0$. Write $P = \sum_{j=0}^i \epsilon_j [L_j]$ with $0 \leq \epsilon_j \leq  1$ and $\sum_{j=0}^i \epsilon_j = 1$.
Then
$$\nu_P:= \sup\{ q^{-\xi_j} \nu_{L_j} : 0\leq j \leq i\}$$
with
$$\xi_j := \sum_{\ell = 0}^{j-1} \epsilon_\ell \quad \text{ and } \quad \nu_L(x):= \inf\{ |a|_\infty: a \in k_\infty \text{ with } x \in a L\}.$$

\begin{defn}\label{defn 1.6}
The \textit{building map} $\lambda : \Hfk^r \rightarrow \Bcal^r(\QQ)$ is defined by
$$z = (z_1 : \cdots : z_{r-1}: 1) \in \Hfk^r \longmapsto \nu_z := \big((a_1,...,a_r) \in k_\infty^r \mapsto |a_1z_1 + \cdots + a_{r-1} z_{r-1} + a_r |_\infty\big).$$
\end{defn}

The right action of $\GL_r(k_\infty)$ on $k_\infty^r$ yields a left action on the set of norms on $k_\infty^r$ and then on $\Bcal_r(\RR)$.

\begin{prop}\label{prop 1.6}
\text{\rm (cf.\ \cite[Proposition 1.5.3]{G-R} and \cite[(4.2) Proposition]{D-H}} The building map $\lambda$ is $\GL_r(k_\infty)$-equivariant.
\end{prop}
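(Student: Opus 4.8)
The plan is to verify directly that the norm $\nu_z$ attached to $z \in \Hfk^r$ transforms correctly under the action of $\GL_r(k_\infty)$, since once the map $\lambda$ is shown to be well-defined into $\Bcal^r(\QQ)$ (which amounts to checking $\nu_z$ is a genuine norm and rational), equivariance is a matter of unwinding the two definitions of the respective actions. First I would recall precisely the two actions in play: on the left, $\GL_r(k_\infty)$ acts on $\Hfk^r$ by the M\"obius-type formula $g \cdot z = (z_1' : \cdots : z_r')$ where the column vector $(z_i')$ is $g$ times the column vector $(z_i)$ with $z_r = 1$; on the right, $\GL_r(k_\infty)$ acts on row vectors $a = (a_1,\dots,a_r) \in k_\infty^r$ by $a \mapsto ag$, and this induces a left action on norms by $(g \cdot \nu)(a) := \nu(ag)$, hence a left action on $\Bcal^r(\RR)$.

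The key computation is then the identity $\nu_{g\cdot z}(a) = \nu_z(ag)$ for all $a \in k_\infty^r$ and $g \in \GL_r(k_\infty)$. Write $g\cdot z$ with its normalized last coordinate equal to $1$; if $(w_1,\dots,w_r)^{t} = g\,(z_1,\dots,z_{r-1},1)^{t}$ then $g \cdot z = (w_1 : \cdots : w_r)$, and the normalized representative is $(w_1/w_r : \cdots : w_{r-1}/w_r : 1)$ (note $w_r = j(g,z) \neq 0$ since $z \in \Hfk^r$ and $g$ is invertible, using that $z$ avoids all $k_\infty$-rational hyperplanes). By definition,
\[
\nu_{g\cdot z}(a) = \Bigl| \sum_{i=1}^{r-1} a_i \frac{w_i}{w_r} + a_r \Bigr|_\infty = |w_r|_\infty^{-1} \cdot \Bigl| \sum_{i=1}^r a_i w_i \Bigr|_\infty.
\]
Now $\sum_{i=1}^r a_i w_i$ is exactly the matrix product $a\, g\, (z_1,\dots,z_{r-1},1)^{t}$, so $\sum_i a_i w_i = \sum_{i=1}^{r-1} (ag)_i z_i + (ag)_r$, giving $\bigl|\sum_i a_i w_i\bigr|_\infty = \nu_z(ag)$. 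Thus $\nu_{g\cdot z} = |j(g,z)|_\infty^{-1} \cdot (g \cdot \nu_z)$ as functions on $k_\infty^r$, and since norms are considered only up to the scaling equivalence defining $\Bcal^r(\RR)$ (equivalently, $\nu_z$ is only recorded as a point of the building, i.e. a homothety class), the scalar $|j(g,z)|_\infty^{-1}$ is absorbed. Hence $\lambda(g\cdot z)$ and $g \cdot \lambda(z)$ are the same point of $\Bcal^r(\RR)$, which is the claim.

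I do not anticipate a serious obstacle here; the statement is essentially bookkeeping. The one point requiring a little care is the normalization: $\nu_z$ as written is a specific norm, but two norms differing by a positive real scalar that is a power of $q$ represent the same point of the realization of the building (and more precisely the same homothety class of lattice-norms), so the appearance of the automorphy factor $|j(g,z)|_\infty$ is harmless — it is precisely the discrepancy between the naive action on normalized representatives and the clean action on projective space. I would also briefly note that $\nu_z$ really is a norm (it is $\FF_q$-... rather, $k_\infty$-homogeneous and satisfies the ultrametric inequality because $|\cdot|_\infty$ does, and it is positive definite exactly because $z \notin H_a$ for every $a \in \PP^{r-1}(k_\infty)$), and that its values lie in $q^\QQ$, so that $\lambda$ indeed lands in $\Bcal^r(\QQ)$; but these are prerequisites for the map being defined rather than part of the equivariance assertion, and are covered by the cited references \cite{G-R}, \cite{D-H}.
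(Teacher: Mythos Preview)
Your argument is correct. The paper does not actually supply its own proof of this proposition; it simply records the statement and cites \cite[Proposition 1.5.3]{G-R} and \cite[(4.2) Proposition]{D-H}. Your direct verification---computing $\nu_{g\cdot z}(a) = |j(g,z)|_\infty^{-1}\,\nu_z(ag)$ and observing that the scalar is absorbed by the homothety equivalence on norms---is exactly the standard argument one finds in those references, so there is nothing to compare beyond noting that you have written out what the paper leaves to citation. One cosmetic remark: the equivalence relation on norms defining $\Bcal^r(\RR)$ allows scaling by any positive real, not only by powers of $q$, so you need not restrict the scalar in your parenthetical comment; but this does not affect the argument.
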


\subsubsection{Imaginary part}\label{sec 1.5.1}

Given $z = (z_1:\cdots : z_{r-1} : 1) \in \Hfk^r$, for $1\leq i < r$, define the \lq\lq $i$-th imaginary part\rq\rq\ of $z$ by
$$\im(z)_i := \inf\left\{ \big|z_i + (\sum_{j=i+1}^{r-1} u_j z_j) + u_r\big|_\infty\ :\ u_{i+1},...,u_{r} \in k_\infty \right\}.$$
Take $\omega = (\omega_1 : \cdots :\omega_{r-1} : \omega_r) \in \Hfk^r$ where $\omega_i :=  z_i +  (\sum_{j=i+1}^{r-1} u_{ij} z_j) + u_{ir} \in \CC_\infty$ with $u_{ij} \in k_\infty$ so that $\im(z)_i = |\omega_i|_\infty$ for $1\leq i < r$ and $\omega_r :=1$.
Then
$$\nu_\omega(x) = \sup \{ |x_i \omega_i|_\infty : 1\leq i \leq r\}, \quad \forall x = (x_1,...,x_r) \in k_\infty^r.$$
Indeed, suppose $\nu_\omega(x) = |x_1\omega_1+\cdots+x_r \omega_r|_\infty < \sup\{|x_i \omega_i|_\infty: 1\leq i \leq r\}$.
Take $i_0$ minimal so that $|x_{i_0}\omega_{i_0}|_\infty =  \sup\{|x_i \omega_i|_\infty: 1\leq i \leq r\}$. Then $x_{i_0} \neq 0$ and $|x_i\omega_i|_\infty < |x_{i_0}\omega_{i_0}|_\infty$ for $i< i_0$, which implies 
$$|x_{i_0}\omega_{i_0}+ \cdots + x_r\omega_r|_\infty < |x_{i_0}\omega_{i_0}|_\infty.$$
Expressing 
$$\omega_{i_0}+ \frac{x_{i_0+1}}{x_{i_0}} \omega_{i_0+1} + \cdots + \frac{x_r}{x_{i_0}}
= z_{i_0}+ u_{i_0+1}'z_{i_0+1} + \cdots + u_r' \quad \text{ for some } u_{i_0+1}',...,u_r' \in k_\infty,$$
we get $|z_{i_0}+ u_{i_0+1}'z_{i_0+1} + \cdots + u_r'|_\infty < |\omega_{i_0}|_\infty = \im(z)_{i_0}$, a contradiction.\\

Write $|\omega_i|_\infty = q_\infty^{-\ell_i + 1 - \xi_i}$ where $\ell_i \in \ZZ$ and $\xi_i \in \QQ$ with $0\leq \xi_i < 1$ for $1\leq i < r$.
Let $\xi_0 := 0$ and $\xi_r := 1$ (so $\ell_r = 0$).
Take a permutation $\sigma$ of $\{1,...,r-1\}$ so that
$$\xi_{\sigma(1)} \leq \cdots \leq \xi_{\sigma(r-1)}.$$
We may put $\sigma(0):= 0$ and $\sigma(r) := r$.
Set
$$g_\omega = g_{\omega,0} := \begin{pmatrix} \pi_\infty^{\ell_1} & &  \\ & \ddots &  \\ & & \pi_\infty^{\ell_r} \end{pmatrix} \quad \text{ and } \quad
g_{\omega,i} := \begin{pmatrix}\pi_\infty^{\ell_1^{(i)}} & & \\ & \ddots & \\ & & \pi_\infty^{\ell_r^{(i)}}\end{pmatrix} \text{ for $1\leq i < r$,} $$
where 
$$\ell_{\sigma(j)}^{(i)} := \begin{cases} \ell_{\sigma(j)} - 1, & \text{ if $j\leq i$,} \\ \ell_{\sigma(j)}, & \text{ otherwise.}\end{cases}$$
Take $L_{\omega,i} := L^o g_{\omega,i}^{-1} \subset k_\infty^r$ for $1\leq i < r$.
Then we observe that
$$\nu_{\omega} = \sup \{q_\infty^{-\xi_{\sigma(i)}} \nu_{L_{\omega,i}} : 0\leq i <r \}.$$
In other words, we have
$$\lambda(\omega) = \sum_{0\leq i < r} \epsilon_i [L_{\omega,i}] \in \Bcal^r(\RR) \quad \text{ with } \quad \epsilon_{i} := \xi_{\sigma(i+1)} - \xi_{\sigma(i)}.$$

Note that $\omega = u \cdot z$ where 
$$u = \begin{pmatrix} 1 & & u_{ij} \\ & \ddots & \\ & & 1\end{pmatrix}.$$
For $0\leq i < r$ we take
\begin{eqnarray} \label{eqn 1.4}
g_{z,i}:= u^{-1} g_{\omega,i} \quad \text{ and } \quad L_{z,i} := L^o g_{z,i}^{-1}.
\end{eqnarray} 
Then:

\begin{lem}\label{lem 1.8}
For $z \in \Hfk^r$ with $\im(z)_i = q_\infty^{-\ell_i + 1 -\xi_i}$ where $\ell_i \in \ZZ$ and $0 \leq \xi_i < 1$, we have 
$$\lambda(z) = \sum_{i=0}^{r-1} \epsilon_i [L_{z,i}],$$
where $\epsilon_i$ and $L_{z,i}$ are taken as above for $0\leq i < r$.
\end{lem}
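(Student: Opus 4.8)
The plan is to reduce Lemma~\ref{lem 1.8} to the already-established description of $\lambda(\omega)$ for the special representative $\omega = u\cdot z$ constructed just before the statement, by exploiting the $\GL_r(k_\infty)$-equivariance of $\lambda$ from Proposition~\ref{prop 1.6}. The point is that passing from $z$ to $\omega$ amounts to acting by the unipotent matrix $u \in \GL_r(k_\infty)$, and the whole combinatorial apparatus ($g_{\omega,i}$, $L_{\omega,i}$, $\epsilon_i$) has been set up for $\omega$; we just need to transport it back.

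First I would record that, by the computation preceding the lemma, $\nu_\omega = \sup\{q_\infty^{-\xi_{\sigma(i)}}\nu_{L_{\omega,i}} : 0\le i < r\}$, hence $\lambda(\omega) = \sum_{0\le i<r}\epsilon_i[L_{\omega,i}]$ with $\epsilon_i = \xi_{\sigma(i+1)} - \xi_{\sigma(i)}$; this is exactly the content of the displayed equations in Section~\ref{sec 1.5.1}. Next, since $\omega = u\cdot z$, Proposition~\ref{prop 1.6} gives $\lambda(z) = u^{-1}\cdot\lambda(\omega)$, where $u^{-1}$ acts on $\Bcal^r(\RR)$ via the right action of $\GL_r(k_\infty)$ on norms. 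The coefficients $\epsilon_i \in \QQ_{\ge 0}$ summing to $1$ are unchanged by this action (it is simplicial and affine on each simplex), so it remains only to identify the image of each vertex $[L_{\omega,i}]$ under $u^{-1}$. By definition of the action, $u^{-1}\cdot[L_{\omega,i}] = [L_{\omega,i}\,u] = [L^o g_{\omega,i}^{-1} u]$; setting $g_{z,i} := u^{-1}g_{\omega,i}$ as in \eqref{eqn 1.4}, we have $g_{\omega,i}^{-1}u = (u^{-1}g_{\omega,i})^{-1} = g_{z,i}^{-1}$, so $u^{-1}\cdot[L_{\omega,i}] = [L^o g_{z,i}^{-1}] = [L_{z,i}]$. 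Combining, $\lambda(z) = \sum_{i=0}^{r-1}\epsilon_i[L_{z,i}]$, which is the claim.

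One bookkeeping point I would make explicit is that $\im(z)_i$ and $\im(\omega)_i$ coincide: by construction $\omega_i = z_i + \sum_{j>i}u_{ij}z_j + u_{ir}$ with $u_{ij}\in k_\infty$ achieves the infimum defining $\im(z)_i$, and acting further by a unipotent upper-triangular matrix with $k_\infty$-entries cannot decrease $|\omega_i|_\infty$ below $\im(z)_i$ (the infimum over all such shifts is the same), so $\im(\omega)_i = |\omega_i|_\infty = \im(z)_i = q_\infty^{-\ell_i+1-\xi_i}$; hence the integers $\ell_i$, the fractional parts $\xi_i$, the permutation $\sigma$, and the coefficients $\epsilon_i$ attached to $z$ in the lemma's hypothesis agree with those attached to $\omega$ in the preceding discussion. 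I would also note the two degenerate conventions $\xi_0 = 0$, $\xi_r = 1$, $\ell_r = 0$ are consistent on both sides.

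The main (and only real) obstacle is purely notational: one must be careful about left-versus-right actions and about the inverse in $L^o g^{-1}$, so that $u^{-1}$ acting on $\Bcal^r(\RR)$ really does send $g_{\omega,i}$ to $g_{z,i} = u^{-1}g_{\omega,i}$ rather than to $g_{\omega,i}u^{-1}$ or similar. Once the conventions fixed in Section~\ref{sec 1.5} (the identification $V(\Bcal^r) = \{[L^o g^{-1}]\}$ and the induced left action on norms) are matched against the definition \eqref{eqn 1.4}, the verification is a one-line substitution; there is no analytic or geometric difficulty beyond what Proposition~\ref{prop 1.6} already provides.
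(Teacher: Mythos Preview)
Your proof is correct and is precisely the argument the paper leaves implicit: the lemma is stated with ``Then:'' immediately after the computation of $\lambda(\omega)$ and the definition $g_{z,i}:=u^{-1}g_{\omega,i}$, and the intended justification is exactly the transport along the $\GL_r(k_\infty)$-equivariance of $\lambda$ (Proposition~\ref{prop 1.6}) that you have spelled out. Your bookkeeping check that $u^{-1}\cdot[L_{\omega,i}] = [L^o g_{\omega,i}^{-1}u] = [L^o g_{z,i}^{-1}] = [L_{z,i}]$ is the only thing to verify, and your handling of the left/right conventions matches the paper's.
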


Define the total imaginary part of $z \in \Hfk^r$ by
\begin{eqnarray}\label{eqn 1.5}
\im(z) := \prod_{i=1}^{r-1} \im(z)_i,
\end{eqnarray}
and put
$
[\im(z)]_i := |\det(g_{z,i})|_\infty$
for $0\leq i < r$.
The above lemma implies:

\begin{cor}\label{cor 1.9}
$(1)$ For $z_1,z_2 \in \Hfk^r$ with $\lambda(z_1) = \lambda(z_2)$, we get $\im(z_1) = \im(z_2)$.\\
$(2)$ Given $s \in \CC$, the following equality holds:
$$\frac{\im(z)^s}{\nu_z(x)^{rs}} = \sum_{i=0}^{r-1} c_{z,i}(s) \cdot \frac{[\im(z)]_i^s}{\nu_{L_{z,i}}(x)^{rs}}, \quad \forall x \in k_\infty^r-\{0\},$$
where
$$c_{z,i}(s) := q_{{}_\infty}^{{}^{\sum_{j=1}^r (\xi_{\sigma(i)}-\xi_j)s}} \cdot \frac{q_{{}_\infty}^{r\epsilon_i s}-1}{q_{{}_\infty}^{is}-q_{{}_\infty}^{(i-r)s}}, \quad 0\leq i <r.$$
\end{cor}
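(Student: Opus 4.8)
The plan is to read off both statements directly from Lemma~\ref{lem 1.8}, which already decomposes $\lambda(z)$ as $\sum_{i=0}^{r-1}\epsilon_i[L_{z,i}]$ with explicit barycentric coordinates $\epsilon_i=\xi_{\sigma(i+1)}-\xi_{\sigma(i)}$ and explicit lattices $L_{z,i}=L^og_{z,i}^{-1}$. For part $(1)$, I would note that $\im(z)=\prod_{i=1}^{r-1}\im(z)_i$ and $\im(z)_i=q_\infty^{-\ell_i+1-\xi_i}$, so
$$
\im(z)=q_\infty^{(r-1)-\sum_{i=1}^{r-1}(\ell_i+\xi_i)}.
$$
On the other hand $|\det g_{z,i}|_\infty=|\det g_{\omega,i}|_\infty$ since $u$ is unipotent, and from the definitions $|\det g_{\omega,0}|_\infty=q_\infty^{-\sum_{i=1}^{r-1}\ell_i}$ while passing from $g_{\omega,i-1}$ to $g_{\omega,i}$ multiplies the determinant by $q_\infty$ (one more diagonal entry gets an extra $\pi_\infty$). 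Thus $[\im(z)]_i=q_\infty^{i-\sum_{j=1}^{r-1}\ell_j}$, and in particular $[\im(z)]_i$ and the $\epsilon_i$ are functions of $\lambda(z)$ alone. Since $\nu_z(x)=\nu_{\lambda(z)}(x)$ by construction of the building map (Definition~\ref{defn 1.6}), the quantity $\im(z)$ is recovered from $\lambda(z)$ via the identity in part $(2)$ evaluated at any fixed $x$; alternatively, one checks directly that both $\ell_i+\xi_i$ (equivalently $\im(z)_i$) and the combinatorial data are determined by $\lambda(z)$, whence $\im(z_1)=\im(z_2)$.

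For part $(2)$, the key computation is the identity
$$
\nu_z(x)^{rs}=\Big(\sup_{0\le i<r}q_\infty^{-\xi_{\sigma(i)}}\nu_{L_{z,i}}(x)\Big)^{rs},
$$
which follows from $\nu_\omega=\sup\{q_\infty^{-\xi_{\sigma(i)}}\nu_{L_{\omega,i}}\}$ established in Section~\ref{sec 1.5.1} together with the $\GL_r(k_\infty)$-equivariance (Proposition~\ref{prop 1.6}) transporting the statement from $\omega$ back to $z=u^{-1}\omega$. Granting this, I would fix $x\in k_\infty^r\setminus\{0\}$, let $j_0$ be the index realizing the supremum, and then the claimed formula becomes an identity purely among the scalars $q_\infty^{-\xi_{\sigma(i)}}\nu_{L_{z,i}}(x)$: writing $t_i:=q_\infty^{-\xi_{\sigma(i)}}\nu_{L_{z,i}}(x)$ one has $\nu_z(x)=\max_i t_i$, and since the $L_{z,i}$ form a chain $L_{z,0}\supsetneq\cdots\supsetneq L_{z,r-1}\supsetneq\pi_\infty L_{z,0}$ the values $\nu_{L_{z,i}}(x)$ satisfy $\nu_{L_{z,i}}(x)\in\{q_\infty^{-1}\nu_{L_{z,0}}(x),\nu_{L_{z,0}}(x)\}$ in a monotone pattern; combined with $0=\xi_{\sigma(0)}\le\cdots\le\xi_{\sigma(r-1)}<1$ this forces the maximum to be attained at $i=0$, i.e.\ $\nu_z(x)=\nu_{L_{z,0}}(x)$. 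Then the right-hand side is $\sum_i c_{z,i}(s)[\im(z)]_i^s\nu_{L_{z,i}}(x)^{-rs}$, and using $[\im(z)]_i=q_\infty^i[\im(z)]_0$, $\im(z)=q_\infty^{\sum_{j=1}^r(\xi_{\sigma(i)}-\xi_j)}\cdot(\text{stuff})$, and $\nu_{L_{z,i}}(x)^{-rs}$ expressed in terms of $\nu_{L_{z,0}}(x)^{-rs}$ and the $\xi$'s, one checks by a direct geometric-series manipulation that
$$
\sum_{i=0}^{r-1}c_{z,i}(s)\frac{[\im(z)]_i^s}{\nu_{L_{z,i}}(x)^{rs}}=\frac{\im(z)^s}{\nu_z(x)^{rs}}.
$$
The telescoping in the definition of $c_{z,i}(s)$ — the factor $(q_\infty^{r\epsilon_i s}-1)/(q_\infty^{is}-q_\infty^{(i-r)s})$ — is precisely engineered so that the sum collapses.

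The main obstacle is the bookkeeping in part $(2)$: one must correctly track how $\nu_{L_{z,i}}(x)$ depends on $i$ relative to the position of $x$ in the flag of lattices, and match the exponents of $q_\infty$ coming from three sources — the $\xi_{\sigma(i)}$ shifts in $\nu_\omega$, the determinant normalizations $[\im(z)]_i$, and the total imaginary part $\im(z)$. A clean way to organize this is to first prove the identity after dividing through by $\im(z)^s\nu_{L_{z,0}}(x)^{-rs}$, reducing to the scalar identity $\sum_i (\text{ratio})=1$, and then observe that every $x$-dependence disappears because all the $\nu_{L_{z,i}}(x)/\nu_{L_{z,0}}(x)$ ratios are powers of $q_\infty$ dictated by the same combinatorial datum $\sigma$ that defines the $\epsilon_i$. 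Everything else is routine algebra with finite geometric sums, and part $(1)$ is then immediate.
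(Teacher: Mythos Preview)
Your plan for part $(1)$ is fine: the imaginary parts $\im(z)_i$ are recoverable from the norm $\nu_z$ alone (indeed $\im(z)_i=\inf\{\nu_z(e_i+u_{i+1}e_{i+1}+\cdots+u_re_r):u_j\in k_\infty\}$), and $\nu_z$ is determined by $\lambda(z)$ by definition of the building map. So that part is sound.

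The gap is in part $(2)$. You claim that since the $L_{z,i}$ form a chain $L_{z,0}\supsetneq\cdots\supsetneq L_{z,r-1}\supsetneq\pi_\infty L_{z,0}$, the values $\nu_{L_{z,i}}(x)$ lie in $\{q_\infty^{-1}\nu_{L_{z,0}}(x),\,\nu_{L_{z,0}}(x)\}$ and hence the supremum $\nu_z(x)=\sup_i q_\infty^{-\xi_{\sigma(i)}}\nu_{L_{z,i}}(x)$ is always attained at $i=0$. Both assertions are wrong. Because the lattices shrink as $i$ increases, the lattice norms $\nu_{L_{z,i}}(x)$ are weakly \emph{increasing} in $i$, and in fact $\nu_{L_{z,i}}(x)\in\{\nu_{L_{z,0}}(x),\,q_\infty\nu_{L_{z,0}}(x)\}$. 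There is a unique jump index $i_0=i_0(x)\in\{0,\dots,r\}$ (depending on $x$) such that $\nu_{L_{z,j}}(x)$ equals $\nu_{L_{z,i_0}}(x)$ for $j\geq i_0$ and $q_\infty^{-1}\nu_{L_{z,i_0}}(x)$ for $j<i_0$; a short check using $0\leq\xi_{\sigma(0)}\leq\cdots\leq\xi_{\sigma(r-1)}<1$ shows the supremum is attained precisely at $i_0$, not at $0$. This is exactly how the paper proceeds: it fixes $i_0$ and then computes both sides. Your telescoping idea is the right instinct, but the sum does not collapse to a single term independent of $x$; rather, the sum splits into the ranges $i<i_0$ and $i\geq i_0$, each contributing a geometric series in $q_\infty^{r\xi_{\sigma(i)}s}$, and it is the combination of these two pieces (with the extra $q_\infty^{-rs}$ on the first) that telescopes to $q_\infty^{r\xi_{\sigma(i_0)}s}$. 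Once you correct the direction of the lattice-norm inequality and carry the index $i_0$ through the computation, the algebra goes through as in the paper.
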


\begin{proof}
Note that $\nu_z = \sup\{q_\infty^{-\xi_{\sigma(i)}} \nu_{L_{z,i}} : 0\leq i < r\}$.
Given $x \in k_\infty^r-\{0\}$, we have that $\nu_z(x) = q_\infty^{-\xi_{\sigma(i_0)}} \nu_{L_{z,i_0}}(x)$ with minimal $i_0 \in \{0,...,r-1\}$ if and only if
$$\nu_{L_{z,j}}(x) = \begin{cases}
q_\infty^{-1}  \nu_{L_{z,i_0}}(x), & \text{ if $j < i_0$,} \\
 \nu_{L_{z,i_0}}(x), & \text{ otherwise.}
 \end{cases}
$$
Then
$$
\frac{\im(z)^s}{\nu_z(x)^{rs}}
= q_\infty^{r\xi_{\sigma(i_0)} s} \cdot \frac{q_\infty^{-\sum_{j=1}^r (\ell_j-1+\xi_j)s}}{\nu_{L_{z,i_0}}(x)^{rs}}.
$$
On the other hand, one has
\begin{eqnarray}
&& \sum_{i=0}^{r-1}c_{z,i}(s) \frac{[\im(z)]_i^s}{\nu_{L_{z,i}}(x)^{rs}} \nonumber \\
&=&
\frac{q_\infty^{-\sum_{j=1}^r(\ell_j + \xi_j) s}}{\nu_{L_{z,i_0}}(x)^{rs}}
\cdot \left[\bigg(
\sum_{i=0}^{i_0-1} q_\infty^{r\xi_{\sigma(i)} s} \cdot \frac{q_{{}_\infty}^{r\epsilon_i s}-1}{q_{{}_\infty}^{is}-q_{{}_\infty}^{(i-r)s}} \cdot \frac{q_\infty^{is}}{q_\infty^{-rs}}\bigg)  + \bigg(\sum_{i=i_0}^{r-1} q_\infty^{r\xi_{\sigma(i)} s} \cdot \frac{q_{{}_\infty}^{r\epsilon_i s}-1}{q_{{}_\infty}^{is}-q_{{}_\infty}^{(i-r)s}} \cdot q_\infty^{is}
\bigg)\right] \nonumber \\
&=& \frac{q_\infty^{-\sum_{j=1}^r(\ell_j -1+ \xi_j) s}}{ \nu_{L_{z,i_0}}(x)^{rs}\cdot (1-q_\infty^{-rs})}\cdot \left[\bigg(\sum_{i=0}^{i_0-1} q_\infty^{r\xi_{\sigma(i+1)} s} - q_\infty^{r\xi_{\sigma(i)}s}\bigg)
+ q_\infty^{-rs}\bigg(\sum_{i=i_0}^{r-1} q_\infty^{r\xi_{\sigma(i+1)} s} - q_\infty^{r\xi_{\sigma(i)}s}\bigg)\right] \nonumber \\
&=& \frac{q_\infty^{-\sum_{j=1}^r(\ell_j -1+ \xi_j) s}}{ \nu_{L_{z,i_0}}(x)^{rs}} \cdot q_\infty^{r\xi_{\sigma(i_0)} s}  . \nonumber
\end{eqnarray}
Therefore the result holds.
\end{proof}

\begin{rem}\label{rem 1.10}
For an $A$-lattice $\Lambda$ of rank $r$ in $\CC_\infty$, the {\it lattice discriminant} of $\Lambda$, denoted by $D_A(\Lambda)$, is the \lq\lq covolume\rq\rq\ of $\Lambda$ (cf.\ \cite[Section 4]{Wei2}):
choose an \lq\lq orthogonal\rq\rq\ $k_\infty$-basis $\{\lambda_i\}_{1\leq i \leq r}$ of $k_\infty \cdot \Lambda$, i.e.\ $\lambda_1,...,\lambda_r$ satisfy that 
\begin{itemize}
\item[(i)] $\lambda_i \in \Lambda$ for $1\leq i \leq r$;
\item[(ii)] $|a_1 \lambda_1+\cdots + a_{r} \lambda_{r}|_{\infty} = \max\{ |a_i \lambda_i|_\infty; 1\leq i \leq r\}$ for all $a_1,...,a_{r} \in k_\infty$.
\item[(iii)] $k_\infty \cdot \Lambda = \Lambda + (O_\infty \lambda_1 + \cdots O_\infty \lambda_r)$.
\end{itemize}
Set
$$D_A( \Lambda):= q^{1-g_k}\cdot\left( \frac{ \prod_{1\leq i \leq r} |\lambda_i|_\infty}{\#\big(\Lambda \cap (O_\infty\lambda_1 + \cdots +O_\infty \lambda_r)\big)}\right)^{1/r} = \left( \frac{\prod_{1\leq i \leq r} |\lambda_i|_\infty}{\#\big(\Lambda /(A \lambda_1 + \cdots +A \lambda_r)\big)}\right)^{1/r} .$$
It is clear that $D_A(c \cdot \Lambda) = |c|_\infty \cdot D_A(\Lambda)$ for every $c \in \CC_\infty^\times$.
In particular, for $z \in \Hfk^r$ and a rank $r$ projective $A$-module $Y \subset k^r$, we have
\begin{eqnarray}\label{eqn 1.6}
D_A(\Lambda_z^Y)^r &=& \Vert Y \Vert \cdot \im(z).
\end{eqnarray}
Here $\Vert Y \Vert := \#(A^r/\afk Y) \cdot \Vert \afk\Vert^{-r}$ for every ideal $\afk$ of $A$ so that $\afk Y \subset A^r$.

\begin{lem}\label{lem 1.11}
Given $z \in \Hfk^r \text{ and } \gamma \in \GL_r(k_\infty)$, we have
$$\im(\gamma \cdot z) = \frac{|\det \gamma |_\infty}{|j(\gamma,z)|_\infty^r} \cdot \im(z)$$
\end{lem}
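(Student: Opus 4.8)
The plan is to reduce the statement to the action of a convenient generating set of $\GL_r(k_\infty)$ and then verify it on each generator using the definition of the total imaginary part $\im(z) = \prod_{i=1}^{r-1}\im(z)_i$ together with Lemma~\ref{lem 1.8}. Concretely, I would first observe that both sides of the claimed identity are (up to the cocycle factor) functions on $\Hfk^r$, and that the cocycle $\gamma \mapsto |j(\gamma,z)|_\infty^{-r}|\det\gamma|_\infty$ satisfies the chain rule $j(\gamma_1\gamma_2,z) = j(\gamma_1,\gamma_2 z)\cdot j(\gamma_2,z)$ (a standard computation from the $\GL_r(k_\infty)$-action on $\widetilde{\Hfk}^r$). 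Hence if the formula holds for $\gamma_1$ and $\gamma_2$ it holds for $\gamma_1\gamma_2$, so it suffices to check it on generators.

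For the generating set I would take: (i) the unipotent upper-triangular matrices $u$ of the shape appearing just before (\ref{eqn 1.4}), i.e. those fixing the last coordinate; (ii) diagonal matrices; (iii) permutation matrices; and (iv) lower/elementary matrices needed to generate $\GL_r$. For type (i) one has $j(u,z) = 1$ and $\det u = 1$, and the point is exactly that $\im(u\cdot z) = \im(z)$: this is essentially built into the definition of $\im(z)_i$ as an infimum over $k_\infty$-translations by later coordinates, and it is the content of passing from $z$ to the point $\omega = u\cdot z$ in Section~\ref{sec 1.5.1}. For diagonal $\gamma = \mathrm{diag}(d_1,\dots,d_r)$ one computes directly that $\im(z)_i$ scales by $|d_i/d_r|_\infty$, so $\im(\gamma z) = \prod_{i=1}^{r-1}|d_i/d_r|_\infty \cdot \im(z)$, while $|\det\gamma|_\infty/|j(\gamma,z)|_\infty^r = \prod_i |d_i|_\infty / |d_r|_\infty^r$, and these agree. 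For permutation matrices and the remaining elementary generators one argues via the building map: by Proposition~\ref{prop 1.6} $\lambda$ is $\GL_r(k_\infty)$-equivariant, and by Corollary~\ref{cor 1.9}(1) $\im$ is a function of $\lambda(z)$; so it is enough to track how $|\det g_{z,i}|_\infty$ and the weights $\epsilon_i$ transform, which reduces to a finite combinatorial check on the decomposition in Lemma~\ref{lem 1.8}.

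An alternative, possibly cleaner route avoids generators entirely: relate $\im(z)$ to the lattice discriminant via (\ref{eqn 1.6}), namely $D_A(\Lambda_z^Y)^r = \Vert Y\Vert\cdot\im(z)$, use (\ref{eqn 1.2}) which says $\Lambda_{\gamma z}^{Y\gamma^{-1}} = j(\gamma,z)^{-1}\Lambda_z^Y$, and invoke $D_A(c\Lambda) = |c|_\infty D_A(\Lambda)$ together with the behaviour of $\Vert Y\Vert$ under $Y \mapsto Y\gamma^{-1}$, which contributes exactly $|\det\gamma|_\infty$. Taking $r$-th roots then yields the formula. I would likely present this second argument as the main proof since it is short and conceptual, and mention the direct verification only if (\ref{eqn 1.6}) has not yet been established at that point in the text.

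The main obstacle I anticipate is the permutation/elementary-matrix case in the first approach, or equivalently, in the second approach, pinning down precisely how $\Vert Y\Vert = \#(A^r/\afk Y)\cdot\Vert\afk\Vert^{-r}$ changes when $Y$ is replaced by $Y\gamma^{-1}$ for $\gamma \in \GL_r(k_\infty)$ that does not preserve $A^r$ — one must choose an auxiliary ideal $\afk$ with $\afk Y\gamma^{-1}\subset A^r$ and check the index computation is compatible with $|\det\gamma|_\infty$, using $\Vert\alpha A\Vert = |\alpha|_\infty$. This is a bookkeeping issue rather than a conceptual one, but it is where care is needed to get the exponents of $j(\gamma,z)$ and $\det\gamma$ exactly right.
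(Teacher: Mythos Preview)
Your second approach is essentially the paper's argument, and you have correctly located the only real difficulty: equation~(\ref{eqn 1.2}) is stated for $\gamma \in \GL_r(k)$, so $Y\gamma^{-1}$ is again a projective $A$-module in $k^r$ and $\Vert Y\gamma^{-1}\Vert$ makes sense. For $\gamma \in \GL_r(k_\infty)$ this breaks down, and it is not merely bookkeeping---$Y\gamma^{-1}$ need not be an $A$-module at all. The paper bypasses this cleanly: since $\GL_r(k)$ is dense in $\GL_r(k_\infty)$ and the quantities $\im(\cdot)$, $|\det(\cdot)|_\infty$, $|j(\cdot,z)|_\infty$ are locally constant (the first by Corollary~\ref{cor 1.9}(1) and the $\GL_r(k_\infty)$-equivariance of $\lambda$), one replaces $\gamma$ by a nearby $\gamma' \in \GL_r(k)$ with the same absolute values of $\det$ and $j$, and the same $\im(\gamma'z)=\im(\gamma z)$. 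Then (\ref{eqn 1.2}) and (\ref{eqn 1.6}) apply directly to $\gamma'$, and $\Vert Y(\gamma')^{-1}\Vert = |\det\gamma'|_\infty^{-1}\Vert Y\Vert$ is an honest index computation over $A$.

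Your first approach via generators is workable but less efficient; the permutation-matrix case is not a pure ``finite combinatorial check'' on the simplex data, since a permutation genuinely reshuffles which coordinates count as ``later'' in the definition of $\im(z)_i$, and $j(\gamma,z)$ becomes one of the $z_i$ rather than $1$. You would end up re-deriving the lattice-discriminant interpretation anyway. So the density trick is the missing idea that makes your preferred route short.
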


\begin{proof}
Take $\gamma' \in \GL_r(k)$ closed enough to $\gamma$ so that
$$\im(\gamma' \cdot z) = \im(\gamma \cdot z), \quad |\det \gamma'|_\infty = |\det \gamma|_\infty, \quad \text{ and } \quad |j(\gamma',z)|_\infty = |j(\gamma,z)|_\infty.$$
The result then follows from the equalities~(\ref{eqn 1.2}) and (\ref{eqn 1.6}).
\end{proof}
\end{rem}

\section{\lq\lq Non-holomorphic\rq\rq\ Eisenstein series}\label{sec 2}

We first recall the basic properties of mirabolic Eisenstein series on $\GL_r(\AA)$ to be used.

\subsection{Mirabolic Eisenstein series}\label{sec 2.1}

Let $\chi : k^\times \backslash \AA^\times \rightarrow \CC^\times$ be a unitary Hecke character.
Given a Schwartz function $\varphi \in S(\AA^r)$, i.e.\ the function $\varphi$ on $\AA^r$ is locally constant and compactly supported, put
$$\Phi(g,s;\chi,\varphi) := |\det(g)|_\AA^s \cdot  \int_{\AA^\times} \varphi\big((0,...,0,a^{-1})g\big) \chi(a) |a|_\AA^{-rs} d^\times a, \quad \forall g \in \GL_r(\AA)$$
The Haar measure $d^\times a$ on $\AA^\times$ is chosen so that $\text{vol}(O_\AA^\times, d^\times a) = 1$.
It is known that (cf.\ \cite[(4.1)]{J-S} or \cite[p.\ 119]{S-S}) the function $\Phi(g,s;\chi,\varphi)$ converges absolutely for every $g \in \GL_r(\AA)$ and $s \in \CC$ with $\re(s) > 1/r$, and has meromorphic continuation to the whole $s$-plane. Moreover, for each element $b = \begin{pmatrix} a & * \\ 0 & d \end{pmatrix} \in \GL_r(\AA)$ with $a \in \GL_{r-1}(\AA)$ and $d \in \AA^\times$, one has
$$\Phi(bg,s;\chi,\varphi) = |\det a|_\AA^s \cdot \chi(d) |d|_\AA^{(1-r)s} \cdot \Phi(g,s;\chi,\varphi), \quad \forall g \in \GL_r(\AA).$$

Let
$${\rm P} := \left\{ \begin{pmatrix} a & * \\ 0 & d \end{pmatrix} \in \GL_r : a \in \GL_{r-1}, \ d \in \GL_1 \right\}.$$

\begin{defn}\label{defn 2.1}
The \textit{mirabolic Eisenstein series on $\GL_r(\AA)$ associated to  $\chi$ and $\varphi$} is defined by:
$$\Ecal(g,s;\chi,\varphi) := \sum_{\gamma \in {\rm P}(k)\backslash \GL_r(k)} \Phi(\gamma g,s;\chi,\varphi), \quad \forall g \in \GL_r(\AA).$$
\end{defn}

The needed analytic properties of $\Ecal(g,s;\chi,\varphi)$ are stated in the following (cf.\ \cite[Section 4]{J-S} or \cite[p.\ 120]{S-S}):

\begin{thm}\label{thm 2.2}
${}$
\begin{itemize}
\item[(1)] The Eisenstein series $\Ecal(g,s;\chi,\varphi)$ converges absolutely for $\re(s) > 1$ and has meromorphic continuation to the whole $s$-plane. 
Moreover, $\Ecal(g,s;\chi,\varphi)$ is entire except for $\chi = |\cdot |_\AA^{\epsilon}$ with $\epsilon \in \sqrt{-1} \cdot \RR$.
In this case,
the only possible poles of $\Ecal(g,s;\chi,\varphi)$ are at 
$$s = \left\{-\frac{\epsilon}{r} + \frac{2n \pi \sqrt{-1}}{r\ln q} : n \in \ZZ\right\} \cup  \left\{1+\frac{\epsilon}{r} + \frac{2n \pi \sqrt{-1}}{r \ln q} : n \in \ZZ\right\},$$
with residues independent of the chosen $g \in \GL_r(\AA)$. 
\item[(2)] Let $\widehat{\varphi}$ be the Fourier transform of $\varphi$: For $x = (x_1,...,x_r) \in \AA^r$, put
$$\widehat{\varphi} (x) := \int_{\AA^r} \varphi(y) \psi( \sum_{i=1}^r x_i y_i) \ dy.$$
Here $dy$ is chosen to be self-dual with respect to the fixed additive character $\psi$ on $k\backslash \AA$. Then $\Ecal(g,s;\chi,\varphi)$ satisfies the following functional equation:
$$\Ecal(g,s;\chi,\varphi) = \Ecal({}^tg^{-1},1-s;\chi^{-1}, \widehat{\varphi}).$$
\end{itemize}
\end{thm}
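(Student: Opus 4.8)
The plan is to run the classical Godement--Jacquet / Jacquet--Shalika argument for degenerate (``mirabolic'') Eisenstein series, transposed to the global function field $k$: unfold $\Ecal(g,s;\chi,\varphi)$ into a global zeta integral against a Schwartz--Bruhat theta series, and then analyze that integral by Poisson summation together with a Tate-style splitting of the idele-norm range.

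First I would make the orbit decomposition explicit. The group $\mathrm{P}$ is precisely the stabilizer of the line $k e_r\subset k^r$ under right multiplication, so $\gamma\mapsto(\text{bottom row of }\gamma)$ identifies $\mathrm{P}(k)\backslash\GL_r(k)$ with $\PP^{r-1}(k)$. Since $\chi$ and $|\cdot|_\AA$ are trivial on $k^\times$ and $|\det a|_\AA=1$ for $a\in\GL_{r-1}(k)$ by the product formula, the summand $\Phi(\gamma g,s;\chi,\varphi)$ is genuinely $\mathrm{P}(k)$-left-invariant, so the series makes sense; moreover $|\det(\gamma g)|_\AA=|\det g|_\AA$. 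Choosing a representative vector on each line and folding the $\AA^\times$-integral defining $\Phi$ together with the sum over lines (substituting $a\mapsto ba$ for $b\in k^\times$) gives, for $\re(s)>1$, the zeta-integral form recalled in {\it Remark}~\ref{rem 2.3}:
\[
\Ecal(g,s;\chi,\varphi)=|\det g|_\AA^{s}\int_{k^\times\backslash\AA^\times}\theta_\varphi(a,g)\,\chi(a)\,|a|_\AA^{-rs}\,d^\times a,\qquad
\theta_\varphi(a,g):=\sum_{0\neq x\in k^r}\varphi(a^{-1}xg).
\]
Absolute convergence in this range follows from a height count: the sum over $\ell\in\PP^{r-1}(k)$ is dominated by $\sum_\ell H(\ell)^{-r\re(s)}$, a shifted Dedekind-type series convergent for $\re(s)>1$.

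The next ingredient is Poisson summation on the compact quotient $k^r\backslash\AA^r$, using the self-dual Haar measure attached to $\psi$ (for which $k^r$ is its own annihilator); applied to $x\mapsto\varphi(a^{-1}xg)$ it yields the theta functional equation
\[
\theta_\varphi(a,g)+\varphi(0)=\frac{|a|_\AA^{r}}{|\det g|_\AA}\Bigl(\theta_{\widehat\varphi}\bigl(a^{-1},{}^{t}g^{-1}\bigr)+\widehat\varphi(0)\Bigr).
\]
I would then split $\int_{k^\times\backslash\AA^\times}$ at $|a|_\AA=1$ along the exact sequence $1\to\AA^{1}/k^\times\to\AA^\times/k^\times\to q^{\ZZ}\to1$, the first group being compact. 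On the range $|a|_\AA\ge1$ one substitutes the theta functional equation; the term $\theta_{\widehat\varphi}(a^{-1},{}^{t}g^{-1})$ is rapidly decreasing there, as is $\theta_\varphi(a,g)$ on $|a|_\AA<1$, so after isolating the two ``degenerate'' contributions coming from $-\varphi(0)$ and $\widehat\varphi(0)|a|_\AA^{r}|\det g|_\AA^{-1}$ the remaining integrals converge for all $s$ and are entire. The degenerate contributions are integrals of $\chi(a)$ over the shells $|a|_\AA=q^{n}$: by orthogonality of characters on $\AA^{1}/k^\times$ they vanish unless $\chi|_{\AA^{1}}$ is trivial, i.e. $\chi=|\cdot|_\AA^{\epsilon}$ with $\epsilon\in\sqrt{-1}\cdot\RR$ (well-defined modulo $\tfrac{2\pi\sqrt{-1}}{\ln q}\ZZ$), in which case each becomes a geometric series in $q^{\pm s}$ summing to a rational function of $q^{s}$ with a single simple pole --- at $s\in-\tfrac{\epsilon}{r}+\tfrac{2\pi\sqrt{-1}}{r\ln q}\ZZ$ for the $\varphi(0)$-term and at $s\in1+\tfrac{\epsilon}{r}+\tfrac{2\pi\sqrt{-1}}{r\ln q}\ZZ$ for the $\widehat\varphi(0)$-term --- with residue a constant multiple of $\varphi(0)$ (resp.\ $\widehat\varphi(0)$) depending on $g$ at most through the unimodular factor $|\det g|_\AA^{s}$. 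This gives part~(1). Part~(2) is then formal: writing the split formula for $\Ecal({}^{t}g^{-1},1-s;\chi^{-1},\widehat\varphi)$ and using Fourier inversion $\widehat{\widehat\varphi}(x)=\varphi(-x)$ (reindexing $x\mapsto-x$ in the theta series), $|\det{}^{t}g^{-1}|_\AA=|\det g|_\AA^{-1}$, and the exponent bookkeeping $r-r(1-s)=rs$, one checks it coincides term-by-term with the split formula for $\Ecal(g,s;\chi,\varphi)$ --- with the norm-one shell and the degenerate terms matching up --- which is the functional equation $\Ecal(g,s;\chi,\varphi)=\Ecal({}^{t}g^{-1},1-s;\chi^{-1},\widehat\varphi)$.

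The step I expect to demand the most care is the analytic control underlying the splitting: proving that $\theta_\varphi(a,g)$ (and $\theta_{\widehat\varphi}$) decays rapidly as $|a|_\AA\to0$, uniformly for $g$ in compact sets, which rests on the fact that $k^{r}g$ is a discrete cocompact subgroup of $\AA^{r}$ behaving like a lattice even though $g\in\GL_r(\AA)$ is only adelic (a Riemann--Roch / lattice-point count exploiting the ultrametric structure), together with keeping track of the volume constant $\mathrm{vol}(\AA^{1}/k^\times)$ and of the conductor of $\psi$ entering the self-dual measure, so that the residues and the functional equation come out in the stated normalization. As all of this is carried out in \cite[Section~4]{J-S} (see also \cite[pp.~119--120]{S-S}), in the paper it is enough to cite these sources.
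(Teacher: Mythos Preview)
Your sketch is correct and reconstructs precisely the Jacquet--Shalika/Tate argument that underlies this result. Note that the paper does not supply its own proof of Theorem~\ref{thm 2.2} at all: the statement is simply quoted with a reference to \cite[Section~4]{J-S} and \cite[p.~120]{S-S}, exactly as you anticipate in your final paragraph, so there is nothing further to compare.
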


\begin{rem}\label{rem 2.3}
For $\re(s)>1$, we may express $\Ecal(g,s;\chi,\varphi)$ in the following form (cf.\ \cite[p.\ 119]{S-S}):
$$
\Ecal(g,s;\chi,\varphi) = |\det g|_\AA^s \cdot 
\int_{k^\times \backslash \AA^\times} \left(\sum_{0 \neq x \in k^r} \varphi(a^{-1} x g) \right) \chi(a)|a|_\AA^{-rs} d^\times a.
$$
\end{rem}

\subsection{Finite mirabolic Eisenstein series}\label{sec 2.2}

Let $d^\times a^\infty$ be the Haar measure on $\AA^{\infty,\times}$ normalized so that $\text{vol}(O_{\AA^\infty}^\times, d^\times a^\infty) = 1$.
Then $\Ical^\infty:=k^\times \backslash \AA^{\infty,\times} (\cong 
k^\times \backslash \AA^\times/ k_\infty^\times)$ is compact and
\begin{eqnarray}\label{eqn Ivol}
\text{vol}(\Ical^\infty, d^\times a^\infty) = \frac{\#(k^\times \backslash \AA^{\infty,\times}/O_{\AA^\infty}^\times)}{\#(O_{\AA^\infty}^\times\cap k^\times)} &=& \frac{\#\Pic(A)}{q-1}.
\end{eqnarray}
Denote by $\widehat{\Ical}^\infty$ the Pontryagin dual group of $\Ical^\infty$.
Given a Schwartz function $\varphi^\infty \in S((\AA^\infty)^r)$, 
we set
$$\Ecal^\infty(g,s;\varphi^\infty) := \frac{q-1}{\#\Pic(A)} \cdot\sum_{\chi \in \widehat{\Ical}^\infty}
\Ecal(g,s;\chi,\varphi^\infty \otimes \mathbf{1}_{L^o}), \quad \forall g  \in \GL_r(\AA).$$
Here $\mathbf{1}_{L^o}$ is the characteristic function of $L^o = O_\infty^r$.
Observe that $\Ecal(g,s;\chi,\varphi^\infty \otimes \mathbf{1}_{L^o}) = 0$ for all but finitely many $\chi \in \widehat{\Ical}^\infty$. Thus $\Ecal^\infty(g,s;\varphi^\infty)$ is a well-defined meromorphic function (in the variable $s$).
\begin{lem}\label{lem 2.4}
Given $\varphi^\infty \in S((\AA^\infty)^r)$, for  $g = (g_\infty,g^\infty) \in \GL_r(k_\infty) \times \GL_r(\AA^\infty) = \GL_r(\AA)$ we have
$$\Ecal^\infty (g,s;\varphi^\infty) = \frac{|\det g|_\AA^s}{1-q_\infty^{-rs}} \cdot  \sum_{0 \neq x \in k^r} \frac{\varphi^\infty(xg^\infty)}{\nu_{L^o}(xg_\infty)^{rs}},\quad \re(s)>1.$$
Here $\nu_{L^o}$ is the norm on $k_\infty^r$ associated to $L^o$ \text{\rm (cf.\ Section~\ref{sec 1.5})}.
\end{lem}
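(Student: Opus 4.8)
The plan is to unwind the definition of $\Ecal^\infty$ and collapse the sum over $\chi \in \widehat{\Ical}^\infty$ using Fourier orthogonality on the compact group $\Ical^\infty$. First I would start from the integral representation of $\Ecal(g,s;\chi,\varphi)$ recalled in \emph{Remark}~\ref{rem 2.3}, applied with $\varphi = \varphi^\infty \otimes \mathbf{1}_{L^o}$. Writing $a = (a^\infty, a_\infty) \in \AA^{\infty,\times} \times k_\infty^\times$ and $x = (x_1,\dots,x_r) \in k^r$, the factor $\varphi((a^{-1}x)g)$ splits as $\varphi^\infty(a^{-\infty} x g^\infty) \cdot \mathbf{1}_{L^o}(a_\infty^{-1} x g_\infty)$, and $|a|_\AA^{-rs} = |a^\infty|_{\AA^\infty}^{-rs}\,|a_\infty|_\infty^{-rs}$. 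Since $\chi$ is trivial on $k_\infty^\times$ (this is the content of tensoring with $\widehat{\Ical}^\infty$, i.e.\ summing over Hecke characters unramified at $\infty$ and trivial on $k_\infty^\times$), the $a_\infty$-integral decouples:
$$
\int_{k_\infty^\times} \mathbf{1}_{L^o}(a_\infty^{-1} x g_\infty) \,|a_\infty|_\infty^{-rs}\, d^\times a_\infty
= \sum_{n \geq \log_{q_\infty} \nu_{L^o}(xg_\infty)} q_\infty^{nrs} \cdot \text{vol}(O_\infty^\times)
= \frac{\nu_{L^o}(xg_\infty)^{-rs}}{1 - q_\infty^{-rs}},
$$
where I use that $a_\infty^{-1}xg_\infty \in L^o$ iff $\nu_{L^o}(xg_\infty) \leq |a_\infty|_\infty$, together with the standard geometric-series evaluation (which is where the factor $(1-q_\infty^{-rs})^{-1}$ comes from, and why $\re(s) > 0$ is needed for this piece).

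Next I would interchange the order: sum over $\chi$ first, then integrate over $a^\infty \in k^\times \backslash \AA^{\infty,\times}$. For a fixed $x \neq 0$ the remaining integrand against $\chi$ is $\chi(a^\infty)$ times a $\chi$-independent function of $(a^\infty, x)$, so Fourier inversion on the compact abelian group $\Ical^\infty$ gives
$$
\sum_{\chi \in \widehat{\Ical}^\infty} \chi(a^\infty) = \text{vol}(\Ical^\infty, d^\times a^\infty) \cdot \delta_{a^\infty \in k^\times O_{\AA^\infty}^\times} \cdot (\text{something}),
$$
more precisely the sum over all of $\widehat{\Ical}^\infty$ is a Dirac mass concentrated on the identity coset. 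Combined with $\text{vol}(\Ical^\infty) = \#\Pic(A)/(q-1)$ from \eqref{eqn Ivol}, the prefactor $(q-1)/\#\Pic(A)$ cancels exactly, and the $a^\infty$-integral collapses to its value at $a^\infty = 1$ (using that $\varphi^\infty$ and $\nu_{L^o}$ are invariant under $O_{\AA^\infty}^\times$ on the relevant locus, or absorbing that into the measure normalization). What survives is precisely
$$
\Ecal^\infty(g,s;\varphi^\infty) = \frac{|\det g|_\AA^s}{1-q_\infty^{-rs}} \sum_{0 \neq x \in k^r} \frac{\varphi^\infty(xg^\infty)}{\nu_{L^o}(xg_\infty)^{rs}}.
$$

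The main obstacle I anticipate is purely bookkeeping: justifying the interchange of the (finite, once $\varphi^\infty$ is fixed) $\chi$-sum with the integrals in the region $\re(s) > 1$ where everything converges absolutely, and tracking the measure normalizations so that the $(q-1)/\#\Pic(A)$ factor, the volume of $\Ical^\infty$, and the volume of $O_\infty^\times$ all combine cleanly. There is also a minor subtlety in that the sum defining $\Ecal^\infty$ is finite because $\Ecal(g,s;\chi,\varphi^\infty\otimes\mathbf 1_{L^o})$ vanishes for all but finitely many $\chi$ (the conductor of $\chi$ is constrained by the support of $\varphi^\infty$), so Fubini is unproblematic; but one should note that after the collapse one recovers an \emph{a priori} infinite sum over $x \in k^r$, whose absolute convergence for $\re(s)>1$ must be cited from Theorem~\ref{thm 2.2}(1) (or checked directly by comparison with the lattice-point count, as $\nu_{L^o}$ is comparable to the sup-norm). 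None of these steps is deep; the lemma is essentially the statement that averaging the mirabolic Eisenstein series over characters unramified at $\infty$ replaces the $\Ical^\infty$-integral by a point evaluation, and the $k_\infty^\times$-integral produces the explicit Euler-type factor $(1-q_\infty^{-rs})^{-1}$.
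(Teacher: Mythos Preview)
Your proposal is correct and follows essentially the same route as the paper: evaluate the $k_\infty^\times$-integral to produce the factor $(1-q_\infty^{-rs})^{-1}\nu_{L^o}(xg_\infty)^{-rs}$, then collapse the $\chi$-sum and the $\Ical^\infty$-integral by orthogonality. The paper makes your ``Dirac mass'' heuristic precise by choosing an open compact subgroup $U\subset O_{\AA^\infty}^\times$ with $(U\cdot k_\infty^\times)\cap k^\times=\{1\}$ and $\varphi^\infty$ $U$-invariant; this simultaneously (i) gives an honest fundamental domain $\coprod_i a_i^\infty U\cdot k_\infty^\times$ for $k^\times\backslash\AA^\times$, (ii) reduces the character sum to the finite set $\{\chi:\chi(U)=1\}$, and (iii) replaces your appeal to $O_{\AA^\infty}^\times$-invariance of $\varphi^\infty$ (which need not hold) by $U$-invariance (which does). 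This is exactly the bookkeeping you flagged as the main obstacle; there is no new idea beyond it.
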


\begin{proof}
We first claim that
\begin{eqnarray}\label{eqn:fmE}
&& \Ecal^\infty (g,s;\varphi^\infty) \nonumber \\
&=&|\det g|_\AA^s \cdot  \sum_{0 \neq x \in k^r} \varphi^\infty(xg^\infty) \cdot \left(\int_{k_\infty^\times} \mathbf{1}_{L^o}(a_\infty^{-1} x g_\infty)  |a_\infty|_\infty^{-rs} d^\times a_\infty \right), \quad \re(s)>1. 
\end{eqnarray}
Recall that $\nu_{L^o}(x) = \inf\{|a_\infty |_\infty : a_\infty \in k_\infty \text{ with } x \in a_\infty L^o\}$.
The result then follows from the identities below:
\begin{eqnarray}
\int_{k^\times_\infty} \mathbf{1}_{L^o}(a_\infty^{-1} x g_\infty) |a_\infty|_\infty^{-rs} d^\times a_\infty
&=& \int_{|a_\infty|_\infty \geq \nu_{L^o}(x g_\infty)} |a_\infty|_\infty^{-rs} d^\times a_\infty \nonumber  \\
&=& (1-q_\infty^{-rs})^{-1} \cdot \nu_{L^o}(x g_\infty)^{-rs}, \quad \re(s)>0. \nonumber
\end{eqnarray}
To show (\ref{eqn:fmE}), take an open compact subgroup $U$ of $O_{\AA^\infty}^\times$ small enough so that
\begin{itemize}
\item[(i)] $(U \cdot k_\infty^\times )\cap k^\times = \{1\}$ (in $\AA^\times$);
\item[(ii)] $\varphi^\infty(ux) = \varphi^\infty(x)$ for every $u \in U$ and $x \in (\AA^\infty)^r$.
\end{itemize}
Let $h_U := \#(k^\times \backslash \AA^{\infty,\times} / U)$, and choose a set $\{1=a_1^\infty,...,a_{h_U}^\infty\}$ of representatives of the cosets in  $k^\times \backslash \AA^{\infty,\times} / U$.
Then the integration over $k^\times \backslash \AA^\times$ is equal to the integration over 
$\coprod_{i=1}^{h_U} a_i^\infty U \cdot k_\infty^\times$.
For each $\chi \in \widehat{\Ical}^\infty$ with $\chi(U) =1$, by \textit{Remark}~\ref{rem 2.3} one has
\begin{eqnarray}
&& \Ecal(g,s;\chi,\varphi^\infty \otimes \mathbf{1}_{L^o}) \nonumber \\
&=& |\det g|_\AA^s  \cdot \text{vol}(U,d^\times a^{\infty}) \nonumber \\
&& \cdot \sum_{0 \neq x \in k^r} \sum_{i=1}^{h_U} \varphi^\infty((a_i^\infty)^{-1} x g^\infty) \chi(a_i^\infty)|a_i^\infty|_{\AA^\times}^{-rs} \cdot \int_{k_\infty^\times} \mathbf{1}_{L^o}(a_\infty^{-1} x g_\infty) |a_\infty|_\infty^{-rs} d^\times a_\infty, \quad \re(s)>1. \nonumber 
\end{eqnarray}
Since
$$\sum_{\subfrac{\chi \in \widehat{\Ical}^\infty}{\chi(U) = 1}} \chi(a_i^\infty)
= \begin{cases} 
h_U & \text{ if $i = 1$} \\
0 & \text{ otherwise,}
\end{cases}
$$
we obtain that
\begin{eqnarray}
\sum_{\chi \in \widehat{\Ical}^\infty} \Ecal(g,s;\chi,\varphi^\infty \otimes \mathbf{1}_{L^o})
&=& \sum_{\subfrac{\chi \in \widehat{\Ical}^\infty}{\chi(U) = 1}} \Ecal(g,s;\chi,\varphi^\infty \otimes \mathbf{1}_{L^o}) \nonumber \\
&=& |\det g|_\AA^s \cdot  \text{vol}(U,d^\times a^{\infty}) \cdot h_U \nonumber \\
&& \cdot \sum_{0 \neq x \in k^r}\varphi^\infty(x g^\infty)\cdot \int_{k_\infty^\times} \mathbf{1}_{L^o}(a_\infty^{-1} x g_\infty) |a_\infty|_\infty^{-rs} d^\times a_\infty. \nonumber
\end{eqnarray}
Therefore (\ref{eqn:fmE}) follows from the identity 
$$\text{vol}(U,d^\times a^\infty) \cdot h_U = \text{vol}(\Ical^\infty, d^\times a^\infty) = \frac{\#\Pic(A)}{q-1}.$$
\end{proof}

\begin{rem}\label{rem 2.5}
(1) We may view $\Ecal^\infty (\cdot,s;\varphi^\infty)$  as a function on $\GL_r(\AA)/k_\infty^\times \GL_r(O_\infty) \cong \Bcal^r_\AA(\ZZ)$.\\
(2) Given $\chi \in \widehat{\Ical}^\infty$ and $\varphi^\infty \in S((\AA^\infty)^r)$, we have
$$\Ecal(g,s;\chi,\varphi^\infty \otimes \mathbf{1}_{L^o}) = \int_{k^\times \backslash \AA^{\infty,\times}} \overline{\chi(a^\infty)} \cdot \Ecal^\infty (ga^\infty,s;\varphi^\infty) d^\times a^\infty.$$
\end{rem}

\subsection{Adelic Drinfeld half space and \lq\lq non-holomorphic\rq\rq\ Eisenstein series}\label{sec 2.3}

We let $\Hfk_\AA^r := \Hfk^r \times \GL_r(\AA^\infty)$.
Then $\Hfk_\AA^r$ admits a left action of $\GL_r(k)$ and a right action of $\GL_r(\AA^\infty)$ as follows:
$$\gamma \cdot z_\AA \cdot h^\infty := (\gamma z, \gamma g^\infty h^\infty),\quad  \forall z_\AA = (z,g^\infty) \in \Hfk_\AA^r,\ \gamma \in \GL_r(k),\   h^\infty \in \GL_r(\AA^\infty).$$
We may extend the building map to $\lambda_\AA: \Hfk_\AA^r \rightarrow \Bcal^r_\AA(\RR):= \Bcal^r(\RR)\times \GL_r(\AA^\infty)$ by setting
$\lambda_\AA(z,g^\infty):= (\lambda(z), g^\infty)$. Note that $\Bcal^r_\AA(\ZZ):= \Bcal^r(\ZZ)\times \GL_r(\AA^\infty) \subset \Bcal^r_\AA(\RR)$ can be identified with $\GL_r(\AA)/k_\infty^\times \GL_r(O_\infty)$.\\

For $z_\AA = (z,g^\infty) \in \Hfk_\AA^{r}$, the \lq\lq total imaginary part\rq\rq\ of $z_\AA$ is defined by: 
\begin{eqnarray}\label{eqn 2.1}
\im(z_\AA) &:=& \im(z) \cdot |\det g^\infty|_\AA.
\end{eqnarray}

\begin{defn}\label{defn 2.6}
Given a Schwartz function $\varphi^\infty \in S((\AA^\infty)^r)$, the \lq\lq non-holomorphic\rq\rq\ Eisenstein series associated to $\varphi^\infty$ on $\Hfk_\AA^r$ is:
$$\EE(z_\AA,s;\varphi^\infty) := \sum_{0 \neq x \in k^r} \big(\varrho(g^\infty)\varphi^\infty\big)(x) \cdot \frac{\im(z_\AA)^s}{\nu_z(x)^{rs}}, \quad \forall z_\AA = (z,g^\infty) \in \Hfk_\AA^r.$$
Here $\nu_z$ is the norm on $k_\infty^r$ associated to $z \in \Hfk^r$ in Definition~\ref{defn 1.6},
and 
$\varrho$ is the left action of $\GL_r(\AA^\infty)$ on the space $S((\AA^\infty)^r)$ of Schwartz functions on $(\AA^\infty)^r$ induced by right translations:
$$\big(\varrho(g^\infty)\varphi^\infty\big) (x) := \varphi^\infty(x g^\infty), \quad \forall x \in (\AA^\infty)^r.$$
\end{defn}
It is observed that for $h^\infty \in \GL_r(\AA^\infty)$, one has
$$\EE(z_\AA \cdot h^\infty,s;\varphi^\infty) = |h^\infty|_\AA^s \cdot \EE\big(z_\AA,s;\varrho(h^\infty)\varphi^\infty\big).$$

\begin{prop}\label{prop 2.7}
Given $z_\AA \in \Hfk_\AA^r$ and $\varphi^\infty \in S((\AA^\infty)^r)$,
the Eisenstein series $\EE(z_\AA,s;\varphi^\infty)$ converges absolutely for $\re(s)>1$ and has meromorphic continuation to the whole $s$-plane.
Moreover, $\EE(\cdot,s;\varphi^\infty)$ factors through $\lambda_\AA$.
\end{prop}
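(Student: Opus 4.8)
The plan is to reduce the analytic assertions about $\EE(z_\AA,s;\varphi^\infty)$ to the already-established properties of the finite mirabolic Eisenstein series $\Ecal^\infty(g,s;\varphi^\infty)$ via the building map, using Corollary~\ref{cor 1.9} as the computational engine. First I would fix $z_\AA = (z,g^\infty) \in \Hfk_\AA^r$ and write $\lambda(z) = \sum_{i=0}^{r-1}\epsilon_i[L_{z,i}]$ as in Lemma~\ref{lem 1.8}, with the auxiliary vertices $L_{z,i} = L^o g_{z,i}^{-1}$ of (\ref{eqn 1.4}). Applying the identity of Corollary~\ref{cor 1.9}(2) termwise to the defining series (with $x$ replaced by $xg_\infty$ where $g_\infty$ is the infinite component coming from $g_{z,i}$; more precisely, using $\nu_z(x) = \sup_i q_\infty^{-\xi_{\sigma(i)}}\nu_{L_{z,i}}(x)$ and $[\im(z)]_i = |\det g_{z,i}|_\infty$), I expand
$$
\EE(z_\AA,s;\varphi^\infty)
= \sum_{i=0}^{r-1} c_{z,i}(s)\,[\im(z)]_i^s\,|\det g^\infty|_\AA^s \sum_{0\neq x\in k^r}\frac{\big(\varrho(g^\infty)\varphi^\infty\big)(x)}{\nu_{L_{z,i}}(x)^{rs}}.
$$
The inner sum, up to the factor $(1-q_\infty^{-rs})^{-1}$ and the determinant normalization, is exactly $\Ecal^\infty(g_{z,i}\cdot(1,g^\infty),s;\varphi^\infty)$ by Lemma~\ref{lem 2.4} (after accounting for $|\det g_{z,i}|_\infty = [\im(z)]_i$ and the equivariance $\Ecal^\infty(g h^\infty,s;\varphi^\infty)$ under right translation by the finite part). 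Hence $\EE(z_\AA,s;\varphi^\infty)$ is a finite $\CC(q^s)$-linear combination of finite mirabolic Eisenstein series evaluated at the vertices $[g_{z,i}\cdot(1,g^\infty)] = [L_{z,i}]\times g^\infty \in \Bcal^r_\AA(\ZZ)$ adjacent to $\lambda_\AA(z_\AA)$.

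From this expansion the three claims follow. Absolute convergence for $\re(s)>1$ is inherited from Theorem~\ref{thm 2.2}(1) (each $\Ecal^\infty$, being a finite sum of $\Ecal(\cdot,s;\chi,\cdot)$, converges absolutely there), noting that the coefficients $c_{z,i}(s)$ and the factors $q_\infty^{-rs}$ are holomorphic and nonzero for $\re(s)>1$ away from the lattice of poles of $(1-q_\infty^{-rs})^{-1}$, which lie at $\re(s)=0$; one should double-check directly from the defining series (a comparison with $\EE^Y$) that no spurious singularity is introduced, i.e. that the apparent poles of the $c_{z,i}(s)$ at $s=0$ and at $q_\infty^{is}=q_\infty^{(i-r)s}$ cancel against the zeros and poles of the corresponding $\Ecal^\infty$-terms — but for the convergence statement this is irrelevant since $\re(s)>1$. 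Meromorphic continuation to all of $\CC$ follows because each $\Ecal(\cdot,s;\chi,\varphi^\infty\otimes\mathbf 1_{L^o})$ continues meromorphically (Theorem~\ref{thm 2.2}(1)) and the $c_{z,i}(s)$ are rational in $q^s$, hence meromorphic. Finally, the factorization through $\lambda_\AA$ is immediate from the expansion: every quantity on the right-hand side — the $\xi_j$, the $\epsilon_i$, the permutation $\sigma$, the vertices $L_{z,i}$, and $\im(z)$ (by Corollary~\ref{cor 1.9}(1)) — depends on $z$ only through $\lambda(z)$, and the finite component $g^\infty$ is common to $z_\AA$ and $\lambda_\AA(z_\AA)$; so if $\lambda_\AA(z_\AA) = \lambda_\AA(z_\AA')$ then $\EE(z_\AA,s;\varphi^\infty) = \EE(z_\AA',s;\varphi^\infty)$.

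The main obstacle I anticipate is bookkeeping rather than conceptual: one must carry the $g^\infty$-translation and the $|\det|$-normalizations carefully through Corollary~\ref{cor 1.9}(2) and Lemma~\ref{lem 2.4} so that the vertices $g_{z,i}$ on the infinite side glue correctly with $g^\infty$ to produce genuine elements of $\GL_r(\AA)$ representing points of $\Bcal^r_\AA(\ZZ)$, and so that the weight factor $\im(z_\AA)^s = \im(z)^s|\det g^\infty|_\AA^s$ is distributed among the summands as $[\im(z)]_i^s|\det g^\infty|_\AA^s$. A secondary subtlety worth a remark (and arguably cleaner to verify independently via the explicit continuation of $\EE^Y$ in Lemma~\ref{lem 2.10}) is that the combination $\sum_i c_{z,i}(s)\Ecal^\infty(\cdots)$ is honestly meromorphic with no pole at $s=0$ beyond what Theorem~\ref{thm 2.2} predicts — this is what makes the limit formula in Theorem~\ref{thm 0.1}(2) meaningful — but it is not needed for the present proposition, whose assertions are all covered by the expansion above together with Theorem~\ref{thm 2.2}.
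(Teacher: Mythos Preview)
Your proposal is correct and follows essentially the same route as the paper: apply Corollary~\ref{cor 1.9}(2) to rewrite $\im(z)^s/\nu_z(x)^{rs}$ as a $c_{z,i}(s)$-weighted sum over the neighboring vertices $[L_{z,i}]$, then invoke Lemma~\ref{lem 2.4} to recognize each resulting sum as $(1-q_\infty^{-rs})\,\Ecal^\infty(g_{z_\AA,i},s;\varphi^\infty)$ with $g_{z_\AA,i}=(g_{z,i},g^\infty)$, whence absolute convergence and meromorphic continuation follow from Theorem~\ref{thm 2.2}; the factorization through $\lambda_\AA$ is read off from Corollary~\ref{cor 1.9}(1). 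Your extra remarks on possible cancellations at $s=0$ are not needed for this proposition, as you note.
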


\begin{proof}
By Corollary~\ref{cor 1.9} (1), it follows that $\EE(z_\AA,s;\varphi^\infty)$, as a function in $z_\AA$, only depends on $\lambda_\AA (z_\AA)$.
Moreover, for $z_\AA = (z,g^\infty) \in \Hfk_\AA^r$, Corollary~\ref{cor 1.9} (2) says that
$$\frac{\im(z)^s}{\nu_z(x)^{rs}}
= \sum_{i=0}^{r-1} c_{z,i}(s) \frac{|\det g_{z,i}|_\infty^s}{\nu_{L^o}(xg_{z,i})^{rs}}, \quad \forall x \in k_\infty^r-\{0\},$$
where $g_{z,i} \in \GL_r(k_\infty)$ for $0\leq i <r$ are introduced in (\ref{eqn 1.4}) and $\nu_{L^o}$ is the norm on $k_\infty^r$ associated to $L^o = O_\infty^r$.
Let $g_{z_\AA,i} := (g_{z,i},g^\infty) \in \GL_r(k_\infty) \times \GL_r(\AA^\infty) = \GL_r(\AA)$ for $0\leq i < r$.
By Lemma~\ref{lem 2.4}, we obtain that
\begin{eqnarray}\label{eqn 2.2}
\EE(z_\AA,s;\varphi^\infty)  &=& (1-q_\infty^{-rs}) \cdot \sum_{i=0}^{r-1}c_{z,i}(s) \cdot \Ecal^\infty (g_{z_\AA,i},s;\varphi^\infty).
\end{eqnarray}
Therefore the result holds.
\end{proof}

\begin{rem}\label{rem 2.8}
(1) It is straightforward that $\EE(\gamma \cdot z_\AA,s;\varphi^\infty) = \EE(z_\AA,s;\varphi^\infty)$ for every $\gamma \in \GL_r(k)$.\\
(2) The equality~(\ref{eqn 2.2}) says that the following modified Eisenstein series
$$\widetilde{\EE}(z_\AA,s;\varphi^\infty) := (1-q_\infty^{-rs})^{-1} \cdot \EE(z_\AA,s;\varphi^\infty)$$
can be viewed as an extension function of the finite mirabolic Eisenstein series $\Ecal^\infty (g,s;\varphi^\infty)$ to $\Hfk_\AA^r$ via the building map $\lambda_\AA$.
In particular, for $\chi \in \widehat{\Ical}^\infty$ we let
$$\EE(z_\AA,s;\chi,\varphi^\infty) := \int_{k^\times \backslash \AA^{\infty,\times}} \overline{\chi(a^\infty)} \cdot \EE (z_\AA \cdot a^\infty,s;\varphi^\infty) d^\times a$$
and
$$\widetilde{\EE}(z_\AA,s;\chi,\varphi^\infty):= (1-q_\infty^{-rs})^{-1} \cdot \EE(z_\AA,s;\chi,\varphi^\infty).
$$ 
Then given $[g] \in \GL_r(\AA)/k_\infty^\times \GL_r(O_\infty) \cong \Bcal^r_\AA(\ZZ)$, from the equality~(\ref{eqn 2.2}) and \textit{Remark}~\ref{rem 2.5} (2) one has
$$\Ecal(g,s;\chi,\varphi^\infty \otimes \mathbf{1}_{L^o}) = \widetilde{\EE}(z_\AA,s;\chi,\varphi^\infty)$$
for every $z_\AA \in \Hfk_\AA^r$ with $\lambda_\AA(z_\AA) = [g]$.\\
(3) Theorem~\ref{thm 2.2} implies the following \lq\lq weak\rq\rq\ functional equations:
given $z_\AA, z_\AA' \in \Hfk_\AA^r$ with $\lambda_\AA(z_\AA) = {}^t \lambda_\AA(z_\AA')^{-1} \in \Bcal^r_\AA(\ZZ)$ and $\chi \in \widehat{\Ical}^\infty$, we have
$$\widetilde{\EE}(z_\AA,s;\varphi^\infty) = q_\infty^{-r\delta_\infty (s-1/2)} \cdot \widetilde{\EE}(z_\AA',1-s;\widehat{\varphi}^\infty)$$
and
$$
\widetilde{\EE}(z_\AA,s;\chi,\varphi^\infty) = q_\infty^{-r\delta_\infty (s-1/2)} \cdot \widetilde{\EE}(z_\AA',1-s;\chi^{-1},\widehat{\varphi}^\infty),$$
where $\widehat{\varphi}^\infty$ is the Fourier transform of $\varphi^\infty$, and $\delta_\infty$ is the conductor of $\psi$ at $\infty$ (cf. Section~\ref{sec 1.1}).
\end{rem}

\begin{ex}\label{ex 2.9}
Note that $ (\AA^\infty)^r = {\varprojlim}_{Y'} (k^r/Y')$.
For every $\varphi^\infty \in S((\AA^\infty)^r)$, there exists a projective $A$-module $Y$ of rank $r$ in $k^r$ so that 
$\varphi^\infty$ corresponds to a function $D_{\varphi^\infty}$ on $k^r/Y$ with finite support.
\begin{itemize}
\item[(1)] Let $Y$ be a projective $A$-module of rank $r$ in $k^r$. Take $\varphi_Y^\infty \in S((\AA^\infty)^r)$ corresponding to the characteristic function of $0+Y \in k^r/Y$. For $z = (z_1:\cdots : z_{r-1} : 1) \in \Hfk^r$, we have
\begin{eqnarray}
 \EE\big((z,1),s;\varphi_Y^\infty\big) &=& \sum_{0 \neq (c_1,...,c_{r-1},d) \in Y} \frac{ \im(z)^s}{|c_1z_1+\cdots + c_{r-1} z_{r-1} + d|_\infty^{rs}} \nonumber \\
&=& \sum_{0 \neq \lambda \in \Lambda^Y_z}\frac{\im(z)^s}{|\lambda|_\infty^{rs}} = \EE^{Y}(z,s). \nonumber
\end{eqnarray}
\item[(2)] Let $Y$ be a projective $A$-module of rank $r$ in $k^r$. Given $z = (z_1:\cdots : z_{r-1}: 1) \in \Hfk^r$ and $w \in \CC_\infty - \Lambda^Y_z$, consider the the following Eisenstein series of \lq\lq Jacobi-type\rq\rq:
$$\EE^Y(z,w,s):= \sum_{\lambda \in \Lambda^Y_z} \frac{\im(z)^s}{|\lambda - w|_\infty^{rs}},$$
which satisfies
$$\EE^Y\left(\gamma \cdot z, \frac{w}{j(\gamma,z)},s\right) = \EE^Y(z,w,s), \quad \forall \gamma =\begin{pmatrix}*&*\\ c_1 \ \cdots \ c_{r-1} &d\end{pmatrix} \in \GL(Y).$$
The following equality
\begin{eqnarray}\label{eqn 2.3}
\ \ \ \EE^Y(z,w,s) - \EE^Y(z,s) &=&\im(z)^s \left( \frac{1}{|w|_{\infty}^{rs}} + \sum_{0 \neq \lambda \in \Lambda^Y_z \atop |\lambda|_\infty \leq |w|_\infty}\left(\frac{1}{|\lambda - w|_\infty^{rs}} - \frac{1}{|\lambda|_\infty^{rs}}\right)\right)
\end{eqnarray}
gives the meromorphic continuation of $\EE^Y(z,w,s)$.
For $\alpha = (\alpha_1,...,\alpha_r) \in k^r - Y$, take $\varphi_{Y,\alpha}^\infty \in S((\AA^\infty)^r)$ corresponding to the characteristic function of $\alpha + Y \in k^r/Y$. 
Then 
$$\EE\big((z,1),s;\varphi_{Y,\alpha}^\infty\big) = \EE^Y(z,\alpha_z,s),$$
where $$\alpha_z := \alpha_1 z_1+ \cdots \alpha_{r-1} z_{r-1} + \alpha_r \in k\cdot \Lambda_z^Y \subset \CC_\infty.$$
\item[(3)] Conversely, given $\varphi^\infty \in S((\AA^\infty)^r)$, we take a projective $A$-module $Y$ of rank $r$ in $k^r$ so that 
$\varphi^\infty$ corresponds to a function $D_{\varphi^\infty}$ on $k^r/Y$ with finite support. Then
for $z \in \Hfk^r$, the following equality holds:
\begin{eqnarray}\label{eqn 2.4}
\EE\big((z,1),s;\varphi^\infty\big)
= D_{\varphi^\infty}(0) \cdot \EE^Y(z,s) + \sum_{0 \neq \alpha \in k^r/Y} D_{\varphi^\infty}(\alpha) \cdot \EE^Y(z,\alpha_z,s).\end{eqnarray}
\end{itemize}
\end{ex}

\subsection{Derivatives of non-holomorphic Eisenstein series}\label{sec 2.4}

The meromorphic continuation of $\EE^Y(z,s)$ can be described explicitly from the following identity:

\begin{lem}\label{lem 2.10}
Given a rank $r$ projective $A$-module $Y \subset k^r$ and $z \in \Hfk^r$, we have
$$E^Y(z,s) = \frac{ \im(z)^s}{|a|_\infty^{rs} - |a|_\infty^r} \cdot \sum_{0 \neq w \in \frac{1}{a} \Lambda_z^Y/\Lambda_z^Y} \left[\frac{1}{|w|_\infty^{rs}} + \sum_{0 \neq \lambda \in \Lambda_z^Y \atop |\lambda|_\infty \leq |w|_\infty} \left(\frac{1}{|\lambda-w|_\infty^{rs}} - \frac{1}{|\lambda|_\infty^{rs}}\right)\right], \quad \forall a \in A-\FF_q.$$
\end{lem}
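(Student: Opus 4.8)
The plan is to reduce the identity to the meromorphic-continuation formula \eqref{eqn 2.3} for the Jacobi-type series $\EE^Y(z,w,s)$, together with two elementary properties of the lattice $\Lambda^Y_z$: a scaling relation and a coset decomposition. I will work first in the half-plane $\re(s)>1$, where every series in sight converges absolutely. Observe that the right-hand side of the asserted identity is a \emph{finite} sum --- over the $|a|_\infty^{r}-1$ nonzero classes $w\in a^{-1}\Lambda^Y_z/\Lambda^Y_z$, and for each such $w$ over the finitely many $\lambda\in\Lambda^Y_z$ with $|\lambda|_\infty\le|w|_\infty$ --- divided by the entire function $|a|_\infty^{rs}-|a|_\infty^{r}$, hence is meromorphic on all of $\CC$; and $\EE^Y(z,s)$ is meromorphic in $s$ by Proposition~\ref{prop 2.7} (it is the special case $\varphi^\infty=\varphi_Y^\infty$, cf.\ Example~\ref{ex 2.9}). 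So once the identity is proved for $\re(s)>1$ it extends to the whole $s$-plane by analytic continuation, and simultaneously furnishes the explicit meromorphic continuation of $\EE^Y(z,s)$.

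The two lattice facts I will use are as follows. Set $a^{-1}Y:=\{a^{-1}y:y\in Y\}$, a rank-$r$ projective $A$-module with $\Lambda^{a^{-1}Y}_z=a^{-1}\Lambda^Y_z\supsetneq\Lambda^Y_z$ of index $|a|_\infty^{r}$ (here $a\notin\FF_q$ forces $|a|_\infty>1$, so the index is genuinely $>1$ and $|a|_\infty^{rs}-|a|_\infty^{r}\not\equiv 0$). First, the substitution $\mu=\lambda/a$ gives at once the scaling relation $\EE^{a^{-1}Y}(z,s)=|a|_\infty^{rs}\cdot\EE^Y(z,s)$. Second, decomposing $a^{-1}\Lambda^Y_z$ into cosets of $\Lambda^Y_z$ and rearranging the absolutely convergent series: the class of $0$ contributes $\EE^Y(z,s)$, while a nonzero class represented by $w$ contributes $\sum_{\lambda\in\Lambda^Y_z}\im(z)^s|w+\lambda|_\infty^{-rs}$, which by the change of variables $\lambda\mapsto-\lambda$ equals $\EE^Y(z,w,s)$ and depends only on $w+\Lambda^Y_z$. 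Comparing the two evaluations of $\EE^{a^{-1}Y}(z,s)$ then yields
$$\sum_{0\neq w\in a^{-1}\Lambda^Y_z/\Lambda^Y_z}\EE^Y(z,w,s)=(|a|_\infty^{rs}-1)\cdot\EE^Y(z,s).$$

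Finally I will sum \eqref{eqn 2.3}, i.e.\ $\EE^Y(z,w,s)-\EE^Y(z,s)=\im(z)^s\big(|w|_\infty^{-rs}+\sum_{0\neq\lambda\in\Lambda^Y_z,\,|\lambda|_\infty\le|w|_\infty}(|\lambda-w|_\infty^{-rs}-|\lambda|_\infty^{-rs})\big)$, over the $|a|_\infty^{r}-1$ nonzero classes $w$. By the displayed relation the left-hand side collapses to $(|a|_\infty^{rs}-1)\EE^Y(z,s)-(|a|_\infty^{r}-1)\EE^Y(z,s)=(|a|_\infty^{rs}-|a|_\infty^{r})\EE^Y(z,s)$, while the right-hand side is exactly $\im(z)^s$ times the double sum in the statement; dividing by $|a|_\infty^{rs}-|a|_\infty^{r}$ gives the lemma. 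I expect no serious obstacle: the only delicate points are the legitimacy of rearranging the coset sums (valid on $\re(s)>1$, the region of absolute convergence) and keeping track of the $\lambda=0$/$w=0$ term under the convention $\EE^Y(z,0,s)=\EE^Y(z,s)$.
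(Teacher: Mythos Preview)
Your proof is correct and follows essentially the same approach as the paper: both derive the key identity $\sum_{0\neq w\in a^{-1}\Lambda^Y_z/\Lambda^Y_z}\EE^Y(z,w,s)=(|a|_\infty^{rs}-1)\,\EE^Y(z,s)$ (the paper via the coset decomposition of $\Lambda^Y_z\setminus a\Lambda^Y_z$ followed by the rescaling $w'=aw$, you via computing $\EE^{a^{-1}Y}(z,s)$ two ways), then sum \eqref{eqn 2.3} over the nonzero classes and rearrange. The arguments are the same up to cosmetic differences in how the coset sum is first obtained.
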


\begin{proof}
Taking $a \in A-\FF_q$, one has
\begin{eqnarray}
(1-|a|_\infty^{-rs}) \cdot \EE^Y(z,s)
&=& \sum_{\lambda \in \Lambda_z^Y - a\Lambda_z^Y} \frac{\im(z)^s}{|\lambda|_\infty^{rs}} \nonumber \\
&=& \sum_{0 \neq w' \in \Lambda_z^Y/ a\Lambda_z^Y} \left( \sum_{\lambda \in \Lambda_z^Y} \frac{\im(z)^s}{|a\lambda - w'|_\infty^{rs}} \right) \nonumber \\
&=& |a|_\infty^{-rs} \cdot \sum_{0 \neq w \in  \frac{1}{a} \Lambda_z^Y/\Lambda_z^Y} \EE^Y(z,w,s). \nonumber 
\end{eqnarray}
Thus from the equality~(\ref{eqn 2.3}) we get
\begin{eqnarray}
(|a|_\infty^{rs}-1) \cdot \EE^Y(z,s)
&=& \sum_{0 \neq w \in  \frac{1}{a} \Lambda_z^Y/\Lambda_z^Y} \EE^Y(z,w,s) \nonumber \\
&=& (|a|_\infty^r-1) \cdot \EE^Y(z,s) \nonumber \\
&& +\im(z)^s  \sum_{0 \neq w \in  \frac{1}{a} \Lambda_z^Y/\Lambda_z^Y}
 \left( \frac{1}{|w|_{\infty}^{rs}} + \sum_{0 \neq \lambda \in \Lambda^Y_z \atop |\lambda|_\infty \leq |w|_\infty}\left(\frac{1}{|\lambda - w|_\infty^{rs}} - \frac{1}{|\lambda|_\infty^{rs}}\right)\right). \nonumber
\end{eqnarray}
Therefore
$$
(|a|_\infty^{rs} - |a|_\infty^r) \cdot \EE^Y(z,s) = 
\im(z)^s  \sum_{0 \neq w \in  \frac{1}{a} \Lambda_z^Y/\Lambda_z^Y}
 \left( \frac{1}{|w|_{\infty}^{rs}} + \sum_{0 \neq \lambda \in \Lambda^Y_z \atop |\lambda|_\infty \leq |w|_\infty}\left(\frac{1}{|\lambda - w|_\infty^{rs}} - \frac{1}{|\lambda|_\infty^{rs}}\right)\right)
 $$
and the result follows.
\end{proof}

For $a \in A-\FF_q$, we rewrite $\EE^Y(z,s)$ as
$$
\EE^Y(z,s)= \frac{\im(z)^s}{1- |a|_\infty^{r(1-s)}} \cdot \sum_{0 \neq w \in \Lambda_z^Y/ a\Lambda_z^Y} \left[\frac{1}{|w|_\infty^{rs}} + \sum_{0 \neq \lambda \in a \Lambda_z^Y \atop |\lambda|_\infty \leq |w|_\infty} \left(\frac{1}{|\lambda-w|_\infty^{rs}} - \frac{1}{|\lambda|_\infty^{rs}}\right)\right].$$
One immediately gets:

\begin{cor}\label{cor 2.11}
Given a projective $A$-module $Y$ of rank $r$ in $k^r$ and $z \in \Hfk^r$, for every $n\in\ZZ_{\geq 0}$ we have
\begin{eqnarray}
&& 
\frac{\partial^n}{\partial s^n}\left(\frac{1- |a|_\infty^{r(1-s)}}{\im(z)^s} \cdot \EE^Y(z,s)\right)\bigg|_{s=0} \nonumber \\
&=&(-r)^n  \cdot
\sum_{0 \neq w \in  \Lambda_z^Y/a \Lambda_z^Y} \left[\ln^n |w|_\infty + \sum_{0 \neq \lambda \in a \Lambda_z^Y \atop |\lambda|_\infty \leq |w|_\infty} \Big(\ln^n |\lambda-w|_\infty - \ln^n |\lambda|_\infty\Big)\right], \quad \forall a \in A-\FF_q.\nonumber
\end{eqnarray}
Here $\ln^n (x) := (\ln (x) )^n$ for $x \in \RR_{>0}$.
\end{cor}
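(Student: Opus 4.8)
The plan is to read the statement off directly from the reformulation of $\EE^Y(z,s)$ displayed immediately before it. That identity writes
$$\frac{1-|a|_\infty^{r(1-s)}}{\im(z)^s}\cdot\EE^Y(z,s)\ =\ \sum_{0\neq w\in\Lambda_z^Y/a\Lambda_z^Y}\left[\frac{1}{|w|_\infty^{rs}}+\sum_{\substack{0\neq\lambda\in a\Lambda_z^Y\\ |\lambda|_\infty\leq|w|_\infty}}\left(\frac{1}{|\lambda-w|_\infty^{rs}}-\frac{1}{|\lambda|_\infty^{rs}}\right)\right],$$
which exhibits the left-hand side as an explicit $\ZZ$-linear combination of functions of the single shape $s\mapsto|x|_\infty^{-rs}$, with $x$ ranging over the elements $w$, $\lambda$, $\lambda-w$ appearing above. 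First I would check that this is genuinely a \emph{finite} sum: the outer index set $\Lambda_z^Y/a\Lambda_z^Y$ is finite (indeed of cardinality $|a|_\infty^r$), and for each fixed representative $w$ the inner set $\{\,0\neq\lambda\in a\Lambda_z^Y:|\lambda|_\infty\leq|w|_\infty\,\}$ is finite because $a\Lambda_z^Y$ is a discrete $A$-lattice in $\CC_\infty$. Moreover each such $x$ is nonzero (note $w\notin a\Lambda_z^Y$, so $\lambda-w\neq 0$), hence $|x|_\infty\in\RR_{>0}$ and $\ln|x|_\infty\in\RR$ is well defined.

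Next I would record the elementary fact that, for fixed $x\in\CC_\infty^\times$, the function $s\mapsto|x|_\infty^{-rs}=\exp(-rs\ln|x|_\infty)$ is entire with
$$\frac{\partial^n}{\partial s^n}|x|_\infty^{-rs}\ =\ \big(-r\ln|x|_\infty\big)^n\cdot|x|_\infty^{-rs},\qquad\text{so}\qquad \frac{\partial^n}{\partial s^n}|x|_\infty^{-rs}\Big|_{s=0}=(-r)^n\ln^n|x|_\infty.$$
Since the displayed expression is a finite sum, differentiation commutes with the summation; applying the above term by term and setting $s=0$ turns $1/|w|_\infty^{rs}$ into $(-r)^n\ln^n|w|_\infty$ and each paired difference into $(-r)^n\big(\ln^n|\lambda-w|_\infty-\ln^n|\lambda|_\infty\big)$, which is exactly the asserted formula.

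There is essentially no real obstacle here beyond this bookkeeping; the one point deserving a sentence of justification is the finiteness of the inner sums, which is what legitimizes term-by-term differentiation and guarantees the right-hand side is itself a finite quantity. In particular no fresh analytic-continuation argument is needed, since the reformulation already presents $(1-|a|_\infty^{r(1-s)})\im(z)^{-s}\,\EE^Y(z,s)$ as an entire function of $s$ given by a finite exponential sum, so "differentiate and evaluate at $s=0$" is literally all that happens.
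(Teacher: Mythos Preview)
Your proposal is correct and follows exactly the paper's approach: the paper derives the corollary by writing ``One immediately gets'' after the displayed rewriting of $\EE^Y(z,s)$, and you have simply spelled out that immediate deduction (finiteness of the sums, term-by-term differentiation of $|x|_\infty^{-rs}$).
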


\begin{rem}\label{rem 2.12}
The above result can be viewed as an analogue of the formula for the Stieltjes constants occurring in the Laurent expansion of the Riemann zeta function $\zeta_\QQ(s)$ at $s=1$:
$$\zeta_\QQ(s) = \frac{1}{s-1} + \sum_{n=0}^\infty \frac{(-1)^n}{n!} \gamma_n (s-1)^n$$
with 
$$\gamma_n = \lim_{m \rightarrow \infty} \left[\left(\sum_{\ell=1}^m \frac{\ln^n(\ell)}{\ell}\right) - \frac{\ln^{n+1}(m)}{m+1}\right].$$
\end{rem}

From the fact that
\begin{eqnarray}
&& \frac{\partial^n}{\partial s^n}\left(\frac{1 - |a|_\infty^{r(1-s)}}{\im(z)^s} \cdot \EE^Y(z,s)\right)\bigg|_{s=0} \nonumber \\
&=& \sum_{m=0}^n \binom{n}{m} (-1)^m \Big(\ln^m \big(\im(z)\big)-|a|_\infty^r \ln^m \big(\im(z)|a|_\infty^r\big)\Big)
\cdot 
\frac{\partial^{n-m}}{\partial s^{n-m}} \EE^Y(z,s)\bigg|_{s=0}, \nonumber 
\end{eqnarray}
all the derivatives of $\EE^Y(z,s)$ at $s=0$ are determined recursively from Corollary~\ref{cor 2.11}. 
Consequently, we may describe all the derivatives of $\EE(z_\AA,s;\varphi^\infty)$ at $s=0$  from the equaity~(\ref{eqn 2.3}) and (\ref{eqn 2.4}).
It is natural to ask for a \lq\lq modular\rq\rq\ interpretation for all the derivatives in question, which will be explored in future work.

\section{Kronecker limit formula}\label{sec 3}

In this section, we shall present a Kronecker-type limit formula for $\EE(z_\AA,s;\varphi^\infty)$, which connects its first derivative at $s=0$ with Drinfeld-Siegel units.

\subsection{Drinfeld-Siegel units}\label{sec 3.1}

Let $Y\subset k^r$ be a projective $A$-module of rank $r$. Given a $\ZZ$-valued function
$D$ on $k^r/Y$ with finite support, put
$$\mu(D) := \sum_{\alpha \in k^r/Y} D(\alpha).$$ 
The \textit{Drinfeld-Siegel unit} associated to $D$ on $\Hfk^r$ is:
$$u^Y(z;D) := \Delta^Y(z)^{\mu(D)} \cdot \prod_{0 \neq \alpha \in k^r/Y} \exp_{\Lambda_z^Y}(\alpha_z)^{(q_\infty^r -1)D(\alpha)},
 \quad \forall z \in \Hfk^r.$$
Here for $z = (z_1:\cdots: z_{r-1}:1) \in \Hfk^r$, the point $\alpha_z$ is the image of $\alpha$ via the natural isomorphism 
$$
\begin{tabular}{ccl}
$k^r$ & $\cong$ & $k \cdot \Lambda^Y_z \ \subset \CC_\infty$ \\
$(\alpha_1,...,\alpha_r)$ & $\longmapsto$ & $\alpha_z := \alpha_1 z_1 + \cdots + \alpha_{r-1} z_{r-1} + \alpha_r$.
\end{tabular}
$$ 
Let $\mfk \lhd A$ be an ideal so that the support of $D$ is contained in $\mfk^{-1}Y/Y$.
Put $$\Gamma^Y(\mfk) := \ker \big(\GL(Y) \rightarrow \GL(Y/\mfk Y)\big).$$
Then $u^Y(z;D)$ is an invertible rigid analytic function on $\Hfk^r$ satisfying
$$u^Y(\gamma z;D) = j(\gamma,z)^{(q_\infty^r-1)D(0)} \cdot u(z;D), 
\quad \forall \gamma  \in \Gamma^Y(\mfk).$$
To see the above transformation law, notice that for $0 \neq \alpha \in \mfk^{-1}Y/Y$ and $\gamma \in \Gamma^Y(\mfk)$, we can derive from the equality~(\ref{eqn 1.2}) that
$$\exp_{\Lambda_{\gamma z}^Y}(\alpha_{\gamma z}) = j(\gamma,z)^{-1} \cdot \exp_{\Lambda_z^Y}(\alpha_z).$$
Together with the transformation law of $\Delta^Y$ the result follows.

\begin{lem}\label{lem 3.1}
Let $Y\subset k^r$ be a projective $A$-module of rank $r$.
Let $D$ be a $\ZZ$-valued function on $k^r/Y$ with finite support.
Given a projective $A$-modules $Y'$ of rank $r$ in $k^r$ with $Y' \subset Y$,
let $p$ be the canonical map from $k^r/Y'$ to $k^r/Y$. Then we have
$$u^{Y'}(z;{D \circ p}) = u^Y(z;D), \quad \forall z \in \Hfk^r.$$
\end{lem}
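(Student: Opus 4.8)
The plan is to unwind both sides through the definition of the Drinfeld-Siegel unit and reduce everything to the norm-compatibility relations already established for $\Delta^Y$ and $\exp_{\Lambda_z^Y}$ in Lemma~\ref{lem 1.5}. Fix $z \in \Hfk^r$ and write $\Lambda := \Lambda_z^{Y}$, $\Lambda' := \Lambda_z^{Y'}$, so that $\Lambda' \subset \Lambda$ is a sub-$A$-lattice of the same rank $r$ with $\#(\Lambda/\Lambda') = \#(Y/Y')$. First I would note that the natural isomorphism $k^r \cong k\cdot\Lambda = k\cdot\Lambda'$ sending $\alpha \mapsto \alpha_z$ is the same for $Y$ and $Y'$ (it depends only on $z$), and that under the canonical surjection $p : k^r/Y' \to k^r/Y$, fibres of $p$ are cosets of $Y/Y'$; concretely, the support of $D \circ p$ is the preimage of the support of $D$, so $\mu(D\circ p) = \#(Y/Y') \cdot \mu(D)$. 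This already matches the exponent $\#(Y/Y')$ appearing in the second displayed identity of Lemma~\ref{lem 1.5}, which is the clue that the two sides will collapse.

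Next I would substitute the product formula from Lemma~\ref{lem 1.5} for $\Delta^Y(z)$ into the definition of $u^Y(z;D)$, getting
$$
u^Y(z;D) = \Bigl(\Delta^{Y'}(z)^{\#(Y/Y')}\prod_{0\neq u \in \Lambda/\Lambda'} \exp_{\Lambda'}(u)^{q_\infty^r-1}\Bigr)^{\mu(D)} \cdot \prod_{0\neq \alpha \in k^r/Y}\exp_{\Lambda}(\alpha_z)^{(q_\infty^r-1)D(\alpha)}.
$$
The first factor is $\Delta^{Y'}(z)^{\#(Y/Y')\mu(D)} = \Delta^{Y'}(z)^{\mu(D\circ p)}$, which is exactly the discriminant contribution to $u^{Y'}(z;D\circ p)$. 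So the whole problem becomes the identity
$$
\Bigl(\prod_{0\neq u \in \Lambda/\Lambda'} \exp_{\Lambda'}(u)^{q_\infty^r-1}\Bigr)^{\mu(D)} \cdot \prod_{0\neq \alpha \in k^r/Y}\exp_{\Lambda}(\alpha_z)^{(q_\infty^r-1)D(\alpha)} \;=\; \prod_{0\neq \beta \in k^r/Y'}\exp_{\Lambda'}(\beta_z)^{(q_\infty^r-1)(D\circ p)(\beta)}.
$$
Here the key step is to use the first identity of Lemma~\ref{lem 1.5} applied with $w$ replaced by $\alpha_z$, namely $\exp_{\Lambda}(\alpha_z) = \exp_{\Lambda'}(\alpha_z)\prod_{0\neq u\in\Lambda/\Lambda'}\bigl(1 - \exp_{\Lambda'}(\alpha_z)/\exp_{\Lambda'}(u)\bigr)$, together with the observation that the cosets $\beta \in k^r/Y'$ mapping to a fixed $\alpha \in k^r/Y$ are precisely $\{\alpha + \bar u : \bar u \in \Lambda/\Lambda'\}$ (under the identification $Y/Y' \leftrightarrow \Lambda/\Lambda'$), so that $\{\beta_z\}$ ranges over $\{\alpha_z + u\}$. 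Grouping the right-hand product over the fibres of $p$ and using $(D\circ p)(\beta) = D(\alpha)$ on each fibre, the task is to check that $\prod_{u\in\Lambda/\Lambda'}\exp_{\Lambda'}(\alpha_z + u)$ equals $\exp_{\Lambda}(\alpha_z)$ times a correction coming from the $\alpha = 0$ fibre (where $\beta_z$ runs over the nonzero classes in $\Lambda/\Lambda'$, producing exactly the extra factor $\prod_{0\neq u}\exp_{\Lambda'}(u)^{\mu(D)}$ after accounting for the $\mu(D)$ bookkeeping).

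The main obstacle I anticipate is the careful bookkeeping of the $\alpha = 0$ term: on the right side, the fibre over $0 \in k^r/Y$ contributes the classes $\beta$ with $\beta \in Y/Y'$, i.e. the nonzero $u \in \Lambda/\Lambda'$, each weighted by $D(0)$, giving $\prod_{0\neq u}\exp_{\Lambda'}(u)^{(q_\infty^r-1)D(0)}$ — but the ``extra'' factor on the left is $\bigl(\prod_{0\neq u}\exp_{\Lambda'}(u)^{q_\infty^r-1}\bigr)^{\mu(D)}$, weighted by the \emph{total} mass $\mu(D)$, not just $D(0)$. Reconciling these forces one to be precise about how the identity $\exp_{\Lambda}(\alpha_z) = \exp_{\Lambda'}(\alpha_z)\prod_{u}(1 - \exp_{\Lambda'}(\alpha_z)/\exp_{\Lambda'}(u))$ redistributes a factor $\prod_{0\neq u}\exp_{\Lambda'}(u)^{-1}$ for \emph{each} nonzero $\alpha$, so that summing the exponents $D(\alpha)$ over all $\alpha$ recovers the mass $\mu(D)$. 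Once this telescoping is set up correctly — essentially a finite product rearrangement over the exact sequence $0 \to \Lambda/\Lambda' \to k^r/Y' \to k^r/Y \to 0$ — everything cancels and the lemma follows; there is no analytic subtlety since all products are finite and all functions involved are invertible rigid analytic functions on $\Hfk^r$.
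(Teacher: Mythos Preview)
Your proposal is correct and follows exactly the route the paper intends: the paper's own proof is the single sentence ``This follows directly from the `norm compatibilities' in Lemma~\ref{lem 1.5},'' and what you have written is precisely the unpacking of that sentence. Your identification of the bookkeeping issue is also accurate: rewriting $\exp_{\Lambda}(\alpha_z)$ via the first identity of Lemma~\ref{lem 1.5} (and using $\FF_q$-linearity of $\exp_{\Lambda'}$) gives $\prod_{u\in\Lambda/\Lambda'}\exp_{\Lambda'}(\alpha_z+u)=\exp_{\Lambda}(\alpha_z)\cdot\prod_{0\neq u}\exp_{\Lambda'}(u)$, so each nonzero $\alpha$ contributes an extra $\prod_{0\neq u}\exp_{\Lambda'}(u)^{-D(\alpha)}$, which together with the $\mu(D)$ coming from the discriminant substitution leaves exactly the exponent $D(0)$ needed to match the fibre over $0$.
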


\begin{proof}
This follows directly from the \lq\lq norm compatibilities\rq\rq\ in Lemma~\ref{lem 1.5}.
\end{proof}

From the identification ${\varprojlim}_Y (k^r/Y) = (\AA^\infty)^r$, every $\ZZ$-valued Schwartz function $\varphi^\infty$ on $(\AA^\infty)^r$ can be viewed as a $\ZZ$-valued function $D_{\varphi^\infty}$ of $k^r/Y$ with finite support for a suitable projective $A$-module $Y$ of rank $r$ in $k^r$.
Set 
$$u(z;\varphi^\infty):= u^Y(z,D_{\varphi^\infty}), \quad \forall z \in \Hfk^r.$$
Then Lemma~\ref{lem 3.1} assures that $u(z;\varphi^\infty)$ is independent of the chosen $Y$.
Recall that $\varrho$ is the left action of $\GL_r(\AA^\infty)$ on the Schwartz space $S((\AA^\infty)^r)$ defined by
$$\big(\varrho(g^\infty)\varphi^\infty\big) (x) := \varphi^\infty(x g^\infty), \quad \forall x \in (\AA^\infty)^r.$$
Let $S((\AA^\infty)^r)_\ZZ$ be the space of $\ZZ$-valued Schwartz function on $(\AA^\infty)^r$.

\begin{defn}\label{defn 3.2}
Given $\varphi^\infty \in S((\AA^\infty)^r)_\ZZ$, the \textit{Drinfeld-Siegel unit associated to $\varphi^\infty$ on $\Hfk_\AA^{r}$} is 
$$\ufk(z_\AA; \varphi^\infty) := u(z, \varrho(g^\infty)\varphi^\infty), \quad \forall z_\AA = (z,g^\infty) \in \Hfk_\AA^{r}.$$
\end{defn}

It is straightforward that
\begin{lem}\label{lem 3.3}
For $z_\AA = (z,g^\infty) \in \Hfk_\AA^r$ and $\gamma \in \GL_r(k)$, one has
$$\ufk(\gamma \cdot z_\AA; \varphi^\infty) = j(\gamma,z)^{(q_\infty^r-1)\varphi^\infty (0)} \cdot \ufk(z_\AA; \varphi^\infty).$$
In particular, the unit $\ufk(\cdot;\varphi^\infty)$ on $\Hfk_\AA^r$ is invariant under the action of $\GL_r(k)$ if $\varphi^\infty(0) = 0$.
\end{lem}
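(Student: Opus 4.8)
The plan is to reduce the assertion, via Definition~\ref{defn 3.2} and Lemma~\ref{lem 3.1}, to a transformation property of the functions $u^{Y}$ under the \emph{full} group $\GL_r(k)$, and then to run the same computation sketched above for the $\Gamma^Y(\mfk)$-transformation law of $u^Y$, this time allowing $\gamma$ to move the underlying lattice. Writing $z_\AA = (z,g^\infty)$ we have $\gamma\cdot z_\AA = (\gamma z,\gamma g^\infty)$, and since $\varrho$ is a left action of $\GL_r(\AA^\infty)$ (so $\varrho(\gamma g^\infty)=\varrho(\gamma)\varrho(g^\infty)$, with $\gamma$ embedded diagonally), Definition~\ref{defn 3.2} gives $\ufk(\gamma\cdot z_\AA;\varphi^\infty) = u\big(\gamma z,\varrho(\gamma)\psi^\infty\big)$ where $\psi^\infty := \varrho(g^\infty)\varphi^\infty$; note $\psi^\infty(0)=\varphi^\infty(0)$. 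Hence it suffices to prove, for every $\gamma\in\GL_r(k)$ and every $\ZZ$-valued Schwartz function $\psi^\infty$,
\[
u\big(\gamma z,\varrho(\gamma)\psi^\infty\big) = j(\gamma,z)^{(q_\infty^r-1)\psi^\infty(0)}\cdot u(z,\psi^\infty),\qquad z\in\Hfk^r .
\]

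Next I would translate this into the combinatorial picture. Choose a projective $A$-module $Y$ of rank $r$ in $k^r$ so that $\psi^\infty$ corresponds to a finitely supported function $D:=D_{\psi^\infty}$ on $k^r/Y$, with support in $\mfk^{-1}Y/Y$ say. Right translation by $\gamma$ carries the periodicity lattice $\widehat Y$ to $\widehat Y\gamma^{-1}=\widehat{Y\gamma^{-1}}$, so $\varrho(\gamma)\psi^\infty$ corresponds to the function $D^\gamma$ on $k^r/Y\gamma^{-1}$ defined by $D^\gamma(\beta+Y\gamma^{-1}):=D(\beta\gamma+Y)$; the substitution $\alpha=\beta\gamma$ is a bijection $k^r/Y\gamma^{-1}\to k^r/Y$ fixing $0$, whence $\mu(D^\gamma)=\mu(D)$ and $D^\gamma(0)=D(0)=\psi^\infty(0)$. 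By Lemma~\ref{lem 3.1} the claim becomes $u^{Y\gamma^{-1}}(\gamma z,D^\gamma)=j(\gamma,z)^{(q_\infty^r-1)D(0)}\,u^Y(z,D)$.

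The computation then uses three ingredients: the relation $\Lambda^{Y\gamma^{-1}}_{\gamma z}=j(\gamma,z)^{-1}\Lambda^Y_z$ from~\eqref{eqn 1.2}; the homogeneity of the exponential and of the discriminant, namely $\exp_{c\Lambda}(cw)=c\exp_\Lambda(w)$ and $\Delta^{c\Lambda}=c^{1-q_\infty^r}\Delta^\Lambda$ for $c\in\CC_\infty^\times$ (the latter coming from $\rho^{c\Lambda}_a=c\,\rho^{\Lambda}_a\,c^{-1}$ in $\CC_\infty\{\tau\}$ together with Proposition~\ref{prop 1.4}); and the elementary identity $j(\gamma,z)\cdot\beta_{\gamma z}=(\beta\gamma)_z$ for $\beta\in k^r$, immediate from $\widetilde{\gamma z}=j(\gamma,z)^{-1}\gamma\tilde z$. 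Substituting these into the definition of $u^{Y\gamma^{-1}}(\gamma z,D^\gamma)$ and re-indexing the product over $0\neq\beta\in k^r/Y\gamma^{-1}$ by $\alpha=\beta\gamma$, the $\Delta$-factor contributes $j(\gamma,z)^{(q_\infty^r-1)\mu(D)}$ while the product of the $\exp_{\Lambda^Y_z}(\alpha_z)$-terms contributes $j(\gamma,z)^{-(q_\infty^r-1)(\mu(D)-D(0))}$; multiplying, the two combine to $j(\gamma,z)^{(q_\infty^r-1)D(0)}=j(\gamma,z)^{(q_\infty^r-1)\varphi^\infty(0)}$, which is exactly the claimed factor, and the final sentence is the special case $\varphi^\infty(0)=0$.

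The routine parts here are genuinely routine; the step deserving care is the bookkeeping of the previous two paragraphs — verifying that $\varrho(\gamma)$ transports the defining data $(\widehat Y,D)$ of $\psi^\infty$ to $(\widehat{Y\gamma^{-1}},D^\gamma)$, so that Lemma~\ref{lem 3.1} may legitimately be invoked after the lattice has changed, and pinning down $\Delta^{c\Lambda}=c^{1-q_\infty^r}\Delta^\Lambda$ for the chosen normalisation of $\Delta^Y$, since the excerpt records only the $\GL(Y)$-equivariance of $\Delta^Y$ whereas a general $\gamma\in\GL_r(k)$ really does move the lattice. Everything else is the $\GL_r(k)$-analogue of the $\Gamma^Y(\mfk)$-computation already indicated before the statement.
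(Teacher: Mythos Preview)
Your argument is correct and is precisely the detailed unpacking that the paper has in mind when it declares the lemma ``straightforward'': the paper provides no proof beyond that word, having just recorded the $\Gamma^Y(\mfk)$-transformation law via \eqref{eqn 1.2}, so your computation—using $\Lambda^{Y\gamma^{-1}}_{\gamma z}=j(\gamma,z)^{-1}\Lambda^Y_z$, the homogeneity $\exp_{c\Lambda}(cw)=c\exp_\Lambda(w)$, and the resulting $\Delta^{Y\gamma^{-1}}(\gamma z)=j(\gamma,z)^{q_\infty^r-1}\Delta^Y(z)$—is exactly the intended route. One small remark: the $\Delta$-identity holds on the nose provided one uses the same auxiliary data $(a_1,a_2,\ell_1,\ell_2)$ to define $\Delta^{Y}$ and $\Delta^{Y\gamma^{-1}}$ (otherwise only up to a $(q_\infty^r-1)$-th root of unity, cf.\ Proposition~\ref{prop 1.4}); this is harmless for all uses in the paper, which only involve $|\ufk|_\infty$.
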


\subsection{Kronecker limit formula}\label{sec 3.2}

We now arrive at:

\begin{thm}\label{thm 3.4}
Take $\varphi^\infty \in S((\AA^\infty)^r)_\ZZ$. For $z_\AA \in \Hfk_\AA^r$ we have
\begin{eqnarray}\label{eqn 3.1}
\EE(z_\AA,0;\varphi^\infty) &=& - \varphi^\infty(0) \quad \text{ and}  \nonumber \\
\frac{\partial}{\partial s} \EE(z_\AA,s;\varphi^\infty)\bigg|_{s=0} &=& - \varphi^\infty(0) \cdot \ln \im(z_\AA) - \frac{r}{q_\infty^r-1} \ln |\ufk(z_\AA;\varphi^\infty)|_\infty. 
\end{eqnarray}
\end{thm}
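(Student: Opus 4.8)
The plan is to pass to the archimedean component via the $\GL_r(\AA^\infty)$-transformation law, split the resulting Eisenstein series into a \lq\lq discriminant\rq\rq\ summand and finitely many \lq\lq Jacobi\rq\rq\ summands, evaluate each at $s=0$ to first order using the explicit meromorphic continuations of Section~\ref{sec 2.4}, and recombine into the Drinfeld--Siegel unit. First, writing $z_\AA=(z,g^\infty)$ and applying the identity recorded after Definition~\ref{defn 2.6}, $\EE(z_\AA\cdot g^\infty,s;\varphi^\infty)=|\det g^\infty|_\AA^s\cdot\EE(z_\AA,s;\varrho(g^\infty)\varphi^\infty)$, with base point $(z,1)$, one reduces both assertions to the case $g^\infty=1$ applied to $\varrho(g^\infty)\varphi^\infty$: the value at $s=0$ is unchanged (as $(\varrho(g^\infty)\varphi^\infty)(0)=\varphi^\infty(0)$), while differentiating $|\det g^\infty|_\AA^s$ contributes $-\varphi^\infty(0)\ln|\det g^\infty|_\AA$, which merges with $-\varphi^\infty(0)\ln\im(z)$ into $-\varphi^\infty(0)\ln\im(z_\AA)$ by \eqref{eqn 2.1}, while $\ufk(z_\AA;\varphi^\infty)=u(z;\varrho(g^\infty)\varphi^\infty)$ by Definition~\ref{defn 3.2}. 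So assume $g^\infty=1$, fix by Example~\ref{ex 2.9} a rank $r$ projective $A$-module $Y\subset k^r$ for which $\varphi^\infty$ corresponds to a finitely supported $\ZZ$-valued $D=D_{\varphi^\infty}$ on $k^r/Y$, and invoke \eqref{eqn 2.4}:
$$
\EE\big((z,1),s;\varphi^\infty\big)=D(0)\cdot\EE^Y(z,s)+\sum_{0\neq\alpha\in k^r/Y}D(\alpha)\cdot\EE^Y(z,\alpha_z,s).
$$

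For the discriminant summand I would differentiate the identity of Lemma~\ref{lem 2.10} directly. Fixing $a\in A-\FF_q$ (so $|a|_\infty=q^{\deg a}>1$) and using $\#(\tfrac1a\Lambda^Y_z/\Lambda^Y_z)=|a|_\infty^r$ together with the vanishing of the inner telescoping sums at $s=0$ gives $\EE^Y(z,0)=-1$. For the first derivative, the ultrametric inequality yields, for each coset representative $w$,
$$
\ln|w|_\infty+\sum_{\substack{0\neq\lambda\in\Lambda^Y_z\\ |\lambda|_\infty\le|w|_\infty}}\bigl(\ln|\lambda-w|_\infty-\ln|\lambda|_\infty\bigr)=\ln\bigl|\exp_{\Lambda^Y_z}(w)\bigr|_\infty,
$$
the terms with $|\lambda|_\infty>|w|_\infty$ dropping out of the Weierstrass product defining $\exp_{\Lambda^Y_z}$, the sum being finite by discreteness of $\Lambda^Y_z$, and the left side being independent of the chosen representative since $\exp_{\Lambda^Y_z}$ is $\Lambda^Y_z$-periodic. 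Summing over $0\neq w\in\tfrac1a\Lambda^Y_z/\Lambda^Y_z$ and substituting the product formula $\Delta^Y_a(z)=a\cdot\prod_{w}\exp_{\Lambda^Y_z}(w)^{-1}$ of Lemma~\ref{lem 1.3}, the $\ln|a|_\infty$ terms cancel after the quotient-rule bookkeeping, leaving
$$
\frac{\partial}{\partial s}\EE^Y(z,s)\Big|_{s=0}=-\ln\im(z)+\frac{r}{1-|a|_\infty^r}\ln\bigl|\Delta^Y_a(z)\bigr|_\infty;
$$
then $(\Delta^Y)^{|a|_\infty^r-1}=(\Delta^Y_a)^{q_\infty^r-1}$ from Proposition~\ref{prop 1.4} (using $q^{r\deg a}=|a|_\infty^r$) converts this into $-\ln\im(z)-\tfrac{r}{q_\infty^r-1}\ln|\Delta^Y(z)|_\infty$, visibly independent of $a$.

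For the Jacobi summands, \eqref{eqn 2.3} shows $\EE^Y(z,w,s)-\EE^Y(z,s)$ is $\im(z)^s$ times a single telescoping sum of the same shape, equal to $1$ at $s=0$, with $s$-derivative at $0$ equal to $\ln\im(z)-r\ln|\exp_{\Lambda^Y_z}(w)|_\infty$ by the identity above; together with the previous paragraph this gives $\EE^Y(z,\alpha_z,0)=0$ and
$$
\frac{\partial}{\partial s}\EE^Y(z,\alpha_z,s)\Big|_{s=0}=-\frac{r}{q_\infty^r-1}\ln\bigl|\Delta^Y(z)\cdot\exp_{\Lambda^Y_z}(\alpha_z)^{q_\infty^r-1}\bigr|_\infty.
$$
Feeding these into the decomposition above yields $\EE((z,1),0;\varphi^\infty)=-D(0)=-\varphi^\infty(0)$, and for the derivative, collecting the $\ln|\Delta^Y(z)|_\infty$ coefficients into $\mu(D)=\sum_\alpha D(\alpha)$ and the $\exp_{\Lambda^Y_z}(\alpha_z)$ factors with exponents $(q_\infty^r-1)D(\alpha)$ produces exactly $-\varphi^\infty(0)\ln\im(z)-\tfrac{r}{q_\infty^r-1}\ln|u^Y(z;D)|_\infty$ by the definition of the Drinfeld--Siegel unit, Lemma~\ref{lem 3.1} ensuring independence of $Y$. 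Undoing the first reduction via \eqref{eqn 2.1} and Definition~\ref{defn 3.2} then gives \eqref{eqn 3.1}.

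The conceptual work is already packaged in Lemma~\ref{lem 2.10} and Corollary~\ref{cor 2.11}, which effect the meromorphic continuation and express the Stieltjes-type second term through $\bar k$-algebraic quantities only; inside the present argument the one genuinely delicate step is the identification of the regularized telescoping sum with $\ln|\exp_{\Lambda^Y_z}(w)|_\infty$ — requiring the ultrametric split, finiteness from discreteness, and representative-independence — and then carrying the auxiliary parameter $a\in A-\FF_q$ through the differentiation cleanly so that it drops out of the final formula.
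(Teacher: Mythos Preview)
Your proposal is correct and follows essentially the same route as the paper's proof: the same reduction to $g^\infty=1$, the same splitting via \eqref{eqn 2.4} into the discriminant and Jacobi summands, and the same evaluations \eqref{eqn 3.2}--\eqref{eqn 3.3} obtained from Lemma~\ref{lem 2.10}/\eqref{eqn 2.3} together with Lemma~\ref{lem 1.3} and Proposition~\ref{prop 1.4}. The only cosmetic difference is that you differentiate Lemma~\ref{lem 2.10} directly rather than citing Corollary~\ref{cor 2.11}, and you isolate the ultrametric identity $\ln|w|_\infty+\sum_{|\lambda|_\infty\le|w|_\infty}(\ln|\lambda-w|_\infty-\ln|\lambda|_\infty)=\ln|\exp_{\Lambda^Y_z}(w)|_\infty$ as a named step (the paper uses it inline in the computation for \eqref{eqn 3.3}).
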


\begin{proof}
Note that for $z_\AA = (z,g^\infty) \in \Hfk_\AA^r$, one observes that
$$\EE(z_\AA,s;\varphi^\infty) = |g^\infty|_\AA^s \cdot \EE\big((z,1),s;\varrho(g^\infty)\varphi^\infty\big).$$
From the equality~(\ref{eqn 2.4}), it then suffices to show the equalities~(\ref{eqn 3.2}) and (\ref{eqn 3.3}) below:
\begin{eqnarray}\label{eqn 3.2}
\EE^Y(z,0) = -1, \quad  \frac{\partial}{\partial s} \EE^Y(z,s)\Big|_{s=0} 
= 
- \ln \im(z) - \frac{r}{q_\infty^r-1} \cdot \ln |\Delta^Y(z)|_\infty
\end{eqnarray}
for every projective $A$-module $Y$ of rank $r$ in $k^r$ and $z \in \Hfk^r$, and
\begin{eqnarray}\label{eqn 3.3}
&& \EE^Y(z,w,0) = 0, \quad 
\frac{\partial}{\partial s} \EE^Y(z,w,s)\Big|_{s=0}
= - \frac{r}{q_\infty^r-1} \cdot \ln |\Delta^Y(z)|_\infty - r \ln |\exp_{\Lambda_z^Y}(w)|_\infty.
\end{eqnarray}
for every $w \in \CC_\infty - \Lambda^Y_z$.
Note that the equality~(\ref{eqn 3.2}) follows from Lemma~\ref{lem 1.3} and Corollary~\ref{cor 2.11} (for the case of $n=0$ and $n=1$). To show the equality~(\ref{eqn 3.3}), from the equaity~(\ref{eqn 2.3}) we get
$\EE^Y(z,w;0) = 0$ and
\begin{eqnarray}
&&\frac{\partial}{\partial s} \EE^Y(z,w,s)\Big|_{s=0} \nonumber \\
&=& \frac{\partial}{\partial s} \EE^Y(z,s)\Big|_{s=0}
+ \ln \im(z) - r \cdot \left(\ln |w|_\infty + \sum_{\subfrac{0 \neq \lambda \in \Lambda_z^Y}{|\lambda|_\infty \leq |w|_\infty}} \ln |\lambda-w|_\infty - \ln |\lambda|_\infty\right) \nonumber \\
&=& - \frac{r}{q_\infty^r-1} \cdot \ln |\Delta^Y(z)|_\infty - r \cdot \ln\Big|w \cdot\prod_{0\neq \lambda \in \Lambda_z^Y}(1-\frac{w}{\lambda})\Big|_\infty \nonumber \\
&=& - \frac{r}{q_\infty^r-1} \cdot \ln |\Delta^Y(z)|_\infty - r \ln |\exp_{\Lambda_z^Y}(w)|_\infty. \nonumber
\end{eqnarray}
The second equality in the above comes from the fact that $|1-w/\lambda|_\infty = 1$ if $|\lambda|_\infty > |w|_\infty$.
\end{proof}

\begin{rem} \label{rem 3.5}
${}$
\begin{itemize}
\item[(1)] We may view the equality~(\ref{eqn 3.2}) and (\ref{eqn 3.3}) as an analogue of the classical Kronecker first and second limit formulas, respectively.
\item[(2)] Given a  rank $r$ $A$-lattice $\Lambda \subset \CC_\infty$, take $\alpha \in \CC_\infty^\times$, $z \in \Hfk^r$, and a projective $A$-module $Y$ of rank $r$ in $k^r$ so that 
$\Lambda = \alpha \cdot \Lambda^Y_z$.
Define $$\EE(\Lambda,s):= \sum_{0 \neq \lambda \in \Lambda}\left(\frac{D_A(\Lambda)}{|\lambda|_\infty}\right)^{rs} = \Vert Y\Vert^s \cdot \EE^Y(z,s) \quad \text{(from the equation (\ref{eqn 1.6}))}$$
and for $w \in \CC_\infty - \Lambda$ we put
$$\EE(\Lambda,w,s) := \sum_{\lambda \in \Lambda} \left(\frac{D_A(\Lambda)}{|\lambda - w|_\infty}\right)^{rs} = \Vert Y \Vert^s \cdot \EE^Y(z,w/\alpha,s)  \quad \text{(from the equation (\ref{eqn 1.6}))},$$
which both have meromorphic continuation with $\EE(\Lambda,0) = -1$ and $\EE(\Lambda,w,0) = 0$.
On the other hand, let $\rho^{\Lambda}$ be the rank $r$ Drinfeld $A$-module over $\CC_\infty$ associated to $\Lambda$.
Writing $\rho^{\Lambda}_a(x) = ax+\cdots + \Delta_a(\Lambda) x^{q^{r\deg a}}$ for each $a\in A$,
one has
$$\Delta_a(\Lambda) = \alpha^{1-q^{r\deg a}} \Delta_a^Y(z).$$
Let $\Delta(\Lambda):= \alpha^{1-q_\infty^r} \cdot \Delta^Y(z)$. The equality~(\ref{eqn 3.2}) and (\ref{eqn 3.3}) can be reformulated to:
\begin{eqnarray}\label{eqn 3.4}
\frac{\partial}{\partial s} \EE(\Lambda,s)\bigg|_{s=0} &=& -r\left(\ln\big(D_A(\Lambda) \big) + \frac{1}{q_\infty^r-1} \ln |\Delta(\Lambda)|_\infty\right), \\ \label{eqn 3.5}
\frac{\partial}{\partial s} \EE(\Lambda,w,s)\bigg|_{s=0} &=& 
- r \left(\frac{1}{q_\infty^r-1} \cdot \ln |\Delta(\Lambda)|_\infty + \ln |\exp_{\Lambda}(w)|_\infty\right). 
\end{eqnarray}
\item[(3)] The equality~(\ref{eqn 3.5}) agrees with Kondo's formula in \cite[Theorem 1 and Proposition 4]{Kon} (without the factor $D_A(\Lambda)$ in the definition of $\EE(\Lambda,w,s)$).
\end{itemize}
\end{rem}

\subsection{Lerch-type formula}\label{sec 3.3}

Let $\varphi^\infty \in S((\AA^\infty)^r)_\ZZ$ and $\chi \in \widehat{\Ical}^\infty$.
Recall in \textit{Remark}~\ref{rem 2.8} (2) that
$$\EE(z_\AA,s;\chi,\varphi^\infty) = \int_{k^\times \backslash \AA^{\infty,\times}} \overline{\chi(a^\infty)} \cdot \EE(z_\AA a^\infty,s;\varphi^\infty) d^\times a^\infty, \quad \forall z_\AA \in \Hfk_\AA^r.$$
Thus Theorem~\ref{thm 3.4} leads directly to the following Lerch-type formula of $\EE(z_\AA,s;\chi,\varphi^\infty)$:

\begin{cor}\label{cor 3.6}
Given $\varphi^\infty \in S((\AA^\infty)^r)_\ZZ$ and $\chi \in \widehat{\Ical}^\infty$, we have
\begin{eqnarray}
&& \EE(z_\AA,s;\chi,\varphi^\infty) \nonumber \\
&=& -\varphi^\infty(0) \cdot \int_{k^\times \backslash \AA^{\infty,\times}} \overline{\chi(a^\infty)} d^\times a^\infty \nonumber \\
&& -
\left[\frac{r}{q_\infty^r-1} \cdot \int_{k^\times \backslash \AA^{\infty,\times}} \overline{\chi(a^\infty)} \cdot \left(
\frac{(q_\infty^r-1)\varphi^\infty(0)}{r} \ln \im(z_\AA a^\infty) + \ln \big| \ufk(z_\AA a^\infty ,\varphi^\infty)\big|_\infty\right) d^\times a^\infty\right] \cdot s  \nonumber \\
&& + \text{\rm O}(s^2), \quad \text{ as $s \rightarrow 0$.}\nonumber
\end{eqnarray}
In particular, when $\varphi^\infty(0) = 0$ or $\chi \neq 1$, we get $\EE(z_\AA,0;\chi,\varphi^\infty) = 0$ and
\begin{eqnarray}
&& \frac{\partial}{\partial s} \EE (z_\AA,s;\chi,\varphi^\infty)\bigg|_{s=0} \nonumber \\
&=&
-\frac{r}{q_\infty^r-1} \cdot \int_{k^\times \backslash \AA^{\infty,\times}} \overline{\chi(a^\infty)} \cdot \left(
\frac{(q_\infty^r-1)\varphi^\infty(0)}{r} \ln \im(z_\AA a^\infty) + \ln \big| \ufk(z_\AA a^\infty ,\varphi^\infty)\big|_\infty\right) d^\times a^\infty. \nonumber
\end{eqnarray}
\end{cor}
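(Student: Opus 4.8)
The plan is to obtain Corollary~\ref{cor 3.6} by feeding the Kronecker limit formula of Theorem~\ref{thm 3.4} into the defining integral
\[
\EE(z_\AA,s;\chi,\varphi^\infty) = \int_{k^\times\backslash\AA^{\infty,\times}} \overline{\chi(a^\infty)}\cdot\EE(z_\AA a^\infty,s;\varphi^\infty)\,d^\times a^\infty
\]
recalled in \textit{Remark}~\ref{rem 2.8}~(2), and then reading off the first two Taylor coefficients at $s=0$. The first --- and essentially only non-formal --- step is to see that this integral over the compact group $\Ical^\infty = k^\times\backslash\AA^{\infty,\times}$ is genuinely a finite sum, which both makes sense of the integrals on the right-hand sides of the corollary and legitimizes expanding under the integral sign. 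Fixing $z_\AA = (z,g^\infty)$ and $\varphi^\infty$, I would choose an open compact subgroup $U \subseteq O_{\AA^\infty}^\times$ small enough that $\varrho(u)\varphi^\infty = \varphi^\infty$ and $|u|_\AA = 1$ for all $u \in U$; then the three functions $a^\infty \mapsto \varrho(a^\infty g^\infty)\varphi^\infty$, $a^\infty\mapsto \im(z_\AA a^\infty) = |a^\infty|_\AA^{\,r}\,\im(z_\AA)$, and $a^\infty\mapsto \ufk(z_\AA a^\infty;\varphi^\infty) = u\big(z,\varrho(a^\infty g^\infty)\varphi^\infty\big)$ are all constant on right cosets of $U$, hence so is $a^\infty \mapsto \EE(z_\AA a^\infty,s;\varphi^\infty)$, and the integral collapses to a weighted finite sum over $k^\times\backslash\AA^{\infty,\times}/U$.

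With this in place, the second step is purely mechanical: apply Theorem~\ref{thm 3.4} to each term, i.e.\ use
\[
\EE(z_\AA a^\infty,s;\varphi^\infty)
= -\varphi^\infty(0) - s\Bigl(\varphi^\infty(0)\ln\im(z_\AA a^\infty) + \tfrac{r}{q_\infty^r-1}\ln\bigl|\ufk(z_\AA a^\infty;\varphi^\infty)\bigr|_\infty\Bigr) + \mathrm{O}(s^2),
\]
where the error term is uniform precisely because the sum is finite. Multiplying by $\overline{\chi(a^\infty)}$, integrating, pulling the constant $\varphi^\infty(0)$ through the integral, and using the identity $\tfrac{r}{q_\infty^r-1}\cdot\tfrac{(q_\infty^r-1)\varphi^\infty(0)}{r} = \varphi^\infty(0)$ to rewrite the $\ln\im$ contribution in the packaged form displayed in the corollary, gives the first asserted formula. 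One should note here that $\ln|\ufk(z_\AA a^\infty;\varphi^\infty)|_\infty$ is a finite real number because the Drinfeld--Siegel units are nowhere vanishing on $\Hfk^r$ (Section~\ref{sec 3.1}), so no $-\infty$ intervenes.

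For the last clause, the third step invokes orthogonality of characters on the compact group $\Ical^\infty$: if $\chi\neq 1$ then $\int_{k^\times\backslash\AA^{\infty,\times}}\overline{\chi(a^\infty)}\,d^\times a^\infty = 0$, and if $\varphi^\infty(0)=0$ that integral is simply multiplied by zero; either way the $s^0$-coefficient of $\EE(z_\AA,s;\chi,\varphi^\infty)$ vanishes, so $\EE(z_\AA,0;\chi,\varphi^\infty)=0$, and the $s^1$-coefficient read off from the first formula is exactly the claimed value of $\tfrac{\partial}{\partial s}\EE(z_\AA,s;\chi,\varphi^\infty)\big|_{s=0}$ (the $\ln\im$ term automatically disappearing when $\varphi^\infty(0)=0$). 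I expect no real obstacle: the whole argument is bookkeeping on top of Theorem~\ref{thm 3.4}, and the one genuinely substantive point --- which I would address carefully at the outset --- is the finiteness/local-constancy reduction of the first step that underpins differentiation under the integral sign.
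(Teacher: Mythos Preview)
Your proposal is correct and follows exactly the route the paper intends: the paper simply states that ``Theorem~\ref{thm 3.4} leads directly to'' Corollary~\ref{cor 3.6}, and your argument spells out this deduction by plugging the Taylor expansion from Theorem~\ref{thm 3.4} into the defining integral from \textit{Remark}~\ref{rem 2.8}~(2). Your added justification that the integral over $\Ical^\infty$ reduces to a finite sum (so that expansion under the integral and uniformity of the $\mathrm{O}(s^2)$ term are legitimate) is a welcome elaboration of a point the paper leaves implicit.
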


We then immediately get a Lerch-type formula for mirabolic Eisenstein series. 
More precisely, for $\varphi^\infty \in S((\AA^\infty)^r)_\ZZ$ and $\chi \in \widehat{\Ical}^\infty$, recall in the equality~(\ref{eqn 2.2}) and \text{Remark}~\ref{rem 2.8} (2) that
\begin{eqnarray}\label{eqn 3.6}
&&\Ecal^\infty(g,s;\varphi^\infty) = \frac{\EE(z_\AA,s;\varphi^\infty)}{1-q_\infty^{-rs}} \quad \text{and} \quad \Ecal(g,s;\chi,\varphi^\infty\otimes \mathbf{1}_{L^o}) = \frac{\EE(z_\AA,s;\chi,\varphi^\infty)}{1-q_\infty^{-rs}},
\end{eqnarray}
where $z_{\AA} \in \Hfk_\AA^r$ with $\lambda_\AA(z_\AA) = [g] \in \Bcal^r_\AA(\ZZ)$.
Observe that
\begin{eqnarray}\label{eqn 3.7}
\frac{1}{1-q_\infty^{-rs}} = \frac{1}{r\ln q_\infty} \cdot s^{-1} + \frac{1}{2} + \text{O}(s), \quad \text{ as $s \rightarrow 0$.}
\end{eqnarray}
Define 
$$\eta(g;\varphi^\infty) := \frac{(q_\infty^r-1)\varphi^\infty(0)}{r} \log_{q_\infty} \im(z_\AA) + \log_{q_\infty} | \ufk(z_\AA;\varphi^\infty)|_\infty \quad \text{ with $\lambda_\AA(z_\AA) = [g] \in \Bcal^r_\AA(\ZZ)$},$$
and set
$$\eta_{\chi}(g;\varphi^\infty):= \int_{k^\times \backslash \AA^{\infty,\times}} \overline{\chi(a^\infty)} \cdot 
\eta(ga^\infty;\varphi^\infty) d^\times a^\infty.
$$
From the equalities~(\ref{eqn 3.6}) and (\ref{eqn 3.7}), Theorem~\ref{thm 3.4} and Corollary~\ref{cor 3.6} say:

\begin{cor}\label{cor 3.7}
Let $\varphi^\infty \in S((\AA^\infty)^r)_\ZZ$ and $\chi \in \widehat{\Ical}^\infty$.
For $g \in \GL_r(\AA)$ we have
$$\Ecal^\infty(g,s;\varphi^\infty) = -\frac{\varphi^\infty(0)}{r\ln q_\infty} s^{-1} 
- \left(\frac{\varphi^\infty(0)}{2} + \frac{1}{q_\infty^r-1} \eta(g;\varphi^\infty)\right) + \text{\rm O}(s) \quad \text{ as $s \rightarrow 0$}$$
and
\begin{eqnarray}
&&\Ecal(g,s,\chi;\varphi^\infty\otimes \mathbf{1}_{L^o}) \nonumber \\
&=&
-\left(\frac{\varphi^\infty(0)}{r \ln q_\infty} \int_{k^\times \backslash \AA^{\infty,\times}} \overline{\chi(a^\infty)} d^\times a^\infty\right) \cdot s^{-1} \nonumber \\
&& -\left(\frac{\varphi^\infty(0)}{2} \cdot
\int_{k^\times \backslash \AA^{\infty,\times}} \overline{\chi(a^\infty)} d^\times a^\infty + \frac{1}{q_\infty^r-1} \cdot \eta_\chi(g;\varphi^\infty)\right) \nonumber \\
&& + \text{\rm O}(s) \quad \text{ as $s \rightarrow 0$.} \nonumber
\end{eqnarray}
In particular, when $\varphi^\infty(0) = 0$ or $\chi \neq 1$, we get
$$\Ecal(g,0,\chi;\varphi^\infty\otimes \mathbf{1}_{L^o}) = 
\frac{1}{1-q_\infty^r} \cdot \eta_{\chi}(g;\varphi^\infty).
$$
\end{cor}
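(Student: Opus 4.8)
The plan is to derive Corollary~\ref{cor 3.7} by feeding the Kronecker limit formula (Theorem~\ref{thm 3.4} and Corollary~\ref{cor 3.6}) into the elementary identities \eqref{eqn 3.6} and \eqref{eqn 3.7}, and then multiplying the two Laurent series at $s=0$.

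First, fix $g\in\GL_r(\AA)$ and pick any $z_\AA\in\Hfk_\AA^r$ with $\lambda_\AA(z_\AA)=[g]\in\Bcal^r_\AA(\ZZ)$. Theorem~\ref{thm 3.4} gives the Taylor expansion
$$\EE(z_\AA,s;\varphi^\infty) = -\varphi^\infty(0) + s\left(-\varphi^\infty(0)\ln\im(z_\AA) - \frac{r}{q_\infty^r-1}\ln\big|\ufk(z_\AA;\varphi^\infty)\big|_\infty\right) + \text{O}(s^2).$$
Rewriting $\ln = (\ln q_\infty)\log_{q_\infty}$ and comparing with the definition of $\eta(g;\varphi^\infty)$ preceding the statement, the linear coefficient equals $-\frac{r\ln q_\infty}{q_\infty^r-1}\eta(g;\varphi^\infty)$. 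Substituting into the first identity of \eqref{eqn 3.6} and multiplying by the expansion \eqref{eqn 3.7}, the coefficient of $s^{-1}$ comes out as $-\frac{\varphi^\infty(0)}{r\ln q_\infty}$, and the constant term, which picks up a cross-term between the $s^{-1}$ part of $(1-q_\infty^{-rs})^{-1}$ and the linear part of $\EE$, comes out as $-\frac{\varphi^\infty(0)}{2}-\frac{1}{q_\infty^r-1}\eta(g;\varphi^\infty)$. This is the first asserted formula.

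For the second formula I would repeat the argument verbatim with $\EE(z_\AA,s;\chi,\varphi^\infty)$ in place of $\EE(z_\AA,s;\varphi^\infty)$, using Corollary~\ref{cor 3.6} instead of Theorem~\ref{thm 3.4} and the second identity of \eqref{eqn 3.6}. The one bookkeeping point is that $\lambda_\AA(z_\AA\cdot a^\infty)=[ga^\infty]$ by $\GL_r(\AA^\infty)$-equivariance of $\lambda_\AA$, so the integrand appearing in Corollary~\ref{cor 3.6} is precisely $\eta(ga^\infty;\varphi^\infty)$; after integrating against $\overline{\chi}$ the linear coefficient of $\EE(z_\AA,s;\chi,\varphi^\infty)$ becomes $-\frac{r\ln q_\infty}{q_\infty^r-1}\eta_\chi(g;\varphi^\infty)$ while its value at $s=0$ is $-\varphi^\infty(0)\int_{k^\times\backslash\AA^{\infty,\times}}\overline{\chi(a^\infty)}\,d^\times a^\infty$. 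Multiplying by \eqref{eqn 3.7} yields the claimed Laurent expansion. Finally, if $\chi\neq 1$ the latter integral vanishes by orthogonality of characters, and if $\varphi^\infty(0)=0$ every term carrying $\varphi^\infty(0)$ disappears; in either case the polar part and the $\varphi^\infty(0)$-term of the constant drop out, leaving $\Ecal(g,0,\chi;\varphi^\infty\otimes\mathbf{1}_{L^o})=-\frac{1}{q_\infty^r-1}\eta_\chi(g;\varphi^\infty)=\frac{1}{1-q_\infty^r}\eta_\chi(g;\varphi^\infty)$.

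Since all the analytic content — meromorphic continuation, the limit formula itself, and character orthogonality — is already available, there is no genuine obstacle here; the only care needed is to keep the $\ln$ versus $\log_{q_\infty}$ normalization consistent and to track the coefficients of $s^{-1}$ and $s^0$ correctly when multiplying the two Laurent series, in particular the cross-term noted above.
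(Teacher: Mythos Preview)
Your proposal is correct and follows essentially the same approach as the paper: the paper's own proof of Corollary~\ref{cor 3.7} consists of nothing more than the sentence ``From the equalities~(\ref{eqn 3.6}) and (\ref{eqn 3.7}), Theorem~\ref{thm 3.4} and Corollary~\ref{cor 3.6} say:'', i.e.\ exactly the Laurent-series multiplication you describe. Your write-up in fact spells out the cross-term computation and the $\ln$ versus $\log_{q_\infty}$ bookkeeping more explicitly than the paper does.
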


\begin{rem}\label{rem 3.8}
(1) Recall that our normalization of the Haar measure $d^\times a$ on $\AA^{\infty,\times}$ implies (cf.\ equation~(\ref{eqn Ivol}))
$$\int_{k^\times \backslash \AA^{\infty,\times}} \overline{\chi(a^\infty)} d^\times a^\infty
= \begin{cases}
\#\Pic(A)/(q-1), & \text{if $\chi = 1 \in \widehat{\Ical}^\infty$,} \\
0, & \text{otherwise.}
\end{cases}
$$
(2) When $r=1$, the Drinfeld period domain $\Hfk^1$ only consists of a single point, say $z_o$. 
Take $\varphi^\infty = \mathbf{1}_{1+\widehat{\mfk}}$ where $\mfk$ is a non-zero proper ideal of $A$ and $\widehat{\mfk}$ is the closure of $\mfk$ in $\AA^{\infty}$.
It is known that
$$\ufk_\mfk := \ufk((z_o,1);\mathbf{1}_{1+\widehat{\mfk}}) \in H_\mfk^\times,$$
where $H_\mfk^\times$ is the ray class field of $k$ of conductor $\mfk$.
In fact, $\ufk_\mfk$ is an \lq\lq elliptic unit\rq\rq\ introduced by
Hayes in \cite{Ha3}, and the explicit class field theory says
$$
\ufk((z_0,a);\mathbf{1}_{1+\widehat{\mfk}}) = \ufk_\mfk^{\sigma_a}, \quad \forall a \in \AA^{\infty,\times},
$$
where $\sigma_a \in \gal(H_\mfk/k)$ is the corresponding automorphism on $H_\mfk$ via the Artin map.
Then Corollary~\ref{cor 0.2} coincides with Hayes' formula of the Dirichlet $L$-function $L(s,\chi)$ for every character $\chi$ of conductor $\mfk$ (cf.\ \cite[p.\ 238]{Ha2}):
$$L(0,\chi) = \frac{1}{1-q^{\deg \infty}} \cdot \sum_{\sigma \in \gal(H_\mfk/k)} \overline{\chi(\sigma)} \log_{q^{\deg \infty}}|\ufk_\mfk^\sigma|_\infty,
$$
and the Hayes-Stickelberger element is
$$\omega_\mfk = -\sum_{\sigma \in \gal(H_\mfk/k)} \log_{q^{\deg \infty}}|\ufk_\mfk^\sigma|_\infty \cdot \sigma^{-1} \in \ZZ[\gal(H_\mfk/k)].$$
(3) Corollary~\ref{cor 3.7} can be stated in the following variant form:
$$(1-q_\infty^{-rs}) \cdot \Ecal^\infty(g,s;\varphi^\infty) = -\varphi^\infty(0)
- \left(\frac{r\ln q_\infty }{q_\infty^r-1} \cdot \eta(g;\varphi^\infty)\right) \cdot s + \text{\rm O}(s^2) \quad \text{ as $s \rightarrow 0$}$$
and
\begin{eqnarray}
&& (1-q_\infty^{-rs}) \cdot \Ecal(g,s,\chi;\varphi^\infty\otimes \mathbf{1}_{L^o}) \nonumber \\
&=&
-\left(\varphi^\infty(0) \cdot \int_{k^\times \backslash \AA^{\infty,\times}} \overline{\chi(a^\infty)} d^\times a^\infty\right)  - \left(\frac{r \ln q_\infty}{q_\infty^r-1} \cdot \eta_\chi(g;\varphi^\infty) \right) \cdot s + \text{\rm O}(s^2) \quad \text{ as $s \rightarrow 0$.} \nonumber
\end{eqnarray}
This will be used in Section~\ref{sec Z-E}.
\end{rem}

\section{Colmez-type formula for CM Drinfeld modules}\label{sec 4}


\subsection{Taguchi height of Drinfeld modules}\label{sec 4.1}

Let $F$ be a finite extension of $k$ (viewing as an $A$-field via $A \hookrightarrow F$) 
and $\rho$ be a Drinfeld $A$-module of rank $r$ over $F$.
Recall that for each $a \in A$, 
we write 
$$\rho_a = a+ \sum_{i=1}^{r \deg a} l_i(\rho_a) \tau^i \in F\{\tau\}.$$

For each place $w$ of $F$ with $w \nmid \infty$, put
$$\ord_w(\rho):= \min\left\{ \frac{\ord_w(l_i(\rho_a))}{q^i-1}: 0 \neq a \in A, \ 1\leq i \leq r\deg a \right\}.$$ 
Let $\FF_w$ be the residue field of $w$. 
The local height of $\rho$ at $w$ is given by
$$h_{\text{Tag},w}(\rho/F):= -[\FF_w:\FF_q] \cdot \lfloor\ord_{w}(\rho)\rfloor.$$

For each place $\tilde{\infty}$ of $F$ with $\tilde{\infty}\mid\infty$, we embed $F$ into $\CC_\infty$ via $\tilde{\infty}$ and let $\Lambda_{\rho,\tilde{\infty}}\subset \CC_\infty$ be the rank $r$ $A$-lattice in $\CC_\infty$ associated to $\rho$.
We set 
$$h_{\text{Tag},\tilde{\infty}}(\rho/F):= -[F_{\tilde{\infty}}: k_\infty] \cdot \log_q D_A( \Lambda_{\rho,\tilde{\infty}}).$$

\begin{defn}\label{defn 4.1}
(cf.\ \cite[Section 5]{Tag}) The \textit{Taguchi height of $\rho/F$} is defined by
$$h_{\text{Tag}}(\rho/F):= \frac{1}{[F:k]} \cdot \left(\sum_{w \nmid \infty} h_{\text{Tag},w}(\rho/F) + \sum_{\tilde{\infty}\mid \infty} h_{\text{Tag},\tilde{\infty}}(\rho/F)\right).$$
\end{defn}

\begin{rem}\label{rem 4.2}
$(1)$ Let $F'$ be a finite extension over $F$. Given places $\tilde{\infty}$ of $F$ and $\tilde{\infty}'$ of $F'$ with $\tilde{\infty}' \mid \tilde{\infty} \mid \infty$, it is clear that $\Lambda_{\rho,\tilde{\infty}} = \Lambda_{\rho,\tilde{\infty}'} \subset \CC_\infty$, and
$$h_{\text{Tag},\tilde{\infty}'}(\rho/F') = [F'_{\tilde{\infty}'}: F_{\tilde{\infty}}] \cdot h_{\text{Tag},\tilde{\infty}}(\rho/F).$$
For places $w$ of $F$ and $w'$ of $F'$ with $w'\mid w \nmid \infty$, one has $\ord_{w'}(\rho) = e_{w'/w} \cdot \ord_{w}(\rho)$, where $e_{w'/w}$ is the ramification index of $w' / w$. Thus we get
$$h_{\text{Tag},w'}(\rho/F') \leq [F'_{w'}:F_w] \cdot h_{\text{Tag},w}(\rho/F).$$
In particular, if $\rho$ has stable reduction at $w$, then $\ord_w(\rho)$ is an integer, which implies that
$h_{\text{Tag},w'}(\rho/F') = [F'_{w'}:F_w] \cdot h_{\text{Tag},w}(\rho/F)$. In conclusion, we have  $h_{\text{Tag}}(\rho/F') \leq h_{\text{Tag}}(\rho/F)$, and the equality holds when $\rho$ has stable reduction everywhere.

$(2)$
Note that every Drinfeld $A$-module $\rho$ over $F$ has potentially stable reduction everywhere (cf.\ \cite[Proposition 7.2]{Ha1}).
Define
$$h^{\text{st}}_{\text{Tag}}(\rho) := \ln q \cdot \lim_{F': \atop [F':F]<\infty} h_{\text{Tag}}(\rho/F'),$$
which is always convergent by $(1)$, called the \textit{stable Taguchi height of $\rho$}.
In particular, we may express the stable Taguchi height of $\rho$ by:
\begin{eqnarray}\label{eqn: hst}
h^{\text{st}}_{\text{Tag}}(\rho) &=& \frac{-\ln q}{[F:k]} \cdot \left(
\sum_{w \nmid \infty} [\FF_w:\FF_q] \ord_w(\rho)
+ \sum_{\tilde{\infty} \mid \infty} [F_{\tilde{\infty}} : k_\infty] \ln D_A(\Lambda_{\rho,\tilde{\infty}})\right).
\end{eqnarray}

$(3)$ Let $\rho$ be a Drinfeld $A$-module of rank $r$ over a finite extension $F$ of $k$.
Similar to Proposition~\ref{prop 1.4}, there exists a unique element $\Delta(\rho) \in F^\times$, up to multiplying with $(q_\infty^r-1)$-th roots of unity, so that
$$\Delta(\rho)^{q^{r\deg a} -1} = l_{r\deg a}(\rho_a)^{q_\infty^r-1}, \quad \forall a \in A-\{0\}.$$
In particular, for each place $\tilde{\infty}$ of $F$ lying above $\infty$, we have the following identity (up to multiplying with $(q_\infty^r-1)$-th roots of unity):
$$\Delta(\Lambda_{\rho,\tilde{\infty}}) = \iota_{\tilde{\infty}}(\Delta(\rho))$$
where $\iota_{\tilde{\infty}} : F \hookrightarrow \CC_\infty$ is the embedding corresponding to $\tilde{\infty}$.
Set
$$c(\rho) := -\frac{\ln q}{[F:k]} \cdot \sum_{w \nmid \infty} [\FF_w : \FF_q] \Big(\ord_w(\rho) - \frac{\ord_w(\Delta(\rho))}{q_\infty^r-1}\Big),$$
which is actually a finite sum. We refer $c(\rho)$ to a {\it conductor quantity of $\rho$}.
On the other hand, put 
\begin{eqnarray}
h^{\text{disc}}_{\text{Tag}}(\rho) &:=& \frac{-1}{[F:k]} \sum_{\tilde{\infty} \mid \infty} [F_{\tilde{\infty}} : k_\infty] \cdot \ln \Big(|\Delta(\Lambda_{\rho,\tilde{\infty}})|_\infty^{\frac{1}{q_\infty^r-1}} \cdot D_A(\Lambda_{\rho,\tilde{\infty}})\Big). \nonumber 
\end{eqnarray}
Note that $c(\rho)$ and $h^{\text{disc}}_{\text{Tag}}(\rho)$ are both independent of the chosen defining field $F$ of $\rho$.
Moreover, from the expression of $h^{\text{st}}_{\text{Tag}}(\rho)$ in (\ref{eqn: hst}) we have
\begin{eqnarray}
&& c(\rho) + h^{\text{disc}}_{\text{Tag}}(\rho) \nonumber \\
&=& \frac{-\ln q}{[F:k]} \sum_{w \nmid \infty} \left([\FF_w:\FF_q] \ord_w(\rho) + \frac{1}{q_\infty^r-1} \log_q \Vert\Delta(\rho)\Vert_{F,w}\right) \nonumber \\
&& + \frac{-\ln q}{[F:k]} \sum_{\tilde{\infty}\mid \infty} \left([F_{\tilde{\infty}}:k_\infty] D_A(\Lambda_{\rho,\tilde{\infty}}) + \frac{1}{q_\infty^r-1} \cdot \log_q\Vert\Delta(\rho)\Vert_{F,\tilde{\infty}}\right) \nonumber \\
&=& h^{\text{st}}_{\text{Tag}}(\rho). \nonumber 
\end{eqnarray}
Here for a place $w$ of $F$ lying above a place $v$ of $k$, we put $\Vert\alpha\Vert_{F,w} := |N_{F_w/k_v}(\alpha)|_v$ for every $\alpha \in F_w$.
The last equality comes from the product formula:
$$\prod_{\text{place $w$ of $F$}} \Vert \Delta(\rho) \Vert_{F,w} = 1.$$

$(4)$ Let $\rho$ and $\rho'$ be two Drinfeld $A$-modules over $\overline{k}$ where $\overline{k}$ is the algebraic closure of $k$ in $\CC_\infty$. 
Suppose $\rho$ and $\rho'$ are isomorphic over $\overline{k}$. 
Then
$h^{\text{st}}_{\text{Tag}}(\rho) = h^{\text{st}}_{\text{Tag}}(\rho')$.
In fact, we have 
$$c(\rho) = c(\rho') \quad \text{ and } \quad h^{\text{disc}}_{\text{Tag}}(\rho) = h^{\text{disc}}_{\text{Tag}}(\rho').$$
\end{rem}

\subsection{CM Drinfeld $A$-modules}\label{sec 4.2}

Given a Drinfeld $A$-module $\rho$ of rank $r$ over $\overline{k}$, it is known that the endomorphism ring $\End_A(\rho/\overline{k})$ can be identified with an $A$-order $\Ocal$ of an \lq\lq imaginary\rq\rq\ field $K$ with $[K:k] \mid r$. Here \lq\lq imaginary\rq\rq\ means that the place $\infty$ of $k$ does not split in $K$.  We say that $\rho$ is CM if $[K:k] = r$. 

To calculate the stable Taguchi height of a Drinfeld module $\rho$ of rank $r$ with CM by $\Ocal$, an $A$-order of an imaginary field $K$ with $[K:k] = r$, we may assume that $\rho$ is defined over the ring class field $H_\Ocal$ of $\Ocal$ (cf.\ Theorem~\ref{thm A.1} (1)). 
We point out that the unique place of $K$ lying above $\infty$ is split completely in $H_\Ocal$.
Fix an embedding of $H_\Ocal \hookrightarrow \CC_\infty$. Let $\Lambda_\rho \subset \CC_\infty$ be the $A$-lattice associated to $\rho$. Then there exists an ideal $\Ifk$ of $\Ocal$ and $\alpha \in \CC_\infty^{\times}$ such that $\Lambda_\rho = \alpha \cdot \Ifk$.
From the fact that $\rho$ has good reduction at every finite places of $H_{\Ocal}$,
we have $c(\rho) = 0$ and
\begin{eqnarray}\label{eqn 4.1}
h^{\text{st}}_{\text{Tag}}(\rho) &=&  h^{\text{disc}}_{\text{Tag}}(\rho) \nonumber \\
&=&
\frac{-1}{[H_\Ocal: k]} \cdot \sum_{[\Acal] \in \Pic(\Ocal)}[K_\infty:k_\infty] \ln \big(D_A(\Lambda_{\Acal*\rho})|\Delta(\Lambda_{\Acal*\rho})|_\infty^{\frac{1}{q_\infty^r-1}}\big) \nonumber \\
&=&   \frac{- 1}{\#\Pic(\Ocal)}\sum_{[\Acal] \in \Pic(\Ocal)} \ln \big(D_A(\Acal^{-1} \Ifk ) \cdot |\Delta(\Acal^{-1} \Ifk )|_\infty^{\frac{1}{q_\infty^r-1}}\big)\nonumber \\
&=& - \ln\big(D_A(\Ocal)\big) - \frac{1}{r\cdot\# \Pic(\Ocal)} \sum_{[\Acal] \in \Pic(\Ocal)} \ln \big(N_{\Ocal}(\Acal \Ifk ) \cdot |\Delta( \Acal \Ifk)|_\infty^{\frac{r}{q_\infty^r-1}}\big). 
\end{eqnarray}
Here $\Pic(\Ocal)$ is the class group of the invertible ideals of $\Ocal$, which is isomorphic to the Galois group $\gal(H_\Ocal/K)$ via the Artin map; and $$N_{\Ocal}(\Acal):= \frac{\#(\Ocal/a \Acal)}{|a|_\infty^r}$$ for $0 \neq a \in A$ such that $a \Acal \subset \Ocal$, which is independent of the chosen $a$.
The second equality follows from Theorem~\ref{thm A.1} (2), and the third equality comes from the fact that the lattice $\Lambda_{\Acal*\rho}$ is equal to $\alpha_\Acal \cdot \Acal^{-1} \Ifk$ for some $\alpha_\Acal \in \CC_\infty^\times$ (cf.\ \textit{Remark}~\ref{rem A.2}).
\\

On the other hand, let $\zeta_\Ifk(s)$ be the zeta function associated to the ideal $\Ifk$:

\begin{eqnarray}
\zeta_{\Ifk}(s) &:=& N_\Ocal(\Ifk)^s \cdot \sum_{\text{invertible ideal $I$ of $\Ocal$} \atop I \subset \Ifk} \frac{1}{N_{\Ocal}(I)^s} \nonumber \\
&=& \frac{N_\Ocal(\Ifk)^s}{\#(\Ocal^\times)} \cdot \sum_{[\Acal] \in \Pic(\Ocal)} \left( \sum_{0 \neq \lambda \in \Acal \Ifk} \frac{N_{\Ocal}(\Acal)^s}{|\lambda|_\infty^{rs}} \right)\nonumber \\
&=& \frac{1}{\#(\Ocal^\times)} \cdot \sum_{[\Acal] \in \Pic(\Ocal)}
\frac{N_{\Ocal}(\Acal\Ifk)^s}{D_A(\Acal\Ifk)^{rs}} \cdot 
\EE(\Acal \Ifk,s), \nonumber 
\end{eqnarray}
where $\EE(\Lambda,s)$ is introduced in \textit{Remark}~\ref{rem 3.5} (2) for every rank $r$ $A$-lattice $\Lambda \subset \CC_\infty$.
By our Kronecker limit formula (the version in the equation~(\ref{eqn 3.4})), we have 
\begin{eqnarray}\label{eqn 4.2}
\zeta_{\Ifk}(0) &=& -\frac{\#\Pic(\Ocal)}{\#(\Ocal^{\times})}, \quad \text{ and }\nonumber \\
\zeta_{\Ifk}'(0)
&=& -\frac{1}{\#(\Ocal^{\times})} \cdot  \sum_{[\Acal] \in \Pic(\Ocal)} \ln \big(N_{\Ocal}(\Ifk\Acal) \cdot |\Delta(\Ifk \Acal)|_\infty^{\frac{r}{q_\infty^r-1}}\big).
\end{eqnarray}

Combining the equality~(\ref{eqn 4.1}) and (\ref{eqn 4.2}), we then arrive at:

\begin{thm}\label{thm 4.3}
Given a CM Drinfeld $A$-module $\rho$ of rank $r$ over $\overline{k}$, let $\Ocal = \End_A(\rho/\overline{k})$, which is identified with an $A$-order of an imaginary field $K$ satisfying $[K:k] = r$. 
Then
$$ h^{\text{\rm st}}_{\text{\rm Tag}}(\rho) = -\ln \big(D_A(\Ocal)\big) - \frac{1}{r}\cdot \frac{\zeta_{\Ifk}'(0)}{\zeta_{\Ifk}(0)}.$$
Here $\Ifk$ is an ideal of $\Ocal$ which is isomorphic to the $A$-lattice $\Lambda_\rho$ corresponding to $\rho$ $($as $\Ocal$-modules$)$.
\end{thm}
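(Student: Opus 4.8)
\emph{Proof strategy.} The plan is to marry the analytic Kronecker limit formula of Theorem~\ref{thm 3.4} with the theory of complex multiplication for Drinfeld modules, the latter in the form of Hayes' main theorem extended to lattices of arbitrary $\Ocal$-genus (Theorem~\ref{thm A.1}, established in Appendix~\ref{sec A}). Since $h^{\text{st}}_{\text{Tag}}$ depends only on the $\overline{k}$-isomorphism class of $\rho$ (\textit{Remark}~\ref{rem 4.2} (4)), I would first replace $\rho$ by a model over the ring class field $H_\Ocal$, which exists by Theorem~\ref{thm A.1} (1). Such a model has good reduction at every finite place of $H_\Ocal$, so the conductor quantity $c(\rho)$ vanishes and \textit{Remark}~\ref{rem 4.2} (3) reduces the computation to the discriminant part $h^{\text{disc}}_{\text{Tag}}(\rho)$, which depends only on the archimedean invariants $D_A(\cdot)$ and $|\Delta(\cdot)|_\infty$ attached to the lattices $\Lambda_{\rho,\tilde{\infty}}$.

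Next I would unwind $h^{\text{disc}}_{\text{Tag}}(\rho)$ using that the unique place of $K$ above $\infty$ splits completely in $H_\Ocal$: the group $\gal(H_\Ocal/K)\cong\Pic(\Ocal)$ then acts simply transitively on the places $\tilde{\infty}\mid\infty$ of $H_\Ocal$, and the corresponding Galois conjugates of $\rho$ are the $\Acal*\rho$, whose lattices are $\Lambda_{\Acal*\rho} = \alpha_\Acal\cdot\Acal^{-1}\Ifk$ for suitable $\alpha_\Acal\in\CC_\infty^\times$ (Theorem~\ref{thm A.1} (2) and \textit{Remark}~\ref{rem A.2}). The scaling laws $D_A(c\Lambda) = |c|_\infty D_A(\Lambda)$ and $\Delta(c\Lambda) = c^{1-q_\infty^r}\Delta(\Lambda)$ make the product $D_A(\Lambda)\cdot|\Delta(\Lambda)|_\infty^{1/(q_\infty^r-1)}$ invariant under homothety, so the scalars $\alpha_\Acal$ drop out; together with the relation between the lattice discriminant of an $\Ocal$-ideal and its norm, this gives the identity~(\ref{eqn 4.1}), expressing $h^{\text{st}}_{\text{Tag}}(\rho)$ as $-\ln D_A(\Ocal)$ minus an average over $\Pic(\Ocal)$ of $\ln\big(N_\Ocal(\Acal\Ifk)\cdot|\Delta(\Acal\Ifk)|_\infty^{r/(q_\infty^r-1)}\big)$.

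On the analytic side, I would decompose $\zeta_\Ifk(s)$ by ideal class, sorting the invertible ideals $I\subseteq\Ifk$ according to the class of $\Ifk I^{-1}$, to get
\[
\zeta_\Ifk(s) = \frac{1}{\#(\Ocal^\times)}\sum_{[\Acal]\in\Pic(\Ocal)}\frac{N_\Ocal(\Acal\Ifk)^s}{D_A(\Acal\Ifk)^{rs}}\,\EE(\Acal\Ifk,s),
\]
where $\EE(\Lambda,s)$ is the normalized lattice Eisenstein series of \textit{Remark}~\ref{rem 3.5} (2). Evaluating at $s=0$ with $\EE(\Lambda,0) = -1$ gives $\zeta_\Ifk(0) = -\#\Pic(\Ocal)/\#(\Ocal^\times)$, and differentiating while feeding in the Kronecker limit formula in the shape~(\ref{eqn 3.4}) produces the formula~(\ref{eqn 4.2}) for $\zeta_\Ifk'(0)$, which involves precisely the same sum of logarithms of $N_\Ocal(\Acal\Ifk)$ and $|\Delta(\Acal\Ifk)|_\infty$ that occurs in~(\ref{eqn 4.1}). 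Forming the ratio $\zeta_\Ifk'(0)/\zeta_\Ifk(0)$ and substituting into the formula of the previous paragraph yields the asserted identity.

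The genuine obstacle is the algebraic input, not the analytic one: one needs the full Hayes dictionary --- a model over $H_\Ocal$, everywhere good reduction, the simply transitive Galois action on the $\Acal*\rho$, and the identification of their lattices with scalar multiples of $\Acal^{-1}\Ifk$ --- in the generality where $\Lambda_\rho$ need not be an invertible $\Ocal$-module but only has a fixed genus. Supplying this is exactly the role of Appendix~\ref{sec A}; once it is granted, the only analytic ingredient is the Kronecker limit formula already proved in Theorem~\ref{thm 3.4}, and the rest is the bookkeeping sketched above.
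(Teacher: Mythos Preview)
Your proposal is correct and follows the paper's argument essentially step for step: reduction to a model over $H_\Ocal$ with everywhere good reduction (hence $c(\rho)=0$), the rewriting of $h^{\text{disc}}_{\text{Tag}}(\rho)$ as the average~(\ref{eqn 4.1}) via Theorem~\ref{thm A.1} and the homothety-invariance of $D_A(\Lambda)\cdot|\Delta(\Lambda)|_\infty^{1/(q_\infty^r-1)}$, the ideal-class decomposition of $\zeta_\Ifk(s)$ in terms of $\EE(\Acal\Ifk,s)$, and the evaluation~(\ref{eqn 4.2}) via the Kronecker limit formula~(\ref{eqn 3.4}). You have also correctly isolated the one nontrivial input beyond Theorem~\ref{thm 3.4}, namely the extension of Hayes' CM theory to arbitrary genus supplied in Appendix~\ref{sec A}.
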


\begin{rem}\label{rem 4.4}
Suppose $r = 1$ or $2$. Observe that an ideal $\Ifk$ of a quadratic $A$-order $\Ocal$ is invertible if and only if 
$$\{b \in K : b \Ifk \subset \Ifk\} = \Ocal.$$
Thus Theorem~\ref{thm 4.3} coincides with the formula in \cite{Wei2}.
\end{rem}
\begin{lem}\label{lem 4.5}
Let $q_K$ be the cardinality of the constant field of $K$ and $g_K$ be the genus of $K$.
Then
$$D_A(O_K) = q^{1-g_k} \cdot {q_K^{\frac{g_K-1}{r}}} \cdot q_\infty^{\frac{-r+f_K(\infty)}{2r}}.$$
Here $f_K(\infty)$ is the residue degree of $\infty$ in $K$.
\end{lem}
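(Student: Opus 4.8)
The plan is to compute $D_A(O_K)$ directly from the first expression for the lattice discriminant in Remark~\ref{rem 1.10}, using an orthogonal basis of $K_{\infty_K}$ adapted to the unique place $\infty_K$ of $K$ above $\infty$; the only nontrivial input will be Riemann--Roch on the smooth projective model $X_K$ of $K$. I would begin by fixing the local data: let $e$ and $f:=f_K(\infty)$ be the ramification index and residue degree of $\infty_K/\infty$; since $K$ is imaginary, $\infty_K$ is the only place above $\infty$, so $[K_{\infty_K}:k_\infty]=r$ and $ef=r$, and $O_K$ embeds (via $\infty_K$) as a rank-$r$ $A$-lattice in $K_{\infty_K}$. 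Put $d_K:=[\FF_{q_K}:\FF_q]$, so $q_K=q^{d_K}$ and $q_\infty=q^{\deg\infty}$. Choosing a uniformizer $\pi_K$ of $K_{\infty_K}$ and units $u_1,\dots,u_f$ whose residues form an $\FF_\infty$-basis of $\FF_{\infty_K}$, the family $\{u_i\pi_K^{\,j}:1\le i\le f,\ 0\le j<e\}$ is simultaneously an $O_\infty$-basis of $O_{K,\infty_K}$ and an orthogonal $k_\infty$-basis of $K_{\infty_K}$: orthogonality holds because any nonzero $k_\infty$-combination of the $u_i$ has $\ord_{\infty_K}$ in $e\ZZ$, so the powers $\pi_K^{\,0},\dots,\pi_K^{\,e-1}$ separate the terms according to the residue of $\ord_{\infty_K}$ modulo $e$. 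A direct computation then gives $\prod_{i,j}|u_i\pi_K^{\,j}|_\infty=q_\infty^{-f(e-1)/2}=q_\infty^{(f-r)/2}$.

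To globalize, I would first perturb this family, using the density of $K$ in $K_{\infty_K}$ together with the fact that both ``being an orthogonal $k_\infty$-basis of $K_{\infty_K}$'' and ``being an $O_\infty$-basis of $O_{K,\infty_K}$'' are stable under small perturbation, to elements $\mu_1,\dots,\mu_r\in K$ retaining all three properties and the same absolute values, so that $\prod_i|\mu_i|_\infty=q_\infty^{(f-r)/2}$. I would then choose $a\in A$ with $a\mu_i\in O_K$ for all $i$ (clearing the finitely many finite poles of the $\mu_i$) and, after replacing $a$ by $ab^N$ for a nonconstant $b\in A$ and $N$ large, with $\ord_\infty(a)$ as negative as desired; set $\lambda_i:=a\mu_i\in O_K$. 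This is again an orthogonal family, with $M:=\bigoplus_i O_\infty\lambda_i=a\,O_{K,\infty_K}$. Condition (iii) of Remark~\ref{rem 1.10}, namely $O_K+M=K_{\infty_K}$, becomes $a^{-1}O_K+O_{K,\infty_K}=K_{\infty_K}$; writing $a^{-1}O_K=\bigcup_n L_{X_K}(D_a+n\infty_K)$ for the effective divisor $D_a:=\divv_{X_K}(a)|_{\text{finite}}$, whose $K$-degree equals $-e\,\ord_\infty(a)\cdot\deg_K\infty_K$ and is therefore large, Riemann--Roch on $X_K$ shows $a^{-1}O_K\to K_{\infty_K}/O_{K,\infty_K}$ is onto (checked on the finite-length quotients $\{\ord_{\infty_K}\ge -n\}/O_{K,\infty_K}$) once $\deg_K D_a>2g_K-2$. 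Hence $\lambda_1,\dots,\lambda_r$ is an admissible basis for the definition of $D_A(O_K)$.

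It then remains to evaluate the two ingredients with this basis. On one hand $\prod_i|\lambda_i|_\infty=|a|_\infty^{\,r}\,q_\infty^{(f-r)/2}$. On the other, $O_K\cap M=\{x\in O_K:\ord_{\infty_K}(x)\ge\ord_{\infty_K}(a)\}=L_{X_K}(m'\infty_K)$ with $m':=-e\,\ord_\infty(a)>0$, so by Riemann--Roch (applicable as $a$ has a large pole) $\#(O_K\cap M)=q_K^{\,m'f'-g_K+1}$ where $f':=[\FF_{\infty_K}:\FF_{q_K}]$; since $d_K m' f'=-r\deg\infty\,\ord_\infty(a)$, i.e.\ $q_K^{\,m'f'}=|a|_\infty^{\,r}$, this equals $|a|_\infty^{\,r}\,q^{d_K(1-g_K)}$. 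Substituting into $D_A(O_K)^r=q^{r(1-g_k)}\cdot\#(O_K\cap M)^{-1}\prod_i|\lambda_i|_\infty$, the factor $|a|_\infty^{\,r}$ cancels and we obtain $D_A(O_K)^r=q^{r(1-g_k)}\,q_\infty^{(f-r)/2}\,q_K^{\,g_K-1}$; extracting $r$-th roots gives the asserted formula. The genuinely delicate point should be the constant-field bookkeeping --- keeping the degrees over $\FF_q$, over $\FF_{q_K}$ and over $\FF_\infty$ consistent, and verifying that the $a$-dependent contributions cancel exactly --- whereas the entire substance is packaged into Riemann--Roch on $X_K$, invoked both to check condition (iii) and to count $O_K\cap M$.
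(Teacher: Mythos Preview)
Your proof is correct and follows essentially the same route as the paper's: both construct an orthogonal $k_\infty$-basis of $K_{\infty_K}$ of the form (residue-field generators) $\times$ (powers of a local uniformizer), scale by an element $a$ with large pole at $\infty$ to land in $O_K$ and satisfy condition~(iii) of Remark~\ref{rem 1.10}, and invoke Riemann--Roch on the curve underlying $K$ to evaluate $\#(O_K\cap aO_{K,\infty_K})$. The only cosmetic difference is that the paper selects $\xi$ and $\Pi_{\tilde{\infty}}$ directly in $K$ from the outset rather than perturbing a local basis, but the computations and the final cancellation of the $|a|_\infty^r$ factor are identical.
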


\begin{proof}
Let $\tilde{\infty}$ be the unique place of $K$ lying above $\infty$, and let $O_{K_{\tilde{\infty}}}$ be the valuation ring in $K_{\tilde{\infty}}$, the completion of $K$ at $\tilde{\infty}$. Choose $\Pi_{\tilde{\infty}} \in K$ to be a uniformizer at $\tilde{\infty}$, and denote by $\FF_{\tilde{\infty}} := O_{K_{\tilde{\infty}}}/ \Pi_{\tilde{\infty}} O_{K_{\tilde{\infty}}}$, the residue field at $\tilde{\infty}$.
Take $\xi \in K \cap O_{K_{\tilde{\infty}}}$ so that $\FF_{\tilde{\infty}} = \FF_\infty(\bar{\xi})$, where $\bar{\xi} := \xi \bmod \Pi_{\tilde{\infty}} \in \FF_{\tilde{\infty}}$
Then
$$ O_{K_{\tilde{\infty}}} = O_\infty[\xi,\Pi_{\tilde{\infty}}] = \oplus_{i=0}^{f_K(\infty)-1} \oplus_{j=0}^{e_K(\infty)-1} O_\infty \xi^i \Pi_{\tilde{\infty}}^j,$$
where $e_K(\infty):= [K:k]/f_K(\infty)$, the ramification index of $\infty$ in $K$.
In particular, $|\xi|_\infty = 1$ and $|\Pi_{\tilde{\infty}}|_\infty = q_\infty^{-1/e_K(\infty)}$.
Moreover, it can be checked that given $a_{ij} \in K_\infty$ for $0\leq i < f_K(\infty)$ and $0\leq j < e_K(\infty)$, one has
$$\Bigg|\sum_{i=0}^{f_K(\infty)-1} \sum_{j=0}^{e_K(\infty)-1} a_{ij} \xi^i \Pi_{\tilde{\infty}}^j\Bigg|_\infty = \max\Big( \big|a_{ij} \xi^i \Pi_{\tilde{\infty}}^j\big|_\infty \ \Big|\ 0\leq i < f_K(\infty),\ 0\leq j < e_K(\infty)\Big).$$
Choose $a \in k$ \lq\lq sufficiently large\rq\rq\ so that $a \xi^i \Pi_{\tilde{\infty}}^j \in O_K$ for $0\leq i < f_K(\infty)$, $0\leq j < e_K(\infty)$, and  $K_\infty = O_K + a O_{K_{\tilde{\infty}}}$.
Then from \textit{Remark}~\ref{rem 1.10} we obtain that
$$ D_A(O_K) = q^{1-g_k} \cdot \left(\frac{\prod_{0\leq i < f_K(\infty)} \prod_{0\leq j < e_K(\infty)} \big|a\xi^i \Pi_{\tilde{\infty}}^j\big|_\infty}{\#\big(O_K \cap aO_{K_{\tilde{\infty}}}\big)}\right)^{1/r}.$$
Applying the Riemann-Roch Theorem, one can get
$$
\#\big(O_K \cap aO_{K_{\tilde{\infty}}}\big)
= q^{[K:k]\cdot \deg a} \cdot q_K^{1-g_K} \cdot \#\left(\frac{K_\infty}{O_K + aO_{K_{\tilde{\infty}}}}\right) = q^{r\deg a} \cdot q_K^{1-g_K}.
$$
Therefore
\begin{eqnarray}
D_A(O_K) &=& q^{1-g_k} \cdot q_K^{\frac{g_K-1}{r}} \cdot q_\infty^{\frac{f_K(\infty)}{r} \cdot \big(-\frac{1}{e_K(\infty)} \cdot \frac{e_K(\infty) (e_K(\infty)-1)}{2}\big)} \nonumber \\
&=& q^{1-g_k} \cdot q_K^{\frac{g_K-1}{r}} \cdot q_\infty^{\frac{-r+f_K(\infty)}{2r}}. \nonumber
\end{eqnarray}
\end{proof}

\begin{rem}\label{rem 4.6}
For every $A$-order $\Ocal$ in $K$,
let $\dfk(\Ocal/A) \lhd A$ be the (relative) discriminant ideal of $\Ocal/A$.
When $K/k$ is separable and tamely ramified at $\infty$, 
from the Riemann-Hurwitz formula we get
\begin{eqnarray}
\Vert \dfk(O_K/A) \Vert &=& q_K^{(2g_K-2)-[K:\FF_K k](2g_k-2)} \cdot q_\infty^{-f_K(\infty)\cdot (e_K(\infty)-1)} \nonumber \\
&=&q^{2r(1-g_k)} \cdot q_K^{2(g_K-1)} \cdot q_\infty^{-r + f_K(\infty)}.
\end{eqnarray}
Here $\FF_K$ is the constant field of $K$ and $e_K(\infty)$ is the ramification index of $\infty$ in $K$.
Thus for every $A$-order $\Ocal$ in $K$, by Lemma~\ref{lem 4.5} we obtain
\begin{eqnarray}
D_A(\Ocal) &=& D_A(O_K) \cdot \#(O_K/\Ocal)^{1/r} \nonumber \\
&=& \Vert \dfk(O_K/A)\Vert^{1/2r} \cdot \#(O_K/\Ocal)^{1/r} \nonumber \\
&=& \Vert \dfk(\Ocal/A)\Vert^{1/2r}. \nonumber
\end{eqnarray}
\end{rem}

\subsubsection{Asymptotic behavior}\label{sec 4.2.1}
Let $\zeta_K(s):= (1-q_\infty^{-f_K(\infty)s})^{-1} \cdot  \zeta_{O_K}(s)$. 
Write
$$\zeta_K(s) = \frac{c_{-1}}{s-1} + c_0 + {\rm O}(s-1)$$
and put $\gamma_K := c_0/c_{-1}$, the \textit{Euler-Kronecker constant of $K$} (following the definition in \cite[equation (0.1) and (0.2)]{Iha}).
From the functional equation of $\zeta_K(s)$ (cf.\ \cite[Theorem 5.9]{Ros}):
$$\zeta_K(s) = q_K^{(g_K-1)(1-2s)} \cdot \zeta_K(1-s),$$ 
one gets
$$\zeta_{O_K}(s) =  q_K^{(g_K-1)(1-2s)} \cdot (1-q_\infty^{-f_K(\infty)s}) \cdot \zeta_K(1-s),$$
whereas the logarithmic derivative of $\zeta_{O_K}(s)$ at $s=0$ equals to:
\begin{eqnarray}\label{eqn: log-der zeta}
\frac{\zeta_{O_K}'(0)}{\zeta_{O_K}(0)} &=& -\left[2(g_K-1)\ln q_K + \frac{ f_K(\infty)}{2}\ln q_\infty  + \gamma_K\right].
\end{eqnarray}

Let $\rho^o$ be a rank $r$ Drinfeld $A$-module over $\overline{k}$ with CM by $O_K$.
Then Theorem~\ref{thm 4.3}, Lemma~\ref{lem 4.5}, and the equality~(\ref{eqn: log-der zeta}) give us:
\begin{eqnarray}
h^{\text{\rm st}}_{\text{\rm Tag}}(\rho^o) &=& 
\left[(g_k-1)\ln q + \frac{1-g_K}{r} \ln q_K + \frac{r-f_K(\infty)}{2r} \ln q_\infty\right] \nonumber \\
&& + \frac{1}{r}\left[2(g_K-1)\ln q_K + \frac{ f_K(\infty) }{2}\ln q_\infty  + \gamma_K\right] \nonumber \\
&=&(g_k-1) \ln q + \frac{g_K-1}{r} \ln q_K +  \frac{1}{2} \ln q_\infty + \frac{\gamma_K}{r}. \nonumber
\end{eqnarray}
From Ihara's estimate of $\gamma_K$ (cf.\ \cite[equation (0.6) and (0.12)]{Iha}), we get
$$\gamma_K = {\rm O}_{k,r}\big(\ln (g_K-1)\big), \quad \quad \text{ $g_K \gg 0$}.$$
Here ${\rm O}_{k,r}$ is the big-O notation (depending on the base field $k$ and the rank $r$).
Therefore we obtain the following asymptotic formula:

\begin{cor}\label{cor 4.5}
Let $\rho^o$ be a Drinfeld $A$-module of rank $r$ over $\overline{k}$ with CM by $O_K$. Then
$$h^{\text{\rm st}}_{\text{\rm Tag}}(\rho^o) = \frac{g_K-1}{r} \ln q_K + 
{\rm O}_{k,r}\big(\ln (g_K-1)\big), \quad \quad \text{ $g_K \gg 0$}.$$
\end{cor}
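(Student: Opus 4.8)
The plan is to combine Theorem~\ref{thm 4.3} with the explicit evaluations of $D_A(O_K)$ and of the logarithmic derivative of $\zeta_{O_K}$ at $s=0$, and then to isolate the dominant term. First I would apply Theorem~\ref{thm 4.3} to $\rho^o$. Since $O_K$ is the maximal $A$-order in $K$, every invertible ideal of $O_K$ is locally free of rank one, hence lies in the same genus (as an $O_K$-module) as $O_K$ itself; therefore the auxiliary ideal $\Ifk$ in Theorem~\ref{thm 4.3} may be taken to be $O_K$, so that $\zeta_\Ifk(s) = \zeta_{O_K}(s)$. This yields
$$h^{\text{\rm st}}_{\text{\rm Tag}}(\rho^o) = -\ln\big(D_A(O_K)\big) - \frac{1}{r}\cdot\frac{\zeta_{O_K}'(0)}{\zeta_{O_K}(0)}.$$

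Next I would substitute the value $D_A(O_K) = q^{1-g_k}\cdot q_K^{(g_K-1)/r}\cdot q_\infty^{(f_K(\infty)-r)/(2r)}$ from Lemma~\ref{lem 4.5}, together with the identity $\zeta_{O_K}'(0)/\zeta_{O_K}(0) = -\big[2(g_K-1)\ln q_K + \frac{1}{2}f_K(\infty)\ln q_\infty + \gamma_K\big]$. The latter I would derive from the functional equation $\zeta_K(s) = q_K^{(g_K-1)(1-2s)}\zeta_K(1-s)$ of the completed zeta function $\zeta_K(s) = (1-q_\infty^{-f_K(\infty)s})^{-1}\zeta_{O_K}(s)$, by writing $\zeta_K(s) = c_{-1}/(s-1) + c_0 + {\rm O}(s-1)$ and expanding both sides around $s=0$; the only delicate point is that the simple pole of the factor $(1-q_\infty^{-f_K(\infty)s})^{-1}$ at $s=0$ cancels the simple pole of $\zeta_K(1-s)$ there, and tracking the two finite parts produces the constant $\gamma_K = c_0/c_{-1}$. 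Collecting terms then gives the closed formula
$$h^{\text{\rm st}}_{\text{\rm Tag}}(\rho^o) = (g_k-1)\ln q + \frac{g_K-1}{r}\ln q_K + \frac{1}{2}\ln q_\infty + \frac{\gamma_K}{r}.$$

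Finally I would argue that every term other than $\frac{g_K-1}{r}\ln q_K$ is ${\rm O}_{k,r}\big(\ln(g_K-1)\big)$ for $g_K\gg 0$. For this I would use that $[\FF_K:\FF_q]$ divides $[K:k]=r$, so $q_K\le q^r$ ranges over a finite set determined by $k$ and $r$, and likewise $f_K(\infty)\le r$; since $q$, $q_\infty$, $g_k$ are fixed, the terms $(g_k-1)\ln q$ and $\frac{1}{2}\ln q_\infty$ are ${\rm O}_{k,r}(1)$. The remaining term $\gamma_K/r$ is controlled by Ihara's estimate of the Euler--Kronecker constant \cite[equation (0.6) and (0.12)]{Iha}, which gives $\gamma_K = {\rm O}_{k,r}\big(\ln(g_K-1)\big)$ as $g_K\to\infty$; for $g_K\gg 0$ all of these contributions are absorbed into ${\rm O}_{k,r}\big(\ln(g_K-1)\big)$, and the asserted asymptotic follows.

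I expect the main obstacle to be bookkeeping rather than conceptual: one must check that the implied constant in Ihara's bound for $\gamma_K$ depends only on the residue-field size $q$ (equivalently on $k$ and $r$, since $q_K$ then ranges over finitely many values) and is uniform over the imaginary field $K$, and one must verify that the Euler-factor normalization in $\zeta_K = (1-q_\infty^{-f_K(\infty)s})^{-1}\zeta_{O_K}$ is the one compatible with the zeta function $\zeta_\Ifk$ appearing in Theorem~\ref{thm 4.3}. Once those two points are pinned down, the remainder of the argument is a direct substitution into formulas already established above.
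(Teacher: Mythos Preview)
Your proposal is correct and follows essentially the same route as the paper: apply Theorem~\ref{thm 4.3} with $\Ifk=O_K$, insert Lemma~\ref{lem 4.5} and the logarithmic-derivative identity~(\ref{eqn: log-der zeta}) (derived exactly as you describe, from the functional equation of $\zeta_K$), obtain the closed formula $h^{\text{st}}_{\text{Tag}}(\rho^o)=(g_k-1)\ln q+\frac{g_K-1}{r}\ln q_K+\frac12\ln q_\infty+\frac{\gamma_K}{r}$, and then invoke Ihara's bound on $\gamma_K$. The paper is terser about why the remaining terms are ${\rm O}_{k,r}(1)$, but your justification via $[\FF_K:\FF_q]\mid r$ and $f_K(\infty)\le r$ is the intended one.
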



\section{On zeta functions over \lq\lq totally real\rq\rq\ function fields}\label{sec Z-E}

Let $F$ be a finite extension over $k$ with $[F:k] = r$. 
Suppose $F/k$ is \lq\lq totally real\rq\rq\ with respect to $\infty$, i.e.\ $\infty$ is totally split in $F$. 
Note that $F/k$ must be separable. 
Let $\Ocal$ be an $A$-order in $F$. 
Let $\Ifk$ be an invertible fractional ideal of $\Ocal$, the partial zeta function associated to $\Ifk$ is
$$\zeta_{\Ocal}(s;\Ifk) := \sum_{\text{invertible } I \lhd \Ocal \atop [I] = [\Ifk]} \frac{1}{N_{\Ocal}(I)^s}.$$
Here $[\Ifk] \in \Pic(\Ocal)$ denotes the ideal class represented by $\Ifk$. Note that $\zeta_{\Ocal}(s;\Ifk)$ only depends on the ideal class of $\Ifk$. We shall first prove a \lq\lq limit\rq\rq\ formula for $\zeta_{\Ocal}(s;\Ifk)$ at $s = 0$.

Let 
$$(F_\infty^\times)_0 :=\frac{(F\otimes_k k_\infty)^\times}{(k \otimes_k k_\infty)^\times} \quad \text{ and } \quad (\Ocal^\times)_0 := \frac{\Ocal^\times}{\FF_q^\times}.$$
Then the canonical embedding $O_F \hookrightarrow F \hookrightarrow F_\infty := F \otimes_k k_\infty$ induces an injective map $(O_F^\times)_0 \hookrightarrow (F_\infty^\times)_0$, which is cocompact by the Dirichlet unit Theorem.
Identifying $F_\infty$ with $k_\infty^r$, each $\alpha \in F$ corresponds to a vector denoted by $(\alpha^{(1)},...,\alpha^{(r)}) \in k_\infty^r$.

Given an invertible fractional ideal $\Ifk$ of $\Ocal$,
take a projective $A$-module $Y_\Ifk \subset k^r$ of rank $r$ and $\lambda_1,...,\lambda_r \in \Ifk$ so that
$$\Ifk = \{ c_1 \lambda_1 + \cdots c_r \lambda_r : (c_1,...,c_r) \in Y_\Ifk \}.$$

We introduce the following {\it \lq\lq Heegner cycle\rq\rq} on the $\GL(Y_\Ifk) \backslash \Bcal^r(\ZZ)$:
$$\Zfk_\Ifk := \{z_\Ifk(\mathbf{t}) : \mathbf{t} \in (F_\infty^\times)_0\},$$
where for $\mathbf{t} = (t_1: \cdots : t_r) \in (F_\infty^\times)_0$, the vertex $z_\Ifk(\mathbf{t})$ is represented by
$$
\begin{pmatrix}
t_1 & & \\
& \ddots & \\
& & t_r 
\end{pmatrix}
\cdot \ g_\infty(\lambda_1,...,\lambda_r), \quad \text{ where } \quad 
g_\infty(\lambda_1,...,\lambda_r) := 
\begin{pmatrix}
\lambda_1^{(1)} & \cdots & \lambda_1^{(r)} \\
\vdots & & \vdots \\
\lambda_r^{(1)} & \cdots & \lambda_r^{(r)}
\end{pmatrix} \quad \in \GL_r(k_\infty).
$$
Note that the cycle $\Zfk_\Ifk$ only depends on the ideal $\Ifk$.
Recall the following bijection:
$$
\begin{tabular}{ccl}
$\left(\coprod\limits_{[Y] \in \Pscr_A^r} \GL(Y)\backslash \Bcal^r(\ZZ)\right)$ &$\longleftrightarrow$ & $\GL_r(k) \backslash \GL_r(\AA) / k_\infty^\times \GL_r(O_\AA) = : \YY(1)$ \\
$[g_\infty] \in \GL(Y)\backslash \Bcal^r(\ZZ)$ & $\longleftrightarrow$ & $[g_Y^\infty, g_\infty]$,
\end{tabular} 
$$
where the element $g_Y^\infty \in \GL(\AA^\infty)$ is chosen so that $O_{\AA^\infty}^r \cdot (g_Y^\infty)^{-1} = \widehat{Y}$, the closure of $Y$ in $(\AA^\infty)^r$.
Then $\Zfk_\Ifk$ can be identified as a \lq\lq cycle\rq\rq\ in $\YY(1)$ associated to $\Ifk$.
We remark that
\begin{eqnarray}\label{eqn:disc}
&& \quad  |\det(g_{Y_\Ifk}^\infty)|_\AA = \Vert Y_\Ifk \Vert \quad \text{and} \quad
\Vert Y_\Ifk \Vert \cdot |\det(g_\infty(\lambda_1,...,\lambda_r))|_\infty = N_{\Ocal}(\Ifk^{-1} )\cdot \Vert \dfk(\Ocal/A) \Vert^{1/2}.
\end{eqnarray}
${}$

Let $\mathbf{1}_{O_{\AA^\infty}^r} \in S((\AA^\infty)^r)$ be the characteristic function of $O_{\AA^\infty}^r$ and $\Omega(\Ocal) := (F_\infty^\times)_0/(\Ocal^\times)_0$.
Then: 

\begin{prop}\label{prop part-E}
Let $\dfk(\Ocal/A)$ be the discriminant ideal of $\Ocal$ over $A$. We have
$$
\Vert \dfk(\Ocal/A) \Vert^{s/2} \cdot (1-q_\infty^{-s})^{-r} \cdot \zeta_{\Ocal}(s;\Ifk)
=
\frac{1}{q-1} \int_{\Omega(\Ocal)} \Ecal^\infty(z_{\Ifk^{-1}}(\mathbf{t}),s; \mathbf{1}_{{O_{\AA^\infty}^r}}) d^\times \mathbf{t}.
$$
Here $d^\times \mathbf{t}$ is induced from the Haar measure on $F_\infty^\times \cong (k_\infty^\times)^r$ so that the volume of $(O_\infty^{\times})^r$ is one.
\end{prop}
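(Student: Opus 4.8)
The plan is to express the partial zeta function $\zeta_{\Ocal}(s;\Ifk)$ as a lattice sum indexed by an ideal class and then recognize that sum, after integrating over the unit torus $\Omega(\Ocal)$, as a mirabolic Eisenstein series via Lemma~\ref{lem 2.4}. First I would unfold the definition: since every invertible integral ideal $I$ with $[I]=[\Ifk]$ can be written as $I = \lambda \Ifk$ for some $\lambda \in \Ifk^{-1}$, unique up to multiplication by $\Ocal^\times$ (this uses invertibility of $\Ifk$ in an essential way), and $N_{\Ocal}(\lambda\Ifk) = |\lambda|_\infty^r / N_{\Ocal}(\Ifk^{-1})$ because $\Vert \lambda\Ocal\Vert = |\lambda|_\infty$ by the product formula, I get
$$
\zeta_{\Ocal}(s;\Ifk) = \frac{N_{\Ocal}(\Ifk^{-1})^{-s}}{\#(\Ocal^\times)_0}\cdot(q-1)^{-1}\cdot\#\FF_q^\times\cdot\!\!\sum_{0\neq\lambda\in\Ifk^{-1}/\Ocal^\times}\!\!\frac{N_{\Ocal}(\Ifk^{-1})^{s}}{|\lambda|_\infty^{rs}},
$$
so up to bookkeeping constants $\zeta_{\Ocal}(s;\Ifk)$ is $N_{\Ocal}(\Ifk^{-1})^{-s}$ times a sum over $\Ifk^{-1}$ modulo units. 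Folding the quotient by $\Ocal^\times$ into an integral over $\Omega(\Ocal) = (F_\infty^\times)_0/(\Ocal^\times)_0$ turns the sum over $\Ifk^{-1}/\Ocal^\times$ into an integral of a full lattice sum over $\Ifk^{-1}$ with the scaled norm $\nu$ twisted by $\mathbf{t}$; here I would be careful about the relation between $\#(\Ocal^\times)_0$, the covolume of $(\Ocal^\times)_0$ in $\Omega(\Ocal)$, and the measure normalization on $(O_\infty^\times)^r$.

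Next I would identify the twisted lattice sum with $\Ecal^\infty$. Writing $\Ifk^{-1} = \{c_1\lambda_1+\cdots+c_r\lambda_r : (c_1,\dots,c_r)\in Y_{\Ifk^{-1}}\}$ and using the matrix $g_\infty(\lambda_1,\dots,\lambda_r)$, the value $|\lambda|_\infty$ for $\lambda = \sum c_i\lambda_i$ equals $\nu_{L^o}\big((c_1,\dots,c_r)\,\mathrm{diag}(t_1,\dots,t_r)\,g_\infty(\lambda_1,\dots,\lambda_r)\big)$ when we insert the torus element $\mathbf{t}$; this is exactly $\nu_z(x)$ with $z = z_{\Ifk^{-1}}(\mathbf{t})$ and $x = (c_1,\dots,c_r) \in k^r$. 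The finite Schwartz function $\mathbf{1}_{O_{\AA^\infty}^r}$ composed with right translation by $g_{Y_{\Ifk^{-1}}}^\infty$ picks out precisely the condition $(c_1,\dots,c_r)\in Y_{\Ifk^{-1}}$, i.e. $\lambda\in\Ifk^{-1}$. Applying Lemma~\ref{lem 2.4} to $g = (g_{z},g^\infty)$ with $g_z = \mathrm{diag}(\mathbf{t})g_\infty(\lambda_1,\dots,\lambda_r)$ and $g^\infty = g_{Y_{\Ifk^{-1}}}^\infty$ then yields
$$
\Ecal^\infty\big(z_{\Ifk^{-1}}(\mathbf{t}),s;\mathbf{1}_{O_{\AA^\infty}^r}\big) = \frac{|\det g|_\AA^s}{1-q_\infty^{-rs}}\sum_{0\neq x\in Y_{\Ifk^{-1}}}\frac{1}{\nu_{L^o}(xg_z)^{rs}},
$$
and the factor $|\det g|_\AA^s$ is $\big(\Vert Y_{\Ifk^{-1}}\Vert\cdot|\det g_\infty(\lambda_1,\dots,\lambda_r)|_\infty\big)^s$, which by the displayed identity~\eqref{eqn:disc} equals $\big(N_{\Ocal}(\Ifk)\cdot\Vert\dfk(\Ocal/A)\Vert^{1/2}\big)^s$ (note $N_{\Ocal}(\Ifk) = N_{\Ocal}((\Ifk^{-1})^{-1})$). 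Matching this against the unfolded zeta expression, the $N_{\Ocal}(\Ifk)^{-s}$ from $\zeta$ cancels the $N_{\Ocal}(\Ifk)^s$ here and the surviving prefactor is exactly $\Vert\dfk(\Ocal/A)\Vert^{s/2}$, giving the claimed identity after dividing by $(1-q_\infty^{-s})^{-r}$ versus $(1-q_\infty^{-rs})^{-1}$—wait, this discrepancy between $(1-q_\infty^{-s})^r$ and $(1-q_\infty^{-rs})$ is the point I must handle next.

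The main obstacle I anticipate is precisely the convergence/measure-theoretic step of passing from the sum over $\Ifk^{-1}/\Ocal^\times$ to the integral over $\Omega(\Ocal)$, and reconciling the Euler-type factor. The resolution is that the integral $\int_{\Omega(\Ocal)}$ of a single lattice point $\lambda$ over its stabilizer-free orbit, with the $(O_\infty^\times)^r$-normalized measure, contributes a factor $\mathrm{vol}\big((O_\infty^\times)^r\big)$-worth of redundancy that combines with the local computation $\int_{|a_\infty|_\infty\ge\nu}|a_\infty|_\infty^{-rs}d^\times a_\infty = (1-q_\infty^{-rs})^{-1}\nu^{-rs}$ built into Lemma~\ref{lem 2.4}; meanwhile the diagonal torus $(F_\infty^\times)_0 \cong (k_\infty^\times)^r/k_\infty^\times$ contributes an independent $(1-q_\infty^{-s})$ per coordinate when one quotients the $r$-dimensional torus integral by the central $k_\infty^\times$ already absorbed. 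I would make this precise by writing $(F_\infty^\times)_0 = (k_\infty^\times)^r/k_\infty^\times$, choosing the obvious fundamental domain, and computing the torus integral coordinate-by-coordinate against $\nu_{L^o}(x g_z)$; the factor $(1-q_\infty^{-s})^{-r}$ on the left is what remains after the $k_\infty^\times$-direction is folded into the $d^\times a_\infty$ integral of Lemma~\ref{lem 2.4}. Finally I would double-check the constant $(q-1)^{-1}$ by tracking $\#\FF_q^\times = q-1$ through $(\Ocal^\times)_0 = \Ocal^\times/\FF_q^\times$ and the volume normalization~\eqref{eqn Ivol}; everything else is a routine comparison of absolutely convergent series and integrals for $\re(s)$ large, with meromorphic continuation inherited from Theorem~\ref{thm 2.2}.
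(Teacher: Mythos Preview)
Your unfolding of the partial zeta function is where the argument breaks. You write $N_{\Ocal}(\lambda\Ifk) = |\lambda|_\infty^r / N_{\Ocal}(\Ifk^{-1})$ and then carry the term $|\lambda|_\infty^{-rs}$ through the rest of the computation, but in the totally real situation $\lambda \in F \hookrightarrow F_\infty \cong k_\infty^r$ has \emph{$r$ distinct} archimedean absolute values $|\lambda^{(1)}|_\infty,\dots,|\lambda^{(r)}|_\infty$, and there is no single $|\lambda|_\infty$. The correct identity is $N_{\Ocal}(\lambda\Ocal) = |N_{F/k}(\lambda)|_\infty = \prod_{i=1}^r |\lambda^{(i)}|_\infty$, so the unfolded zeta is a sum of $|N_{F/k}(\lambda)|_\infty^{-s}$, not of $|\lambda|_\infty^{-rs}$. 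Your later identification ``$|\lambda|_\infty = \nu_{L^o}\big((c_1,\dots,c_r)\,\mathrm{diag}(\mathbf{t})\,g_\infty\big)$'' is therefore comparing a product norm with a sup norm, and this is exactly why you are left puzzling over $(1-q_\infty^{-s})^{-r}$ versus $(1-q_\infty^{-rs})^{-1}$.

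The paper's proof resolves this cleanly by inserting the $r$-fold local integral
\[
(1-q_\infty^{-s})^{-r}\,|N_{F/k}(\lambda)|_\infty^{-s}
=\int_{(k_\infty^\times)^r}\mathbf{1}_{O_\infty^r}(t_1\lambda^{(1)},\dots,t_r\lambda^{(r)})\,|t_1\cdots t_r|_\infty^{s}\,d^\times t_1\cdots d^\times t_r,
\]
one factor of $(1-q_\infty^{-s})^{-1}$ per embedding. Then one unfolds the sum over $(\Ifk^{-1}\setminus\{0\})/\Ocal^\times$ against this $(k_\infty^\times)^r$-integral: the diagonal $k_\infty^\times \subset (k_\infty^\times)^r$ becomes the inner integral $\int_{k_\infty^\times}\mathbf{1}_{O_\infty^r}(a^{-1}xg_\infty)|a|_\infty^{-rs}d^\times a$ appearing in \eqref{eqn:fmE}, while the quotient $(k_\infty^\times)^r/k_\infty^\times = (F_\infty^\times)_0$ modulo $(\Ocal^\times)_0$ gives the outer integral over $\Omega(\Ocal)$; the factor $(q-1)^{-1}$ comes from $\FF_q^\times = \ker\big(\Ocal^\times \to (\Ocal^\times)_0\big)$. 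With this in hand the comparison with $\Ecal^\infty$ via \eqref{eqn:fmE} and \eqref{eqn:disc} is immediate. Your sketch has the right global shape (unfold, integrate over the torus, match with Lemma~\ref{lem 2.4}), but the product-versus-sup norm confusion is a genuine gap, and your final paragraph's attempted reconciliation does not supply the missing computation.
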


\begin{proof}
For $\re(s)>1$, we first express $\Vert \dfk(\Ocal/A) \Vert^{s/2} \cdot (1-q_\infty^{-s})^{-r} \cdot \zeta_{\Ocal}(s;\Ifk)
$ as
\begin{eqnarray}
&& \frac{\Vert \dfk(\Ocal/A) \Vert^{s/2} }{N(\Ifk)^s (1-q_\infty^{-s})^r } \cdot \sum_{\lambda \in (\Ifk^{-1}-\{0\})/ \Ocal^\times} \frac{1}{|\text{N}_{F/k}(\lambda)|_\infty^s} \nonumber \\
&=& \frac{\Vert \dfk(\Ocal/A) \Vert^{s/2} }{N(\Ifk)^s} 
\cdot \int_{(k_\infty^\times)^r} \left(\sum_{\lambda \in (\Ifk^{-1}-\{0\})/ \Ocal^\times} \mathbf{1}_{(O_\infty)^r} (t_1 \lambda^{(1)},...,t_r \lambda^{(r)}) |t_1\cdots t_s|_\infty^s \right) d^\times t_1 \cdots d^\times t_r \nonumber \\
&=& \frac{\Vert \dfk(\Ocal/A) \Vert^{s/2} }{(q-1) N(\Ifk)^s} \cdot
\int_{\Omega(\Ocal)} \left(\int_{k_\infty^\times} 
\sum_{(c_1,...,c_r) \in Y_\Ifk}' \mathbf{1}_{O_\infty^r}( a(c_1,...,c_r) z_{\Ifk^{-1}}(\mathbf{t})) |a|_\infty^{rs} d^\times a\right) d^\times \mathbf{t} \nonumber 
\end{eqnarray}
The result then follows from the expression of the finite mirabolic Eisenstein series in \eqref{eqn:fmE} and the equality \eqref{eqn:disc}.
\end{proof}

Set
$$ \widetilde{\zeta}_{\Ocal}(s;\Ifk) := \Vert \dfk(\Ocal/A) \Vert^{s/2} \cdot \frac{1-q_\infty^{-rs}}{(1-q_\infty^{-s})^{r}} \cdot \zeta_\Ocal(s; \Ifk) \quad \text{ and } \quad  \eta(g) := \eta(g,\mathbf{1}_{O_{\AA^\infty}^r}),$$
where $\eta(\cdot,\varphi^\infty)$ is introduced in Section~\ref{sec 3.3}.
Let $\Rcal_\Ocal$ be the regulator of $\Ocal$ (with respect to $q_\infty$), i.e.\
$$
\Rcal_\Ocal := \Big|\det \begin{pmatrix} \log_{q_\infty} |\varepsilon_1^{(1)}|_\infty & \cdots &\log_{q_\infty} |\varepsilon_1^{(r-1)}|_\infty \\
\vdots & & \vdots \\
\log_{q_\infty} |\varepsilon_{r-1}^{(1)}|_\infty & \cdots & \log_{q_\infty} |\varepsilon_{r-1}^{(r-1)}|_\infty
\end{pmatrix}
\Big| \quad \in \ZZ_{>0},
$$
where $\{\varepsilon_1,...,\varepsilon_{r-1}\}$ is a set of {\it fundamental units} of $\Ocal$.
One can check that
\begin{eqnarray}\label{eqn vol}
\text{\rm vol}(\Omega(\Ocal), d^\times \mathbf{t})
= \frac{r \cdot (q-1)}{q_\Ocal-1} \cdot \Rcal_\Ocal.
\end{eqnarray}
Here $q_\Ocal$ is the cardinality of the constants of $F$ contained in $\Ocal$.
The Lerch-type formula of finite mirabolic Eisenstein series then shows immediately the following limit formula of $ \widetilde{\zeta}_{\Ocal}(s;\Ifk)$:

\begin{cor}\label{cor Z-E}
\begin{eqnarray}
\widetilde{\zeta}_{\Ocal}(s;\Ifk) 
&=& -\frac{r}{q_\Ocal-1}\cdot \Rcal_{\Ocal} - \left( \frac{r \ln q_\infty}{(q-1) (q_\infty^r-1)}\cdot \int_{\Omega(\Ocal)} \eta(z_{\Ifk^{-1}}(\mathbf{t})) d^\times \mathbf{t}\right) \cdot s \nonumber \\
&& + \text{\rm O}(s^2) \quad \text{ as $s \rightarrow 0$.} \nonumber 
\end{eqnarray}
\end{cor}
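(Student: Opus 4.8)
The plan is to read the expansion off from Proposition~\ref{prop part-E} by inserting the Laurent expansion of finite mirabolic Eisenstein series at $s=0$ — the variant form of Corollary~\ref{cor 3.7} recorded in \emph{Remark}~\ref{rem 3.8}(3) — and then evaluating the constant term with the volume formula~(\ref{eqn vol}). No new idea is needed; all three inputs are already at our disposal.

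First I would unwind the definition of $\widetilde{\zeta}_\Ocal(s;\Ifk)$ and multiply the identity of Proposition~\ref{prop part-E} by $1-q_\infty^{-rs}$, obtaining, as an identity of meromorphic functions of $s$,
$$\widetilde{\zeta}_\Ocal(s;\Ifk) = (1-q_\infty^{-rs})\cdot\frac{1}{q-1}\int_{\Omega(\Ocal)}\Ecal^\infty\big(z_{\Ifk^{-1}}(\mathbf{t}),s;\mathbf{1}_{O_{\AA^\infty}^r}\big)\,d^\times\mathbf{t} = \frac{1}{q-1}\int_{\Omega(\Ocal)}(1-q_\infty^{-rs})\,\Ecal^\infty\big(z_{\Ifk^{-1}}(\mathbf{t}),s;\mathbf{1}_{O_{\AA^\infty}^r}\big)\,d^\times\mathbf{t}.$$
Then I would apply \emph{Remark}~\ref{rem 3.8}(3) with $\varphi^\infty=\mathbf{1}_{O_{\AA^\infty}^r}$, for which $\varphi^\infty(0)=1$, giving at each $g\in\GL_r(\AA)$
$$(1-q_\infty^{-rs})\,\Ecal^\infty\big(g,s;\mathbf{1}_{O_{\AA^\infty}^r}\big) = -1 - \frac{r\ln q_\infty}{q_\infty^r-1}\,\eta(g)\cdot s + \mathrm{O}(s^2), \qquad s\to 0,$$
with $\eta(g)=\eta(g;\mathbf{1}_{O_{\AA^\infty}^r})$. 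Substituting $g=z_{\Ifk^{-1}}(\mathbf{t})$ and integrating over $\Omega(\Ocal)$ term by term, the constant term contributes $-\mathrm{vol}(\Omega(\Ocal),d^\times\mathbf{t})/(q-1)$ and the linear term contributes $-\frac{r\ln q_\infty}{(q-1)(q_\infty^r-1)}\big(\int_{\Omega(\Ocal)}\eta(z_{\Ifk^{-1}}(\mathbf{t}))\,d^\times\mathbf{t}\big)\,s$; finally, inserting $\mathrm{vol}(\Omega(\Ocal),d^\times\mathbf{t})=\frac{r(q-1)}{q_\Ocal-1}\Rcal_\Ocal$ from~(\ref{eqn vol}) turns the constant term into $-\frac{r}{q_\Ocal-1}\Rcal_\Ocal$, which is exactly the claimed expansion.

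The one point that needs justification is the interchange of $\int_{\Omega(\Ocal)}$ with the expansion near $s=0$, and this is immediate. By the Dirichlet unit theorem $(\Ocal^\times)_0$ is cocompact in $(F_\infty^\times)_0$, so $\Omega(\Ocal)$ is compact; moreover $\mathbf{t}\mapsto z_{\Ifk^{-1}}(\mathbf{t})$ is locally constant into the discrete set $\YY(1)=\GL_r(k)\backslash\GL_r(\AA)/k_\infty^\times\GL_r(O_\AA)$, through which both $\Ecal^\infty(\cdot,s;\mathbf{1}_{O_{\AA^\infty}^r})$ (being automorphic) and $\eta(\cdot;\mathbf{1}_{O_{\AA^\infty}^r})$ factor — here the extra term $\tfrac{q_\infty^r-1}{r}\log_{q_\infty}\im(z_\AA)$ in $\eta$ is arranged precisely to cancel the $j(\gamma,z)$-factor in the transformation law of $\ufk$ under $\GL_r(k)$ (using $|\det\gamma|_\AA=1$ and Lemma~\ref{lem 3.3}). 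Hence the image of $\Omega(\Ocal)$ in $\YY(1)$ is finite, the integral is a finite positive-rational combination of the finitely many values of $1$ and $\eta$, and term-by-term integration is trivially valid; this also reconfirms that $\mathbf{t}\mapsto\eta(z_{\Ifk^{-1}}(\mathbf{t}))$ descends to $\Omega(\Ocal)$, as was already used for the integrand in Proposition~\ref{prop part-E}. There is therefore no substantial obstacle — the argument is essentially bookkeeping, the only modest input being the cancellation that makes $\eta$ a $\GL_r(k)$-invariant function.
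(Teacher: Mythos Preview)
Your proposal is correct and follows essentially the same route as the paper's own proof: multiply Proposition~\ref{prop part-E} by $(1-q_\infty^{-rs})$, plug in the variant expansion from \emph{Remark}~\ref{rem 3.8}(3) with $\varphi^\infty=\mathbf{1}_{O_{\AA^\infty}^r}$, integrate over $\Omega(\Ocal)$, and evaluate the constant term via the volume formula~(\ref{eqn vol}). Your extra paragraph justifying the term-by-term integration (compactness of $\Omega(\Ocal)$, local constancy, and the $\GL_r(k)$-invariance of $\eta$ via Lemma~\ref{lem 3.3} and Lemma~\ref{lem 1.11}) is a welcome addition that the paper leaves implicit.
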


\begin{proof}
By \textit{Remark}~\ref{rem 3.8} (3), we have
\begin{eqnarray}
&&\int_{\Omega(\Ocal)} (1-q_\infty^{-rs}) \cdot \Ecal^\infty(z_{\Ifk^{-1}}(\mathbf{t}),s; \mathbf{1}_{{O_{\AA^\infty}^r}}) d^\times \mathbf{t} \nonumber \\
&=&-\text{vol}(\Omega(\Ocal),d^\times \mathbf{t}) - \left(\frac{r \ln q_\infty}{q_\infty^r-1}\cdot\int_{\Omega(\Ocal)}
 \eta(z_{\Ifk^{-1}}(\mathbf{t}))d^\times \mathbf{t}\right)\cdot s  + \text{\rm O}(s^2) \quad \text{ as $s \rightarrow 0$.} \nonumber \\
&=& - \frac{r \cdot (q-1)}{q_\Ocal-1} \cdot  \Rcal_{\Ocal}  - \left( \frac{r \ln q_\infty}{q_\infty^r-1}\cdot \int_{\Omega(\Ocal)} \eta(z_{\Ifk^{-1}}(\mathbf{t})) d^\times \mathbf{t}\right) \cdot s  + \text{\rm O}(s^2) \quad \text{ as $s \rightarrow 0$.} \nonumber 
\end{eqnarray}
The second equality comes from (\ref{eqn vol}).
Hence the result follows from Proposition~\ref{prop part-E}.
\end{proof}

\begin{rem}
We may view the above formula as an analogue of 
\cite[Theorem 1]{L-M} for totally real number fields (generalzation of the work of Hecke \cite{Hec} and Bump-Goldfield \cite{B-G}).
\end{rem}

We then arrive at:

\begin{thm}\label{thm E-K}
${}$
\begin{itemize}
\item[(1)] \text{\rm (Kronecker term)}
Let $ \widetilde{\zeta}_{\Ocal}(s) := \sum_{[\Ifk] \in \Pic(\Ocal)}  \widetilde{\zeta}_{\Ocal}(s;\Ifk)$.
Writing $$\widetilde{\zeta}_{\Ocal}(s) = c_{0}(\Ocal)  + c_1(\Ocal)s + \cdots,$$
the natural logarithmic derivative of $\widetilde{\zeta}_{\Ocal}(s)$ at $s=0$ equals to
\begin{eqnarray}
\frac{c_1(\Ocal)}{c_0(\Ocal)} = \frac{r \ln q_\infty}{(q_\infty^r-1)} \cdot \left( \frac{1}{h_\Ocal \cdot \text{\rm vol}\big(\Omega(\Ocal), d^\times \mathbf{t}\big)} \sum_{[\Ifk] \in \Pic(\Ocal)}
 \int_{\Omega(\Ocal)} \eta(z_{\Ifk}(\mathbf{t})) d^\times \mathbf{t}\right). \nonumber 
\end{eqnarray}
Here $h_{\Ocal} := \#(\Pic(\Ocal))$.

\item[(2)] ({\rm Lerch-type formula})
Let $\chi$ be a non-trivial character of $\Pic(\Ocal)$, and set 
$$\widetilde{L}_{\Ocal}(s,\chi):= \sum_{[\Ifk] \in \Pic(\Ocal)} \chi([\Ifk]) \cdot \widetilde{\zeta}_{\Ocal}(s;\Ifk).$$
Then $\widetilde{L}_{\Ocal}(s,\chi)$ vanishes at $s=0$, and
$$
\frac{d}{d s}\widetilde{L}_{\Ocal}(s,\chi)\Big|_{s=0} = 
- \frac{r \ln q_\infty}{(q-1)(q_\infty^r-1)}\cdot \sum_{[\Ifk] \in \Pic(\Ocal)}\overline{\chi(\Ifk)} \left(\int_{\Omega(\Ocal)} \eta(z_{\Ifk}(\mathbf{t})) d^\times \mathbf{t}\right).
$$
\end{itemize}
\end{thm}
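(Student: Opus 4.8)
The plan is to obtain both statements as immediate consequences of the limit formula in Corollary~\ref{cor Z-E}, which records the first two Taylor coefficients of $\widetilde{\zeta}_{\Ocal}(s;\Ifk)$ at $s=0$. Since $\widetilde{\zeta}_{\Ocal}(s;\Ifk)$ depends only on the class $[\Ifk]\in\Pic(\Ocal)$, the finite sums defining $\widetilde{\zeta}_{\Ocal}(s)$ and $\widetilde{L}_{\Ocal}(s,\chi)$ are well defined, and one may simply add up the expansions of Corollary~\ref{cor Z-E} over the $h_\Ocal$ ideal classes.

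\textbf{Part (1).} Summing Corollary~\ref{cor Z-E} over $[\Ifk]\in\Pic(\Ocal)$ gives
$$\widetilde{\zeta}_{\Ocal}(s) = -\frac{r\,h_\Ocal}{q_\Ocal-1}\,\Rcal_{\Ocal} - \left(\frac{r\ln q_\infty}{(q-1)(q_\infty^r-1)}\sum_{[\Ifk]\in\Pic(\Ocal)}\int_{\Omega(\Ocal)}\eta(z_{\Ifk^{-1}}(\mathbf{t}))\,d^\times\mathbf{t}\right) s + \mathrm{O}(s^2),$$
so that $c_0(\Ocal) = -\frac{r\,h_\Ocal}{q_\Ocal-1}\Rcal_{\Ocal}$ and $c_1(\Ocal)$ is the parenthesised coefficient. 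Next I would invoke the volume identity~(\ref{eqn vol}), $\mathrm{vol}(\Omega(\Ocal),d^\times\mathbf{t}) = \frac{r(q-1)}{q_\Ocal-1}\Rcal_{\Ocal}$, to rewrite $\frac{r\,h_\Ocal}{q_\Ocal-1}\Rcal_{\Ocal}$ as $\frac{h_\Ocal}{q-1}\,\mathrm{vol}(\Omega(\Ocal),d^\times\mathbf{t})$; dividing and cancelling then produces exactly the asserted value of $c_1(\Ocal)/c_0(\Ocal)$. Finally, since $\Ifk\mapsto\Ifk^{-1}$ permutes $\Pic(\Ocal)$, the class sum is unchanged by this substitution, so $\sum_{[\Ifk]}\int_{\Omega(\Ocal)}\eta(z_{\Ifk^{-1}}(\mathbf{t}))\,d^\times\mathbf{t} = \sum_{[\Ifk]}\int_{\Omega(\Ocal)}\eta(z_{\Ifk}(\mathbf{t}))\,d^\times\mathbf{t}$, matching the stated form.

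\textbf{Part (2).} For a non-trivial character $\chi$ of $\Pic(\Ocal)$, summing $\chi([\Ifk])$ times the expansion of Corollary~\ref{cor Z-E} shows the constant term equals $-\frac{r}{q_\Ocal-1}\Rcal_{\Ocal}\sum_{[\Ifk]}\chi([\Ifk]) = 0$ by orthogonality of characters on the finite abelian group $\Pic(\Ocal)$, so $\widetilde{L}_{\Ocal}(s,\chi)$ vanishes at $s=0$; its $s$-coefficient is $-\frac{r\ln q_\infty}{(q-1)(q_\infty^r-1)}\sum_{[\Ifk]}\chi([\Ifk])\int_{\Omega(\Ocal)}\eta(z_{\Ifk^{-1}}(\mathbf{t}))\,d^\times\mathbf{t}$. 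Substituting $\Ifk\mapsto\Ifk^{-1}$ and using $\chi(\Ifk^{-1}) = \overline{\chi(\Ifk)}$ (valid as $\chi$ is a unitary character of a finite group) converts this into $-\frac{r\ln q_\infty}{(q-1)(q_\infty^r-1)}\sum_{[\Ifk]}\overline{\chi(\Ifk)}\int_{\Omega(\Ocal)}\eta(z_{\Ifk}(\mathbf{t}))\,d^\times\mathbf{t}$, as claimed.

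\textbf{Main obstacle.} There is no genuine analytic difficulty here: everything reduces to Corollary~\ref{cor Z-E} together with the two elementary facts that $\sum_{[\Ifk]}\chi([\Ifk]) = 0$ for $\chi\neq 1$ and that the class sum is invariant under inversion. The only point demanding care is the bookkeeping of the several normalizing constants ($q_\Ocal-1$, $q-1$, $r$, $\Rcal_{\Ocal}$, $\mathrm{vol}(\Omega(\Ocal),d^\times\mathbf{t})$) through~(\ref{eqn vol}), so that the final ratio $c_1(\Ocal)/c_0(\Ocal)$ acquires precisely the normalization written in the statement — and, implicitly, recalling that $\widetilde{\zeta}_{\Ocal}(s;\Ifk)$ is genuinely a class invariant, so that the defining sums make sense.
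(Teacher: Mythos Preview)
Your proposal is correct and follows essentially the same route as the paper's own proof: sum Corollary~\ref{cor Z-E} over $\Pic(\Ocal)$ weighted by $\chi([\Ifk])$, identify the constant and linear Taylor coefficients, invoke the volume identity~(\ref{eqn vol}) to normalize, and reindex via $\Ifk\mapsto\Ifk^{-1}$ (using $\chi(\Ifk^{-1})=\overline{\chi(\Ifk)}$ in part (2)). The paper carries this out in one pass by treating a general $\chi$ first and then specializing to $\chi=1$ and $\chi\neq 1$, but the content is identical.
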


\begin{proof}
Let $\chi$ be a character of $\Pic(\Ocal)$. From Corollary~\ref{cor Z-E}, we obtain
\begin{eqnarray}
&& \sum_{[\Ifk] \in \Pic(\Ocal)} \chi([\Ifk]) \cdot \widetilde{\zeta}_{\Ocal}(s;\Ifk) \nonumber \\
&=& -\frac{r \Rcal_{\Ocal}}{q_\Ocal-1}\cdot \left(\sum_{[\Ifk] \in \Pic(\Ocal)} \chi([\Ifk])\right) \nonumber \\
&&  -\frac{r \ln q_\infty}{(q-1) (q_\infty^r-1)}\cdot \left( \sum_{[\Ifk] \in \Pic(\Ocal)} \chi([\Ifk]) \cdot \int_{\Omega(\Ocal)} \eta(z_{\Ifk^{-1}}(\mathbf{t})) d^\times \mathbf{t}\right) \cdot s \nonumber \\
&& + \text{\rm O}(s^2) \quad \text{ as $s \rightarrow 0$.} \nonumber 
\end{eqnarray}
Take $\chi$ to be trivial and get
$$
c_0(\Ocal) = \frac{-r h_\Ocal \Rcal_{\Ocal}}{q_\Ocal -1} \quad \text{ and } \quad
c_1(\Ocal) = -\frac{r \ln q_\infty}{(q-1) (q_\infty^r-1)}\cdot \sum_{[\Ifk] \in \Pic(\Ocal)}  \int_{\Omega(\Ocal)} \eta(z_{\Ifk}(\mathbf{t})) d^\times \mathbf{t}.
$$
Thus (1) follows from the equality~(\ref{eqn vol}).
When $\chi$ is non-trivial, one has $\sum_{[\Ifk] \in \Pic(\Ocal)} \chi([\Ifk]) = 0$.
Hence
$$
\widetilde{L}_\Ocal(s,\chi) = 
-\left(\frac{r \ln q_\infty}{(q-1) (q_\infty^r-1)}\cdot  \sum_{[\Ifk] \in \Pic(\Ocal)} \overline{\chi([\Ifk])} \cdot \int_{\Omega(\Ocal)} \eta(z_{\Ifk}(\mathbf{t})) d^\times \mathbf{t}\right) \cdot s + \text{\rm O}(s^2) \quad \text{ as $s \rightarrow 0$,}
$$
which shows (2).
\end{proof}

\section{Special values of Automorphic $L$-functions}\label{sec 5}


\subsection{Rankin-Selberg $L$-functions}\label{sec 5.1}

Let $\Pi_1$ and $\Pi_2$ be two automorphic cuspidal representations of $\GL_r(\AA)$ with unitary central character denoted by $\omega_1$ and $\omega_2$, respectively. 
Given a Schwartz function $\varphi \in S(\AA^r)$ and cusp forms $f_i \in \Pi_i$ for $i = 1,2$, the \textit{Rankin-Selberg $L$-function associated to $f_1,\ f_2,$ and $\varphi$} is defined by
$$L(s;f_1,f_2,\varphi) :=
\int_{\AA^\times \GL_r(k)  \backslash \GL_r(\AA)} f_1(g) f_2(g) \Ecal(g,s;\chi,\varphi) dg.$$
Here $\chi := \omega_1^{-1}  \cdot \omega_2^{-1}$, and $dg$ is the Tamagawa measure (i.e.\ $\text{vol}(\GL_r(k) \AA^\times \backslash \GL_r(\AA), dg) = 2$).
The meromorphic continuation and the functional equation of $\Ecal(g,s,\chi,\varphi)$ enable us to extend
$L(s;f_1,f_2,\varphi)$ to a meromorphic function on the $s$-plane satisfying
$$L(s;f_1,f_2,\varphi) = L(1-s;\tilde{f}_1,\tilde{f}_2,\widehat{\varphi}),$$
where $\tilde{f}_i(g) := f_i(^tg^{-1})$ for $i = 1,2$. 
In particular, Theorem~\ref{thm 2.2} (1) tells us that $L(s;f_1,f_2,\varphi)$ is holomorphic at $s=0$ if $f_1$ and $f_2$ are orthogonal with respect to the Petersson inner product.\\

The Whittaker function of $f_1$ (resp.\ $f_2$) with respect to $\psi$ (resp.\ $\overline{\psi}$) is defined by:
$$W_{f_1}(g) := \int_{N_r(k)\backslash N_r(\AA)} f_1(ng) \psi(-n) dn, \ \
W_{f_2}'(g) := \int_{N_r(k)\backslash N_r(\AA)} f_2(ng) \psi(n) dn, \ \  \forall g \in \GL_r(\AA).$$
If $f_1$ and $f_2$ are factorizable, then for $g = (g_v)_v \in \GL_r(\AA)$ one has
$$W_{f_1}(g) = \prod_v W_{f_1,v}(g_v) \quad \text{ and } \quad W_{f_2}'(g) = \prod_v W_{f_2,v}'(g_v),$$
where $W_{f_1,v} := W_{f_1}\big|_{k_v}$ and $W_{f_2,v}' := W_{f_2} \big|_{k_v}$. Moreover, for a factorizable $\varphi \in S(\AA^r)$ we get
$$L(s;f_1,f_2, \varphi) = \prod_vL_v(s;f_1,f_2,\varphi_v), \quad \re(s)>1.$$
Here
\begin{eqnarray}
&& L_v(s;f_1 ,f_2,\varphi_v) \nonumber \\
&:=& \int_{N_r(k_v)\backslash \GL_r(k_v)} W_{f_1,v}(g_v) W_{f_2,v}' (g_v) \varphi_v\big((0,...,0,1) g_v\big) |\det g_v|_v^s dg_v, \quad \forall \re(s)>0, \nonumber
\end{eqnarray}
which has meromorphic continuation to the whole complex $s$-plane (cf.\ \cite[p.\ 121]{S-S}).\\

Let $L(s,\Pi_1\times \Pi_2) = \prod_v L_v(s,\Pi_1\times \Pi_2)$ be the Rankin-Selberg $L$-function associated to $\Pi_1$ and $\Pi_2$. For factorizable $f_1 \in \Pi_1,\ f_2 \in \Pi_2$ and $\varphi \in S(\AA^r)$, the quotient
$$L_v^o(s;f_1 ,f_2,\varphi_v) := \frac{L_v(s;f_1 , f_2,\varphi_v)}{L_v(s,\Pi_1\times \Pi_2)}$$
is a polynomial in $\CC[q_v^{-s}, q_v^s]$ for every place $v$ of $k$, which is equal to $1$ when $v$ is \lq\lq good\rq\rq\ (i.e.\ the representations $\Pi_1$ and $\Pi_2$ are both unramified at $v$, the additive character $\psi_v$ has trivial conductor at $v$, the automorphic forms $f_1$ and $f_2$ are both spherical at $v$ with 
$W_{f_1,v}(1) =W_{f_2,v}'(1) = 1$, and $\varphi_v = \mathbf{1}_{O_v^r}$).
Therefore the following identity holds for factorizable $f_1 \in \Pi_1$, $f_2 \in \Pi_2$, and $\varphi \in S(\AA^r)$:
\begin{eqnarray}\label{eqn RS-local-global}
L(s;f_1,f_2,\varphi) &=& L(s,\Pi_1\times \Pi_2) \cdot \prod_v L_v^o(s;f_1,f_2,\varphi_v).
\end{eqnarray}

Suppose $\chi\big|_{k_\infty^\times} = 1$. 
Let $\Pscr^{\text{\rm RS}} := : \Pi_1 \times \Pi_2 \times S((\AA^\infty)^r)_\ZZ \longrightarrow \CC$
be the multilinear functional satisfying
\begin{eqnarray}\label{eqn 5.1}
\Pscr^{\text{\rm RS}}(f_1,f_2,\varphi^\infty) &:=& L_\infty^o(0;f_1 ,f_2, \mathbf{1}_{L^o}) 
\cdot \prod_{v \neq \infty} L_v^o(0;f_1,f_2,\varphi_v)
\end{eqnarray}
for factorizable $f_1 \in \Pi_1$, $f_2 \in \Pi_2$, and $\varphi^\infty \in S((\AA^\infty)^r)_\ZZ$. 
On the other hand, we have another multilinear functional $\Pcal^{\text{\rm RS}}: \Pi_1 \times \Pi_2 \times S((\AA^\infty)^r)_\ZZ \longrightarrow \CC$ defined by:
$$\Pcal^{\text{\rm RS}}(f_1,f_2,\varphi^\infty) := \frac{1}{1-q_\infty^r} \int_{\AA^\times \GL_r(k) \backslash \GL_r(\AA)} f_1(g)f_2(g) \eta_{\chi}(g;\varphi^\infty) dg,$$
where $\eta_{\chi}(g;\varphi^\infty)$ is introduced in Section~\ref{sec 3.3}.
Then:

\begin{cor}
Let $\Pi_1$ and $\Pi_2$ be two automorphic cuspidal representations of $\GL_r(\AA)$ with unitary central characters $\omega_1$ and $\omega_2$, respectively.
Suppose $\Pi_1$ is not the contragredient representation of $\Pi_2$ and $(\omega_1\omega_2)\big|_{k_\infty^{\times}} = 1$. 
We then have
$$\Pcal^{\text{\rm RS}} = L(0,\Pi_1 \times \Pi_2) \cdot \Pscr^{\text{\rm RS}}.$$
\end{cor}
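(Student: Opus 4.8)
The plan is to realize $\Pcal^{\text{\rm RS}}$ as the value at $s=0$ of the Rankin--Selberg integral $L(s;f_1,f_2,\varphi)$ for the specific choice $\varphi := \varphi^\infty\otimes\mathbf{1}_{L^o}$, and then to read off the answer from the Euler product factorization \eqref{eqn RS-local-global}. Since $\Pcal^{\text{\rm RS}}$ and $\Pscr^{\text{\rm RS}}$ are multilinear and factorizable vectors span $\Pi_1$ and $\Pi_2$, I would first reduce to factorizable $f_1\in\Pi_1$, $f_2\in\Pi_2$ and $\varphi^\infty\in S((\AA^\infty)^r)_\ZZ$; the general case then follows by multilinearity.

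Put $\chi := (\omega_1\omega_2)^{-1}$, which is unitary and trivial on $k_\infty^\times$ by hypothesis. Because $\Pi_1$ is not the contragredient of $\Pi_2$, the completed $L$-function $L(s,\Pi_1\times\Pi_2)$ is entire; hence, by \eqref{eqn RS-local-global} together with $L_v^o(s;f_1,f_2,\varphi_v)\in\CC[q_v^{s},q_v^{-s}]$ (and the product $\prod_v L_v^o$ having only finitely many factors $\neq 1$), the function $L(s;f_1,f_2,\varphi)$ is holomorphic at $s=0$. Next I would use that, for $\re(s)>1$, $L(s;f_1,f_2,\varphi)=\int_{\AA^\times\GL_r(k)\backslash\GL_r(\AA)} f_1(g)f_2(g)\,\Ecal(g,s;\chi,\varphi)\,dg$, and that, by rapid decay of the cusp forms against the moderate growth of $\Ecal(\cdot,s)$, this integral furnishes the meromorphic continuation of the left-hand side. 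Substituting the Laurent expansion of $\Ecal(\cdot,s;\chi,\varphi)$ at $s=0$ supplied by Corollary~\ref{cor 3.7}, whose principal part is \emph{constant in $g$}, and invoking the holomorphy just established, the $s^{-1}$-term integrates to zero; what remains is
$$L(0;f_1,f_2,\varphi)=\int_{\AA^\times\GL_r(k)\backslash\GL_r(\AA)} f_1(g)f_2(g)\,\Ecal_0(g)\,dg,$$
where Corollary~\ref{cor 3.7} gives $\Ecal_0(g)=\tfrac{1}{1-q_\infty^r}\,\eta_\chi(g;\varphi^\infty)-\tfrac{\varphi^\infty(0)}{2}\int_{k^\times\backslash\AA^{\infty,\times}}\overline{\chi(a^\infty)}\,d^\times a^\infty$, the second summand being independent of $g$. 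That constant summand is nonzero only when $\chi=1$, in which case $\omega_1\omega_2=1$ and $f_1f_2$ descends to $\AA^\times\GL_r(k)\backslash\GL_r(\AA)$; its contribution is then a multiple of the invariant pairing $\int_{\AA^\times\GL_r(k)\backslash\GL_r(\AA)} f_1(g)f_2(g)\,dg$, which vanishes precisely because $\Pi_1$ is not the contragredient of $\Pi_2$. Hence
$$L(0;f_1,f_2,\varphi)=\frac{1}{1-q_\infty^r}\int_{\AA^\times\GL_r(k)\backslash\GL_r(\AA)} f_1(g)f_2(g)\,\eta_\chi(g;\varphi^\infty)\,dg=\Pcal^{\text{\rm RS}}(f_1,f_2,\varphi^\infty).$$

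On the other hand, specializing \eqref{eqn RS-local-global} at $s=0$ (legitimate since both sides are finite there) and using $\varphi_\infty=\mathbf{1}_{L^o}$ together with the definition \eqref{eqn 5.1} of $\Pscr^{\text{\rm RS}}$ gives
$$L(0;f_1,f_2,\varphi)=L(0,\Pi_1\times\Pi_2)\cdot L_\infty^o(0;f_1,f_2,\mathbf{1}_{L^o})\prod_{v\neq\infty}L_v^o(0;f_1,f_2,\varphi_v)=L(0,\Pi_1\times\Pi_2)\cdot\Pscr^{\text{\rm RS}}(f_1,f_2,\varphi^\infty).$$
Comparing the two expressions for $L(0;f_1,f_2,\varphi)$ yields $\Pcal^{\text{\rm RS}}=L(0,\Pi_1\times\Pi_2)\cdot\Pscr^{\text{\rm RS}}$ on factorizable vectors, hence on all of $\Pi_1\times\Pi_2\times S((\AA^\infty)^r)_\ZZ$.

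The step I expect to be the main obstacle is the passage from the value $L(0;f_1,f_2,\varphi)$ of the meromorphically continued Rankin--Selberg integral to $\int f_1 f_2\,\Ecal_0\,dg$, i.e.\ the interchange of integration over $\AA^\times\GL_r(k)\backslash\GL_r(\AA)$ with extraction of the constant term of the Laurent expansion at $s=0$. This rests on two points to be made precise: that the polar part of the mirabolic Eisenstein series at $s=0$ is constant in $g$ (so that against cusp forms it either vanishes or would force a pole that the hypothesis on $\Pi_1$ and $\Pi_2$ forbids), and the cuspidal orthogonality $\int_{\AA^\times\GL_r(k)\backslash\GL_r(\AA)} f_1(g)f_2(g)\,dg=0$. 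Everything else — the Euler product factorization, the local-factor bookkeeping, and the multilinear extension — is routine.
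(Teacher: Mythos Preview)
Your proposal is correct and follows essentially the same route as the paper's proof: reduce to factorizable data, insert the Laurent expansion of $\Ecal(g,s;\chi,\varphi^\infty\otimes\mathbf{1}_{L^o})$ at $s=0$ from Corollary~\ref{cor 3.7}, kill the constant-in-$g$ terms via the orthogonality $\int f_1f_2\,dg=0$ coming from the non-contragredient hypothesis, and then match with the Euler product \eqref{eqn RS-local-global}. The only cosmetic difference is that you deduce holomorphy at $s=0$ first from the Euler product side, whereas the paper deduces it from the vanishing of the polar contribution; both are valid and the technical point you flag (interchanging the $g$-integral with the Laurent expansion) is handled in the paper without further comment, justified implicitly by cuspidal rapid decay.
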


\begin{proof}
Given factorizable $f_1 \in \Pi_1$, $f_2 \in \Pi_2$, and $\varphi^\infty \in S((\AA^\infty)^r)_\ZZ$, by the definition of $\Pscr^{\text{\rm RS}}$ in (\ref{eqn 5.1}) one has
$$ \Pscr^{\text{\rm RS}}(f_1,f_2,\varphi^\infty) = L_\infty^o(0;f_1 ,f_2, \mathbf{1}_{L^o}) 
\cdot \prod_{v \neq \infty} L_v^o(0;f_1,f_2,\varphi_v).$$
On the other hand, Corollary~\ref{cor 3.7} shows that
\begin{eqnarray}
&&L(s;f_1,f_2,\varphi^\infty \otimes \mathbf{1}_{L^o}) \nonumber \\
&=& \int_{\AA^\times \GL_r(k) \backslash \GL_r(\AA)} f_1(g) f_2(g) \Ecal(g,s;\chi,\varphi^\infty \otimes \mathbf{1}_{L^o}) dg \nonumber \\
&=& -\left(\int_{\AA^\times \GL_r(k) \backslash \GL_r(\AA)} f_1(g) f_2(g) d g\right) \cdot \left(\varphi^\infty(0) \cdot \int_{k^\times \backslash \AA^{\infty,\times}} \overline{\chi(a^\infty)} d^\times a^\infty\right) \cdot  \left(\frac{1}{r\ln q_\infty} \cdot s^{-1}+ \frac{1}{2}\right) \nonumber \\
&&+\frac{1}{1-q_\infty^r}  \cdot \int_{\AA^\times \GL_r(k) \backslash \GL_r(\AA)} f_1(g) f_2(g) \eta_\chi(g;\varphi^\infty) dg \nonumber \\
&& + O(s) \quad \text{ as $s\rightarrow 0$.} \nonumber 
\end{eqnarray}
Since $\Pi_1$ is not the contragredient representation of $\Pi_2$, one has
$$\int_{\AA^\times \GL_2(k)\backslash \GL_2(\AA)} f_1(g) f_2(g) dg = 0.$$
Thus $L(s;f_1,f_2,\varphi^\infty \otimes \mathbf{1}_{L^o})$ is holomorphic at $s=0$, and
\begin{eqnarray}
L(0;f_1,f_2,\varphi^\infty \otimes \mathbf{1}_{L^o})
&=& \frac{1}{1-q_\infty^r}  \cdot \int_{\AA^\times \GL_r(k) \backslash \GL_r(\AA)} f_1(g) f_2(g) \eta_\chi(g;\varphi^\infty) dg \nonumber\\
&=&\Pcal^{\text{\rm RS}}(f_1,f_2,\varphi^\infty). \nonumber
\end{eqnarray}
Therefore the result follows from the identity~(\ref{eqn RS-local-global}).
\end{proof}




\subsection{Godement-Jacquet $L$-functions}\label{sec 5.2}

Let $\Pi$ be an automorphic cuspidal representation of $\GL_r(\AA)$ with a unitary central character denoted by $\omega$.
Given  $f_1, f_2 \in \Pi$ and a Schwartz function $\varPhi \in S(\Mat_r(\AA))$, the Godement-Jacquet $L$-function associated to $f_1, f_2$ and $\varPhi$ is defined by
$$L^{\text{GJ}}(s;f_1,f_2,\varPhi):= \int_{\GL_r(\AA)} |\det (g)|_\AA^s \varPhi(g) \cdot \langle \Pi(g) f_1, f_2\rangle_{\text{Pet}} dg, \quad \forall \re(s)> r,$$
where $\langle \cdot, \cdot \rangle_{\text{Pet}}$ is the Petersson inner product on $\Pi$:
$$\langle f_1, f_2\rangle_{\text{Pet}} := \int_{\AA^\times \GL_r(k)\backslash \GL_r(\AA)} f_1(g) \overline{f_2(g)} dg, \quad \forall f_1,f_2 \in \Pi.$$
As before, we choose $dg$ to be the Tamagawa measure. 
For $f \in \Pi$, recall that we put
$\tilde{f}(g) = f({}^t g ^{-1})$ for $g \in \GL_r(\AA)$.
The contragredient representation of $\Pi$ can be realized by
$$\widetilde{\Pi} := \{\tilde{f} : f \in \Pi\},$$
where $\GL_r(\AA)$ acts via right translations.
Let $\widehat{\varPhi}$ be the following Fourier transform of $\varPhi$:
$$\widehat{\varPhi}(X) := \int_{\Mat_r(\AA)} \varPhi(Y) \psi(\tr(X \cdot  {}^tY)) dY, \quad \forall X \in \Mat_r(\AA).$$
It is known that $L^{\text{GJ}}(s;f_1,f_2,\varPhi)$ has analytic continuation to the whole complex $s$-plane satisfying the following functional equation (cf.\ \cite[Theorem 13.8]{G-J}):
\begin{eqnarray}\label{eqn 5.2}
L^{\text{GJ}}(s;f_1,f_2,\varPhi) = L^{\text{GJ}}(r-s;\tilde{f}_1,\tilde{f}_2,\widehat{\varPhi}).
\end{eqnarray}
${}$

Identify $\Mat_r(k)$ with the vector space $k^{r^2}$ as follows:
\begin{eqnarray}\label{eqn 5.3}
(a_{ij})_{1\leq i,j \leq r} \in \Mat_r(k) &\longleftrightarrow& (a_{11},...,a_{1r}, a_{21},...,a_{2r},...,a_{r1},...a_{rr}) \in k^{r^2}.
\end{eqnarray}
For each Schwartz function $\varPhi \in S(\Mat_r(\AA))$, recall the mirabolic Eisenstein series associated to $\omega^{-1}$ and $\varPhi$:
$$\Ecal(\mathbf{g},s; \omega^{-1},\varPhi) 
= |\det(\mathbf{g})|_\AA^s \cdot \int_{k^\times \backslash \AA^\times} 
\sum_{0 \neq X \in k^{r^2}} \varPhi(a^{-1} \cdot X \cdot \mathbf{g}) \omega^{-1}(a) |a|_\AA^{-r^2s} d^\times a, \quad \forall \mathbf{g} \in \GL_{r^2}(\AA).$$
The right action of $\GL_r \times \GL_r$ on $\Mat_r$ defined by
$$ (g_1,g_2) \cdot X :=  {}^t g_2 X g_1, \quad \forall (g_1,g_2) \in \GL_r \times \GL_r \text{ and } X \in \Mat_r,$$
induces a group homomorphism $\iota: \GL_r \times \GL_r \rightarrow \GL_{r^2}$.
the mirabolic Eisenstein series restricting to $\iota(\GL_r^2)(\AA)$ can be  written as follows:
for $\varPhi \in S(\Mat_r(\AA))$ and $(g_1,g_2) \in \GL_r^2(\AA)$ we have
\begin{eqnarray}
&& \Ecal(\iota(g_1,g_2),s; \omega^{-1},\varPhi) \nonumber \\
&=& |\det(g_1) \det(g_2)|_\AA^{rs} \cdot
\int_{a \in k^\times \backslash \AA^\times} 
\sum_{0 \neq X \in \Mat_r(k)} \varPhi( {}^tg_2 a^{-1}  X g_1) \omega^{-1}(a) |a|_\AA^{-r^2s} d^\times a.\nonumber
\end{eqnarray}

It is known that the Godement-Jacquet $L$-function can be expressed as the following \lq\lq doubling method\rq\rq\ integral:

\begin{prop}\label{prop 5.2}
\text{\rm (cf.\ \cite[Proposition 3.2]{GPSR})}
Let $\Pi$ be an automorphic cuspidal representation of $\GL_r(\AA)$ with a unitary central character denoted by $\omega$.
For $f_1,f_2 \in \Pi$ and $\varPhi \in S(\Mat_r(\AA))$ we have
\begin{eqnarray}
&& L^{\text{\rm GJ}}(rs;f_1,f_2,\varPhi) \nonumber\\
&=& \iint_{\big(\AA^\times \GL_r(k)\backslash \GL_r(\AA)\big)^2}
f_1(g_1)  \Ecal(\iota(g_1,g_2),s; \omega^{-1},\varPhi) \overline{\tilde{f}_2(g_2)} dg_1 dg_2. \nonumber
\end{eqnarray}
\end{prop}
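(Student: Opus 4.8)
The plan is to evaluate the right-hand side, call it $I(s)$, by unfolding the mirabolic Eisenstein series and isolating the Godement--Jacquet integral inside its \lq\lq open-orbit\rq\rq\ part. First I would work in the range $\re(s)>1$, where $\Ecal(\cdot,s;\omega^{-1},\varPhi)$ converges absolutely by Theorem~\ref{thm 2.2} and, since the cusp forms $f_1$ and $\tilde f_2$ are bounded on the finite-volume quotient $\AA^\times\GL_r(k)\backslash\GL_r(\AA)$, the double integral $I(s)$ converges absolutely; this justifies all the interchanges of summation and integration below. Inserting the explicit formula
$$\Ecal(\iota(g_1,g_2),s;\omega^{-1},\varPhi) = |\det(g_1)\det(g_2)|_\AA^{rs}\int_{k^\times\backslash\AA^\times}\sum_{0\neq X\in\Mat_r(k)}\varPhi({}^tg_2\, a^{-1}X g_1)\,\omega^{-1}(a)|a|_\AA^{-r^2 s}\,d^\times a$$
into $I(s)$, I would then decompose the sum over $0\neq X\in\Mat_r(k)$ into the orbits of the $\GL_r(k)\times\GL_r(k)$-action $(g_1,g_2)\cdot X := {}^tg_2 X g_1$; these orbits are exactly the rank strata $\mathcal O_j(k) := \{X\in\Mat_r(k):\rank X = j\}$, $1\leq j\leq r$.

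For the open orbit $\mathcal O_r(k) = \GL_r(k)$ I would take the base point $I_r$, whose stabilizer is the \lq\lq diagonal\rq\rq\ $\Delta := \{(g,{}^tg^{-1}):g\in\GL_r(k)\}$, so that $\mathcal O_r(k)\cong \Delta\backslash(\GL_r(k)\times\GL_r(k))$ via $(\gamma_1,\gamma_2)\mapsto{}^t\gamma_2\gamma_1$. Folding this sum against the double quotient (using that $f_1$ is left $\GL_r(k)$-invariant and that $\overline{\tilde f_2(\gamma_2 g_2)} = \overline{\tilde f_2(g_2)}$ for $\gamma_2\in\GL_r(k)$) collapses one of the two quotients; folding the remaining $k^\times\backslash\AA^\times$-integral in $a$ against the center then frees the $\GL_r(\AA)$-variable, the central character $\omega$ of $\Pi$ cancelling the $\omega^{-1}$ in the integrand and the exponents matching since $|\det(aI_r)|_\AA = |a|_\AA^{r}$. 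After the substitution $g_2\mapsto{}^tg_2^{-1}$, which turns $\overline{\tilde f_2(g_2)}$ into $\overline{f_2(\,\cdot\,)}$ and reconciles the order of the matrix products, the open-orbit contribution becomes
$$\int_{\GL_r(\AA)}\varPhi(g)\,|\det g|_\AA^{rs}\left(\int_{\AA^\times\GL_r(k)\backslash\GL_r(\AA)}f_1(hg)\overline{f_2(h)}\,dh\right)dg = L^{\text{\rm GJ}}(rs;f_1,f_2,\varPhi),$$
by the definitions of the Godement--Jacquet integral and of $\langle\Pi(g)f_1,f_2\rangle_{\text{Pet}}$.

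It remains to see that the degenerate orbits $\mathcal O_j(k)$ with $1\leq j<r$ contribute nothing. Taking the base point $e_j := \bigl(\begin{smallmatrix}I_j & 0\\ 0 & 0\end{smallmatrix}\bigr)$, one checks that its stabilizer in $\GL_r(k)\times\GL_r(k)$ contains $N(k)\times\{I_r\}$, where $N$ is the unipotent radical of the standard maximal parabolic of $\GL_r$ with Levi block sizes $(j,r-j)$, simply because $e_j n = e_j$ for every $n\in N$. Moreover $\varPhi({}^tg_2 a^{-1}e_j\, n g_1) = \varPhi({}^tg_2 a^{-1}e_j g_1)$, so the inner $a$-integral is left $N$-invariant in $g_1$ as well. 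Consequently, unfolding the sum over $\mathcal O_j(k)$ against $\int_{\AA^\times\GL_r(k)\backslash\GL_r(\AA)}f_1(g_1)(\cdots)\,dg_1$ produces a collapsed inner integral containing the period $\int_{N(k)\backslash N(\AA)}f_1(n g_1)\,dn$, which vanishes identically since $\Pi$ is cuspidal; hence each such orbit term is zero. Combining the two cases gives $I(s) = L^{\text{\rm GJ}}(rs;f_1,f_2,\varPhi)$ for $\re(s)>1$, and the identity extends to all $s$ because both sides are meromorphic in $s$ (the Godement--Jacquet integral being entire for cuspidal $\Pi$).

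I expect the main obstacle to be the bookkeeping in the open-orbit step: tracking the transposes and the left/right actions so that $\tilde f_2$ is converted into $f_2$ with the correct argument, verifying that the successive foldings (by $\GL_r(k)$ via $\Delta$, and by $k^\times\backslash\AA^\times$ via the center) are compatible with the chosen Tamagawa measures, and confirming that the powers of $|\det|_\AA$ together with the central character conspire to yield exactly $|\det g|_\AA^{rs}$ with no spurious factor. By contrast, the vanishing of the degenerate-orbit terms is a routine consequence of cuspidality once the relevant unipotent subgroups in the stabilizers are pinned down, and the absolute convergence for $\re(s)>1$ needed to carry out the orbit-by-orbit computation follows directly from Theorem~\ref{thm 2.2} and the boundedness of cusp forms.
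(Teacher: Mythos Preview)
The paper does not actually prove this proposition; it simply cites \cite[Proposition 3.2]{GPSR}, and your unfolding argument is precisely the standard proof found there, so in that sense you are reproducing the paper's (cited) approach.

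There is one small but genuine slip in your degenerate-orbit step. With $e_j=\bigl(\begin{smallmatrix}I_j&0\\0&0\end{smallmatrix}\bigr)$ and $n=\bigl(\begin{smallmatrix}I_j&B\\0&I_{r-j}\end{smallmatrix}\bigr)$ in the unipotent radical of the \emph{upper} standard parabolic of type $(j,r-j)$, one has $e_j n=\bigl(\begin{smallmatrix}I_j&B\\0&0\end{smallmatrix}\bigr)\neq e_j$, so $N(k)\times\{I_r\}$ is not in the stabilizer as you claim. The correct unipotent subgroup for the $g_1$-variable is the \emph{opposite} radical $N^{-}=\bigl\{\bigl(\begin{smallmatrix}I_j&0\\ *&I_{r-j}\end{smallmatrix}\bigr)\bigr\}$, since $e_j\,n^{-}=e_j$ for every $n^{-}\in N^{-}$; equivalently, you may keep the upper radical but put it on the $g_2$-side (via $n\,e_j=e_j$). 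Either way you still obtain a period of a cusp form along the unipotent radical of a proper parabolic, which vanishes by cuspidality, so the conclusion is unaffected. Everything else in your outline---the absolute convergence for $\re(s)>1$, the orbit decomposition by rank, the identification of the open-orbit term with the Godement--Jacquet integral via the diagonal stabilizer and the centre, and the meromorphic continuation---is correct.
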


\subsubsection{Inner product formula of $L(s,\Pi)$}\label{sec 5.2.1}

Writing $\Pi = \otimes_v \Pi_v$ and $\widetilde{\Pi} = \otimes_v \widetilde{\Pi}_v$,
take a pairing $\langle \cdot, \cdot \rangle_v : \Pi_v \times \widetilde{\Pi}_v \rightarrow \CC$ for each $v$ so that
$$\langle f_1, f_2 \rangle_{\text{\rm Pet}} = \prod_v \langle f_{1,v}, \overline{f_{2,v}}\rangle_v$$
for factorizable $f_1, f_2 \in \Pi$.
Then for factorizable $f_1,f_2 \in \Pi$ and $\varPhi \in S(\Mat_r(\AA))$, one has 
\begin{eqnarray}\label{eqn GJ-local-global}
L^{\text{\rm GJ}}(s;f_1,f_2,\varPhi) = L(s-\frac{r-1}{2},\Pi) \cdot \prod_v L_v^o(s; f_{1,v},f_{2,v},\varPhi_v),
\end{eqnarray}
where
$$L_v^o(s;f_{1,v},f_{2,v},\varPhi_v) := \frac{1}{L_v(s-(r-1)/2,\Pi)} \cdot 
\int_{\GL_r(k_v)} \varPhi_v(g_v) \langle \Pi_v(g_v) f_{1,v}, \overline{f_{2,v}}\rangle_v |\det g_v|_v^s dg_v,$$
which converges absolutely when $s > r$ and can be extended to an entire function.
In fact, we have $L_v^o(s;f_1,f_2,\varPhi) \in \CC[q_v^{-s},q_v^s]$, and $L_v^o(s,f_1,f_2,\varPhi) = 1$ when $v$ is \lq\lq good\rq\rq\ (cf.\ \cite[Theorem 3.3]{G-J}).\\

Suppose $\omega\big|_{k_\infty^\times} = 1$.
Let $\Pscr^{\text{\rm GJ}} :  \Pi \times \Pi \times S(\Mat_r(\AA^\infty))_\ZZ \longrightarrow \CC$
be the multilinear functional satisfying
\begin{eqnarray}\label{eqn 5.4}
\Pscr^{\text{\rm GJ}}(f_1,f_2,\varphi^\infty) &:=& L_\infty^o(0;f_{1,v} ,f_{2,v}, \mathbf{1}_{\Mat_r(O_\infty)}) 
\cdot \prod_{v \neq \infty} L_v^o(0;{f_{1,v}} ,{f_{2,v}},\varPhi_v)
\end{eqnarray}
for factorizable $f_1,f_2 \in \Pi$ and $\varPhi^\infty \in S(\Mat_r(\AA^\infty))_\ZZ$.

On the other hand, the group homomorphism $\iota: \GL_r^2 = \GL_r \times \GL_r \rightarrow \GL_{r^2}$ (coming from left and right translations) induces a natural map
$$\Bcal^r_\AA(\ZZ) \times \Bcal^r_\AA(\ZZ) \rightarrow \Bcal^{r^2}_\AA(\ZZ).$$
For $\varPhi^\infty \in S(\Mat(\AA^\infty))_\ZZ$,
let $\eta_{\omega^{-1}}(\cdot; \varPhi^\infty)$ be
the function on $\Bcal^{r^2}_\AA(\ZZ)$ induced from the Drinfeld-Siegel units on $\Hfk^{r^2}_\AA$ associated to $\varPhi^\infty$ (cf.\ Section~\ref{sec 3.3}).
We may restrict the function $\eta_{\omega^{-1} }(\cdot; \varPhi^\infty)$ to $\Bcal^r_\AA(\ZZ) \times \Bcal^r_\AA(\ZZ)$, and  view
$\eta_{\omega^{-1}}(\cdot; \varPhi^\infty)$ as a function on $\GL_r^2 (\AA)$.
Define the following multi-linear functional $\Pcal^{\text{\rm GJ}}$ on $\Pi \times \Pi \times S(\Mat_r(\AA^\infty))_\ZZ \rightarrow \CC$ via the Petersson inner product:
$$\Pcal^{\text{\rm GJ}}(f_1,f_2,\varPhi^\infty) :=  \frac{1}{1-q_\infty^{r^2}} \cdot \iint_{\big(\AA^\times \GL_r(k)\backslash \GL_r(\AA)\big)^2}
f_1(g_1) \eta_{\omega^{-1}}(\iota(g_1,g_2);\varPhi^\infty)) \cdot  \overline{\tilde{f}_2(g_2)} dg_1 dg_2.$$
We arrive at:

\begin{cor}\label{cor 5.4}
Let $\Pi$ be an automorphic cuspidal representation of $\GL_r(\AA)$ with a unitary central character denoted by $\omega$.
Suppose $\omega\big|_{k_\infty^\times} = 1$.
Then
$$\Pcal^{\text{\rm GJ}} = L(\frac{1-r}{2},\Pi) \cdot \Pscr^{\text{\rm GJ}}.$$
\end{cor}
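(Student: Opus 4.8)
The plan is to mimic the proof of Theorem~\ref{thm 0.7} in Section~\ref{sec 5.1}, with the doubling integral of Proposition~\ref{prop 5.2} playing the role of the Rankin--Selberg integral and the rank-$r^2$ mirabolic Eisenstein series $\Ecal(\cdot,s;\omega^{-1},\varPhi)$ on $\GL_{r^2}(\AA)$, restricted to $\iota(\GL_r^2)(\AA)$, playing the role of $\Ecal(\cdot,s;\chi,\varphi)$ on $\GL_r(\AA)$. Since $\Pcal^{\text{GJ}}$ and $\Pscr^{\text{GJ}}$ are multilinear, it suffices to establish the identity on factorizable data: fix $f_1=\otimes_v f_{1,v}$, $f_2=\otimes_v f_{2,v}$ in $\Pi$ and $\varPhi^\infty=\otimes_{v\neq\infty}\varPhi_v^\infty\in S(\Mat_r(\AA^\infty))_\ZZ$, and set $\varPhi:=\varPhi^\infty\otimes\mathbf{1}_{\Mat_r(O_\infty)}$.

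The analytic heart is the identity $L^{\text{GJ}}(0;f_1,f_2,\varPhi)=\Pcal^{\text{GJ}}(f_1,f_2,\varPhi^\infty)$, which I would obtain exactly as in Section~\ref{sec 5.1}. By Proposition~\ref{prop 5.2}, for $\re(s)$ sufficiently large,
$$L^{\text{GJ}}(rs;f_1,f_2,\varPhi)=\iint_{(\AA^\times\GL_r(k)\backslash\GL_r(\AA))^2} f_1(g_1)\,\Ecal(\iota(g_1,g_2),s;\omega^{-1},\varPhi)\,\overline{\tilde{f}_2(g_2)}\,dg_1\,dg_2,$$
and both sides continue meromorphically in $s$. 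By Theorem~\ref{thm 2.2}, $\Ecal(\cdot,s;\omega^{-1},\varPhi)$ has at most a simple pole at $s=0$ with constant residue; the same holds for the degenerate (i.e.\ group-variable independent) terms in its Laurent expansion at $s=0$ furnished by Corollary~\ref{cor 3.7}, applied in rank $r^2$ with $\chi=\omega^{-1}$ (legitimate since $\omega|_{k_\infty^\times}=1$). Each such degenerate term carries the factor $\int_{k^\times\backslash\AA^{\infty,\times}}\overline{\omega^{-1}(a^\infty)}\,d^\times a^\infty$, which vanishes unless $\omega$ is trivial; and when $\omega$ is trivial both $f_1$ and $\tilde{f}_2$ descend to $\AA^\times\GL_r(k)\backslash\GL_r(\AA)$ and their integrals over it vanish by cuspidality. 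Hence all degenerate contributions drop out after the double integration, $L^{\text{GJ}}(s;f_1,f_2,\varPhi)$ is holomorphic at $s=0$, and the only surviving term of the expansion at $s=0$ is $-\tfrac{1}{q_\infty^{r^2}-1}\,\eta_{\omega^{-1}}(\iota(g_1,g_2);\varPhi^\infty)$. Letting $s\to0$ this yields
$$L^{\text{GJ}}(0;f_1,f_2,\varPhi)=\frac{1}{1-q_\infty^{r^2}}\iint_{(\AA^\times\GL_r(k)\backslash\GL_r(\AA))^2} f_1(g_1)\,\eta_{\omega^{-1}}(\iota(g_1,g_2);\varPhi^\infty)\,\overline{\tilde{f}_2(g_2)}\,dg_1\,dg_2=\Pcal^{\text{GJ}}(f_1,f_2,\varPhi^\infty).$$

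It remains to combine this with the local--global factorization~(\ref{eqn GJ-local-global}) at $s=0$. With $\varPhi=\varPhi^\infty\otimes\mathbf{1}_{\Mat_r(O_\infty)}$ and $L(s-\tfrac{r-1}{2},\Pi)\big|_{s=0}=L(\tfrac{1-r}{2},\Pi)$, that factorization reads
$$L^{\text{GJ}}(0;f_1,f_2,\varPhi)=L\!\left(\tfrac{1-r}{2},\Pi\right)\cdot L_\infty^o(0;f_{1,\infty},f_{2,\infty},\mathbf{1}_{\Mat_r(O_\infty)})\cdot\prod_{v\neq\infty}L_v^o(0;f_{1,v},f_{2,v},\varPhi_v^\infty),$$
and by the definition~(\ref{eqn 5.4}) the product of local factors on the right is precisely $\Pscr^{\text{GJ}}(f_1,f_2,\varPhi^\infty)$. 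Comparing the two expressions for $L^{\text{GJ}}(0;f_1,f_2,\varPhi)$ gives $\Pcal^{\text{GJ}}(f_1,f_2,\varPhi^\infty)=L(\tfrac{1-r}{2},\Pi)\cdot\Pscr^{\text{GJ}}(f_1,f_2,\varPhi^\infty)$ on factorizable data, and multilinearity extends this to the asserted equality of functionals.

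I expect the main obstacle to be the passage to $s=0$ in the doubling integral: one must verify that the rank-$r^2$ mirabolic Eisenstein series' possible pole at $s=0$ contributes nothing once paired against the two cusp forms, and justify interchanging the Laurent expansion of the Eisenstein series with the double integral. This rests on the rapid decay of $f_1$ and $\tilde{f}_2$ dominating the at most logarithmic-in-height growth of the coefficients of the Eisenstein series, which is also what guarantees the absolute convergence of the integral defining $\Pcal^{\text{GJ}}$. Once these analytic points are settled, exactly as in Section~\ref{sec 5.1}, the remainder is the bookkeeping of local factors through~(\ref{eqn GJ-local-global}) and~(\ref{eqn 5.4}).
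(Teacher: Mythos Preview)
Your proposal is correct and follows essentially the same route as the paper's proof: apply Proposition~\ref{prop 5.2} to write $L^{\text{GJ}}(rs;f_1,f_2,\varPhi^\infty\otimes\mathbf{1}_{\Mat_r(O_\infty)})$ as a doubling integral, plug in the rank-$r^2$ Lerch-type expansion from Corollary~\ref{cor 3.7}, argue that the degenerate Laurent coefficients contribute nothing, and then match against the local--global factorization~\eqref{eqn GJ-local-global} at $s=0$. Your handling of the degenerate terms---splitting into $\omega$ nontrivial (where $\int_{k^\times\backslash\AA^{\infty,\times}}\omega\,d^\times a^\infty=0$) versus $\omega$ trivial (where $\int f_1=\int\overline{\tilde f_2}=0$ by cuspidality)---is in fact slightly more careful than the paper, which writes the product $\big(\iint f_1\overline{\tilde f_2}\big)\cdot\big(\varPhi^\infty(0)\int\omega\big)$ and declares it zero via cuspidality without noting that the first factor is only well-defined on $(\AA^\times\GL_r(k)\backslash\GL_r(\AA))^2$ when $\omega$ is trivial.
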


\begin{proof}
Given factorizable $f_1,f_2 \in \Pi$ and $\varPhi^\infty \in S(\Mat_r(\AA^\infty))$, by Proposition~\ref{prop 5.2} and Corollary~\ref{cor 3.7} one gets
\begin{eqnarray}
&&L^{\text{GJ}}(rs;f_1,f_2,\varPhi^\infty \otimes \mathbf{1}_{\Mat_r(O_\infty)}) \nonumber \\
&=& \iint_{\big(\AA^\times \GL_r(k)\backslash \GL_r(\AA)\big)^2}
f_1(g_1)  \cdot \Ecal(\iota(g_1,g_2),s; \omega^{-1},\varPhi^\infty \otimes \mathbf{1}_{\Mat_r(O_\infty)})\cdot \overline{\tilde{f}_2(g_2)} dg_1 dg_2 \nonumber \\
&=&
-\left(\iint_{\big(\AA^\times \GL_r(k)\backslash \GL_r(\AA)\big)^2}
f_1(g_1) \overline{\tilde{f}_2(g_2)} dg_1 dg_2\right) \cdot \left(\varPhi^\infty(0) \cdot \int_{k^\times \backslash \AA^{\infty,\times}} \omega(a^\infty) d^\times a^\infty\right) \nonumber \\
&& \cdot \left(\frac{1}{r^2\ln q_\infty } \cdot s^{-1} + \frac{1}{2}\right)
\nonumber \\
&& + \frac{1}{1-q_\infty^{r^2}} \cdot \iint_{\big(\AA^\times \GL_r(k)\backslash \GL_r(\AA)\big)^2}
f_1(g_1)\eta_{\omega^{-1}}(\iota(g_1,g_2);\varPhi^\infty)) \cdot  \overline{\tilde{f}_2(g_2)} dg_1 dg_2 \nonumber \\
&& + \text{O}(s) \quad \text{as $s \rightarrow 0$.} \nonumber 
\end{eqnarray}
Since $\int_{\AA^\times \GL_r(k)\backslash \GL_r(\AA)} f(g) dg = 0$ for every $f \in \Pi$ (as $\Pi$ is cuspidal), we obtain that
\begin{eqnarray}
&& L^{\text{GJ}}(0;f_1,f_2,\varPhi^\infty \otimes \mathbf{1}_{\Mat_r(O_\infty)}) \nonumber \\
&=&  \frac{1}{1-q_\infty^{r^2}} \cdot \iint_{\big(\AA^\times \GL_r(k)\backslash \GL_r(\AA)\big)^2}
f_1(g_1)\eta_{\omega^{-1}}(\iota(g_1,g_2);\varPhi^\infty)) \cdot  \overline{\tilde{f}_2(g_2)} dg_1 dg_2 \nonumber \\
&=& \Pcal^{\text{GJ}}(f_1,f_2,\varPhi^\infty). \nonumber
\end{eqnarray}
On the other hand, the equality~(\ref{eqn GJ-local-global}) says that
\begin{eqnarray}
&& L^{\text{GJ}}(0;f_1,f_2,\varPhi^\infty \otimes \mathbf{1}_{\Mat_r(O_\infty)}) \nonumber \\
&=& L(\frac{1-r}{2}, \Pi) \cdot \left(L_\infty^o(0,f_{1,\infty},f_{2,\infty},\mathbf{1}_{\Mat_r(O_\infty)}) \cdot \prod_v L_v^o(0,f_{1,v},f_{2,v},\varPhi^\infty_v)\right) \nonumber \\
&=& L(\frac{1-r}{2},\Pi) \cdot \Pscr^{\text{GJ}}(f_1,f_2,\varPhi^\infty). \nonumber
\end{eqnarray}
Therefore the result holds.
\end{proof}

\appendix

\section{CM theory of Drinfeld modules}\label{sec A}

Let $\rho$ be a Drinfeld $A$-module of rank $r$ over $\CC_\infty$. 
For $f \in \End_A(\rho/\CC_\infty) \subset \CC_\infty\{\tau\}$, let $d_f$ be the constant term of $f$.
Put $$\Ocal := \{d_f : f \in \End_A(\rho/\CC_\infty) \} \subset \CC_\infty.$$ Then $(f \mapsto d_f)$ gives a ring isomorphism from $\End_A(\rho/\CC_\infty)$ to $\Ocal$ (cf.\ \cite[Theorem 13.25]{Ros}). In particular, it is known that:
\begin{itemize}
\item Let $\Lambda_\rho\subset \CC_\infty$ be the $A$-lattice associated to $\rho$. Then $\Ocal = \{ c \in \CC_\infty: c \Lambda_\rho \subset \Lambda_\rho\}$;
\item The field $K$ of fractions of $\Ocal$ is imaginary (i.e.\ $\infty$ is not split in $K$) with $[K:k] \mid r$.
\end{itemize}
For $c \in \Ocal$, we let $\rho_c$ be the endomorphism of $\rho$ with $d_{\rho_c} = c$.

The $A$-lattice $\Lambda_\rho$ can be viewed as an $\Ocal$-module. We may say that two Drinfeld $A$-modules $\rho_1$ and $\rho_2$ of rank $r$ over $\CC_\infty$ with $\End_A(\rho_1/\CC_\infty) = \End_A(\rho_2/\CC_\infty) \cong \Ocal$ \textit{have the same genus} if 
$\Lambda_{\rho_1}\otimes_A O_v \cong \Lambda_{\rho_2}\otimes_A O_v$
 as $\Ocal\otimes_A O_v$-modules for each finite place $v$ of $k$.\\

Suppose $\rho$ is CM (i.e.\ $[K:k] = r$). Then as an $\Ocal$-module, the lattice $\Lambda_{\rho}$ is isomorphic to an ideal $\Ifk$ of $\Ocal$ with 
$$\End_\Ocal(\Ifk):= \{ \alpha \in K : \alpha \Ifk \subset \Ifk \} = \Ocal.$$
We say that $\rho$ has \textit{principal genus} if $\Lambda_\rho \otimes_A O_v \cong \Ocal \otimes_A O_v$ for each finite place $v$ of $k$, or equivalently, $\Ifk$ is an invertible ideal of $\Ocal$.

In this section, we extend the work of Hayes in \cite[Theorem 8.5]{Ha1} on the CM theory of Drinfeld modules having principal genus to the case of arbitrary genus.
More precisely, we shall prove the following theorem:

\begin{thm}\label{thm A.1}
Let $\rho$ be a CM Drinfeld $A$-module of rank $r$ over $\CC_\infty$. Identify the endomorphism ring $\End_A(\rho/\CC_\infty)$ with an $A$-order $\Ocal$ of an imaginary field $K$ with $[K:k] = r$. \\
$(1)$ The Drinfeld $A$-module $\rho$ is isomorphic (over $\CC_\infty$) to a CM Drinfeld $A$-module of rank $r$ defined over $H_\Ocal$, where $H_\Ocal$ is the ring class field of $\Ocal$.\\
$(2)$ Suppose the Drinfeld module $\rho$ is defined over $H_\Ocal$. Then $\End_A(\rho/\CC_\infty) = \End_A(\rho/H_\Ocal)$. Moreover, for an integral ideal $\Acal$ of $\Ocal$ which is invertible,
let $\Frob_\Acal \in \gal(H_\Ocal/K)$ be the Frobenius automorphism associated to $\Acal$ via the Artin map. Then
$$\Frob_\Acal(\rho) \cong \Acal * \rho.$$
Here $\Acal * \rho$ is the unique Drinfeld $A$-module satisfying
$$\rho_\Acal \cdot \rho_a = (\Acal * \rho)_a \cdot \rho_\Acal, \quad \forall a \in A,$$
where $\rho_\Acal \in H_\Ocal\{ \tau \}$ is the monic generator of the left ideal of $H_\Ocal\{ \tau \}$ generated by endomorphisms $\rho_c$ for all $c \in \Acal \subset \Ocal$.
\end{thm}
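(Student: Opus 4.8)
The plan is to follow Hayes' proof of the principal--genus case in \cite[Theorem 8.5]{Ha1}, isolating the single point where invertibility of $\Ifk$ is used and checking that it localizes away. Fix an embedding $\overline{k}\hookrightarrow\CC_\infty$ and let $\cfk\lhd\Ocal$ be the conductor of $\Ocal$ in $O_K$. By Theorem~\ref{thm 1.2} and the correspondence $\rho\leftrightarrow\Lambda_\rho$, the set
\[
S:=\{\text{rank-}r\text{ Drinfeld }A\text{-modules over }\CC_\infty\text{ with CM by }\Ocal\text{ of the same genus as }\rho\}/\!\cong
\]
is identified with the set of $K^\times$-homothety classes of $\Ocal$-lattices in $K$ having the genus of $\Ifk$, and this is a principal homogeneous space under $\Pic(\Ocal)$ through $[\Acal]\cdot\Lambda:=\Acal^{-1}\Lambda$; on Drinfeld modules this operation is $[\Acal]\cdot\rho=\Acal*\rho$, since $\Lambda_{\Acal*\rho}=\Acal^{-1}\Lambda_\rho$ follows from $\rho_\Acal\rho_a=(\Acal*\rho)_a\rho_\Acal$. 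In particular $S$ is finite, so every member of $S$ has algebraic invariants and is defined over $\overline{k}$.

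For $\sigma\in\gal(\overline{k}/K)$ I would first check that $\rho^\sigma\in S$. Conjugation gives $\End_A(\rho^\sigma/\overline{k})=\sigma\big(\End_A(\rho/\overline{k})\big)$; as the constant term of $f^\sigma$ equals $\sigma(d_f)$ with $d_f\in\Ocal\subset K$ fixed by $\sigma$, the module $\rho^\sigma$ again has CM by $\Ocal$, and $\sigma$ induces an $\Ocal\otimes_A O_v$-linear isomorphism $T_v(\rho)\xrightarrow{\sim}T_v(\rho^\sigma)$ at every finite $v$, so $\rho^\sigma$ has the genus of $\rho$. By the torsor structure there is a unique $[\Acal(\sigma)]\in\Pic(\Ocal)$ with $\rho^\sigma\cong\Acal(\sigma)*\rho$, and $(\rho^\sigma)^\tau=\rho^{\tau\sigma}$ together with $(\Bcal*\rho)^\tau\cong\Bcal*\rho^\tau$ (valid because the ideal $\Bcal\subset K$ is fixed by $\tau$) forces the cocycle relation; hence $\theta\colon\sigma\mapsto[\Acal(\sigma)]$ is a continuous homomorphism $\gal(\overline{k}/K)\to\Pic(\Ocal)$.

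The heart of the argument is to identify $\theta$ with the Artin reciprocity isomorphism $\gal(H_\Ocal/K)\xrightarrow{\sim}\Pic(\Ocal)$, which I would do place by place. For $v\nmid\cfk\infty$ the localization $\Ocal_v$ is the maximal order $O_K\otimes_A O_v$ and $\Ifk_v$ is invertible over it, so $\rho$ has good reduction at the places of $\overline{k}$ above $v$ (N\'eron--Ogg--Shafarevich for Drinfeld modules), and Hayes' Eichler--Shimura congruence computation carries over word for word with $O_K$ replaced by $\Ocal_v$ to yield $\theta(\Frob_\pfk)=[\pfk]$ for almost all primes $\pfk\nmid\cfk\infty$ of $\Ocal$. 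The Artin map of $H_\Ocal/K$ has the same defining values, and every class in $\Pic(\Ocal)$ is $[\pfk]$ for some such $\pfk$ (Chebotarev for $H_\Ocal/K$); comparing the two continuous homomorphisms on the compositum of the finite abelian extensions they factor through, and applying Chebotarev there, shows $\theta$ is the Artin map, and in particular factors through $\gal(H_\Ocal/K)$.

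Part (1) then follows: $\rho^\sigma\cong\rho$ over $\CC_\infty$ for every $\sigma\in\gal(\overline{k}/H_\Ocal)$, so the isomorphism class of $\rho$ is an $H_\Ocal$-rational point of the coarse moduli scheme $\Mfk^{(r)}_A$, and a descent argument (as in Hayes; $\Aut(\rho)=\Ocal^\times=\FF_{q_\Ocal}^\times$ is finite, and one rigidifies by a leading-coefficient normalization or a level structure) produces a model of $\rho$ over $H_\Ocal$. For part (2), once $\rho$ is defined over $H_\Ocal$ each $\rho_c$ with $c\in\Ocal$ is an endomorphism of $\rho^\sigma=\rho$ with constant term $c\in\Ocal\subset H_\Ocal$ for every $\sigma\in\gal(\overline{k}/H_\Ocal)$, whence $\rho_c^\sigma=\rho_c$, so $\rho_c\in H_\Ocal\{\tau\}$ and $\End_A(\rho/\CC_\infty)=\End_A(\rho/H_\Ocal)$; finally $\theta(\Frob_\Acal)=[\Acal]$ for every invertible integral $\Acal$ (by multiplicativity of $\theta$ and the generation of $\Pic(\Ocal)$ by the $[\pfk]$ with $\pfk\nmid\cfk\infty$), which is precisely $\Frob_\Acal(\rho)\cong\Acal*\rho$. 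The main obstacle I anticipate is the place-by-place step: one must verify that Hayes' reduction-modulo-$\pfk$ and Frobenius computation truly sees only the local pair $(\Ocal_v,\Ifk_v)$ at good $v$ — where $\Ocal_v$ is maximal and $\Ifk_v$ invertible, so the possible non-invertibility of $\Ifk$ never enters — and, to a lesser extent, the Weil-type descent in part (1).
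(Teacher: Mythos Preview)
Your proposal is viable but takes a genuinely different route from the paper. The paper does not compute $\theta(\Frob_\pfk)=[\pfk]$ by an Eichler--Shimura congruence at good primes and then invoke Chebotarev; instead it reduces explicitly to the already-established theory for the maximal order $O_K$. Concretely, writing $\Cfk$ for the conductor of $\Ocal$ and $\Lambda^o:=\Cfk\Lambda$, the paper takes the sign-normalized rank-$1$ Drinfeld $O_K$-module $\rho^o$ attached to $\Lambda^o$ (defined over $H^+$), forms the explicit isogeny $u(x)=\prod_{\alpha\in\Lambda/\Lambda^o}(x-\exp_{\Lambda^o}(\alpha))$, and thereby produces a model $\rho^{d_u\Lambda}$ of $\rho$ over $H_\Cfk^+$. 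Since the Galois action on $\rho^o[\Cfk]$ is completely explicit (Theorem~\ref{thm A.2} and equation~(\ref{eqn A.1})), one computes $\Frob_\Afk(u)$ directly and obtains $\Frob_\Afk(\rho)=(\Afk\cap\Ocal)*\rho$ for every $\Afk\lhd O_K$ coprime to $\Cfk$ (Proposition~\ref{prop A.4}); this simultaneously yields the reciprocity law and shows the model descends to $H_\Ocal^+$. The passage to $H_\Ocal$ is then a one-line degree count using Hayes' field of invariants: $H_\rho\subset H_\Ocal^+\cap K_\infty=H_\Ocal$, with equality forced by $[H_\rho:K]=\#\Pic(\Ocal)$.

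The upshot is that the two obstacles you flag---the good-reduction congruence $\theta(\Frob_\pfk)=[\pfk]$ and the Weil-type descent---are precisely what the paper's construction sidesteps: all Frobenius computations happen in characteristic zero via the known action on torsion of $\rho^o$, and the descent reduces to intersecting with $K_\infty$. Note also that Hayes' own argument for $O_K$ in \cite{Ha1} proceeds via sign-normalized modules rather than a reduction-mod-$\pfk$ congruence, so the congruence step you invoke would have to be supplied independently (it is available, but it is not literally ``Hayes' computation''). Conversely, your approach is closer to the classical elliptic-curve blueprint and, once the congruence is filled in, gives a proof that does not depend on the special features of sign-normalized rank-$1$ $O_K$-modules.
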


\begin{rem}\label{rem A.2}
Let $\rho$ be a Drinfeld module of rank $r$ over $\CC_\infty$ with CM by $\Ocal$.
Let $\Lambda_\rho \subset \CC_\infty$ be the $A$-lattice associated to $\rho$, which is equipped with an $\Ocal$-module structure.
Then for an invertible ideal $\Acal$ of $\Ocal$,
the $A$-lattice associated to $\Acal * \rho$ is homothetic to $\Acal^{-1} \cdot \Lambda_\rho$ (cf.\ \cite[Proposition 5.10 and the equation (5.18)]{Ha1}).
Suppose $\rho$ is defined over $H_\Ocal$ (via a fixed embedding $H_\Ocal \hookrightarrow \CC_\infty$). Then Theorem~\ref{thm A.1} (2) tells us that
the $A$-lattice associated to $\text{Frob}_\Acal(\rho)$ also lies in the homothety class of $\Acal^{-1} \cdot \Lambda_\rho$.
\end{rem}

We first recall the needed properties in the explicit class field theory over global function fields. Further details are referred to  \cite[Chapter 7]{Gos} and \cite{Ha1}.

\subsection{Explicit class field theory}\label{sec A.1}

Let $\rho^o$ be a CM Drinfeld $A$-module of rank $r$ over $\CC_\infty$ so that $\End(\rho^o)$ is identified with $O_K$, the integral closure of $A$ in the imaginary field $K$. Viewing $\rho$ as a Drinfeld $O_K$-module of rank $1$, suppose $\rho^o$ is \textit{sign-normalized} (cf.\ \cite[Theorem 7.2.15]{Gos}). Then $\rho^o$ is actually defined over $O_{H^+}$, the integral closure of $A$ in $H^+$, where $H^+$ is the \lq\lq narrow\rq\rq\ Hilbert class field of $O_K$ (cf.\ \cite[Section 7.4]{Gos}).  Given an ideal $\Afk$ of $O_K$, let $\Frob_\Afk \in \gal(H^+/K)$ be the Frobenius automorphism associated to $\Afk$. Then (cf.\ \cite[Theorem 7.4.8]{Gos}):
$$\Frob_\Afk(\rho^o) = \Afk * \rho^o.$$

For an integral ideal $\Cfk$ of $O_K$, let $$\rho^o[\Cfk] := \{ \lambda \in \overline{K} : \rho^o_a (\lambda) = 0,\ \forall a \in \Cfk\},$$
and put $H^+_{\Cfk} := H^+(\rho^o[\Cfk])$. Then (cf.\ \cite[Proposition 7.5.4 and Corollary 7.5.5]{Gos}):

\begin{thm}\label{thm A.2}
The field extension $H^+_\Cfk/ K$ is abelian with
$$\gal(H^+_\Cfk /K ) \cong \Ical_{O_K}(\Cfk)/\Pcal^+_\Cfk.$$
Here $\Ical_{O_K}(\Cfk)$ is the group generated by ideals of $O_K$ coprime to $\Cfk$, and $\Pcal^+_\Cfk$ is the subgroup generated by principal ideals $\alpha O_K$, where $\alpha \in O_K$ is positive and $\alpha \equiv 1 \bmod \Cfk$.
Moreover, for an integral ideal $\Afk$ of $O_K$ coprime to $\Cfk$, one has
$$\Frob_\Afk(\lambda) = \rho^o_\Afk(\lambda), \quad \forall \lambda \in \rho^o[\Cfk].$$
\end{thm}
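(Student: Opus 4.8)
\textbf{Plan of proof for Theorem~\ref{thm A.2}.}
The statement to be proven is the classical result of Hayes on narrow ray class fields generated by torsion of a sign-normalized rank-one Drinfeld $O_K$-module; the excerpt cites \cite[Proposition 7.5.4 and Corollary 7.5.5]{Gos}, so the plan is to reconstruct that argument in the present notation. The strategy has three phases: (i) show $H^+_\Cfk/K$ is abelian and unramified outside $\Cfk\infty$; (ii) compute the Galois action of a Frobenius $\Frob_\Afk$ (for $\Afk$ coprime to $\Cfk$) on the torsion module $\rho^o[\Cfk]$, obtaining $\Frob_\Afk(\lambda)=\rho^o_\Afk(\lambda)$; (iii) deduce the isomorphism $\gal(H^+_\Cfk/K)\cong \Ical_{O_K}(\Cfk)/\Pcal^+_\Cfk$ by combining (ii) with the reciprocity law for $H^+/K$ already recalled in the excerpt.

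\textbf{Phase (i).} First I would recall that $\rho^o$ is defined over $O_{H^+}$ and has good reduction at every finite place of $H^+$ (it is the reduction-of-a-rank-one object, with everywhere-nonzero leading coefficients after sign-normalization). For a prime $\Pfk$ of $H^+$ not dividing $\Cfk$, reduction mod $\Pfk$ is injective on the $\Cfk$-torsion $\rho^o[\Cfk]$ because $\#\rho^o[\Cfk]=\#(O_K/\Cfk)$ is prime to the residue characteristic; this gives unramifiedness of $H^+_\Cfk/H^+$ outside $\Cfk$, and at $\infty$ one uses the analytic uniformization (the torsion points generate a totally ramified-or-split extension controlled by the narrow condition, exactly as in the Hilbert case). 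Abelianness of $H^+_\Cfk/K$: the action of $\gal(\overline K/K)$ on $\rho^o[\Cfk]$ is $O_K/\Cfk$-linear (Galois commutes with the $\rho^o_a$), and $\rho^o[\Cfk]$ is free of rank one over $O_K/\Cfk$, so $\gal(H^+_\Cfk/H^+)\hookrightarrow (O_K/\Cfk)^\times$ is abelian; combined with $\gal(H^+/K)$ abelian and the compatibility of the two actions this yields $\gal(H^+_\Cfk/K)$ abelian.

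\textbf{Phase (ii) — the main obstacle.} The heart is the congruence $\Frob_\Afk(\lambda)\equiv \rho^o_\Afk(\lambda) \pmod{\text{a prime above }\Afk}$ for $\lambda\in\rho^o[\Cfk]$ with $\Afk$ prime and coprime to $\Cfk$, upgraded to an exact identity because both sides lie in the (reduction-injective) torsion module. Here I would use that $\Afk*\rho^o=\Frob_\Afk(\rho^o)$ already (the Hilbert-class-field reciprocity recalled before the theorem), that $\rho^o_\Afk\in O_{H^+}\{\tau\}$ reduces mod a prime $\mathfrak P\mid\Afk$ to a power of Frobenius $\tau^{\deg\Afk}$ up to a unit (since $\rho^o_\Afk$ is the monic generator of the left ideal attached to $\Afk$ and $\Afk$ is prime), and that the $\mathfrak P$-Frobenius on $\overline{\FF}$-points of the reduced Drinfeld module is exactly $\tau^{\deg\Afk}$. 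Chasing $\rho^o_\Afk\cdot\rho^o_a=(\Afk*\rho^o)_a\cdot\rho^o_\Afk$ through reduction and using the $O_K/\Cfk$-freeness of the torsion then pins $\Frob_\Afk$ on $\rho^o[\Cfk]$ to multiplication by the class of $\Afk$ in $(O_K/\Cfk)^\times$, i.e.\ to $\lambda\mapsto\rho^o_\Afk(\lambda)$. This reduction-theoretic bookkeeping — checking leading coefficients are units, that $\rho^o_\Afk$ has the right degree and reduces correctly, and that $\Afk$ principal-and-$\equiv1\bmod\Cfk$ acts trivially — is where the real work lies; the danger points are the ramified prime $\infty$ and, in positive characteristic, controlling inseparability of $\rho^o_\Afk$ (handled because $\Afk$ is coprime to the characteristic of the torsion).

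\textbf{Phase (iii).} Finally I would assemble the isomorphism. The reciprocity map sends $\Afk$ (coprime to $\Cfk$) to the automorphism acting on $H^+$ as $\Frob_\Afk$ and on $\rho^o[\Cfk]$ as $\lambda\mapsto\rho^o_\Afk(\lambda)$ by Phase (ii); this is visibly surjective onto $\gal(H^+_\Cfk/K)$ (Chebotarev, or directly since Frobenii generate), and its kernel contains $\Pcal^+_\Cfk$ because a positive $\alpha\equiv1\bmod\Cfk$ satisfies $\rho^o_{\alpha O_K}=\rho^o_\alpha$ which acts as the identity on $\rho^o[\Cfk]$ (as $\alpha\equiv1$) and fixes $H^+$ (as $\alpha O_K$ principal, using $\Frob_{\alpha O_K}=\id$ on $H^+$ from the narrow Hilbert reciprocity). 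A degree/index count — $[\Ical_{O_K}(\Cfk):\Pcal^+_\Cfk]$ equals the narrow ray class number, which by the construction matches $[H^+_\Cfk:K]$ — forces the kernel to be exactly $\Pcal^+_\Cfk$, giving $\gal(H^+_\Cfk/K)\cong \Ical_{O_K}(\Cfk)/\Pcal^+_\Cfk$. The last displayed formula $\Frob_\Afk(\lambda)=\rho^o_\Afk(\lambda)$ for all $\lambda\in\rho^o[\Cfk]$ and all $\Afk$ coprime to $\Cfk$ then follows by multiplicativity from the prime case of Phase (ii).
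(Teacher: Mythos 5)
The paper itself gives no proof of Theorem~\ref{thm A.2}: it is recalled verbatim from Goss \cite[Proposition 7.5.4 and Corollary 7.5.5]{Gos} (originally Hayes), so your sketch is being measured against a citation rather than an in-paper argument. Your outline does follow the standard Hayes--Goss route (everywhere good reduction of the sign-normalized $\rho^o$ over $O_{H^+}$, \'etaleness of the $\Cfk$-torsion away from $\Cfk$, the Frobenius congruence at a prime $\Pfk\mid\Afk$ upgraded to the exact identity $\Frob_\Afk(\lambda)=\rho^o_\Afk(\lambda)$ by injectivity of reduction on torsion, then a kernel computation), and in outline this is the right proof.

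Two steps as written would not survive scrutiny, though both are repairable. First, your justification of injectivity of reduction, \lq\lq because $\#\rho^o[\Cfk]=\#(O_K/\Cfk)$ is prime to the residue characteristic\rq\rq, is a number-field reflex that is false here: in equal characteristic $\#(O_K/\Cfk)$ is a power of $p$, which \emph{is} the residue characteristic of every finite prime. The correct reason is separability: the linear coefficient $d_\Cfk$ of $\rho^o_\Cfk$ generates $\Cfk O_{H^+}$, hence is a unit at $\Pfk\nmid\Cfk$, so $\rho^o_\Cfk$ reduces to a separable $\FF_q$-linear polynomial whose kernel still has $\#(O_K/\Cfk)$ elements, and reduction on $\rho^o[\Cfk]$ is bijective; similarly, in Phase (ii) the monic $\rho^o_\Afk$ reduces at $\Pfk\mid\Afk$ \emph{exactly} to the appropriate power of $\tau$ (its reduction has trivial kernel and is monic), while it acts invertibly on $\rho^o[\Cfk]$ because $\Afk$ is coprime to $\Cfk$ --- not because of any coprimality to \lq\lq the characteristic of the torsion\rq\rq. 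Second, the index count in Phase (iii) is circular as stated: $[H^+_\Cfk:K]$ is not \lq\lq by construction\rq\rq\ the narrow ray class number --- that equality is essentially the content of the theorem. Either compute the kernel directly (if $\Frob_\Afk$ is trivial on $H^+_\Cfk$, then triviality on $H^+$ forces $\Afk=\alpha O_K$ with $\alpha$ positive, and triviality on the free rank-one $O_K/\Cfk$-module $\rho^o[\Cfk]$ forces $\alpha\equiv 1\bmod\Cfk$, so the kernel lies in $\Pcal^+_\Cfk$), or establish the degree by the ramification analysis at primes dividing $\Cfk$ as in Goss. With these repairs your plan coincides with the cited proof.
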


\begin{rem}\label{rem A.3}
(1) Let $\infty_K$ be the unique place of $K$ lying above $\infty$ and $\FF_{\infty_K}$ the residue field at $\infty_K$. Then $H^+_\Cfk/K$ is tamely ramified at $\infty_K$ with ramification index $\#(\FF_{\infty_K})-1$ (cf.\ \cite[Proposition 7.5.8 and Corollary 7.5.9]{Gos}).\\
(2) Let $\Lambda^o = \Lambda_{\rho^o}\subset \CC_\infty$ be the $A$-lattice corresponding to $\rho^o$.
Then we may write $\rho^o[\Cfk]$ as
$$\rho^o[\Cfk] = \left\{ \exp_{\Lambda^o}(\alpha) : \alpha \in \frac{\Cfk^{-1} \Lambda^o}{\Lambda^o} \right\}.$$
Given an integral ideal $\Afk$ of $O_K$ coprime to $\Cfk$, recall that $\rho^o_\Afk \in H^+\{\tau\}$ is the monic generator of the left ideal generated by $\rho^o_c$ for all $c \in \Afk$.
let $d_\Afk$ be the constant term of $\rho^o_\Afk$. Then we have
$$d_\Afk = \prod_{0 \neq \lambda \in \rho^o[\Afk]} \lambda \quad \quad  \text{ and } \quad \quad\Lambda_{\Frob_\Afk(\rho^o)} = \Lambda_{\Afk * \rho^o} = d_\Afk \cdot \Afk^{-1} \Lambda^o.$$
Theorem~\ref{thm A.2} implies that
\begin{eqnarray}\label{eqn A.1}
\quad \quad\Frob_\Afk\big(\exp_{\Lambda^o}(\alpha)\big) = \rho_\Afk^o\big(\exp_{\Lambda^o}(\alpha)\big) = \exp_{d_\Afk \Afk^{-1} \Lambda^o} ( d_\Afk \alpha), 
\quad \quad  \forall \alpha \in \frac{\Cfk^{-1} \Lambda^o}{\Lambda^o}.
\end{eqnarray}
\end{rem}

\subsubsection{Frobenius action on Drinfeld $\Ocal$-modules}\label{sec A.2}

Let $\Lambda \subset \CC_\infty$ be an $A$-lattice of rank $r$ so that
$\Ocal :=  \{ c \in \CC_\infty : c\Lambda \subset \Lambda\}$
is an $A$-order of an imaginary field $K$ with $[K:k] = r$.
Let $\Cfk \lhd O_K$ be the conductor of $\Ocal$.
Take $\Lambda^o := \Cfk \cdot \Lambda$. Then
$$O_K \cdot \Lambda = \Cfk^{-1} \cdot\Lambda^o \supset \Lambda \supset \Lambda^o,$$
and the Drinfeld $A$-module $\rho^o$ corresponding to $\Lambda^o$ is CM by $O_K$. \\

Assume that $\rho^o$ is sign-normalized (thus defined over $H^+$).
Take 
$$u(x) = u_{\Lambda/\Lambda^o}(x) := \prod_{\alpha \in \frac{\Lambda}{\Lambda^o}} \big(x - \exp_{\Lambda^o}(\alpha)\big) \quad \text{and} \quad
d_u:= \prod_{0 \neq \alpha \in \frac{\Lambda}{\Lambda^o}}\exp_{\Lambda^o}(\alpha).
$$
Then $u$ corresponds to a twisted polynomial $u(\tau) \in H^+_\Cfk\{ \tau\}$ with constant term $d_u$.
Moreover, let $\rho^{d_u \Lambda}$ be the Drinfeld $A$-module corresponding to the $A$-lattice $d_u \Lambda$.
Then
$$u \cdot  \rho^o = \rho^{d_u\Lambda} \cdot u,$$
which tells us that $\rho^{d_u\Lambda}$ is defined over $H^+_\Cfk$.
In fact, we have:

\begin{prop}\label{prop A.4}
The Drinfeld $A$-module $\rho = \rho^{d_u \Lambda}$ is defined over $H^+_{\Ocal}$, the \lq\lq narrow\rq\rq\ ring class field of $\Ocal$. Moreover, given an invertible ideal $\Acal$ of $\Ocal$, let $\Frob_\Acal \in \gal(H^+_{\Ocal}/K)$ be the Frobenius automorphism associated to $\Acal$. Then
$$\Frob_\Acal(\rho) = \Acal * \rho.$$
\end{prop}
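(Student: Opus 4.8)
The strategy is to reduce the claim about the $A$-order $\Ocal$ to the already-established case of the maximal order $O_K$ via the \emph{sign-normalized} rank-one Drinfeld $A$-module $\rho^o$ corresponding to $\Lambda^o=\Cfk\cdot\Lambda$, for which Theorem~\ref{thm A.2} and Remark~\ref{rem A.3} give complete control over the Frobenius action on torsion points. First I would record that, since $\Lambda\supseteq\Lambda^o$ with $\Lambda/\Lambda^o$ finite, the twisted polynomial $u(\tau)=u_{\Lambda/\Lambda^o}(\tau)$ has all its coefficients in $H^+_\Cfk$ (its roots are the $\exp_{\Lambda^o}(\alpha)$ for $\alpha\in\Lambda/\Lambda^o\subset\Cfk^{-1}\Lambda^o/\Lambda^o$, which lie in $H^+_\Cfk=H^+(\rho^o[\Cfk])$), and the intertwining identity $u\cdot\rho^o=\rho\cdot u$ forces $\rho=\rho^{d_u\Lambda}$ to be defined over $H^+_\Cfk$.

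Next I would pin down the field of definition. Let $\sigma\in\gal(H^+_\Cfk/K)$ and let $\Afk\lhd O_K$ be an ideal coprime to $\Cfk$ with $\Frob_\Afk=\sigma$. Applying $\sigma$ to the defining data and using $\Frob_\Afk(\rho^o)=\Afk*\rho^o$ together with equation~(\ref{eqn A.1}), one computes that $\sigma(u)$ is the analogous $u$-polynomial built from the lattice pair $d_\Afk\Afk^{-1}\Lambda\supseteq d_\Afk\Afk^{-1}\Lambda^o$, and hence $\sigma(\rho)=\rho^{d_u d_\Afk\,\sigma\text{-twist}}$ is the Drinfeld module attached to the $\Ocal_\Afk$-homothety class of $\Afk^{-1}\Lambda$, where $\Ocal_\Afk:=\{c:c\Lambda\subset\Lambda\}$ is unchanged. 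The key point is that $\sigma(\rho)\cong\rho$ \emph{if and only if} $\Afk^{-1}\Lambda$ is $\Ocal$-homothetic to $\Lambda$, i.e.\ $\Afk\cap\Ocal$ is a principal invertible ideal of $\Ocal$ generated by a positive element; by the idelic description of $\gal(H^+_\Cfk/K)$ in Theorem~\ref{thm A.2} and the standard exact sequence relating the ray class group mod $\Cfk$ of $O_K$ to the narrow Picard group of $\Ocal$, the subgroup of such $\sigma$ is exactly $\gal(H^+_\Cfk/H^+_\Ocal)$. Therefore the fixed field of the stabilizer of $\rho$ (up to isomorphism) is $H^+_\Ocal$, and since $\rho$ can be replaced inside its $\CC_\infty$-isomorphism class by a model with coefficients in this fixed field (the isomorphism class is Galois-stable over $H^+_\Ocal$ and $\CC_\infty$-isomorphisms of Drinfeld modules are given by scalars, a cocycle argument / direct rescaling descends it), $\rho$ is defined over $H^+_\Ocal$. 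The Frobenius formula $\Frob_\Acal(\rho)=\Acal*\rho$ for invertible $\Acal\lhd\Ocal$ then follows from the same computation: writing $\Acal=\Afk\cap\Ocal$ with $\Afk\lhd O_K$ coprime to $\Cfk$ in the chosen class, $\Frob_\Acal$ sends $\Lambda$ to the homothety class of $\Afk^{-1}\Lambda=\Acal^{-1}\Lambda$, which by Remark~\ref{rem A.2} is precisely the lattice of $\Acal*\rho$.

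The main obstacle I anticipate is bookkeeping the relationship between ideals of $O_K$ (where $\rho^o$ and the explicit class field theory live) and invertible ideals of $\Ocal$ (which index $\gal(H^+_\Ocal/K)$): one must check that every class in the narrow Picard group of $\Ocal$ is represented by $\Afk\cap\Ocal$ for some $\Afk\lhd O_K$ prime to $\Cfk$, that the homothety class of $\Afk^{-1}\Lambda$ depends only on the $\Ocal$-ideal class of $\Afk\cap\Ocal$, and that the kernel of $\gal(H^+_\Cfk/K)\twoheadrightarrow\gal(H^+_\Ocal/K)$ matches the principal-ideal condition above. Once this dictionary is in place, descent of $\rho$ to $H^+_\Ocal$ and the transition from the narrow ring class field $H^+_\Ocal$ to $H_\Ocal$ (controlling only the tame ramification at $\infty_K$, via Remark~\ref{rem A.3}(1)) are routine, and part $(1)$ of Theorem~\ref{thm A.1} is immediate from part $(2)$ applied with $\Acal=\Ocal$. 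Finally, $\End_A(\rho/\CC_\infty)=\End_A(\rho/H_\Ocal)$ holds because every endomorphism $\rho_c$, $c\in\Ocal$, is a twisted polynomial whose coefficients are algebraic over $H_\Ocal$ and Galois-invariant (the Frobenius action permutes the $\rho_c$ compatibly with the trivial action on $\Ocal$), hence already lies in $H_\Ocal\{\tau\}$.
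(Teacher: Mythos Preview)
Your overall strategy---reduce to the sign-normalized rank-one module $\rho^o$ with lattice $\Lambda^o=\Cfk\Lambda$, apply the Frobenius description from Theorem~\ref{thm A.2} and equation~(\ref{eqn A.1}) to the $u$-isogeny, and read off the action on $\rho$---is exactly the paper's. But there is a genuine gap in precision. The proposition asserts two \emph{equalities}: that the specific module $\rho=\rho^{d_u\Lambda}$ (not merely something in its $\CC_\infty$-isomorphism class) has coefficients in $H^+_\Ocal$, and that $\Frob_\Acal(\rho)=\Acal*\rho$ on the nose. Your argument establishes only that $\Frob_\Afk(\rho)$ and $\Acal*\rho$ have homothetic lattices, hence are isomorphic, and then invokes a descent/cocycle step. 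Descent produces \emph{some} model over $H^+_\Ocal$, not necessarily $\rho^{d_u\Lambda}$; and your ``if and only if'' criterion conflates $\CC_\infty$-isomorphism (homothety by any scalar, so principality of $\Acal$ without sign) with the positivity condition that singles out $\gal(H^+_\Cfk/H^+_\Ocal)$.

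The paper closes this gap by tracking the scalars exactly: it identifies the lattice of $\Frob_\Afk(\rho)$ as $(d_{\Frob_\Afk(u)}\,d_\Afk)\cdot\Acal^{-1}\Lambda$ and that of $\Acal*\rho$ as $(d_\Acal\,d_u)\cdot\Acal^{-1}\Lambda$, then checks the elementary identity $d_\Acal\,d_u=d_{\Frob_\Afk(u)}\,d_\Afk$. This upgrades isomorphism to equality, so when $\Acal=\alpha\Ocal$ with $\alpha$ positive one gets $\Frob_\Afk(\rho)=\rho$ literally, and no descent is needed. A secondary issue: your expression $\Afk^{-1}\Lambda$ is not well-defined, since $\Lambda$ is only an $\Ocal$-module. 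The paper instead computes $\Frob_\Afk(u)$ via the quotient $d_\Afk(\Lambda+\Afk^{-1}\Lambda^o)\big/d_\Afk\Afk^{-1}\Lambda^o$ (after verifying $\Lambda\cap\Afk^{-1}\Lambda^o=\Lambda^o$ from coprimality of $\Afk$ and $\Cfk$) and then identifies $\Lambda+\Afk^{-1}\Lambda^o$ with $\Acal^{-1}\Lambda$ for $\Acal=\Afk\cap\Ocal$.
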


\begin{proof}
Let $\Ical_\Ocal$ (resp.\ $\Ical_\Ocal(\Cfk)$) be the group generated by invertible ideals of $\Ocal$ (resp.\ coprime to $\Cfk$), and $\Pcal_\Ocal^+$ (resp.\ $\Pcal_\Ocal^+(\Cfk)$) is the subgroup generated by principal ideals $\alpha \Ocal$, where $\alpha$ is positive (resp.\ and coprime to $\Cfk$). 
Then 
$$
\gal(H^+_\Ocal/K) \cong \Pic^+(\Ocal) := \Ical_\Ocal/ \Pcal_\Ocal^+\cong  \Ical_\Ocal(\Cfk) / \Pcal_\Ocal^+(\Cfk),
$$
and we have the following commutative diagram:
$$
\begin{tikzcd}
 \gal(H_\Cfk^+/K)  \arrow[swap,d, "\text{restriction}"] \arrow[r, leftrightarrow,"\sim"] &  \Ical_{O_K}(\Cfk)/\Pcal_\Cfk^+ \arrow[d]  \\
 \gal(H_\Ocal^+/K) \arrow[r, leftrightarrow, "\sim"] & \Ical_\Ocal(\Cfk)/\Pcal_\Ocal^+(\Cfk).
\end{tikzcd}
$$
Here the vertical map on the right hand side is induced from $\Afk \mapsto \Afk \cap \Ocal$ for every integral ideal $\Afk$ of $O_K$ coprime to $\Cfk$.
Thus to prove that $\rho$ is defined in $H_\Ocal^+$, it suffices to show that $\Frob_\Afk(\rho) = \rho$ for every integral ideal $\Afk$ of $O_K$ coprime to $\Cfk$ and $\Afk \cap \Ocal \in \Pcal_\Ocal^+(\Cfk)$.\\

Let $\Afk$ be an integral ideal of $O_K$ coprime to $\Cfk$
and $\Frob_\Afk \in \gal(H_{\Cfk}^+/K)$ be the Frobenius element corresponding to $\Afk$.
We have
$$\Frob_\Afk(u) \cdot \Frob_\Afk(\rho^o) = \Frob_\Afk(\rho) \cdot \Frob_\Afk(u).$$
Note that $O_K\cdot\Lambda$ is a projective $O_K$-module of rank $1$.
Since $\Afk$ and $\Cfk$ are relatively prime, one gets $O_K \cap \Afk^{-1} \Cfk = \Cfk$ and
$$\Lambda^o \subseteq \Lambda \cap \Afk^{-1}\Lambda^o \subseteq (O_K\Lambda) \cap \Afk^{-1}\Cfk \cdot (O_K\Lambda) = (O_K \cap \Afk^{-1}\Cfk) \cdot (O_K\Lambda) = \Cfk \cdot (O_K\Lambda) = \Lambda^o.$$
Thus $\Lambda \cap \Afk^{-1}\Lambda^o = \Lambda^o$ and we have the following isomorphism
$$\frac{\Lambda}{\Lambda^o} \cong \frac{\Lambda + \Afk^{-1}\Lambda^o}{\Afk^{-1}\Lambda^o} \quad \text{ by sending $\alpha + \Lambda^o$ to $\alpha + \Afk^{-1}\Lambda^o$ for all $\alpha \in \Lambda$.}$$
The equality~(\ref{eqn A.1}) then implies
$$ \Frob_\Afk(u) (x)
= \prod_{\alpha' \in \frac{d_\Afk (\Lambda + \Afk^{-1} \Lambda^o)}{d_\Afk \Afk^{-1} \Lambda^o}}
\Big(x-\exp_{d_\Afk \Afk^{-1} \Lambda^o}(\alpha')\Big).$$
Since $\Frob_\Afk(\rho^o) = \Afk * \rho^o$, which corresponds to the lattice $d_\Afk \Afk^{-1} \Lambda^o$,
we obtain that $\Frob_\Afk(\rho)$ actually corresponds to the lattice 
$$(d_{\Frob_\Afk(u)} d_\Afk)\cdot  (\Lambda + \Afk^{-1} \Lambda^o) = (d_{\Frob_\Afk(u)} d_\Afk) \cdot \Acal^{-1} \Lambda,$$
where $d_{\Frob_\Afk(u)}$ is the constant term of ${\Frob_\Afk(u)}$ and $\Acal := \Afk \cap \Ocal$ is an invertible ideal of $\Ocal$ coprime to $\Cfk$. 
On the other hand, the lattice corresponding to $\Acal * \rho$ is $(d_\Acal \cdot d_u) \cdot\Acal^{-1} \Lambda$,
where $d_\Acal$ is the constant term of $\rho_\Acal$.
It is straightforward that 
$$d_\Acal \cdot d_u = d_{\Frob_\Afk(u)} d_\Afk.$$
Therefore 
\begin{eqnarray}\label{eqn Reciprocity}
\Frob_\Afk(\rho) &=& \Acal * \rho.
\end{eqnarray}

Suppose $\Afk \cap \Ocal = \Acal = \alpha \cdot \Ocal$, where $\alpha$ is positive.
Then $\rho_{\alpha \Ocal} = \rho_\alpha$ and 
$$\Frob_\Afk(\rho) = (\alpha \Ocal) * \rho = \rho.$$
Therefore $\rho$ is defined over $H_\Ocal^+$.\\

Moreover, for an invertible ideal $\Acal$ of $\Ocal$ coprime to $\Cfk$, we put $\Afk := \Acal \cdot O_K$,
and let $\Frob_\Acal \in \gal(H_\Ocal^+/K)$ and $\Frob_\Afk \in \gal(H_\Cfk^+/K)$ be the Frobenius elements corresponding to $\Acal$ and $\Afk$, respectively. 
Then $\Frob_\Acal = \Frob_\Afk \big|_{H_\Ocal^+}$, and the equation~(\ref{eqn Reciprocity}) says
$$\Frob_\Acal(\rho) = \Frob_\Afk(\rho) = \Acal * \rho,$$ 
which completes the proof.
\end{proof}

\subsection{Proof of Theorem~\ref{thm A.1}}

Let $\rho$ be a CM Drinfeld $A$-module of rank $r$.
Identifying $\End(\rho)$ with an $A$-order $\Ocal$ of an imaginary field $K$, suppose $\Ocal$ has conductor $\Cfk$.
Let $H_\rho$ be the field of invariants of $\rho$ (cf.\ \cite[Theorem 6.6]{Ha1}). Then it is known that $H_\rho$ is contained in $K_\infty$ (cf.\ \cite[Proposition 6.2 and 6.4]{Ha1}).
Therefore Proposition~\ref{prop A.4} implies that 
$$H_\rho \subset H^+_\Ocal \cap K_\infty = H_\Ocal.$$ 
Let $\Lambda_\rho \subset \CC_\infty$ be the $A$-lattice corresponding to $\rho$.
Viewing $\Lambda_\rho$ as an $\Ocal$-module, it is the fact that for two invertible ideals $\Acal, \Bcal$ of $\Ocal$, we have
$$\Acal \cdot \Lambda_\rho \cong \Bcal \cdot \Lambda_\rho \quad \text{ if and only if } \quad \Acal^{-1} \Bcal \text{ is a principal ideal of $\Ocal$.} $$
The explicit description of Frobenius actions in Proposition~\ref{prop A.4} then assures that 
$$[H_\rho: K] = \#\Pic(\Ocal) = [H_\Ocal:K].$$ Therefore $H_\rho = H_\Ocal$ and the Theorem holds.
\hfill  $\Box$

\end{document}